\documentclass[11pt]{amsart}
\usepackage{fullpage}
\def\showanswers{0}

\newcommand{\hide}[1]{
\ifnum\showanswers=1
        {\color{red} #1} 
        \fi
\ifnum\showanswers=0
        \fi
}

\usepackage{hyperref}
\hypersetup{
    colorlinks=false,
    linkcolor=,
    filecolor=,      
    urlcolor=,
    pdftitle={Overleaf Example},
    pdfpagemode=FullScreen,
    }

\usepackage{comment} 

\usepackage{mathrsfs}
\usepackage{amsthm}
\usepackage{amsmath}
\usepackage{amssymb}

\def\sideremark#1{\ifvmode\leavevmode\fi\vadjust{\vbox to0pt{\vss
\hbox to 0pt{\hskip\hsize\hskip1em%
\vbox{\hsize2cm\tiny\raggedright\pretolerance10000%
\noindent {\color{red}{#1}}\hfill}\hss}\vbox to8pt{\vfil}\vss}}}%

\usepackage{arydshln}
\usepackage{tikz} 
\usepackage{wrapfig}

\theoremstyle{plain}
\newtheorem{lemma}{Lemma}[section]
\newtheorem{sublemma}{Lemma}[lemma]
\newtheorem{proposition}[lemma]{Proposition}
\newtheorem{theorem}[lemma]{Theorem}
\newtheorem{corollary}[lemma]{Corollary}
\newtheorem{maintheorem}{Theorem}

\newtheorem{maincorollary}[maintheorem]{Corollary}

\newtheorem*{keylemma*}{Key Lemma}
\newtheorem{setting}[lemma]{Setting}

\theoremstyle{definition}
\newtheorem{question}[lemma]{Question}
\newtheorem*{openquestion*}{Open Question}
\newtheorem{definition}[lemma]{Definition}
\newtheorem{example}[lemma]{Example}

\newtheorem{remark}[lemma]{Remark}
\newtheorem*{remark*}{Remark}

\newcommand{\op}{\mathrm{op}}
\newcommand{\Fr}{\mathrm{Fr}}
\newcommand{\Euc}{\mathrm{Euc}}

\usepackage[T1]{fontenc}
\usepackage{mathtools}

\newcommand{\Addresses}{{
  \bigskip
  \hide{\footnotesize}

  \textsc{Department of Mathematics, University of Oklahoma, 601 Elm Ave, Norman, OK 73019-3103, USA}\par\nopagebreak
  \textit{Email address}: \texttt{tomoya.tatsuno@ou.edu}

}}

\title{Sectional Curvature Pinching of Two-Step Nilmanifolds}
\author{Tomoya Tatsuno}
\date{}

\begin{document}

\begin{abstract}  
    We study the classical problem of sectional curvature pinching in the class of 2-step nilmanifolds, which are necessarily of mixed curvature. We show that the pinching constant of any 2-step nilmanifold lies in the compact interval $[-3, -\frac{3}{2}]$. The upper bound $-\frac{3}{2}$ is achieved by the complex Heisenberg group $\mathrm{Heis}_3(\mathbb{C})$ with a Ricci soliton metric. The upper bound exhibits rigidity: if a simply connected 2-step nilmanifold $N$ has the pinching constant $-\frac{3}{2}$, then $N$ admits a Ricci soliton complex Heisenberg group as a totally geodesic subgroup. On the other hand, the lower bound satisfies non-rigidity: any 2-step nilpotent Lie group admits a metric with pinching constant $-3$. This is derived by showing that there is an open neighborhood $U$ of $\mathrm{Heis}_3(\mathbb{R})\times \mathbb{R}^{n-3}$ in the space of $n$-dimensional 2-step nilmanifolds such that the pinching constant is $-3$ on $U$, and any 2-step nilpotent Lie group $N$ has a metric $g$ such that $(N,g)$ lies in $U$. In fact, if $N$ is not isomorphic to $\mathrm{Heis}_3(\mathbb{R})\times \mathbb{R}^{n-3}$, then there is a curve $g_t$ of metrics on $N$ with $(N,g_t)\in U$, showing that there are uncountably many left-invariant metrics on $N$ such that the pinching constant is $-3$. An algebraic characterization of a 2-step nilpotent Lie group that admits a metric with the pinching constant $-\frac{3}{2}$ is also given, and the pinching constants of various examples are computed.
\end{abstract}

\maketitle
\Addresses

\subsubsection*{Keywords}
Nilpotent Lie Groups; Pinching; Rigidity; Ricci Solitons; Mixed Curvature       

\subsubsection*{Mathematics Subject Classification} 	53C20, 53C30, 22E25, 53C25

\subsubsection*{Declarations of interest:} none

\section{Introduction} \label{section_introduction}
Pinching problems, first posed by Hopf, have been of interest to Riemannian geometers since the mid-20th century (see \cite[p. 545]{Berger_book}, \cite{Rauch}). For example, if the sectional curvature $K$ of a compact, simply connected Riemannian manifold $M$ satisfies $\frac{1}{4}<K\leq 1$, then $M$ is diffeomorphic to a sphere by the Differentiable Sphere Theorem \cite{Brendle_Schoen_Sphere_Theorem}. Admitting a metric such that $\frac{1}{4}<K\leq 1$ is equivalent to admitting a metric such that $\frac{1}{4}<\frac{K_{\min}}{K_{\max}} \leq 1$, where $K_{\min}$ and $K_{\max}$ denote the minimum and maximum of the sectional curvature, respectively. The quantity $\frac{K_{\min}}{K_{\max}}$ is called a pinching constant, which is an invariant up to isometry and scaling. 

 The Differentiable Sphere Theorem is optimal in the sense that compact rank 1 symmetric spaces have $\frac{1}{4}\leq K \leq 1$, or equivalently, $\frac{K_{\min}}{K_{\max}}=\frac{1}{4}$. Moreover, by \cite{Petersen--Tao}, for each dimension $n$ there exists $\epsilon(n)>0$ such that if a simply connected Riemannian manifold satisfies $\frac{1}{4}-\epsilon(n)\leq \frac{K_{\min}}{K_{\max}} \leq 1$, then $M$ is diffeomorphic to a sphere or a compact rank 1 symmetric space. We also note that positively curved homogeneous spaces are classified. The optimal pinching constants of even-dimensional ones were computed by Valiev \cite{Valiev} in 1979. In 1999, P{\"u}ttmann (\cite{Puttmann}) computed the optimal pinching constants of the odd-dimensional ones, developing the idea of the modified curvature operators, previously introduced by Thorpe \cite{Thorpe}, \cite{Thorpe_erratum} (see also \cite{Singer--Thorpe}).

 On the other hand, negatively curved pinched manifolds are not rigid. For any $\epsilon>0,$ there exists a compact Riemannian manifold $M$ with $-1-\epsilon\leq K \leq -1$ such that $M$ does not admit a metric with $K\equiv -1$, i.e., $M$ cannot be a hyperbolic manifold (\cite{Gromov--Thurston}). 

 However, under symmetry assumptions, one can say more. Let $M$ be a homogeneous Riemannian manifold with $K<0$. Necessarily, $M$ is a solvmanifold. Even further, $M$ is isometric to a solvable Lie group $S$ with a left-invariant metric of the form $S=\mathbb{R} \ltimes N$, where $N$ is a nilpotent Lie group with a left-invariant metric (see \cite{Heintze_Negative_Curvature}, \cite{Azencott--WilsonI}). Geometrically, $M$ contains $N$ as a hypersurface, and if $A$ denotes a unit vector spanning $\mathbb{R}$, then $A$ is a normal vector field of $N$. Algebraically, $A$ acts on the normal subgroup $N$ as an automorphism with special properties (see \cite[Proposition 2 and Theorem 3]{Heintze_Negative_Curvature}).
 
 Important examples of such solvmanifolds are noncompact rank 1 symmetric spaces, which satisfy $-4\leq K \leq -1$. If a homogeneous manifold $M$ satisfies $-4\leq K \leq -1$, then $M=\mathbb{R}\ltimes N$ described as above, and $N$ is a 2-step nilpotent Lie group with a left-invariant metric (see \cite[Theorem 5.1]{EH1} for a full result).  Conversely, any 2-step nilpotent Lie group with a left-invariant metric  (after rescaling) admits a solvable extension $\mathbb{R}\ltimes N$ which carries a left-invariant metric with $-4\leq K\leq -1$ (cf. \cite[Proposition 3.10]{EH1}). This way, there is a close relationship between quarter-pinched homogeneous manifolds of negative curvature and 2-step nilpotent Lie groups with a left-invariant metric.

 A connected Riemannian manifold admitting a transitive 2-step nilpotent Lie group of isometries is called a \textit{2-step nilmanifold}. By a result of Wilson \cite{Wilson_isometry_groups_on_homogeneous_nilmanifolds}, any 2-step nilmanifold is isometric to a 2-step nilpotent Lie group with a left-invariant metric. For this reason, we often call a 2-step nilpotent Lie group with a left-invariant metric simply a 2-step nilmanifold.

In \cite{EH1}, their analysis is reduced to the analysis of the upper and lower bounds of the sectional curvature of 2-step nilmanifolds. However, they did not study the maximum and minimum of the sectional curvature of 2-step nilmanifolds $N$, or equivalently, the pinching constant $\frac{K_{\min}}{K_{\max}}$ of $N$. As observed, there is a close relationship between 2-step nilmanifolds and negatively curved quarter-pinched manifolds. Naturally, one can ask:
 
 \begin{question}
     What can one say about $\frac{K_{\min}}{K_{\max}}$ of a 2-step nilmanifold $N$?
 \end{question}

Any non-abelian nilpotent Lie group $N$ satisfies an interesting curvature property: $N$ is of mixed curvature for any left-invariant metric, i.e., $K_{\max}>0$ and $K_{\min}<0$ (Wolf \cite{Wolf}). Similarly, its Ricci curvature has a mixed sign (\cite[Theorem 4]{Jensen_Einstein}, \cite{Milnor}). This means:
\begin{quote}
     For any left-invariant metric on a (non-abelian) nilpotent Lie group $N$, we have $K_{\max} \neq 0$ and $\frac{K_{\min}}{K_{\max}}$ is always well-defined.
\end{quote}
This makes it possible to study $\frac{K_{\min}}{K_{\max}}$ as a functional on the space of all 2-step nilpotent Lie groups with a left-invariant metric, or more precisely, on the variety of all metric 2-step nilpotent Lie algebras.
 We will compute the pinching constants for the following examples. Observe, the pinching constant is $-3$ for many metrics, except for the last example.

\begin{enumerate} 
    \item $\frac{K_{\min}}{K_{\max}}=-3$ for the 3-dimensional Heisenberg group $\mathrm{Heis}_3(\mathbb{R})$ with any left-invariant metric (Example \ref{example_heis_3}), and more generally,
    \item $\frac{K_{\min}}{K_{\max}}=-3$ for $\mathrm{Heis}_{2n+1}(\mathbb{R})\times \mathbb{R}^m$ ($m\geq 0$) with any left-invariant metric (Corollary \ref{pinching_constant_of_heis_2n+1_plus_abelian}),
    \item $\frac{K_{\min}}{K_{\max}}=-3$ for any 2-step nilpotent Lie group $N$ of dimension $\leq 5$ with any left-invariant metric (Corollary \ref{pinching_constant_up_to_dim_5}),
    \item $\frac{K_{\min}}{K_{\max}}=-\frac{3}{2}$ for the complex Heisenberg group $\mathrm{Heis}_3(\mathbb{C})$ with a left-invariant \textit{Ricci soliton} metric (Example \ref{example_heis_3_C_with_Ricci_soliton_metric}). Moreover, for any non-soliton metric of $\mathrm{Heis}_3(\mathbb{C})$, $-3 \leq \frac{K_{\min}}{K_{\max}}<-\frac{3}{2}$. 
\end{enumerate}

As one can see, having $\frac{K_{\min}}{K_{\max}}=-3$ seems ``typical'' for a 2-step nilmanifold. On the other hand, on $\mathrm{Heis}_3(\mathbb{C})$, the pinching constant functional achieves its maximum $-\frac{3}{2}$ at a Ricci soliton metric.

\begin{maintheorem} \label{maintheorem_bounds}
    Let $N$ be a 2-step nilpotent Lie group with a left-invariant metric. 
    
    Then, $-3\leq \frac{K_{\min}}{K_{\max}} \leq -\frac{3}{2}$. Both bounds are attained by some 2-step nilmanifolds.
\end{maintheorem}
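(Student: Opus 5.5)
We work with the metric Lie algebra $\mathfrak n=\mathfrak v\oplus\mathfrak z$, where $\mathfrak z\supseteq[\mathfrak n,\mathfrak n]$ and $\mathfrak v=\mathfrak z^\perp$, and with Eberlein's maps $j:\mathfrak z\to\mathfrak{so}(\mathfrak v)$ defined by $\langle j(Z)X,Y\rangle=\langle [X,Y],Z\rangle$. The starting point is the standard curvature formula for orthonormal $X+Z$, $Y+W$ with $X,Y\in\mathfrak v$ and $Z,W\in\mathfrak z$:
\[
\langle R(X+Z,Y+W)(Y+W),X+Z\rangle=-\tfrac34|[X,Y]|^2+\tfrac14|j(W)X-j(Z)Y|^2+\tfrac12\langle [X,Y],\,j(W)X-j(Z)Y\rangle' ,
\]
where the last term, suitably written, equals $\langle j(Z)Y, j(W)X\rangle$-type cross terms. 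The known special cases are $K(X,Y)=-\tfrac34|[X,Y]|^2$ and $K(X,Z)=\tfrac14|j(Z)X|^2$.

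For the \textbf{lower bound} $K_{\min}\ge -3K_{\max}$, I would compare with the restricted quantities $m=\max\{|[X,Y]|^2: X,Y\in\mathfrak v \text{ orthonormal}\}$ and $M'=\max\{|j(Z)X|^2:|X|=|Z|=1\}$. These coincide, since $|j(Z)X|^2\ge\langle j(Z)X,Y\rangle^2=\langle[X,Y],Z\rangle^2$ and conversely $\max_Z |j(Z)X|=\max_{Y}|[X,Y]|$ over unit $Y\perp X$. Hence $K_{\max}\ge \tfrac14 m$. Next I need $K_{\min}\ge -\tfrac34 m$. To get it, I expand the general formula, use Cauchy--Schwarz, and bound the positive terms below by zero and the mixed terms. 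A cleaner route is to write the curvature as a quadratic form in $(\cos,\sin)$ components and show that the negative part is at most $\tfrac34|[X,Y]|^2\le\tfrac34 m|X\wedge Y|^2\le \tfrac34 m$. Together these give $K_{\min}/K_{\max}\ge -3$. Equality is attained on $\mathrm{Heis}_3(\mathbb R)$, where $K_{\min}=-\tfrac34$ and $K_{\max}=\tfrac14$ after normalization; this is Example \ref{example_heis_3}.

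For the \textbf{upper bound} I need $K_{\min}\le -\tfrac32 K_{\max}$. The first step is $K_{\min}\le -\tfrac34 m$, which is immediate from horizontal planes. The harder part is to bound $K_{\max}$ above by $\tfrac12 m$, i.e.\ to show every sectional curvature is at most $\tfrac12\max|[X,Y]|^2$. Mixed planes $X+Z, Y+W$ contribute $\tfrac14|j(W)X-j(Z)Y|^2$ plus cross terms, and these can exceed $\tfrac14 m$; the task is to show they never exceed $\tfrac12 m$. I would normalize $|X|^2+|Z|^2=1$, parametrize by angles, and use $|j(W)X|\le \sqrt m\,|W||X|$. Then $|j(W)X-j(Z)Y|\le \sqrt m(|W||X|+|Z||Y|)$, and $(|W||X|+|Z||Y|)^2\le 1$ by Cauchy--Schwarz with the unit-vector constraints. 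This yields the term $\le\tfrac14 m$. The cross term $\tfrac12\langle[X,Y],\cdots\rangle$ should be bounded similarly, by optimizing a two-variable trigonometric expression to obtain the total $\le\tfrac12 m$. This is the \textbf{main obstacle}: the constant $\tfrac12$ must come out sharply from this optimization. Attainment is shown by $\mathrm{Heis}_3(\mathbb C)$ with its soliton metric, computed in Example \ref{example_heis_3_C_with_Ricci_soliton_metric}, where $K_{\max}=\tfrac12 m$.
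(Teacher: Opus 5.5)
Your overall architecture is the paper's: identify $m=||j||_{\op}^2$, show $K_{\min}=-\frac34 m$ and $\frac14 m\le K_{\max}\le \frac12 m$, and use $\mathrm{Heis}_3(\mathbb{R})$ and the soliton $\mathrm{Heis}_3(\mathbb{C})$ for attainment. However, neither of the two inequalities that carry the theorem is actually established. The root cause is that your starting curvature formula is not correct. The displayed term $\langle [X,Y], j(W)X-j(Z)Y\rangle$ pairs a vector of $\mathcal{Z}$ with a vector of $\mathcal{V}$, so it is identically zero. The correct grouping has a plus sign inside the square. After arranging $\langle X,Y\rangle=0=\langle Z,W\rangle$ (Lemma \ref{rotation_in_2-plane}), Proposition \ref{general_formula} reads
\[
K(X+Z,Y+W)=\tfrac14|j(Z)Y+j(W)X|^2-\tfrac34|[X,Y]|^2-\langle j(Z)X,j(W)Y\rangle .
\]
With this formula, your ``cleaner route'' to $K_{\min}\ge-\frac34 m$ fails. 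The negative part is not controlled by $\frac34|[X,Y]|^2$ alone, because $-\langle j(Z)X,j(W)Y\rangle$ can be negative. Concretely, in Example \ref{example_heis_3_C_with_Ricci_soliton_metric} take $X=\frac{1}{\sqrt2}X_1$, $Z=\frac{1}{\sqrt2}Z_1$, $Y=\frac{1}{\sqrt2}X_2$, $W=-\frac{1}{\sqrt2}Z_2$. Then $[X,Y]=0$ and $j(Z)Y+j(W)X=0$, yet $K=-\frac14$. What you need is the joint estimate $\frac34|X|^2|Y|^2+|X||Z||Y||W|\le \frac34$. To get it, set $u=|X||Y|$ and $v=|Z||W|$. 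Then $u+v\le 1$ by Cauchy--Schwarz and $u^2+\frac43uv\le (u+v)^2$. This is exactly the paper's $f(x,y)\le 1$ step in Theorem \ref{minimum_of_sectional_curvature_of_2-step_nilpotent_Lie_groups}.

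For the upper bound, you explicitly leave $K_{\max}\le \frac12 m$ open as the ``main obstacle,'' so the $-\frac32$ half of the theorem is unproved. The difficulty is again an artifact of the wrong grouping. If you split into $\frac14|j(W)X-j(Z)Y|^2$ plus two cross terms and bound each separately, you only reach $\frac14 m+\frac14 m+\frac14 m=\frac34 m$, which is not sharp. With the correct formula the step is immediate, and no delicate optimization is needed:
\begin{itemize}
\item drop $-\frac34|[X,Y]|^2\le 0$;
\item bound $\frac14|j(Z)Y+j(W)X|^2\le \frac14 m(|Z||Y|+|W||X|)^2\le\frac14 m$, by the triangle inequality and Cauchy--Schwarz with $|X|^2+|Z|^2=1=|Y|^2+|W|^2$;
\item bound $-\langle j(Z)X,j(W)Y\rangle\le m|X||Z|\,|Y||W|\le \frac14 m$, since $|X||Z|\le\frac12$ and $|Y||W|\le\frac12$.
\end{itemize}
This is the paper's chain (\ref{first_ineq_in_rigidity_of_maximum})--(\ref{sixth_ineq_in_rigidity_of_maximum}), ending in $\sin^2(\theta+\varphi)+\sin2\theta\sin2\varphi\le 2$.

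The following parts of your proposal are fine as they stand: the identification $m=||j||_{\op}^2$, the bounds $K_{\max}\ge\frac14 m$ and $K_{\min}\le -\frac34 m$ from the totally geodesic $\mathrm{heis}_3(\mathbb{R})$, the reduction of both ratio bounds to these four inequalities, and the two attainment examples.
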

\begin{remark}
    See Theorem \ref{maintheorem_rigidity} and Theorem \ref{maintheorem_characterization} for the rigidity of the maximal pinching constant $-\frac{3}{2}$. See Theorem \ref{maintheorem_invariant}, Corollary \ref{maincorollary_any_2-step_nilmanifod}, and Corollary \ref{maincorollary_open_set} for the non-rigidity of the minimal pinching constant $-3$.
\end{remark}

It is almost immediate that $\frac{K_{\min}}{K_{\max}}=-3$ for $\mathrm{Heis}_3(\mathbb{R})$ if one uses a so-called Milnor frame \cite[pp. 305--306]{Milnor}. Then it follows that $\frac{K_{\min}}{K_{\max}}=-3$ for $\mathrm{Heis}_3(\mathbb{R})\times \mathbb{R}^{m}$, which admits a unique left-invariant metric up to isometry and scaling. Beyond these examples, computing the pinching constant of 2-step nilmanifolds is non-trivial. Interestingly, the lower bound $-3$ and the upper bound $-\frac{3}{2}$ exhibit totally different behaviors. Recall, $\mathrm{Heis}_3(\mathbb{C})$ with a Ricci soliton metric achieves $\frac{K_{\min}}{K_{\max}}=-\frac{3}{2}$. The upper bound enjoys the following rigidity result.

\begin{maintheorem} \label{maintheorem_rigidity}
    Let $N$ be a simply connected 2-step nilpotent Lie group with a left-invariant metric. Suppose that the pinching constant is maximal, i.e., $\frac{K_{\min}}{K_{\max}}=-\frac{3}{2}$. Let $\pi$ be a 2-plane in $T_eN$ such that $K(\pi)=K_{\max}$.

    Then, $\pi$ is contained in the tangent space of a totally geodesic subgroup of $N$ that is isometric and isomorphic to $\mathrm{Heis}_3(\mathbb{C})$ with a Ricci soliton metric. 
\end{maintheorem}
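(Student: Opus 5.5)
Work at the Lie algebra level with the standard Eberlein setup: $\mathfrak{n}=\mathfrak{v}\oplus\mathfrak{z}$ orthogonally, $\mathfrak{z}$ the center, $[\mathfrak{v},\mathfrak{v}]\subseteq\mathfrak{z}$, and $j(Z)\in\mathfrak{so}(\mathfrak{v})$ defined by $\langle j(Z)X,Y\rangle=\langle Z,[X,Y]\rangle$. The sectional curvature formulas are then explicit. For orthonormal $X,Y\in\mathfrak{v}$ one has $K(X,Y)=-\tfrac34|[X,Y]|^2$. For unit $X\in\mathfrak{v}$, $Z\in\mathfrak{z}$ one has $K(X,Z)=\tfrac14|j(Z)X|^2$. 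Planes of the form $\mathrm{span}(X+Z_1,Y+Z_2)$ are handled by the general formula.

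\textbf{Step 1: the maximal planes.} Recall the bound $K_{\max}$ from the proof of Theorem~\ref{maintheorem_bounds}. There one should have $K_{\max}=\tfrac14\max_{|Z|=|X|=1}|j(Z)X|^2=\tfrac14\max\|j(Z)\|_{op}^2$, attained only on planes $\pi=\mathrm{span}(X,Z)$ with $X$ a top singular vector of $j(Z)$; I expect the paper to show this (mixed planes only decrease the value). Normalize so that $\|j(Z)\|=1$.

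\textbf{Step 2: the inequality chain.} Retrace the proof of the upper bound $K_{\min}\le -\tfrac32K_{\max}$. Set $Y=j(Z)X$, a unit vector, so $\langle[X,Y],Z\rangle=1$ and $|[X,Y]|\ge1$, giving $K(X,Y)\le-\tfrac34$. This alone gives only $-3$. To reach $-\tfrac32$, the proof must produce a better test plane, presumably built from $X$, $Y$, and a second central direction $W\perp Z$. The natural candidates are the complex-structure-like plane $\mathrm{span}(X+aW',\,Y+bZ')$ or a plane $\mathrm{span}(X, j(W)X)$ averaged over $W$.

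Equality $K_{\min}=-\tfrac32K_{\max}$ forces every inequality in that chain to be sharp. I would extract the following consequences:
\begin{enumerate}
\item $j(Z)^2=-\mathrm{Id}$ on $\mathrm{span}(X,Y)$.
\item There is a unit $W\perp Z$ in $\mathfrak{z}$ with $j(W)X\perp\mathrm{span}(X,Y)$ and $|j(W)X|=1$.
\item $j(Z)j(W)=-j(W)j(Z)$ on the relevant vectors.
\item The brackets of $X$, $Y$, $U=j(W)X$, and $V=j(W)Y$ land exactly in $\mathrm{span}(Z,W)$, with no components elsewhere.
\end{enumerate}

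\textbf{Step 3: the subalgebra.} Let $\mathfrak{h}=\mathrm{span}(X,Y,U,V,Z,W)$. The equality conditions should give the bracket relations
\[
[X,Y]=[U,V]=Z,\qquad [X,U]=-[Y,V]=W,\qquad [X,V]=[Y,U]=0,
\]
up to signs. These are the structure equations of $\mathfrak{heis}_3(\mathbb{C})$ in its Ricci soliton (H-type) normalization, where both central vectors have equal $j$-norms. Hence $\mathfrak{h}$ is a subalgebra. To show the corresponding subgroup is totally geodesic, it suffices that $\nabla_AB\in\mathfrak{h}$ for $A,B\in\mathfrak{h}$. The Levi-Civita formulas give $\nabla_XY=\tfrac12[X,Y]$, $\nabla_XZ=-\tfrac12 j(Z)X$, and $\nabla_ZW=0$. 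So the condition reduces to $j(\mathfrak{h}\cap\mathfrak{z})(\mathfrak{h}\cap\mathfrak{v})\subseteq\mathfrak{h}\cap\mathfrak{v}$ together with $[\mathfrak{h}\cap\mathfrak{v},\mathfrak{h}\cap\mathfrak{v}]\subseteq\mathfrak{h}$. The first inclusion is exactly the vanishing of the components of $j(Z)U$, $j(W)Y$, etc.\ outside $\mathfrak{h}$, which should come from sharpness. Simple connectivity then gives the subgroup $\exp\mathfrak{h}$, isometric to $\mathrm{Heis}_3(\mathbb{C})$ with the soliton metric, and containing $\pi=\mathrm{span}(X,Z)$.

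\textbf{Main obstacle.} The main difficulty is Step 2. I must know precisely which test plane yields $-\tfrac32$ and why the resulting estimate is sharp only in the configuration of Step 3. I would first try the plane spanned by $X+tW$ and $Y+sU$ and optimize over $t$ and $s$. If the paper's bound instead comes from an averaging argument over $W$ in a unit sphere of $\mathfrak{z}$, I would use the equality case of the averaging: the integrand must be constant, which forces the $j(W)$ to act isometrically and anticommute, i.e.\ H-type behavior on the span. I would also keep in mind that orthogonal leakage of $j(Z)U$ outside $\mathfrak{h}$ must be ruled out, probably by showing that it would strictly lower $K_{\min}$ below $-\tfrac32$.
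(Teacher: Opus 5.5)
There is a genuine gap, and it sits in Step 1, on which everything else rests. Your claim that $K_{\max}=\frac14\|j\|_{\op}^2$, attained only on planes $\mathrm{span}(X,Z)$, is false. In fact it contradicts the hypothesis.

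By Theorem~\ref{minimum_of_sectional_curvature_of_2-step_nilpotent_Lie_groups}, $K_{\min}=-\frac34\|j\|_{\op}^2$ for \emph{every} 2-step nilmanifold. So $\frac{K_{\min}}{K_{\max}}=-\frac32$ is equivalent to $K_{\max}=\frac12\|j\|_{\op}^2$. A plane $\mathrm{span}(X,Z)$ has curvature $\frac14|j(Z)X|^2\le\frac14\|j\|_{\op}^2$. Under your Step 1, every 2-step nilmanifold would therefore have pinching exactly $-3$, and the theorem would be vacuous. In the Ricci soliton $\mathrm{heis}_3(\mathbb{C})$, the value $\frac12$ is attained by the mixed plane spanned by $\frac{1}{\sqrt2}(X_1+Z_1)$ and $\frac{1}{\sqrt2}(X_2+Z_2)$.

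This is also why Step 2 cannot work. You search for a better test plane, but no test plane yields this bound. $K_{\min}$ is already exactly $-\frac34\|j\|_{\op}^2$, attained by your own $\mathrm{span}(X,Y)$. The constant $-\frac32$ instead comes from a \emph{universal upper bound} $K_{\max}\le\frac12\|j\|_{\op}^2$ over all mixed planes. For the same reason, Step 3 ends with $\pi=\mathrm{span}(X,Z)$, which is never a maximal plane in this situation.

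What is needed is the following.

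\textbf{The bound.} Write $\pi=\mathrm{span}(X+Z,X'+Z')$ with $\langle X,X'\rangle=0=\langle Z,Z'\rangle$ (Lemma~\ref{rotation_in_2-plane}). Expand $K(\pi)=\frac14|j(Z)X'+j(Z')X|^2-\frac34|[X,X']|^2-\langle j(Z)X,j(Z')X'\rangle$. Using Cauchy--Schwarz and $|j(Z)X|\le\|j\|_{\op}|Z||X|$, with $|X|=\cos\theta$ and $|X'|=\cos\varphi$, you get $K(\pi)\le\frac14\|j\|_{\op}^2(\sin^2(\theta+\varphi)+\sin2\theta\sin2\varphi)\le\frac12\|j\|_{\op}^2$.

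\textbf{The equality case.} If $K(\pi)=\frac12\|j\|_{\op}^2$, every step is an equality. This gives $|X|=|X'|=|Z|=|Z'|=\frac1{\sqrt2}$, $[X,X']=0$, $j(Z')X=j(Z)X'$ and $j(Z)X=-j(Z')X'$. Let $X_1,X_2,Z_1,Z_2$ be the normalizations of $X,X',Z,Z'$. Equality also forces all four quantities $|j(Z_a)X_b|$ to equal $\|j\|_{\op}$. Through the two-sided eigenvector characterization of Lemma~\ref{maximum_eigenspace_and_two_interpretations}, this gives $-j(Z_a)^2X_b=\|j\|_{\op}^2X_b$ and $[X_b,j(Z_a)X_b]=\|j\|_{\op}^2Z_a$. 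These produce all the $j$-relations and five of the six brackets of $\mathrm{heis}_3(\mathbb{C})$.

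\textbf{The sixth bracket.} Set $X_3\propto j(Z)X$ and $X_4\propto j(Z')X$. The relation $[X_3,X_4]=0$ is \emph{not} forced by the equality case. The paper rules out a component $cZ_3$ with $Z_3\perp Z_1,Z_2$ separately. It takes $x=\frac1{\sqrt2}(X_2+X_3)$ and shows that for a suitable angle $t$, $|j(\cos t\,Z_1+\sin t\,Z_3)x|>\|j\|_{\op}$, which contradicts the definition of $\|j\|_{\op}$. Your ``leakage'' remark anticipates this issue but supplies no argument for it.

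Your totally geodesic criterion in Step 3 is correct: alignment plus $j(\mathfrak{h}\cap\mathfrak{z})$-invariance of $\mathfrak{h}\cap\mathfrak{v}$. It is the same criterion the paper uses, Lemma~\ref{criterion_of_totally_geodesic_when_it_is_aligned}.
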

\begin{remark}
    This implies the existence of a totally geodesic subgroup $\cong\mathrm{Heis}_3(\mathbb{C})$ in $N$, which is even stronger than the existence of a subgroup isomorphic to $\mathrm{Heis}_3(\mathbb{C})$. See Theorem \ref{maintheorem_characterization}.
\end{remark}
If $N$ is not simply connected, one can apply Theorem \ref{maintheorem_rigidity} to the simply connected cover $\tilde{N}$, as the pinching constant stays the same when taking a covering space. The plane $\pi$ may not be unique up to isometry; see Example \ref{example_non-uniqueness_of_maximal_plane}. There can exist different subgroups $\mathrm{Heis}_3(\mathbb{C})$ which are totally geodesic, and there may not exist an isometry that interchanges them.

We give the following characterization. Let $\mathrm{heis}_3(\mathbb{C})$ be the Lie algebra of $\mathrm{Heis}_3(\mathbb{C})$.

\begin{maintheorem} \label{maintheorem_characterization}
    Let $N$ be a simply connected 2-step nilpotent Lie group. Let $\mathcal{N}$ be its Lie algebra. Then, the following are equivalent.

    \begin{enumerate}
        \item $N$ admits a left-invariant metric with the maximal pinching constant, i.e., $\frac{K_{\min}}{K_{\max}}=-\frac{3}{2}$.
        \item $N$ admits a left-invariant metric such that there exists a totally geodesic subgroup isomorphic and isometric to $\mathrm{Heis}_3(\mathbb{C})$ with a Ricci soliton metric.
        \item There exist a subalgebra $\mathcal{N}_1$ and a vector subspace $\mathcal{N}_2$ such that:
        \begin{enumerate}
            \item $\mathcal{N}=\mathcal{N}_1\oplus \mathcal{N}_2$ as vector spaces,
            \item $\mathcal{N}_1$ is isomorphic to $\mathrm{heis}_3(\mathbb{C})$ as Lie algebras, and 
            \item $[\mathcal{N}_1, \mathcal{N}_2]\subseteq \mathcal{N}_2$.
        \end{enumerate}
    \end{enumerate}
\end{maintheorem}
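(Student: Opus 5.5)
We prove the cycle (1)$\Rightarrow$(2)$\Rightarrow$(3)$\Rightarrow$(1).

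\emph{(1)$\Rightarrow$(2).} This is Theorem \ref{maintheorem_rigidity}. Take a left-invariant metric with $\frac{K_{\min}}{K_{\max}}=-\frac32$. Choose a $2$-plane $\pi\subset T_eN$ with $K(\pi)=K_{\max}$; it exists by compactness of the Grassmannian. Theorem \ref{maintheorem_rigidity} then places $\pi$ inside a totally geodesic subgroup that is isometric and isomorphic to $\mathrm{Heis}_3(\mathbb{C})$ with its Ricci soliton metric.

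\emph{(2)$\Rightarrow$(3).} Let $\mathcal{N}_1$ be the Lie algebra of the totally geodesic subgroup, so $\mathcal{N}_1\cong\mathrm{heis}_3(\mathbb{C})$. Set $\mathcal{N}_2=\mathcal{N}_1^\perp$. We need $[\mathcal{N}_1,\mathcal{N}_2]\subseteq\mathcal{N}_2$. Use the standard description of a left-invariant metric: $\nabla_XY=\frac12\bigl([X,Y]-\mathrm{ad}_X^*Y-\mathrm{ad}_Y^*X\bigr)$. A subalgebra is totally geodesic exactly when $\mathrm{ad}_X^*Y+\mathrm{ad}_Y^*X\in\mathcal{N}_1$ for all $X,Y\in\mathcal{N}_1$. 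Polarizing, this is equivalent to $\mathrm{ad}_X^*X\in\mathcal{N}_1$ for all $X\in\mathcal{N}_1$, that is,
\[
\langle [X,W],X\rangle=0 \quad \text{for all } X\in\mathcal{N}_1,\ W\in\mathcal{N}_2 .
\]
Polarizing again gives $\langle[X,W],Y\rangle+\langle[Y,W],X\rangle=0$ for $X,Y\in\mathcal{N}_1$. Thus the bilinear form $(X,Y)\mapsto\langle[X,W],Y\rangle$ on $\mathcal{N}_1$ is skew for each fixed $W\in\mathcal{N}_2$. This skewness alone does not force it to vanish, and closing that gap is the main obstacle.

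To close it, use the $2$-step structure and the specific structure of $\mathrm{heis}_3(\mathbb{C})$, splitting $\mathcal{N}_1=\mathcal{V}_1\oplus\mathcal{Z}_1$ with $\mathcal{Z}_1=[\mathcal{N}_1,\mathcal{N}_1]$.
\begin{itemize}
\item Since $[\mathcal{N},\mathcal{N}]$ is central, $[X,W]$ is central. Hence $\langle[X,W],Y\rangle$ can be nonzero only when $Y$ has a component in the center of $\mathcal{N}$.
\item For $Y\in\mathcal{V}_1$, write $Y$ as a commutator-type vector via the $j$-maps of $\mathcal{N}_1$: because $j(Z)$ is nondegenerate on $\mathcal{V}_1$, and $\mathcal{Z}_1$ lies in the center of $\mathcal{N}$, central components of $\mathcal{V}_1$ can be tested against $\mathcal{Z}_1$. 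Combined with skewness, this should show the form vanishes.
\item If that argument fails, fall back on the alternative in (3): $\mathcal{N}_2$ only needs to be some complement, not the orthogonal one. Replace $\mathcal{N}_2$ by a suitably adjusted complement, for instance $\mathcal{V}_2\oplus\mathcal{Z}_2$ with $\mathcal{V}_2=\mathcal{N}_2\cap\mathcal{V}$ adapted to the decomposition.
\end{itemize}

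\emph{(3)$\Rightarrow$(1).} Put on $\mathcal{N}_1$ the Ricci soliton metric of $\mathrm{heis}_3(\mathbb{C})$, choose any inner product on $\mathcal{N}_2$, and declare $\mathcal{N}_1\perp\mathcal{N}_2$. Then scale the $\mathcal{N}_2$ directions by $t\to\infty$, i.e. use $g_t$ with $g_t|_{\mathcal{N}_2}=t^2 g|_{\mathcal{N}_2}$; equivalently, rescale brackets involving $\mathcal{N}_2$. The aim is for every bracket component involving $\mathcal{N}_2$ to become small relative to the $\mathcal{N}_1$ structure.
\begin{itemize}
\item The condition $[\mathcal{N}_1,\mathcal{N}_2]\subseteq\mathcal{N}_2$ means the scaling leaves $[\mathcal{N}_1,\mathcal{N}_1]$ untouched. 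Brackets $[\mathcal{N}_1,\mathcal{N}_2]$ and $[\mathcal{N}_2,\mathcal{N}_2]$, measured in unit vectors, remain bounded.
\item The previous paragraph's criterion then makes $\mathcal{N}_1$ totally geodesic. By the Gauss equation, $K_{\max}\ge K^{\mathcal{N}_1}_{\max}$ and $K_{\min}\le K^{\mathcal{N}_1}_{\min}$, so the pinching constant is at most $-\frac32$ on the $\mathcal{N}_1$ planes.
\item Combined with Theorem \ref{maintheorem_bounds}, what remains is to prevent the other planes from exceeding the $\mathcal{N}_1$ extremes.
\end{itemize}

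This last step is the second main obstacle, since the scaling need not be exact. I would handle it with a direct curvature formula for $2$-step nilmanifolds, $K(X,Y)=\frac14\|[X,Y]\|^2$-type terms combined with $j$-map terms. With that formula, choose the metric on $\mathcal{N}_2$ so that all $j$-maps are dominated; alternatively, take $\mathcal{N}_2$ with degenerate scaling and argue by a limit together with an exact-attainment argument.
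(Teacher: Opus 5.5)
Your (1)$\Rightarrow$(2) is fine. Your (2)$\Rightarrow$(3), however, stops exactly where an argument is needed: you reduce to the vanishing of the skew form $(X,Y)\mapsto\langle[X,W],Y\rangle$ on $\mathcal{N}_1$, then only say it ``should'' vanish. The missing idea is that $\mathcal{N}_1$ is \emph{aligned}. That is, the orthogonal complement $\mathcal{V}_1$ of $\mathcal{Z}_1=[\mathcal{N}_1,\mathcal{N}_1]\subseteq\mathcal{Z}$ inside $\mathcal{N}_1$ lies in $\mathcal{V}=\mathcal{Z}^\perp$. This is where nonsingularity of $\mathrm{heis}_3(\mathbb{C})$ enters (Proposition~\ref{nonsingular_ones_are_aligned}). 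In your own terms: take $0\neq Z\in\mathcal{Z}_1$, $\eta\in\mathcal{V}_1$, $W\in\mathcal{N}_1^\perp$, and let $j_1$ be the $j$-map of $\mathcal{N}_1$. Skewness gives $\langle[\eta,W],Z\rangle=-\langle[Z,W],\eta\rangle=0$, so $\mathrm{ad}_\eta^*Z\in\mathcal{N}_1$ and hence $\mathrm{ad}_\eta^*Z=j_1(Z)\eta$. But $\mathrm{ad}_\eta^*Z\perp\mathcal{Z}$ and $j_1(Z)$ is invertible on $\mathcal{V}_1$, so $\mathcal{V}_1\subseteq\mathcal{V}$. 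Only then does your first bullet close the argument: $[X,W]\in\mathcal{Z}$ is orthogonal to $\mathcal{V}_1$, and it is orthogonal to $\mathcal{Z}_1$ by skewness since $\mathcal{Z}_1$ is central.

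Your criterion does give (3)$\Rightarrow$(2) at once: any metric with $\mathcal{N}_1\perp\mathcal{N}_2$ and the soliton metric on $\mathcal{N}_1$ makes $\mathcal{N}_1$ totally geodesic. Your route to (1), however, fails. Scaling all of $\mathcal{N}_2$ by $t$ does not shrink $[\mathcal{N}_1,\mathcal{N}_2]$: argument and value both lie in $\mathcal{N}_2$, so these structure constants in $g_t$-orthonormal frames are independent of $t$. If $[\mathcal{N}_2,\mathcal{N}_2]=0$, the scaling is even an automorphism, so all $g_t$ are isometric.

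For a counterexample, adjoin to $\mathrm{heis}_3(\mathbb{C})$ of Example~\ref{example_heis_3_C_with_Ricci_soliton_metric} orthonormal vectors $X_5,Z_3\perp\mathcal{N}_1$ with $[X_1,X_5]=bZ_3$ and $b>1$. Then $\|j\|_{\op}=b$ is attained only at $z=\pm Z_3$. Pinching $-\frac32$ would, by Theorem~\ref{maintheorem_rigidity}, force a totally geodesic $\mathrm{heis}_3(\mathbb{C})$ at scale $b$, i.e.\ a $2$-dimensional space of central directions attaining $\|j\|_{\op}$. So every $g_t$ has pinching $<-\frac32$.

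The correct degeneration, as in the paper, sets $\mathcal{Z}_2=\mathcal{Z}\cap\mathcal{N}_2$, takes a complement $\mathcal{V}_2$, makes $\mathcal{V}_1,\mathcal{V}_2,\mathcal{Z}_1,\mathcal{Z}_2$ mutually orthogonal, and rescales \emph{only} $\mathcal{V}_2$. Then the $\mathcal{V}_1\to\mathcal{V}_2$ blocks of the $j$-map acquire a factor $\lambda$ and the $[\mathcal{V}_2,\mathcal{V}_2]$ blocks a factor $\lambda^2$.

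You also miss the reduction that removes your ``second obstacle''. Always $K_{\min}=-\frac34\|j\|_{\op}^2$ and $K_{\max}\le\frac12\|j\|_{\op}^2$, and the totally geodesic $\mathrm{heis}_3(\mathbb{C})$ contains a plane with $K=\frac12\xi^2$. Hence the pinching is $-\frac32$ as soon as $\|j^{(\lambda)}\|_{\op}=\xi$ \emph{exactly}, and no other planes need separate control. A limit argument gives only $\|j^{(\lambda)}\|_{\op}\to\xi$. The paper gets equality for small $\lambda>0$ from the modified curvature operator $R_\lambda+\iota(S_\lambda)$, whose $\mathcal{V}\otimes\mathcal{Z}$ block has largest eigenvalue exactly $\xi^2/4$.
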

\begin{remark}
    The third condition is purely algebraic. Note, even if $\mathcal{N}$ admits $\mathrm{heis}_3(\mathbb{C})$ as a subalgebra, $N$ may not admit a metric of the maximal pinching constant (Example \ref{almost_heis_C}).
\end{remark}

\begin{example} \label{product_of_heis_3_over_r_and_c} Let $\langle \cdot, \cdot \rangle_{\mathrm{Heis}_3(\mathbb{R})}$ be a left-invariant metric of $\mathrm{Heis}_3(\mathbb{R})$, normalized as $K_{\min}=-\frac{3}{4}$. Let $\langle \cdot, \cdot \rangle_{\mathrm{Heis}_3(\mathbb{C})}$ be a (unique) Ricci soliton left-invariant metric of $\mathrm{Heis}_3(\mathbb{C})$, normalized to be $K_{\min}=-\frac{3}{4}$. Consider $N=\mathrm{Heis}_3(\mathbb{R})\times \mathrm{Heis}_3(\mathbb{C})$ with a product left-invariant metric $\langle \cdot, \cdot \rangle_a:=\frac{1}{a^2}\langle \cdot, \cdot \rangle_{\mathrm{Heis}_3(\mathbb{R})}+\langle \cdot, \cdot \rangle_{\mathrm{Heis}_3(\mathbb{C})}$. Then,
\begin{align*}
    \frac{K_{\min}}{K_{\max}}(\langle \cdot, \cdot \rangle_a)=\begin{cases} 
-\frac{3}{2}   & \text{when } 0 < a \le 1 \\ 
-\frac{3}{2} a^2 & \text{when } 1 \le a \le \sqrt{2} \\ 
-3             & \text{when } a \ge \sqrt{2} 
\end{cases}, \text{while } \mathrm{scal}(\langle\cdot, \cdot\rangle_a)=-\frac{1}{2}(a^2+4).
\end{align*}
See Section \ref{section_product_metrics}. For all $a>0$, both factors are totally geodesic. Interestingly, the Ricci soliton metric on $N$ corresponds to $a=\sqrt{2}$, and has $\frac{K_{\min}}{K_{\max}}=-3$, i.e., a global minimum. Moreover, $\frac{K_{\min}}{K_{\max}}$ is not differentiable at this soliton metric. 
 
 The pinching functional assumes its maximum $-\frac{3}{2}$ at a non-soliton metric (for any $a\leq 1$). This is contrary to the case of $\mathrm{Heis}_3(\mathbb{C})$, where the soliton metric is a global maximum ($=-\frac{3}{2}$) of the pinching functional. This echoes P\"uttmann's result \cite{Puttmann}, where a metric achieving the maximal pinching constant is Einstein in some case, and not Einstein in other cases. 

 Since $\mathrm{scal}$ increases as $a\to 0$, $N$ admits uncountably many left-invariant metrics with $\frac{K_{\min}}{K_{\max}}=-\frac{3}{2}$ up to isometry and scaling. In particular, a left-invariant metric with $\frac{K_{\min}}{K_{\max}}=-\frac{3}{2}$ is not unique.
\end{example}

\begin{remark}
    The 7-dimensional quaternionic Heisenberg group $N$ with a Ricci soliton metric satisfies $\frac{K_{\min}}{K_{\max}}=-\frac{33}{16 }$ (see Section \ref{section_quaternionic_Heisenberg_group}). Since $\mathcal{N}:=\mathrm{Lie}N$ degenerates to $\mathrm{heis}_3(\mathbb{C})\oplus \mathbb{R}$, it turns out that $\frac{K_{\min}}{K_{\max}}\in [-3, -\frac{3}{2})$ when varying a left-invariant metric on $N$ (see also Corollary \ref{maincorollary_any_2-step_nilmanifod}). Note, $\frac{K_{\min}}{K_{\max}}=-\frac{3}{2}$ is not possible due to Theorem \ref{maintheorem_rigidity} (see Proposition \ref{proposition_ridigity_in_dim_7}). It is an interesting question when a Ricci soliton is a (local) maximum or minimum of the pinching constant functional. 
\end{remark}

We discuss the minimal pinching constant $\frac{K_{\min}}{K_{\max}}=-3$. Unlike the rigidity of the maximal pinching constant, we have non-rigidity.

First we define a new isometric invariant of 2-step nilmanifolds that characterizes $\mathrm{Heis}_3(\mathbb{R})\times \mathbb{R}^{n-3}$.
\begin{definition}
    Given a 2-step nilpotent Lie group $N$ with a left-invariant metric $\langle \cdot, \cdot \rangle$, define 
    \begin{align*}
        \mathcal{C}(N, \langle \cdot, \cdot\rangle)=\frac{-3 \mathrm{scal} + 2K_{\min}}{-K_{\min}},
    \end{align*}
    where $\mathrm{scal}$ denotes the scalar curvature. Note, $K_{\min}<0$ \cite{Wolf}. 
\end{definition}

 This is an invariant up to isometry and scaling, and has the following remarkable property.

\begin{maintheorem} \label{maintheorem_invariant}
    Let $N$ be a 2-step nilpotent Lie group of dimension 
    $n$ with a left-invariant metric. Let $\tilde{N}$ be its universal cover. Then,

    \begin{enumerate}
        \item $\mathcal{C}(N, \langle \cdot, \cdot\rangle)\geq 0$, and $\mathcal{C}(N, \langle \cdot, \cdot\rangle)=0$ if and only if $\tilde{N} \cong \mathrm{Heis}_3(\mathbb{R})\times \mathbb{R}^{n-3}$.
        \item If $\mathcal{C}(N, \langle \cdot, \cdot\rangle)<\frac{1}{4n^2(n-1)^2+4(n^2-2)^2}$, then $\frac{K_{\min}}{K_{\max}}=-3$.
        \item Suppose $\tilde{N}$ is not isomorphic to $\mathrm{Heis}_3(\mathbb{R})\times \mathbb{R}^{n-3}$. Then for any $\epsilon>0$, there exists a left-invariant metric $\langle \cdot, \cdot \rangle_{\epsilon}$ of $N$ such that $0<\mathcal{C}(N, \langle \cdot, \cdot\rangle_\epsilon)<\frac{1}{4n^2(n-1)^2+4(n^2-2)^2}$, and hence $\frac{K_{\min}}{K_{\max}}(N, \langle \cdot, \cdot\rangle_\epsilon)=-3$.
        
    \end{enumerate}
\end{maintheorem}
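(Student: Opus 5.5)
We will work in Eberlein's framework: $\mathcal{N}=\mathcal{V}\oplus\mathcal{Z}$ orthogonally, with $\mathcal{Z}$ the center, and $j(Z)\in\mathfrak{so}(\mathcal{V})$ defined by $\langle j(Z)X,Y\rangle=\langle [X,Y],Z\rangle$. The formulas we use are:
\begin{align*}
\mathrm{scal}=-\tfrac14\sum_{k}|j(Z_k)|^2=-\tfrac14\sum_{i<j}|[X_i,X_j]|^2\cdot 2\cdot\tfrac12,
\end{align*}
with the normalization taken from the paper's earlier section (so that $\mathrm{scal}=-\frac14\sum_{i,j}|[X_i,X_j]|^2/2$). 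Also $K(X,Y)=-\frac34|[X,Y]|^2$ for orthonormal $X,Y\in\mathcal{V}$, and $K_{\min}=-\frac34\max_{X,Y}|[X,Y]|^2$. The last statement is the earlier lemma that $K_{\min}$ is attained on horizontal planes, which we assume.

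For (1), let $\mu=\max|[X,Y]|^2$ over orthonormal $X,Y\in\mathcal V$, attained at $X_1,X_2$. Complete to an orthonormal basis $X_1,\dots$ of $\mathcal V$. Then $-\mathrm{scal}=\frac14\sum_{i<j}|[X_i,X_j]|^2\cdot 2\ge \frac12\mu$ up to the normalization, while $-2K_{\min}=\frac32\mu$. Hence $\mathcal C=(-3\mathrm{scal}+2K_{\min})/(-K_{\min})$ is a nonnegative multiple of $\sum_{(i,j)\neq(1,2)}|[X_i,X_j]|^2$, and the constants match exactly so that $\mathcal C\ge0$. Equality holds iff the only nonzero bracket is $[X_1,X_2]$, i.e. iff $\mathcal N\cong\mathrm{heis}_3(\mathbb R)\oplus\mathbb R^{n-3}$. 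I will check the constants carefully against the paper's curvature formulas.

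For (2), the key claim is that $K_{\max}=\frac14\mu$ when $\mathcal C$ is small. By the Heis$_3$ example, $-\frac34\mu/\frac14\mu=-3$, and the earlier-proved lower bound from Theorem~\ref{maintheorem_bounds} gives $K_{\max}\ge -K_{\min}/3=\frac14\mu$. It remains to show $K_{\max}\le\frac14\mu$. A general plane is spanned by $X+Z$ and $Y+W$, and the curvature expands as $\frac14|j(Z)X|^2$-type positive terms plus mixed terms. Write every bracket as $[X,Y]=\langle X\wedge Y, X_1\wedge X_2\rangle[X_1,X_2]+E(X,Y)$, where $|E|^2$ is bounded by $\mu\,\mathcal C$ times a dimensional constant. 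The model Heis$_3\times\mathbb R^{n-3}$ has $K\le\frac14\mu$, with equality on planes containing the central direction together with $\mathrm{span}(X_1,X_2)$. So the curvature is a quadratic form perturbed by $O(\sqrt{\mathcal C})$ terms.

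\textbf{The main obstacle} is that near the maximizing planes the perturbation might push $K$ above $\frac14\mu$. I would first try to exploit the rigidity that the maximum in the model is nondegenerate in transverse directions. I would bound the first-order terms by Cauchy--Schwarz, dominating them by the negative-definite second-order drop of the model, with constants counting the $n(n-1)/2$ and $n^2-2$ coordinate entries. This is where the explicit threshold $\frac{1}{4n^2(n-1)^2+4(n^2-2)^2}$ should come from. Since the threshold has that sum-of-squares form, I expect a bound of the shape $a\sqrt{\mathcal C}\,\cdot n(n-1)+b\sqrt{\mathcal C}(n^2-2)<\frac12$, squared.

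For (3), pick $X_1,X_2$ with $[X_1,X_2]\ne0$, and choose a basis in which the remaining basis brackets are expressed. Then scale the metric by $g_t$ with $|X_1|=|X_2|=1$ and all other $\mathcal V$-directions having length $t^{-1}$, with $t\to 0$ and centers adjusted accordingly. This is a degeneration toward $\mathrm{heis}_3(\mathbb R)\oplus\mathbb R^{n-3}$, via an automorphism-type rescaling of the brackets. Under it, $\mu$ stays bounded below while the other brackets tend to zero, so $\mathcal C\to0$ continuously. Positivity of $\mathcal C$ follows from (1), since $\tilde N$ is not of that type. Part (2) then gives pinching $-3$.
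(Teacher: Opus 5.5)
Parts (1) and (3) follow the paper. For (1), the correct normalization is $\mathrm{scal}=-\frac14||j||_{\Fr}^2=-\frac12\sum_{i<j}|[X_i,X_j]|^2$. Your first two expressions for $\mathrm{scal}$ are off by a factor $2$, which would make $\mathcal{C}<0$ for $\mathrm{Heis}_3(\mathbb{R})$; your third is right. With the correct constant, $\mathcal{C}=\frac{2}{\mu}\sum_{(i,j)\neq(1,2)}|[X_i,X_j]|^2$.

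The genuine gap is in (2): the mechanism you describe cannot give $K_{\max}\le\frac14\mu$ \emph{exactly}. The model's maximal set is the circle $\sigma_\theta=\mathrm{span}\{\cos\theta X_1+\sin\theta X_2,Z_1\}$. Suppose the $O(\mu\sqrt{\mathcal{C}})$ perturbation had a nonzero first variation $\eta$ at some $\sigma_\theta$, transverse to the circle. Then along a curve leaving the circle, $K(\sigma(s))=\frac14\mu+\eta s-c\mu s^2+O(s^3)$, which exceeds $\frac14\mu$ for small $s>0$ no matter how small $\mathcal{C}>0$ is. A term linear in the distance to the maximal set can never be dominated by the model's quadratic drop, and Cauchy--Schwarz only produces linear bounds. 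Concretely, write a unit decomposable $\sigma=\sigma_{11}+\sigma'$ with $\sigma_{11}\in\mathcal{V}_1\otimes\mathcal{Z}_1$. The cross term $2\langle R\sigma_{11},\sigma'\rangle$ contains the couplings $c_{i1j\beta}=\langle R(X_i\wedge Z_1),X_j\wedge Z_\beta\rangle$ ($i\le2<j$, $\beta\ge2$). These can be nonzero, in which case $\lambda_{\max}(R)>\frac14\mu$. Cauchy--Schwarz then leaves you with $\frac14\mu-c\mu|\sigma'|^2+C\max|c_{i1j\beta}|\,|\sigma'|$, which is not $\le\frac14\mu$.

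Two structural facts about the actual bracket (not the model) are missing.

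First, in the adapted basis ($|j(Z_1)X_1|=||j||_{\op}$, $X_2=j(Z_1)X_1/||j||_{\op}$), $\mathrm{span}\{X_1,X_2,Z_1\}$ is an aligned totally geodesic subalgebra. So $j(Z_1)$ preserves $\mathcal{V}_1$ and $\mathcal{V}_2$, and $j(Z_\beta)\mathcal{V}_1\subseteq\mathcal{V}_2$ for $\beta\ge2$. Hence $R(\mathcal{V}_1\otimes\mathcal{Z}_1)\subseteq\mathcal{V}_1\otimes\mathcal{Z}_1\oplus\mathcal{V}_2\otimes\mathcal{Z}_2$. The whole circle therefore keeps $K=\frac14\mu$, and the only surviving couplings land in $\mathcal{V}_2\otimes\mathcal{Z}_2\subseteq\Lambda^2\sigma_\theta^{\perp}$.

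Second, $\sigma\wedge\sigma=0$ for decomposable $\sigma$. This forces the $\mathcal{V}_2\otimes\mathcal{Z}_2$-component of $\sigma$ to be $O(|\sigma'|^2)$ near the circle, so those couplings are invisible to sectional curvature at first order.

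The paper packages both facts with P\"uttmann's modified curvature operator. The $4$-form $T=\sum c_{i1j\alpha}(X_i\wedge X_j\wedge Z_1\wedge Z_\alpha)^*$ cancels the couplings, making $\mathcal{V}_1\otimes\mathcal{Z}_1$ an exact $\frac14\mu$-eigenspace of $R_T$. Every other entry of $R_T$ is bounded by $||j||_{\op}\sqrt{||j||_{\Fr}^2-2||j||_{\op}^2}=\mu\sqrt{\mathcal{C}}$, except $\langle R_T(X_1\wedge X_2),X_1\wedge X_2\rangle=-\frac34\mu$, which Weyl's inequality absorbs. Hence $K_{\max}\le\lambda_{\max}(R_T)=\frac14\mu$ once $16\max\{m,\frac{pq-2}{2}\}^2\,\mathcal{C}\le1$.

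That is where the threshold comes from: $4n^2(n-1)^2+4(n^2-2)^2$ is only a crude upper bound for $16\max\{m,\frac{pq-2}{2}\}^2$, not the square of a sum as you guessed. A Morse--Bott version of your argument could be completed using the two facts above, but you would still need to track constants to reach the stated threshold.
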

\begin{remark}
    If we normalize $K_{\min}=-1$, then $\mathcal{C}(N, \langle \cdot, \cdot \rangle)=-3\mathrm{scal}-2$, so geometrically, $\mathrm{scal}\leq -\frac{2}{3}$. The scalar curvature functional is maximized at $\mathrm{Heis}_3(\mathbb{R})\times \mathbb{R}^{n-3}$, and any maximum $(N, \langle\cdot, \cdot \rangle)$ of the scalar curvature functional has to be $\mathrm{Heis}_3(\mathbb{R})\times \mathbb{R}^{n-3}$ (up to covering).
\end{remark}

Recall, $\mathrm{Heis}_3(\mathbb{R})\times \mathbb{R}^{n-3}$ admits a unique left-invariant metric up to isometry and scaling. Lauret showed that $\mathrm{Heis}_3(\mathbb{R})\times \mathbb{R}^{n-3}$ and $\mathbb{R}\ltimes\mathbb{R}^{n-1}$ (with $\mathbb{R}$ acting on $\mathbb{R}^{n-1}$ via multiples of the identity) are the only simply connected Lie groups that have this property (\cite[Corollary 5.3]{Lauret_Degenerations}). 

The invariant $\mathcal{C}(N, \langle \cdot, \cdot\rangle)\geq 0$ measures how much the local geometry of a 2-step nilmanifold $(N, \langle \cdot, \cdot\rangle)$ deviates from the local geometry of this special 2-step nilmanifold $\mathrm{Heis}_3(\mathbb{R})\times \mathbb{R}^{n-3}$. The key to the proof is to observe that any simply connected 2-step nilmanifold $(N, \langle \cdot, \cdot\rangle)$ admits a totally geodesic subgroup isomorphic to $\mathrm{Heis}_3(\mathbb{R})$ (Lemma \ref{maximum_eigenspace_and_two_interpretations}). This is done by analyzing the maximum of a natural norm (called the operator norm) in the space of metric 2-step nilpotent Lie algebras that also appeared in the study of quarter-pinched homogeneous solvmanifolds \cite{EH1}.

Once one finds a totally geodesic $\mathrm{Heis}_3(\mathbb{R})$, then the Lie algebra $\mathcal{N}$ of $N$ decomposes as an orthogonal direct sum $\mathcal{N}=\mathrm{heis}_3(\mathbb{R})\oplus (\mathrm{heis}_3(\mathbb{R}))^{\perp},$ where $\mathrm{heis}_3(\mathbb{R})$ is the Lie algebra of $\mathrm{Heis}_3(\mathbb{R})$. The next key observation is that $\mathcal{C}(N, \langle \cdot, \cdot\rangle)$ is exactly measuring the size of $(\mathrm{heis}_3(\mathbb{R}))^{\perp}$ in another norm (called the Frobenius norm) in the space of metric 2-step nilpotent Lie algebras. Hence, $\mathcal{C}(N, \langle \cdot, \cdot\rangle)\geq 0$. Moreover, $\mathcal{C}(N, \langle \cdot, \cdot\rangle)= 0$ if and only if $(\mathrm{heis}_3(\mathbb{R}))^{\perp} \cong \mathbb{R}^{n-3}$, i.e., $\mathcal{N}=\mathrm{heis}_3(\mathbb{R})\oplus \mathbb{R}^{n-3}.$

The main difficulty of our result is to show that $\mathcal{C}(N, \langle \cdot, \cdot\rangle)<\frac{1}{4n^2(n-1)^2+4(n^2-2)^2}$ implies $\frac{K_{\min}}{K_{\max}}=-3$. The key is to use the modified curvature operators, following the ideas of P{\"u}ttmann \cite{Puttmann}. Our results heavily rely on the fact that 2-step nilpotent Lie algebras $\mathcal{N}$ have ``fewer structure constants'' due to $[\mathcal{N}, [\mathcal{N}, \mathcal{N}]]=0$. This enables the curvature computations. A key lemma is Lemma \ref{curv_op_and_totally_geodesic_subalgebras}, where we establish various nice properties of the curvature operator of 2-step nilmanifolds. The main idea is to show that a modified curvature operator has a block form essentially adapted to the decomposition $\mathcal{N}=\mathrm{heis}_3(\mathbb{R})\oplus (\mathrm{heis}_3(\mathbb{R}))^{\perp}$, and that the part corresponding to $(\mathrm{heis}_3(\mathbb{R}))^{\perp}$ can be analyzed purely in terms of the invariant $\mathcal{C}(N, \langle \cdot, \cdot\rangle)$. 

The fact that any $N$ admits a left-invariant metric with arbitrarily small $\mathcal{C}(N, \langle \cdot, \cdot\rangle)>0$ has the following consequence.

\begin{maincorollary} \label{maincorollary_any_2-step_nilmanifod}
    Any 2-step nilpotent Lie group $N$ admits a left-invariant metric with the minimal pinching constant $\frac{K_{\min}}{K_{\max}}=-3$. 

    Moreover, if $\tilde{N}$ is not isomorphic to $\mathrm{Heis}_3(\mathbb{R})\times \mathbb{R}^{n-3}$, then there are uncountably many left-invariant metrics on $N$ with $\frac{K_{\min}}{K_{\max}}=-3$ up to isometry and scaling.
\end{maincorollary}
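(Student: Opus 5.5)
The corollary should follow directly from Theorem \ref{maintheorem_invariant}, so I will split into the two cases that theorem distinguishes.

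\emph{Case 1: $\tilde{N}\cong\mathrm{Heis}_3(\mathbb{R})\times\mathbb{R}^{n-3}$.} For any left-invariant metric, part (1) of Theorem \ref{maintheorem_invariant} gives $\mathcal{C}(N,\langle\cdot,\cdot\rangle)=0$. This is below the threshold $\frac{1}{4n^2(n-1)^2+4(n^2-2)^2}$, so part (2) gives $\frac{K_{\min}}{K_{\max}}=-3$. (This also matches Corollary \ref{pinching_constant_of_heis_2n+1_plus_abelian}.) The pinching constant is a local invariant, so passing to the cover is harmless.

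\emph{Case 2: $\tilde{N}\not\cong\mathrm{Heis}_3(\mathbb{R})\times\mathbb{R}^{n-3}$.} Part (3) of Theorem \ref{maintheorem_invariant} supplies a metric with pinching $-3$, which settles existence. For the uncountability claim I would use that $\mathcal{C}$ is invariant under isometry and scaling, so metrics with distinct values of $\mathcal{C}$ are inequivalent. It therefore suffices to show that the set of values $\mathcal{C}$ takes in $(0,\delta)$, with $\delta=\frac{1}{4n^2(n-1)^2+4(n^2-2)^2}$, is uncountable. Part (3) says that for every $\epsilon>0$ some metric $\langle\cdot,\cdot\rangle_\epsilon$ satisfies $0<\mathcal{C}<\delta$. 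Reading the role of $\epsilon$ as in the abstract and the surrounding discussion, namely that $\mathcal{C}$ can be made arbitrarily small and positive, I take $\mathcal{C}(\langle\cdot,\cdot\rangle_\epsilon)<\epsilon$.

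I then connect two such metrics by a path. The space of left-invariant metrics, i.e.\ inner products on $\mathcal{N}$, is convex and hence connected. Along a segment of inner products the curvature tensor varies continuously. Consequently $K_{\min}$, $K_{\max}$ and $\mathrm{scal}$ vary continuously: they are the min and max over the compact Grassmannian of a continuous function, and a trace. Since $K_{\min}<0$ always, $\mathcal{C}$ is continuous on this space. Now pick metrics $g_1,g_2$ with $0<\mathcal{C}(g_2)<\mathcal{C}(g_1)<\delta$; this is possible by taking $\epsilon<\mathcal{C}(g_1)$. The intermediate value theorem gives $\mathcal{C}$ attaining every value in $[\mathcal{C}(g_2),\mathcal{C}(g_1)]\subset(0,\delta)$ along the segment. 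Each such metric has pinching $-3$ by part (2), and the values of $\mathcal{C}$ distinguish them, producing uncountably many classes.

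The main point needing care is this continuity argument. The delicate part is continuity of $K_{\min}$ and $K_{\max}$, which I justify via uniform continuity on the compact Grassmannian after identifying all inner products with a fixed one through $g=\langle P\cdot,\cdot\rangle$, with $P$ positive definite depending continuously on the parameter. If the proof of part (3) instead yields an explicit curve $g_t$ (as the abstract suggests) with $\mathcal{C}(g_t)\to0^+$, I would simply apply the intermediate value theorem along that curve.
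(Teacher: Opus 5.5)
Your proposal is correct and follows the paper's route. The paper also gets both cases from Theorem \ref{maintheorem_invariant} and distinguishes metrics by the isometry-and-scaling invariant $\mathcal{C}$. Your reading $\mathcal{C}(\langle\cdot,\cdot\rangle_\epsilon)<\epsilon$ is exactly what the paper's proof of item 3 delivers, since there $\mathcal{C}(\varphi_\epsilon^{-1}.\mu)\to 0$ along the degeneration to $\mu_{\mathrm{heis}_3(\mathbb{R})\oplus\mathbb{R}^{n-3}}$.

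The paper only asserts that $\mathcal{C}\to 0$ along this family yields uncountably many inequivalent metrics. Your continuity argument plus the intermediate value theorem, applied either on a segment of inner products or along the degeneration curve itself, is precisely the step that makes that assertion rigorous.
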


The exception is made because $\mathrm{Heis}_3(\mathbb{R})\times \mathbb{R}^{n-3}$ admits a unique left-invariant metric up to isometry and scaling. Observe, once $\mathcal{C}(N, \langle \cdot, \cdot\rangle)<\frac{1}{4n^2(n-1)^2+4(n^2-2)^2}$ is established, the pinching constant stays the same $-3$, but the invariant $\mathcal{C}(N, \langle \cdot, \cdot\rangle)$ changes as $\mathcal{C}(N, \langle \cdot, \cdot\rangle)\to 0$. Hence, along this convergence, one obtains uncountably many metrics up to isometry and scaling.

Note, the condition $\mathcal{C}(N, \langle \cdot, \cdot\rangle)<\frac{1}{4n^2(n-1)^2+4(n^2-2)^2}$ is an open condition. 

\begin{maincorollary} \label{maincorollary_open_set}
    There is an open set $U$ in the space of 2-step nilmanifolds such that $\frac{K_{\min}}{K_{\max}}=-3$ on $U$.
\end{maincorollary}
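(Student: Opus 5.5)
The plan is to take $U$ to be the set of (isometry classes of) $n$-dimensional 2-step nilmanifolds $(N,\langle\cdot,\cdot\rangle)$ with $\mathcal{C}(N,\langle\cdot,\cdot\rangle)<\frac{1}{4n^2(n-1)^2+4(n^2-2)^2}$. Theorem \ref{maintheorem_invariant}(2) already gives $\frac{K_{\min}}{K_{\max}}=-3$ on $U$. By Theorem \ref{maintheorem_invariant}(1),(3), $U$ is nonempty: it contains $\mathrm{Heis}_3(\mathbb{R})\times\mathbb{R}^{n-3}$, and in fact a representative of every 2-step nilpotent group. What remains is openness.

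To make openness precise, I will realize the space of 2-step nilmanifolds as the variety of metric 2-step nilpotent Lie algebras. That is, fix $(\mathbb{R}^n,\langle\cdot,\cdot\rangle_{\Euc})$ and let the bracket $\mu\in\Lambda^2(\mathbb{R}^n)^*\otimes\mathbb{R}^n$ vary over the (real algebraic) subset of 2-step nilpotent brackets, carrying the subspace topology. The subset is cut out by Jacobi and $\mu(\mu(\cdot,\cdot),\cdot)=0$. Any left-invariant metric on any 2-step nilpotent group corresponds to such a $\mu$ via an orthonormal frame, and isometric metric Lie algebras correspond to $O(n)$-orbits. Topologizing by the quotient topology, it then suffices to show that $\mu\mapsto\mathcal{C}(\mu)$ is continuous on the set of nonabelian $\mu$. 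The abelian bracket is excluded, since there $K_{\min}=0$. I will write the closing argument in terms of $\mu$ anyway, so the choice of topology does not matter beyond the fact that $O(n)$-invariant open sets in $\mu$-space descend.

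Continuity comes from two facts. First, by the standard formulas for 2-step nilmanifolds (Eberlein), both $\mathrm{scal}(\mu)=-\frac14\|\mu\|^2$ and $K_\mu(X,Y)=\langle R_\mu(X,Y)Y,X\rangle$ are polynomial in $\mu$. The latter is a polynomial in $\mu$ for fixed orthonormal $X,Y$, with $K_\mu(X,Y)=-\frac34|\mu(X,Y)|^2+\frac14\sum(\ldots)$. Second, $K_{\min}(\mu)=\min$ over the compact Grassmannian of 2-planes of a function jointly continuous in $(\mu,\pi)$, so $K_{\min}$ is continuous in $\mu$. By Wolf's result quoted in the introduction, $K_{\min}(\mu)<0$ for nonabelian $\mu$. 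Hence $\mathcal{C}=(-3\,\mathrm{scal}+2K_{\min})/(-K_{\min})$ is continuous there. The preimage of $(-\infty,c)$, with $c=\frac{1}{4n^2(n-1)^2+4(n^2-2)^2}$, is therefore open, and it is $O(n)$-invariant because $\mathcal{C}$ is an isometry invariant. It also avoids the abelian bracket automatically once we restrict to nonabelian $\mu$, which is itself an open condition. This gives the open set $U$.

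I expect no real obstacle. The only delicate points are fixing the topology on the space of 2-step nilmanifolds and checking that $K_{\min}$ stays bounded away from $0$ near a nonabelian point. The second point follows from continuity together with $K_{\min}(\mu_0)<0$.
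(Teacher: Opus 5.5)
Your proposal is correct and follows the paper's route: take $U=\{\mu\in\mathcal{L}_n \mid \mathcal{C}(\mu)<\frac{1}{4n^2(n-1)^2+4(n^2-2)^2}\}$ and invoke item 2 of Theorem \ref{maintheorem_invariant} to get $\frac{K_{\min}}{K_{\max}}=-3$ on $U$. The paper only asserts that this is an open condition, and your continuity argument fills in that step: $\mathrm{scal}$ is polynomial in $\mu$, $K_{\min}$ is continuous as a minimum over the compact Grassmannian, and $K_{\min}<0$ away from the abelian bracket.
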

To make this more precise and to explain the proof strategy, we discuss the \textit{moving bracket approach}, introduced by Heber \cite{Heber_Einstein} and developed by Lauret and others (see, e.g., \cite{Lauret_Ricci_Soliton_Nilmanifolds}, \cite{Lauret_Degenerations}). 

The curvature of $(N, \langle \cdot, \cdot \rangle)$ is determined by its metric Lie algebra $(\mathcal{N}, \langle \cdot, \cdot \rangle)$, i.e., the Lie algebra $\mathcal{N}$ with the induced inner product $\langle \cdot, \cdot \rangle$. In particular, by pulling back the Lie bracket via a linear isometry $(\mathbb{R}^n, \langle \cdot, \cdot \rangle_{\Euc}) \cong (\mathcal{N}, \langle \cdot, \cdot \rangle)$, one can model each metric Lie algebra as $(\mathbb{R}^n, \langle \cdot, \cdot \rangle_{\Euc}, \mu)$, where $\mu$ is a 2-step nilpotent Lie bracket. Here, $\langle \cdot, \cdot \rangle_{\Euc}$ denotes the Euclidean inner product.

Let $\mathcal{L}_n\subseteq (\Lambda^2 \mathbb{R}^n)^* \otimes \mathbb{R}^n$ be the algebraic subset of all 2-step nilpotent Lie brackets on $(\mathbb{R}^n, \langle \cdot, \cdot \rangle_{\Euc})$. As discussed, any 2-step nilmanifold $(N, \langle \cdot, \cdot \rangle)$ corresponds to a point $\mu\in \mathcal{L}_n$. We call the space $\mathcal{L}_n$ the space of 2-step nilpotent Lie brackets of dimension $n$. Since the 2-step nilpotent condition is a polynomial condition, $\mathcal{L}_n$ is a real affine variety. However, we consider the Euclidean topology of $\mathcal{L}_n$. Note that it is possible to compute the dimension of the moduli space, which is positive with a few exceptions (see \cite[Table 1]{Jablonski_moduli}).

Each $\mu\in \mathcal{L}_n$ is identified with a simply connected 2-step nilmanifold $N_\mu$ whose metric Lie algebra is isomorphic to $(\mathbb{R}^n, \langle \cdot, \cdot \rangle_{\Euc}, \mu)$. There is a change-of-basis action by $\mathrm{GL}_n(\mathbb{R})$ on $\mathcal{L}_n$ via $g.\mu:=g\mu(g^{-1}\cdot, g^{-1}\cdot)$. Consider the usual action of $\mathrm{GL}_n(\mathbb{R})$ on the set of inner products on $\mathbb{R}^n$ by $g.\langle \cdot, \cdot\rangle = \langle g^{-1}\cdot, g^{-1}\cdot \rangle$. Then, for any $\mu \in \mathcal{L}_n$, $g: (\mathbb{R}^n, \mu, g^{-1}.\langle \cdot, \cdot \rangle_{\mathrm{Euc}}) \to (\mathbb{R}^n, g.\mu, \langle \cdot, \cdot \rangle_{\mathrm{Euc}})$ is a metric Lie algebra isomorphism, and the corresponding simply connected 2-step nilmanifolds are isometric as Riemannian manifolds. Hence, we can study all the left-invariant metrics on a fixed 2-step nilpotent Lie group $N_\mu$ determined by $\mu \in \mathcal{L}_n$ by studying the orbit $\mathrm{GL}_n(\mathbb{R}).\mu$.

If $\lambda, \mu\in \mathcal{L}_n$, we say that $\mu$ degenerates to $\lambda$ if $\lambda\in \overline{\mathrm{GL}_n(\mathbb{R}).\mu}$, where the closure is relative to the usual vector space topology of $(\Lambda^2 \mathbb{R}^n)^* \otimes \mathbb{R}^n$. Let $\mu_{\mathrm{heis}_3(\mathbb{R})\oplus \mathbb{R}^{n-3}} \in \mathcal{L}_n$ denote a Lie bracket that corresponds to $\mathrm{heis}_3(\mathbb{R})\oplus \mathbb{R}^{n-3}$. It is well known that any $\mu\in \mathcal{L}_n$ degenerates to $\mathrm{heis}_3(\mathbb{R})\oplus \mathbb{R}^{n-3}$ (see \cite[Proof of Theorem 2.5]{Milnor}, \cite[Theorem 5.2]{Lauret_Degenerations}). 

Write $\mathcal{C}(\mu):=\mathcal{C}(N_\mu)$.

\begin{maintheorem} \label{maintheorem_open_set}
    There exists an open neighborhood $U$ of $\mu_{\mathrm{heis}_3(\mathbb{R})\oplus \mathbb{R}^{n-3}}$ in $\mathcal{L}_n$ such that for any $\mu \in U$, $\frac{K_{\min}}{K_{\max}}=-3$. Explicitly, $U=\{\mu\in \mathcal{L}_n\;|\; \mathcal{C}(\mu)<\frac{1}{4n^2(n-1)^2+4(n^2-2)^2}\}$.

    Moreover, for any $\mu \in \mathcal{L}_n$ that is not isomorphic to $\mu_{\mathrm{heis}_3(\mathbb{R})\oplus \mathbb{R}^{n-3}}$, the orbit $\mathrm{GL}_n(\mathbb{R}).\mu$ contains uncountably many elements of $U$. Therefore, any 2-step nilpotent Lie group whose Lie algebra is not isomorphic to $\mathrm{heis}_3\oplus \mathbb{R}^{n-3}$ admits uncountably many left-invariant metrics up to isometry and scaling such that $\frac{K_{\min}}{K_{\max}}=-3$.
\end{maintheorem}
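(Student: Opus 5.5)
Theorem~\ref{maintheorem_open_set} should follow from Theorem~\ref{maintheorem_invariant} once we check that $U$ is open and contains $\mu_0:=\mu_{\mathrm{heis}_3(\mathbb{R})\oplus \mathbb{R}^{n-3}}$. Write $c_n:=\frac{1}{4n^2(n-1)^2+4(n^2-2)^2}$ and set $U=\{\mu\in\mathcal{L}_n \mid \mathcal{C}(\mu)<c_n\}$.

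\textbf{Step 1 (the pinching statement).} For $\mu\in U$ we have $\mathcal{C}(N_\mu)<c_n$, so part (2) of Theorem~\ref{maintheorem_invariant} gives $\frac{K_{\min}}{K_{\max}}=-3$. The abelian bracket $\mu=0$ must be excluded, since $K\equiv 0$ there and $\mathcal{C}$ is undefined. So I will regard $\mathcal{L}_n$ as the set of nonzero 2-step nilpotent brackets, or equivalently intersect $U$ with $\mathcal{L}_n\setminus\{0\}$, which is open in $\mathcal{L}_n$.

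\textbf{Step 2 (openness and $\mu_0\in U$).} By part (1) of Theorem~\ref{maintheorem_invariant}, $\mathcal{C}(\mu_0)=0<c_n$, so $\mu_0\in U$. For openness I need $\mu\mapsto \mathcal{C}(\mu)$ to be continuous on $\mathcal{L}_n\setminus\{0\}$.
\begin{itemize}
\item $\mathrm{scal}(\mu)=-\frac14\|\mu\|^2$ for nilpotent $\mu$, which is a polynomial in $\mu$.
\item The sectional curvature $K_\mu(X,Y)=\langle R_\mu(X,Y)Y,X\rangle$ of orthonormal $X,Y$ is given by the standard 2-step formula $K=\frac14|\mu(X,Y)|^2-\frac34|\mu(X,Y)|^2$ plus $j$-map terms. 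It is polynomial in $\mu$ and continuous on $\mathcal{L}_n\times\mathrm{Gr}_2(\mathbb{R}^n)$.
\item Since the Grassmannian is compact, $K_{\min}(\mu)=\min_{\pi}K_\mu(\pi)$ is continuous in $\mu$.
\item By Wolf's result, $K_{\min}(\mu)<0$ for $\mu\neq0$.
\end{itemize}
Hence $\mathcal{C}(\mu)=\frac{-3\,\mathrm{scal}(\mu)+2K_{\min}(\mu)}{-K_{\min}(\mu)}$ is a quotient of continuous functions with nonvanishing denominator, and $U$ is the preimage of an open set.

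\textbf{Step 3 (orbits).} Let $\mu\in\mathcal{L}_n$ with $\mu$ not isomorphic to $\mu_0$. Part (3) of Theorem~\ref{maintheorem_invariant} gives, for each $\epsilon$, metrics on $N_\mu$ with $0<\mathcal{C}<c_n$. Via the correspondence $g.\mu\leftrightarrow(\mathbb{R}^n,\mu,g^{-1}.\langle\cdot,\cdot\rangle_{\Euc})$, these metrics are points $g.\mu\in U$.

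To get uncountably many points, I would use the same family as in the proof of part (3), presumably a curve $g_t.\mu\to\mu_0$ degenerating to $\mu_0$. Along this curve $\mathcal{C}(g_t.\mu)$ is continuous in $t$, strictly positive by part (1) (because $g_t.\mu\not\cong\mu_0$), and tends to $\mathcal{C}(\mu_0)=0$ by Step 2. Therefore it takes every value in some interval $(0,\delta)$. Since $\mathcal{C}$ is invariant under isometry and scaling, distinct values give pairwise non-isometric, non-homothetic metrics, so there are uncountably many.

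The main obstacle is Step 2, specifically the continuity of $K_{\min}$ and its nonvanishing away from $0$. The compactness argument together with Wolf's theorem handles it. The genuine analytic content, namely that small $\mathcal{C}$ forces pinching $-3$, is already contained in Theorem~\ref{maintheorem_invariant}(2).
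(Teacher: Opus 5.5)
Your proposal is correct and follows the paper's route. The set $U$ is cut out by $\mathcal{C}<c_n$; the pinching $-3$ on $U$ is Theorem \ref{maintheorem_invariant}(2); $\mu_0\in U$ is part (1); and the orbit statement comes from degenerating $\mu$ to $\mu_0$ while $\mathcal{C}$ stays positive and tends to $0$. Your continuity and intermediate-value arguments make explicit the openness of $U$ and the ``uncountably many'' claim, which the paper only asserts.

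One small repair: the $j$-map formula depends on the $\mu$-dependent splitting $\mathcal{V}\oplus\mathcal{Z}$, so continuity of $K_{\min}$ is better seen from $K_{\min}(\mu)=-\frac{3}{4}\max_{|X|=|Y|=1}|\mu(X,Y)|^2$. This follows from Theorem \ref{minimum_of_sectional_curvature_of_2-step_nilpotent_Lie_groups} together with $\mu(X,Y)=\mu(X_{\mathcal{V}},Y_{\mathcal{V}})$. Combined with $\mathrm{scal}=-\frac{1}{4}\|\mu\|^2$, it gives $\mathcal{C}(\mu)=\|\mu\|^2/\max_{|X|=|Y|=1}|\mu(X,Y)|^2-2$, which is visibly continuous on $\mathcal{L}_n\setminus\{0\}$.
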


In numerical experiments, one should observe that the pinching constant $-3$ is ``abundant'' among 2-step nilpotent Lie groups with a left-invariant metric. Our result gives a partial description of this phenomenon.

\subsubsection*{Organization.} Section \ref{section_preliminaries} discusses the preliminaries that will be used throughout this paper. The minimum of sectional curvature is determined in Section \ref{section_minimum}. Section \ref{section_maximum} contains the proof of the main theorems. In Section \ref{section_bounds_and_rigidity}, Theorem \ref{maintheorem_bounds} and Theorem \ref{maintheorem_rigidity} are proved. Then, Theorem \ref{maintheorem_characterization} is proved in Section \ref{section_characterization}. Theorem \ref{maintheorem_invariant} and Theorem \ref{maintheorem_open_set} are proved in Section \ref{section_invariant_and_open_set}. In Section \ref{section_examples}, various examples are presented.

\subsubsection*{Acknowledgement.} I would like to thank my advisor Michael Jablonski for his continuous support and guidance. I also thank Jorge Lauret for helpful suggestions, which led to a significant improvement of an earlier version of this work. This work was supported in part by National Science Foundation grant DMS-1906351. Part of this work was conducted during a stay at the Universidad Nacional de C\'ordoba, Argentina, and was supported by the DFCAS Dissertation Research Fellowship from the University of Oklahoma.

\section{Preliminaries} \label{section_preliminaries}

\subsection{Sectional Curvature and Curvature Operator of Riemannian Manifolds} \label{section_curv_op_of_general_riem_mfd} 

Let $(M,g)$ be a Riemannian manifold. The Riemannian curvature tensor $R$ of $(M,g)$ is given by the formula $R(X,Y)Z=\nabla_X \nabla_Y Z - \nabla_Y \nabla_X Z -\nabla_{[X,Y]} Z$.

As in \cite[p. 83]{Petersen_Riemannian_Geometry}, the curvature tensor induces a symmetric linear map called the curvature operator $R:\Lambda^2T_pM \to \Lambda^2T_pM$ such that for any $X,Y,Z,W \in T_pM$, $\langle R(X\wedge Y), Z\wedge W\rangle=R(X,Y,W,Z)$, where $\langle \cdot, \cdot \rangle$ on the left hand side denotes the inner product on $\Lambda^2T_pM$ induced by the Riemmanian metric. If $X, Y$ are orthonormal, then the sectional curvature of the 2-plane spanned by $X,Y$ is given by $K(X,Y)=\langle R(X\wedge Y), X\wedge Y\rangle$. 

Define the oriented Grassmannian by $Gr_2^+(T_pM):=\{\omega\in \Lambda^2T_pM \;|\; |\omega|=1,  \textrm{$\omega$ is decomposable} \}$, where $\omega$ being decompopsable means that there exists $X,Y \in T_pM$ such that $\omega = X\wedge Y$. The sectional curvature can be considered to be a function $K:Gr_2^+(T_pM) \to \mathbb{R}$. 
Let $\mathrm{symm}(\Lambda^2 T_pM, \langle \cdot, \cdot \rangle)$ be the space of symmetric linear maps from $\Lambda^2 T_pM$ to itself.

\begin{definition} \label{def_of_modified_curvature_operator}
    To each $T\in \Lambda^4 T_p^*M$, we define $\iota(T)\in \mathrm{symm}(\Lambda^2 T_pM, \langle \cdot, \cdot \rangle)$ by $\langle \iota(T)(\omega_1), \omega_2 \rangle=T(\omega_1\wedge \omega_2).$  The associated \textit{modified curvature operator} is defined by $R_T:=R+\iota(T)$. 
\end{definition}
\begin{remark}
    In \cite{Puttmann}, $\iota(T)$ is defined as a symmetric bilinear map $\iota(T)(\omega_1, \omega_2)=T(\omega_1\wedge \omega_2)$, and the modified curvature operator is denoted by $\hat{R}_T$.
\end{remark}

 It is important to observe that for any $\omega \in Gr_2^+(T_pM)$, we have $\langle R_T(\omega), \omega\rangle=\langle R(\omega), \omega\rangle=K(\omega)$, the sectional curvature of $\omega$, as $\omega \wedge \omega=0$.

\begin{proposition} [{\cite[Proposition 3.4]{Puttmann}}]
\label{bounds_by_curv_op_general_case}
    Let $K_{\max}$ be the maximum of the sectional curvature and $K_{\min}$ be the minimum of the sectional curvature. Let $\lambda_{\max}(R_T)$ and $\lambda_{\min}(R_T)$ be the maximum and minimum eigenvalues of the modified curvature operator $R_T$. Then, for any $T\in \Lambda^4 T_p^*M$, we have
\begin{align*}
\lambda_{\min}(R_T) \leq K_{\min} \leq K_{\max} \leq \lambda_{\max}(R_T).
\end{align*}
In particular, if there is a 2-plane $\pi$ and $T\in \Lambda^4 T_p^*M$ such that $K(\pi)=\lambda_{\max}(R_T)$, then $K_{\max}=K(\pi)$.

Moreover, $\sup \{ \lambda_{\min}(R_T)\;|\;T\in \Lambda^4(T_p^*M)\}$ and $\inf \{ \lambda_{\max}(R_T)\;|\;T\in \Lambda^4(T_p^*M)\}$ are both attained by some $T_{\max}, T_{\min}$, respectively. When $n=\dim M \leq 4$, $K_{\min}=\lambda_{\min}(R_{T_{\max}})$ and $K_{\max}=\lambda_{\max}(R_{T_{\min}})$.
\end{proposition}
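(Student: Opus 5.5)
The inequality $\lambda_{\min}(R_T)\le K_{\min}\le K_{\max}\le\lambda_{\max}(R_T)$ follows from the Rayleigh quotient. For any unit decomposable $\omega\in Gr_2^+(T_pM)$ we have $\omega\wedge\omega=0$, hence $\langle\iota(T)\omega,\omega\rangle=T(\omega\wedge\omega)=0$. Therefore $K(\omega)=\langle R_T\omega,\omega\rangle$. Since $R_T$ is symmetric, the quadratic form $\langle R_T\omega,\omega\rangle$ on the unit sphere of $\Lambda^2T_pM$ takes values in $[\lambda_{\min}(R_T),\lambda_{\max}(R_T)]$. Restricting to the compact subset $Gr_2^+$ gives the claimed chain. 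The ``in particular'' is then immediate: if $K(\pi)=\lambda_{\max}(R_T)$, then $K_{\max}\le\lambda_{\max}(R_T)=K(\pi)\le K_{\max}$.

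For attainment of the sup and inf, I will use convexity and coercivity. The map $T\mapsto\lambda_{\max}(R+\iota(T))$ is convex and continuous on the finite-dimensional space $\Lambda^4T_p^*M$, being a supremum of affine functions $T\mapsto\langle R\omega,\omega\rangle+T(\omega\wedge\omega)$ over unit $\omega$. It is bounded below by $K_{\max}$, so it remains to show it is coercive. The key step is that $\iota$ is injective and $\iota(T)$ is traceless for $T\ne0$.
\begin{itemize}
\item Tracelessness: in an orthonormal basis $e_i\wedge e_j$ the diagonal entries are $T(e_i\wedge e_j\wedge e_i\wedge e_j)=0$.
\item Injectivity: choose $i<j<k<l$ with $T_{ijkl}\neq0$; this coefficient equals $\langle\iota(T)(e_i\wedge e_j),e_k\wedge e_l\rangle$, so $\iota(T)\ne0$.
\end{itemize}
Consequently, for $T\ne0$, $\iota(T)$ is a nonzero traceless symmetric operator and has a strictly positive eigenvalue. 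By compactness of the unit sphere in $\Lambda^4$, $\lambda_{\max}(\iota(T))\ge c|T|$ for some $c>0$. Weyl's inequality then gives $\lambda_{\max}(R_T)\ge c|T|-\|R\|$. So the sublevel sets are compact and the infimum is attained at some $T_{\min}$. The argument for $\lambda_{\min}$ is symmetric, applied to $-R$.

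The main obstacle is the equality in dimension $\le4$. For $n\le3$, $\Lambda^4=0$ and every 2-vector is decomposable, so the unit sphere equals $Gr_2^+$ and $\lambda_{\max}(R)=K_{\max}$ directly. For $n=4$, $\Lambda^4T_p^*M$ is one-dimensional, spanned by the volume form, and $\iota(\mathrm{vol})$ is the Hodge star $*$. A unit 2-vector $\omega=\omega_++\omega_-$ (self-dual plus anti-self-dual parts) is decomposable iff $|\omega_+|=|\omega_-|=1/\sqrt2$.

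I would minimize $f(t)=\lambda_{\max}(R+t*)$ over $t$. At the minimizer $t_0$, convexity gives $0\in\partial f(t_0)$. The subdifferential is the convex hull of $\{\langle *\omega,\omega\rangle=|\omega_+|^2-|\omega_-|^2\}$ over unit top eigenvectors $\omega$ of $R_{t_0}$. So there exist top eigenvectors with $|\omega_+|^2-|\omega_-|^2$ of both signs (or zero). Since the top eigenspace $E$ is a linear subspace, its unit sphere is connected when $\dim E\ge2$, and the continuous function $|\omega_+|^2-|\omega_-|^2$ has a zero there by the intermediate value theorem. If $\dim E=1$, the single value $\pm\omega$ must itself be zero. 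Either way we get a decomposable top eigenvector $\omega$, and then $K(\omega)=\lambda_{\max}(R_{T_{\min}})$, which forces equality with $K_{\max}$ by the first part. The minimum case is analogous.
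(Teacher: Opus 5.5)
Your proof is correct, and it is more self-contained than the paper's. The paper proves only the ``in particular'' clause, using the same sandwich $K(\pi)\le K_{\max}\le\lambda_{\max}(R_T)=K(\pi)$ you use. Everything else it defers to P\"uttmann's Proposition~3.4, with the $n=4$ equality credited to Thorpe. Your Rayleigh-quotient step is the observation the paper records after Definition~\ref{def_of_modified_curvature_operator}.

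What you add is twofold:
\begin{enumerate}
\item Attainment of the infimum via coercivity. You correctly reduce this to $\iota$ being injective with traceless values, so that $\lambda_{\max}(\iota(T))\ge c|T|$.
\item The $n=4$ equality via the optimality condition for the convex function $t\mapsto\lambda_{\max}(R+t\,*)$, combined with the intermediate value theorem for $|\omega_+|^2-|\omega_-|^2$ on the unit sphere of the top eigenspace. Danskin's one-sided derivatives already suffice here: they give top eigenvectors with this quantity $\ge0$ and others with it $\le0$.
\end{enumerate}

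Your route also shows why $n\le4$ is special. For $n\ge5$, the same first-order condition at a minimizing $T$ only says that $0$ lies in the convex hull of the vectors $\omega\wedge\omega\in\Lambda^4T_pM$ over unit top eigenvectors $\omega$. Since $\Lambda^4T_pM$ is then more than one-dimensional, no intermediate-value argument produces a single decomposable top eigenvector. This is consistent with Zoltek's counterexamples cited after the statement. The citation route is shorter but leaves all of this to the literature.
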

The result for $n=4$ is due to Thorpe \cite{Thorpe}, \cite{Thorpe_erratum}. In dimension $\geq 5$, $K_{\min}=\lambda_{\min}(R_{T_{\max}})$ and $K_{\max}=\lambda_{\max}(R_{T_{\min}})$ are no longer true \cite[p. 314, Theorem 9.6]{Zoltek}.
\begin{proof}[Proof of Proposition \ref{bounds_by_curv_op_general_case}]
    See \cite[Proposition 3.4]{Puttmann}. Note that if there is a 2-plane $\pi$ and $T\in \Lambda^4 T_p^*M$ such that $K(\pi)=\lambda_{\max}(R_T)$, then $K(\pi)\leq K_{\max}\leq \lambda_{\max}(R_T)=K(\pi)$, which implies $K(\pi)=K_{\max}$.
\end{proof}

\subsection{The j-Map}

Let $N$ be a 2-step nilpotent real Lie group with a left-invariant metric $\langle \cdot, \cdot \rangle$. Let $\mathcal{N}$ be the Lie algebra of $N$, that is, the real vector space of left-invariant vector fields. The Lie algebra $\mathcal{N}$ inherits the inner product from $(N, \langle \cdot, \cdot \rangle)$, which is denoted by $\langle \cdot, \cdot \rangle$ as well. The pair $(\mathcal{N}, \langle \cdot, \cdot\rangle)$ is called a metric Lie algebra. We define the curvature (the curvature tensor, the covariant derivative, etc.) of a metric Lie algebra to be the Riemannian curvature (the curvature tensor, the covariant derivative, etc.) of the corresponding simply connected Lie group with a left-invariant metric.

To study the geometry of $(N, \langle \cdot, \cdot \rangle)$, we follow the approach introduced by A. Kaplan (\cite{Kaplan1}, \cite{Kaplan2}) and further developed by Eberlein (\cite{Eb1}). Let $\mathrm{so}(\mathcal{V}, \langle \cdot, \cdot \rangle)$ be the vector space of skew-symmetric linear maps of $(\mathcal{V}, \langle \cdot, \cdot \rangle)$. Let $j:\mathcal{Z} \to \mathrm{so}(\mathcal{V}, \langle \cdot, \cdot \rangle)$ be defined by $j(Z)X=({\mathrm{ad}X}^*)Z$, where $\mathcal{Z}$ is the center of $\mathcal{N}$, $\mathcal{V}$ is its orthogonal complement, $Z\in \mathcal{Z}$, and $X\in \mathcal{V}$. Here, ${\mathrm{ad}X}^*$ denotes the metric adjoint of the linear operator $\mathrm{ad}X$ with respect to the inner product $\langle \cdot, \cdot \rangle$ of $\mathcal{N}$. Equivalently, for any $Y\in \mathcal{N}$, we have
\begin{equation*}
    \langle j(Z)X, Y \rangle = \langle Z, [X,Y] \rangle.
\end{equation*}
See \cite{Eb1} for the details. We consider two norms for $j$-maps:
\begin{align*}
    ||j||_{\op}&:=\max\{|j(Z)X|\;|\;Z\in \mathcal{Z}, X\in \mathcal{V}, |Z|=1=|X|\}.\\
    ||j||_{\Fr}&:=\sqrt{\sum_{k=1}^p |j(Z_k)|_{\Fr}^2},
\end{align*}
where $p=\dim \mathcal{Z}$, $Z_1, ..., Z_p$ is an orthonormal basis of $\mathcal{Z}$, and $|j(Z_k)|_{\Fr}^2=\sum_{\alpha,\beta=1}^q |j(Z_k)_{\alpha \beta}|^2.$ Here, $q=\dim \mathcal{V}$ and $j(Z_k)_{\alpha \beta}$ is an $(\alpha, \beta)$-entry of the matrix representation of $j(Z_k)$ with respect an orthonormal basis $X_1,...,X_q$ of $\mathcal{V}$. We call $||j||_{\op}$ the \textit{operator norm} of $j$, and $||j||_{\Fr}$ the \textit{Frobenius norm} of $j$. The operator norm $||j||_{\op}$ appears naturally in the study of quarter-pinched homogeneous manifolds of negative curvature by Eberlein and Heber (\cite[p. 458]{EH1}). The Levi-Civita connection and the curvature tensor are computed in \cite[p. 620, (2.2)]{Eb1}, \cite[p. 621, (2.5)]{Eb1}, respectively. 

The curvature operator $R:\Lambda_2 \mathcal{N}\to \Lambda_2 \mathcal{N}$ is computed as follows.
\begin{lemma}\label{formulas_for_curv_op}
    Let $X, Y, X', Y'\in \mathcal{V}$ and $Z,W, Z', W'\in \mathcal{Z}$.
    Then,
    \begin{enumerate}
        \item $\langle R(X\wedge Z), Y\wedge W\rangle = \frac{1}{4}\langle j(Z)Y, j(W)X\rangle$.
        \item $\langle R(X\wedge Y), Z\wedge W \rangle = \frac{1}{4}\langle j(Z)Y, j(W)X \rangle - \frac{1}{4}\langle j(Z)X, j(W)Y \rangle$.
        \item $\langle R(X\wedge Y), X'\wedge Y' \rangle = -\frac{1}{2}\langle[X, Y], [X',Y']\rangle + \frac{1}{4}\langle [Y, X'], [X, Y']\rangle - \frac{1}{4}\langle [X, X'],[Y,Y']\rangle$.
        \item $\langle R(Z\wedge W), Z'\wedge W' \rangle = 0$.
    \end{enumerate}
\end{lemma}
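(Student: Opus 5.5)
The plan is to derive all four identities from the Levi-Civita connection of $(\mathcal{N},\langle\cdot,\cdot\rangle)$, using the convention fixed in Section \ref{section_curv_op_of_general_riem_mfd}: $\langle R(X\wedge Y), Z\wedge W\rangle=R(X,Y,W,Z)=\langle R(X,Y)W, Z\rangle$. The connection is given by Koszul's formula for left-invariant fields, $2\langle\nabla_A B, C\rangle=\langle[A,B],C\rangle-\langle[B,C],A\rangle+\langle[C,A],B\rangle$. Since $[\mathcal{N},\mathcal{N}]\subseteq\mathcal{Z}$ and $\mathcal{Z}$ is central, together with the defining identity $\langle j(Z)X,Y\rangle=\langle Z,[X,Y]\rangle$, this gives Eberlein's formulas \cite[(2.2)]{Eb1}:
\begin{align*}
\nabla_XY=\tfrac12[X,Y],\qquad \nabla_XZ=\nabla_ZX=-\tfrac12 j(Z)X,\qquad \nabla_ZW=0,
\end{align*}
for $X,Y\in\mathcal{V}$ and $Z,W\in\mathcal{Z}$. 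I will first check these signs directly with Koszul, since everything downstream depends on them.

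Next I substitute into $R(A,B)C=\nabla_A\nabla_BC-\nabla_B\nabla_AC-\nabla_{[A,B]}C$ case by case.

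\emph{Item (4).} For $Z,W,W'\in\mathcal{Z}$ we have $[Z,W]=0$ and $\nabla_WW'=\nabla_ZW'=0$. Hence $R(Z,W)W'=0$, which gives (4).

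\emph{Item (1).} For $X\in\mathcal{V}$ and $Z,W\in\mathcal{Z}$ we have $[X,Z]=0$. Then
\begin{align*}
R(X,Z)W=\nabla_X\nabla_ZW-\nabla_Z\nabla_XW=0-\nabla_Z\bigl(-\tfrac12 j(W)X\bigr)=-\tfrac14 j(Z)j(W)X.
\end{align*}
Pairing with $Y$ and using skew-symmetry of $j(Z)$ gives $\langle R(X,Z)W,Y\rangle=\tfrac14\langle j(W)X,j(Z)Y\rangle$, which is (1).

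\emph{Item (2).} I compute $R(X,Y)W$ for $X,Y\in\mathcal{V}$, $W\in\mathcal{Z}$ and pair it with $Z$. Only the $\mathcal{Z}$-component of $R(X,Y)W$ matters, namely $\tfrac14[X,j(W)Y]-\tfrac14[Y,j(W)X]$ up to sign, while the term $-\nabla_{[X,Y]}W=0$. Pairing with $Z$ converts each bracket back into $j(Z)$ via $\langle Z,[X,V]\rangle=\langle j(Z)X,V\rangle$. Alternatively, (2) follows from (1) by the first Bianchi identity and pair symmetry: $R(X,Y,W,Z)=-R(Y,W,X,Z)-R(W,X,Y,Z)$, and each term on the right is of type (1) after reordering.

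\emph{Item (3).} All four vectors lie in $\mathcal{V}$. Here $R(X,Y)Y'$ involves $\nabla_X(\tfrac12[Y,Y'])=-\tfrac14 j([Y,Y'])X$, the analogous term with $X,Y$ swapped, and $-\nabla_{[X,Y]}Y'=\tfrac12 j([X,Y])Y'$. Pairing each term with $X'$ and converting $\langle j(Z)U,V\rangle=\langle Z,[U,V]\rangle$ yields exactly the three bracket inner products.

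The main obstacle is purely bookkeeping. The order of the last two slots in $R(X,Y,W,Z)$, the sign convention for $R$, and the sign of $j$ all have to be tracked consistently. I expect the sign of the $-\tfrac12\langle[X,Y],[X',Y']\rangle$ term in (3), and the relative signs in (2), to be where errors creep in. As a sanity check I will verify that (3) with $X'=X$, $Y'=Y$ orthonormal reproduces the known formula $K(X,Y)=-\tfrac34|[X,Y]|^2$, and that (1) gives $K(X,Z)=\tfrac14|j(Z)X|^2$.
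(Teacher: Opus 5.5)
Your proposal is correct. With $\nabla_XY=\tfrac12[X,Y]$, $\nabla_XZ=\nabla_ZX=-\tfrac12 j(Z)X$, $\nabla_ZW=0$ and the convention $\langle R(X\wedge Y),Z\wedge W\rangle=\langle R(X,Y)W,Z\rangle$, your case-by-case substitutions reproduce all four identities with the right signs. For example, in item (3) you get $R(X,Y)Y'=-\tfrac14 j([Y,Y'])X+\tfrac14 j([X,Y'])Y+\tfrac12 j([X,Y])Y'$, and pairing with $X'$ gives exactly the stated expression.

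This is essentially the paper's argument. The paper only says the lemma is a routine consequence of Eberlein's curvature formulas \cite[p. 621, (2.5)]{Eb1}; you derive the same thing yourself from the connection formulas \cite[p. 620, (2.2)]{Eb1}, which makes the sign bookkeeping self-contained.
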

\begin{proof}
    This is routine using the formulas from \cite[p. 621, (2.5)]{Eb1}.
\end{proof}

The sectional and scalar curvature of a general 2-plane in $\mathcal{N}$ are given by the following formula.

\begin{proposition}(\cite[p.459, (3.6)]{EH1}) \label{general_formula}
    Let $N$ be a 2-step nilpotent Lie group with a left-invariant metric $\langle \cdot, \cdot \rangle$. Let $\mathcal{N}$ be the Lie algebra of $N$. Let $\mathcal{Z}, \mathcal{V}, j$ as above. Let $X, X'\in \mathcal{V}$ and $Z, Z'\in \mathcal{Z}$ such that $X+Z, X'+Z'$ is orthonormal. Then, we have
    \begin{align*}
        K(X+Z,X'+Z')
        &=\frac{1}{4}|j(Z')X|^2 + \frac{1}{4}|j(Z)X'|^2 -\frac{3}{4}|[X,X']|^2 \\
        &\;\;\;+\frac{1}{2}\langle j(Z)X', j(Z')X \rangle 
        - \langle j(Z)X, j(Z')X' \rangle.
    \end{align*}

    If $X, X'\in \mathcal{V}$ are orthonormal, $K(X,X')=-\frac{3}{4}|[X,X']|^2$. If $X\in \mathcal{V}$ and $Z\in \mathcal{Z}$ are orthonormal, $K(X,Z)=\frac{1}{4}|j(Z)X|^2$.

    The scalar curvature is given by $\mathrm{scal}=-\frac{1}{4}||j||_{\Fr}^2$. 
\end{proposition}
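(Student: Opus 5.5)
The statement is a direct computation from the structure equations of Eberlein (\cite[p.~621, (2.5)]{Eb1}), so the plan is to verify the formula from the curvature tensor given in Lemma~\ref{formulas_for_curv_op}.

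\textbf{Step 1: expand the curvature bilinearly.} Write $U=X+Z$ and $U'=X'+Z'$. By Section~\ref{section_curv_op_of_general_riem_mfd}, $K(U,U')=\langle R(U\wedge U'),U\wedge U'\rangle$. Expand
\[
U\wedge U' = X\wedge X' + X\wedge Z' - X'\wedge Z + Z\wedge Z'.
\]
Since $R$ is symmetric on $\Lambda^2\mathcal{N}$, the quadratic form splits into diagonal terms and twice the cross terms. Each of these is one of the four types in Lemma~\ref{formulas_for_curv_op}.

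\textbf{Step 2: evaluate the pieces.}
\begin{itemize}
\item[(a)] The $X\wedge X'$ diagonal term comes from item (3). Using skew-symmetry of the bracket, $\langle[X',X],[X,X']\rangle=-|[X,X']|^2$ and $\langle[X,X],[X',X']\rangle=0$, so it equals $-\tfrac12|[X,X']|^2-\tfrac14|[X,X']|^2=-\tfrac34|[X,X']|^2$.
\item[(b)] The $Z\wedge Z'$ terms vanish by item (4).
\item[(c)] The mixed $\mathcal{V}\wedge\mathcal{Z}$ diagonal terms come from item (1). They give $\tfrac14|j(Z')X|^2+\tfrac14|j(Z)X'|^2$, plus the cross term between $X\wedge Z'$ and $-X'\wedge Z$. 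That cross term is $-2\cdot\tfrac14\langle j(Z')X', j(Z)X\rangle$.
\item[(d)] The cross terms between $X\wedge X'$ and $Z\wedge Z'$ come from item (2). Multiplied by $2$, they give $\tfrac12\langle j(Z)X',j(Z')X\rangle-\tfrac12\langle j(Z)X,j(Z')X'\rangle$.
\item[(e)] All remaining cross terms pair $\mathcal{V}\wedge\mathcal{Z}$ with $\mathcal{V}\wedge\mathcal{V}$ or with $\mathcal{Z}\wedge\mathcal{Z}$. These vanish because $\nabla$ maps $\mathcal{V}\times\mathcal{V}\to\mathcal{Z}$ and $\mathcal{V}\times\mathcal{Z}\to\mathcal{V}$, so the curvature terms have odd $\mathcal{Z}$-degree.
\end{itemize}
Summing (a)–(d) gives the $-\langle j(Z)X,j(Z')X'\rangle$ coefficient, since $-\tfrac12-\tfrac12=-1$. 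This yields the stated formula.

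\textbf{Step 3: special cases and scalar curvature.} The two special cases follow by setting $Z=Z'=0$, respectively $X'=0$ and $Z=0$. For the scalar curvature, sum $K$ over pairs from an orthonormal basis $\{X_\alpha\}\cup\{Z_k\}$:
\begin{itemize}
\item The $\mathcal{V}$–$\mathcal{V}$ pairs give $-\tfrac34\sum_{\alpha<\beta}|[X_\alpha,X_\beta]|^2 = -\tfrac34\cdot\tfrac12\|j\|_{\Fr}^2$, using $\langle Z_k,[X_\alpha,X_\beta]\rangle=\langle j(Z_k)X_\alpha,X_\beta\rangle$.
\item The $\mathcal{V}$–$\mathcal{Z}$ pairs give $\tfrac14\|j\|_{\Fr}^2$.
\item The $\mathcal{Z}$–$\mathcal{Z}$ pairs contribute $0$.
\end{itemize}
Then $\mathrm{scal}=2\sum_{i<j}K(e_i,e_j)=2\left(-\tfrac38+\tfrac14\right)\|j\|_{\Fr}^2=-\tfrac14\|j\|_{\Fr}^2$.

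The main obstacle is sign bookkeeping. This includes the orientation conventions in $\langle R(X\wedge Y),Z\wedge W\rangle=R(X,Y,W,Z)$ and the signs from reordering wedges, especially in the cross terms of (c) and (d). I would check these against the known cases $K(X,Z)=\tfrac14|j(Z)X|^2$ and $K(X,X')=-\tfrac34|[X,X']|^2$.
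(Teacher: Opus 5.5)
Your proposal is correct, and it argues differently from the paper in that it derives the formula rather than citing it. Expanding $(X+Z)\wedge(X'+Z')$ and using Lemma~\ref{formulas_for_curv_op} for the diagonal and cross terms gives the stated formula. The two contributions $-\tfrac12\langle j(Z')X',j(Z)X\rangle$ and $-\tfrac12\langle j(Z)X,j(Z')X'\rangle$ correctly combine to the coefficient $-1$. The mixed $\mathcal{V}\wedge\mathcal{Z}$ versus $\mathcal{V}\wedge\mathcal{V}$ or $\mathcal{Z}\wedge\mathcal{Z}$ terms vanish, as you say. Your scalar curvature count $2\bigl(-\tfrac38+\tfrac14\bigr)\|j\|_{\Fr}^2$ is also right.

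The paper gives no derivation here. It quotes the sectional curvature formula from Eberlein--Heber \cite[(3.6)]{EH1} and the scalar curvature from \cite[Proposition (2.5)]{Eb1}, noting only a typo in \cite[(2.4)(b)]{Eb1}. Your route works entirely from the curvature-operator entries of Lemma~\ref{formulas_for_curv_op}, which the paper states just before this proposition. So your argument is self-contained, involves no circularity, and is automatically consistent with the paper's sign convention $\langle R(X\wedge Y),Z\wedge W\rangle=R(X,Y,W,Z)$. That consistency is worth having, given the misprint in the source. The citation only buys brevity.

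One small simplification: in your step (e) you can cite Proposition~\ref{decomposition_of_the_curvature_operator} directly instead of the parity argument on $\nabla$. As you phrased that argument, it should also list $\nabla_Z X\in\mathcal{V}$ and $\nabla_Z Z'=0$, both of which hold.
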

Note that the formula (b) of (2.4) of \cite[p. 621]{Eb1} has a typo. The formula for the scalar curvature follows from \cite[p. 622, Proposition (2.5)]{Eb1}. In particular, $||j||_{\Fr}$ is independent of the choice of an orthonormal basis.

\subsection{Curvature Operator of 2-step Nilmanifolds}

Let $N$ be a 2-step nilpotent Lie group with a left-invariant metric $\langle \cdot, \cdot \rangle$. Let $\mathcal{N}=\mathrm{Lie}N$ be the Lie algebra of $N$ with the inner product $\langle \cdot, \cdot \rangle$ induced by $\langle \cdot, \cdot \rangle$ on $N$.  We identify with $\mathcal{N}$ with $T_eN$. Then, the results in Section \ref{section_curv_op_of_general_riem_mfd} apply to $\mathcal{N}\cong T_eN$. Let $\mathcal{Z}$ be the center and $\mathcal{V}=\mathcal{Z}^{\perp}$. Let $X_1, ..., X_q$ be an orthonormal basis of $\mathcal{V}$. Let $Z_1, ..., Z_p$ be an orthonormal basis of $\mathcal{Z}$. Then, $\{X_i\wedge X_j\ \;|\;i<j\}$, $\{X_i \wedge Z_j\ \;|\; i=1,\dots q,j=1,\dots p\}$, $\{Z_i \wedge Z_j \;|\; i<j\}$ form an orthonormal basis of $\Lambda^2\mathcal{N}$. We set
 \begin{align*}
     \Lambda^2\mathcal{V}&:= \mathbb{R}\textrm{-span} \{X_i\wedge X_j\ \;|\;i<j\},\\
     \Lambda^2\mathcal{Z} &:=\mathbb{R}\textrm{-span} \{Z_i \wedge Z_j \;|\; i<j\},\\
     \mathcal{V} \otimes \mathcal{Z} &:= \mathbb{R}\textrm{-span} \{X_i \wedge Z_j\ \;|\; i=1,\dots q,j=1,\dots p\}.
 \end{align*}
 
 Note that these subspaces do not depend on the choice of an orthonormal basis. For example, $\Lambda^2\mathcal{V}=\mathbb{R}\textrm{-span}\{x\wedge y\;|\;x,y\in \mathcal{V}\}.$ The decomposition $\Lambda^2\mathcal{N}=(\mathcal{V} \otimes \mathcal{Z}) \oplus(\Lambda^2\mathcal{V})\oplus  (\Lambda^2\mathcal{Z})$ is an orthogonal direct sum.
 \begin{proposition}\label{decomposition_of_the_curvature_operator}
     The subspaces $\mathcal{V} \otimes \mathcal{Z}$ and $\Lambda^2\mathcal{V}\oplus \Lambda^2\mathcal{Z}$ are invariant under the curvature operator $R$.
 \end{proposition}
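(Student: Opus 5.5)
Since $R$ is symmetric and $\Lambda^2\mathcal{N}=(\mathcal{V}\otimes\mathcal{Z})\oplus(\Lambda^2\mathcal{V}\oplus\Lambda^2\mathcal{Z})$ is an orthogonal decomposition, it suffices to show that the off-diagonal block vanishes, i.e.\ that $\langle R(\omega_1),\omega_2\rangle=0$ for all $\omega_1\in\mathcal{V}\otimes\mathcal{Z}$ and $\omega_2\in\Lambda^2\mathcal{V}\oplus\Lambda^2\mathcal{Z}$. Indeed, if $R(\mathcal{V}\otimes\mathcal{Z})\perp \Lambda^2\mathcal{V}\oplus\Lambda^2\mathcal{Z}$, then $R(\mathcal{V}\otimes\mathcal{Z})\subseteq\mathcal{V}\otimes\mathcal{Z}$. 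By symmetry of $R$ we also get $\langle R(\omega_2),\omega_1\rangle=0$, so $R(\Lambda^2\mathcal{V}\oplus\Lambda^2\mathcal{Z})\subseteq \Lambda^2\mathcal{V}\oplus\Lambda^2\mathcal{Z}$. By bilinearity it is enough to check this on the basis elements $\omega_1=X\wedge Z$ and $\omega_2\in\{X'\wedge Y',\ Z'\wedge W'\}$, with $X,X',Y'\in\mathcal{V}$ and $Z,Z',W'\in\mathcal{Z}$.

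Lemma \ref{formulas_for_curv_op} lists only the pairings among blocks of the same type or between $\Lambda^2\mathcal{V}$ and $\Lambda^2\mathcal{Z}$. So I will compute the two mixed pairings directly from the curvature tensor formulas of \cite[p.~621, (2.5)]{Eb1}, equivalently from the Levi-Civita connection \cite[(2.2)]{Eb1}:
\[
\nabla_XY=\tfrac12[X,Y],\quad \nabla_XZ=\nabla_ZX=-\tfrac12 j(Z)X,\quad \nabla_ZW=0 .
\]
The point is parity in the number of central entries. The connection maps $\mathcal{V}\times\mathcal{V}\to\mathcal{Z}$, $\mathcal{V}\times\mathcal{Z}\to\mathcal{V}$, $\mathcal{Z}\times\mathcal{V}\to\mathcal{V}$, and $\mathcal{Z}\times\mathcal{Z}\to 0$. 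Hence $\nabla$ changes the ``$\mathcal{Z}$-degree'' (number of central arguments modulo 2) consistently. Assign degree $0$ to $\mathcal{V}$ and $1$ to $\mathcal{Z}$. Then $\nabla_AB$ has degree $\deg A+\deg B$ mod $2$ whenever it is nonzero, and so does $[A,B]$. Consequently $R(A,B)C$ has degree $\deg A+\deg B+\deg C$ mod 2. The pairing $\langle R(A,B)C,D\rangle$ can be nonzero only when $\deg A+\deg B+\deg C+\deg D$ is even.

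For $\langle R(X\wedge Z),X'\wedge Y'\rangle$ the total degree is $1$, which is odd, so this pairing vanishes. For $\langle R(X\wedge Z),Z'\wedge W'\rangle$ the total degree is $3$, also odd, so it vanishes too. This gives the required orthogonality. As a sanity check, the surviving pairings in Lemma \ref{formulas_for_curv_op} all have even total degree ($2$, $2$, $0$, $4$).

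The main care needed is to verify the grading claim for every term of $R(A,B)C=\nabla_A\nabla_BC-\nabla_B\nabla_AC-\nabla_{[A,B]}C$, in particular the last term, where $[A,B]\in\mathcal{Z}$ when $A,B\in\mathcal{V}$ and $[A,B]=0$ otherwise. Since $\deg[A,B]=\deg A+\deg B$ whenever $[A,B]\neq0$, the claim holds for this term as well. Everything else is bookkeeping.
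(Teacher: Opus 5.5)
Your overall structure matches the paper's: show that the mixed block $\langle R(\mathcal{V}\otimes\mathcal{Z}),\Lambda^2\mathcal{V}\oplus\Lambda^2\mathcal{Z}\rangle$ vanishes, then use symmetry of $R$. Your parity bookkeeping is a way of deriving the two facts the paper reads off from Eberlein's curvature formula, namely $R(X,Z)Y\in\mathcal{Z}$ and $R(X,Z)W\in\mathcal{V}$.

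However, the grading claim you rest it on is false as stated. With $\deg\mathcal{V}=0$ and $\deg\mathcal{Z}=1$, the connection formulas you list give
\[
\nabla_XY=\tfrac12[X,Y]\in\mathcal{Z},\qquad \nabla_XZ=-\tfrac12 j(Z)X\in\mathcal{V}.
\]
Here $\nabla_XY$ has degree $1\neq 0+0$, and $\nabla_XZ$ has degree $0\neq 0+1$. Likewise $[X,Y]\in\mathcal{Z}$ contradicts your assertion $\deg[A,B]=\deg A+\deg B$. This contradicts your own sentence that $\nabla$ maps $\mathcal{V}\times\mathcal{V}\to\mathcal{Z}$ and $\mathcal{V}\times\mathcal{Z}\to\mathcal{V}$. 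So the term-by-term verification you promise in your last paragraph would fail. Under your grading, $\nabla$ and the bracket are \emph{odd}: $\deg\nabla_AB=\deg A+\deg B+1$ and $\deg[A,B]=\deg A+\deg B+1$. As written, the parity of $R(A,B)C$ is therefore not justified.

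The conclusion still holds, and the repair is short. One option is to keep your grading and note that each of the three terms of $R(A,B)C$ involves exactly two odd operations: $\nabla\circ\nabla$, or $\nabla$ applied to a bracket. The two shifts cancel, so $\deg R(A,B)C=\deg A+\deg B+\deg C$.

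A cleaner option is to assign degree $1$ to $\mathcal{V}$ and $0$ to $\mathcal{Z}$. Then $\nabla$ and $[\cdot,\cdot]$ genuinely preserve degree, and $\langle R(A,B)C,D\rangle\neq 0$ forces the total degree to be even. Since the pairing has four slots, the number of $\mathcal{V}$-entries has the same parity as the number of $\mathcal{Z}$-entries. So your counts of $1$ and $3$ central entries still kill both mixed pairings.

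With either correction the argument is complete. It recovers the paper's membership facts $R(X,Z)Y\in\mathcal{Z}$ and $R(X,Z)W\in\mathcal{V}$ directly from the connection formulas of \cite{Eb1}, rather than from the explicit curvature formula the paper cites.
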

 \begin{proof}
     It follows from \cite[p. 621, (2.5)]{Eb1} that $R(X_i, Z_j)X_l \in \mathcal{Z}$, so $\langle R(X_i\wedge Z_j), X_k\wedge X_l\rangle = \langle R(X_i, Z_j)X_l, X_k\rangle=0$. Similarly, $R(X_i, Z_j)Z_l \in \mathcal{V}$, so $\langle R(X_i\wedge Z_j), Z_k\wedge Z_l\rangle = \langle R(X_i,Z_j)Z_l, Z_k\rangle=0$. This shows $R(\mathcal{V}\otimes \mathcal{Z})\subseteq \mathcal{V}\otimes \mathcal{Z}$. Since $R$ is symmetric, it preserves the orthogonal complement $\Lambda^2 \mathcal{V}\oplus \Lambda^2\mathcal{Z}$ as well.
 \end{proof}
\begin{remark} \label{remark_block_form_of_curvature_operator}
    When we work with the basis, we use the lexicographical order in each subspace. Explicitly, the order of the basis is arranged as follows. Within $\Lambda^2\mathcal{V}$, the order is $X_1\wedge X_2, X_1\wedge X_3, ..., X_1\wedge X_q$, $X_2\wedge X_3, ..., X_2 \wedge X_q, ..., X_{q-1}\wedge X_q$. The basis of $\Lambda^2\mathcal{Z}$ is ordered as: $Z_1\wedge Z_2,..., Z_1\wedge Z_p, Z_2\wedge Z_3,..., Z_2\wedge Z_p,...,Z_{p-1}\wedge Z_p$. The basis of $\mathcal{V}\otimes \mathcal{Z}$ is ordered as $X_1\wedge Z_1, ..., X_q\wedge Z_1, X_1\wedge Z_2, ..., X_q\wedge Z_2,..., X_1\wedge Z_p, ..., X_q\wedge Z_p$.
\end{remark} 

\begin{remark} \label{curv_op_on_V_wedge_Z_using_j_map}
    Let $\alpha, \beta \in \{1,...,p\}$ and $i,j \in \{1,...,q\}$. By Lemma \ref{formulas_for_curv_op}, we have $\langle  R(X_j \wedge Z_\beta), X_i \wedge Z_\alpha \rangle=\langle -\frac{1}{4}j(Z_\beta)j(Z_\alpha)X_j, X_i\rangle$. This is useful when computing $R|_{\mathcal{V}\otimes \mathcal{Z}}$. For example, if $p=2$, i.e., $Z_1, Z_2$ form an orthonormal basis of the center $\mathcal{Z}$, then the matrix representation of $R|_{\mathcal{V}\otimes \mathcal{Z}}$ is given by 
    \begin{align*}
    R|_{\mathcal{V}\otimes \mathcal{Z}} = \begin{bmatrix}
            -\frac{1}{4}[j(Z_1)]^2 & -\frac{1}{4}[j(Z_2)][j(Z_1)] \\
            -\frac{1}{4}[j(Z_1)][j(Z_2)] & -\frac{1}{4}[j(Z_2)]^2
        \end{bmatrix},
    \end{align*}
    where $[j(Z_i)], i=1,2,$ denotes the matrix representation of $j(Z_i)$ with respect to $X_1, ..., X_q$.
\end{remark}

\begin{example} \label{example_heis_3}
    Let $\mathcal{N}=\mathrm{heis}_3(\mathbb{R})$ be the 3-dimensional Heisenberg algebra. It has a basis $e_1, e_2, e_3$ with $[e_1,e_2]=e_3$, and the center $\mathcal{Z}$ is spanned by $e_3$. We show that $K_{\max}=\frac{1}{4}||j||_{\op}^2$, $K_{\min}=-\frac{3}{4}||j||_{\op}^2$, and hence $\frac{K_{\min}}{K_{\max}}=-3$ for any inner product on the 3-dimensional Heisenberg algebra.

    Note, the 3-dimensional Heisenberg algebra admits a unique inner product up to isometry and scaling; there is no need to work with an arbitrary inner product. However, it is illustrative to include the computations with respect to any inner product.
    
    Let $\langle \cdot, \cdot \rangle$ be any inner product on $\mathcal{N}$. Let $Z_1$ be a unit vector in $\mathcal{Z}$. Let $\mathcal{V}=\mathcal{Z}^{\perp}$ be the orthogonal complement so that $\mathcal{N}=\mathcal{V}\oplus \mathcal{Z}$. Let $X_1, X_2$ be any orthonormal basis of $\mathcal{V}$, so $X_1,X_2,Z_1$ form an orthonormal basis of $\mathcal{N}$. Then, the $j$-map is given by $j(Z_1)=\begin{bmatrix}
        0 & -\lambda \\
        \lambda & 0
    \end{bmatrix}$ with respect to $X_1, X_2$, where $\lambda\in \mathbb{R}$. This is because $j(Z_1)$ is skew-symmetric. This implies $[X_1, X_2]=\lambda Z_1$. Also, $\lambda\neq 0$ (otherwise $\mathcal{N}$ would be abelian). Now, $\Lambda^2 \mathcal{N}$ has an orthonormal basis $X_1\wedge Z_1, X_2\wedge Z_1, X_1 \wedge X_2$. By Proposition \ref{decomposition_of_the_curvature_operator} and Lemma \ref{formulas_for_curv_op}, the curvature operator is given by 
        $R=\begin{bmatrix}
        \frac{1}{4}\lambda^2 & 0 & 0 \\
        0 & \frac{1}{4}\lambda^2 & 0 \\
        0 & 0 & -\frac{3}{4}\lambda^2
    \end{bmatrix}$
    with respect to $X_1\wedge Z_1, X_2\wedge Z_1, X_1 \wedge X_2$. 
    
    Observe, $K_{\max}\geq K(X_1,Z_1)=\frac{1}{4}\lambda^2$ and $K_{\min}\leq K(X_1,X_2)= -\frac{3}{4}\lambda^2$. On the other hand, the maximum and minimum eigenvalues of $R$ provide bounds of $K_{\max}, K_{\min}$, respectively (Proposition \ref{bounds_by_curv_op_general_case} with $T=0$), and it follows that $K_{\max}=\frac{1}{4}\lambda^2$ and $K_{\min}=-\frac{3}{4}\lambda^2$. Moreover, since $j(Z_1)^2=-\lambda^2 \mathrm{id}_{\mathcal{V}}$, we have $||j||_{\op}^2=\lambda^2$, so $K_{\max}=\frac{1}{4}||j||_{\op}^2$ and $K_{\min}=-\frac{3}{4}||j||_{\op}^2$ for any left-invariant metric on the 3-dimensional Heisenberg group.
\end{example}

\subsection{Totally Geodesic Subalgebras}

Let us define a totally geodesic subalgebra.

\begin{definition}\label{def_of_aligned_totally_geodesic_subalgebras}
    Let $(\mathcal{N}, \langle \cdot, \cdot \rangle)$ be a metric 2-step nilpotent Lie algebra. Let $\mathcal{Z}$ be the center, and $\mathcal{V}=\mathcal{Z}^{\perp}$ be the orthogonal complement of $\mathcal{Z}$. Let $\nabla$ be the Levi-Civita connection of $\mathcal{N}$.

    \begin{enumerate}
        \item (\cite[p. 623]{Eb1}) A subalgebra $\mathcal{N}_1$ of $\mathcal{N}$ is called a \textit{totally geodesic subalgebra} if for any $\xi_1, \xi_2\in\mathcal{N}_1$, $\nabla_{\xi_1} \xi_2 \in \mathcal{N}_1$.
        \item A subalgebra $\mathcal{N}_1$ is called \textit{aligned} if there exist subspaces $\mathcal{V}_1$ of $\mathcal{V}$ and $\mathcal{Z}_1$ of $\mathcal{Z}$ such that $\mathcal{N}_1=\mathcal{V}_1 \oplus \mathcal{Z}_1$ and $\mathcal{Z}_1=\mathfrak{z}(\mathcal{N}_1)$, the center of $\mathcal{N}_1$.
        \item We say that a totally geodesic subalgebra $\mathcal{N}_1$ is \textit{aligned} or an \textit{aligned totally geodesic subalgebra} if the subalgebra $\mathcal{N}_1$ is also aligned in addition to being totally geodesic.
    \end{enumerate}       
\end{definition}
This means the connected Lie subgroup with Lie algebra $\mathcal{N}_1$ is a totally geodesic submanifold (\cite[p. 623]{Eb1}). Not all totally geodesic subalgebras are aligned (see \cite[Example 4.4 (iii)]{Decoste_Demeyer}).

\begin{lemma} \label{criterion_of_totally_geodesic_when_it_is_aligned}
    Let $\mathcal{N}$ be a metric 2-step nilpotent Lie algebra. Let $\mathcal{Z}$ be the center, and $\mathcal{V}=\mathcal{Z}^{\perp}$ be the orthogonal complement of $\mathcal{Z}$.
    
    Let $\mathcal{N}_1$ be an aligned subalgebra, i.e., there exist subspaces $\mathcal{V}_1$ of $\mathcal{V}$ and $\mathcal{Z}_1$ of $\mathcal{Z}$ such that $\mathcal{N}_1=\mathcal{V}_1 \oplus \mathcal{Z}_1$ and $\mathcal{Z}_1=\mathfrak{z}(\mathcal{N}_1)$, the center of $\mathcal{N}_1$.

    Then, $\mathcal{N}_1$ is an aligned totally geodesic subalgebra if and only if $\mathcal{V}_1$ is invariant under $j(z)$ for any $z\in \mathcal{Z}_1$.

    In this case, the $j$-map $j_1$ of $\mathcal{N}_1$ is given by the restriction of the $j$-map $j$ of $\mathcal{N}$, i.e., $j_1(z)=j(z)
    |_{\mathcal{V}_1}$ for $z\in \mathcal{Z}_1$, and  we often write $j$ for the $j$-map of both $\mathcal{N}$ and $\mathcal{N}_1$.
\end{lemma}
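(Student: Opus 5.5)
The plan is to compute $\nabla_{\xi_1}\xi_2$ for $\xi_1,\xi_2\in\mathcal{N}_1$ with the standard Levi-Civita formulas for 2-step nilpotent metric Lie algebras \cite[p. 620, (2.2)]{Eb1}. For $X,Y\in\mathcal{V}$ and $Z,W\in\mathcal{Z}$ these read
\begin{align*}
\nabla_X Y=\tfrac{1}{2}[X,Y],\quad \nabla_X Z=\nabla_Z X=-\tfrac{1}{2}j(Z)X,\quad \nabla_Z W=0 .
\end{align*}
Write $\xi_i=X_i+Z_i$ with $X_i\in\mathcal{V}_1$ and $Z_i\in\mathcal{Z}_1$, which is possible because $\mathcal{N}_1$ is aligned. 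Expanding by bilinearity gives
\begin{align*}
\nabla_{\xi_1}\xi_2=\tfrac{1}{2}[X_1,X_2]-\tfrac{1}{2}j(Z_2)X_1-\tfrac{1}{2}j(Z_1)X_2 .
\end{align*}
Since $\mathcal{N}_1$ is a subalgebra and $[\mathcal{N},\mathcal{N}]\subseteq\mathcal{Z}$, the bracket $[X_1,X_2]$ lies in $\mathcal{N}_1\cap\mathcal{Z}$. It remains to see that this intersection is $\mathcal{Z}_1$. Take $v+z\in\mathcal{N}_1\cap\mathcal{Z}$ with $v\in\mathcal{V}_1$ and $z\in\mathcal{Z}_1$. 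Then $v=(v+z)-z\in\mathcal{Z}\cap\mathcal{V}=0$, so the element lies in $\mathcal{Z}_1$. Hence the bracket term always lies in $\mathcal{N}_1$.

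With this, both directions follow. If every $j(z)$ with $z\in\mathcal{Z}_1$ preserves $\mathcal{V}_1$, then the other two terms lie in $\mathcal{V}_1$, so $\nabla_{\xi_1}\xi_2\in\mathcal{N}_1$ and $\mathcal{N}_1$ is totally geodesic. Conversely, suppose $\mathcal{N}_1$ is totally geodesic. Take $\xi_1=X\in\mathcal{V}_1$ and $\xi_2=z\in\mathcal{Z}_1$, so that $-\tfrac{1}{2}j(z)X\in\mathcal{N}_1$. Because $j(z)X\in\mathcal{V}$, the same decomposition argument as above gives $\mathcal{N}_1\cap\mathcal{V}=\mathcal{V}_1$, and therefore $j(z)X\in\mathcal{V}_1$.

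For the statement about $j_1$, the orthogonal decomposition of $\mathcal{N}_1$ is $\mathcal{N}_1=\mathcal{V}_1\oplus\mathcal{Z}_1$, with $\mathcal{Z}_1$ the center of $\mathcal{N}_1$ by assumption and $\mathcal{V}_1\perp\mathcal{Z}_1$. For $z\in\mathcal{Z}_1$ and $X,Y\in\mathcal{V}_1$, the defining relation gives $\langle j_1(z)X,Y\rangle=\langle z,[X,Y]\rangle=\langle j(z)X,Y\rangle$. By invariance, $j(z)X\in\mathcal{V}_1$, so the two vectors agree, that is, $j_1(z)=j(z)|_{\mathcal{V}_1}$.

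The only delicate point I expect is confirming the sign conventions in the connection formula (in particular $\nabla_X Z=-\tfrac{1}{2}j(Z)X$). The sign is irrelevant to membership in $\mathcal{N}_1$, though, so the argument is robust.
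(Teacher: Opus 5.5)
Your proof is correct and is essentially the paper's argument: both expand $\nabla_{\xi_1}\xi_2$ on $\mathcal{N}_1=\mathcal{V}_1\oplus\mathcal{Z}_1$ using Eberlein's formulas (2.2), and the converse follows because $\mathcal{N}_1$ is closed under the bracket (your check that $\mathcal{N}_1\cap\mathcal{Z}=\mathcal{Z}_1$ is harmless but not needed). The only small difference is in the forward direction: the paper identifies the Levi-Civita connection of $\mathcal{N}_1$ with the restriction of that of $\mathcal{N}$ and reads off $j_1(z)x=j(z)x$ from $\nabla_z x$, which gives invariance and $j_1(z)=j(z)|_{\mathcal{V}_1}$ in one step; you instead use only closure of $\nabla$ together with $\mathcal{N}_1\cap\mathcal{V}=\mathcal{V}_1$, and then obtain the restriction identity algebraically from $\langle j(z)X,Y\rangle=\langle z,[X,Y]\rangle$, which is slightly more elementary and shows the identity holds whenever the invariance condition does.
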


\begin{proof}
    If $\mathcal{N}_1$ is an aligned totally geodesic subalgebra, then let $z\in \mathcal{Z}_1$ and $x\in \mathcal{V}_1$. Since $\mathcal{N}_1$ is totally geodesic, the Levi-Civita connection $\nabla$ of $\mathcal{N}$ coincides with the Levi-Civita connection of $\mathcal{N}_1$. Then, by \cite[p. 620, (2.2)]{Eb1}, we have $-\frac{1}{2}j_1(z)x=\nabla_z x=-\frac{1}{2}j(z)x$ as $\mathcal{N}_1$ is aligned. In particular, $j(z)x\in \mathcal{V}_1$ and $j_1$ is the restriction of $j$. The converse follows from the linearity of $\nabla$, \cite[p. 620, (2.2)]{Eb1}, and the fact that $\mathcal{N}_1$ is a subalgebra.
\end{proof}

 A 2-step nilpotent Lie algebra $\mathcal{N}$ is called \textit{nonsingular} if $ad X:\mathcal{N} \to \mathcal{Z}$ is surjective for any $X\in \mathcal{N}\setminus \mathcal{Z}$, where $\mathcal{Z}$ is the center. Suppose $\mathcal{N}$ is equipped with an inner product. Then, by \cite[(1.8) Lemma]{Eb1}, $\mathcal{N}$ is nonsingular if and only if $j(Z):\mathcal{V}\to \mathcal{V}$ is invertible for any $Z\in \mathcal{Z}\setminus \{0\}$, where $\mathcal{V}=\mathcal{Z}^{\perp}$ (\cite[(1.8) Lemma, p. 620]{Eb1}). Moreover, if $\mathcal{N}$ is nonsingular, then $[\mathcal{N},\mathcal{N}]=\mathcal{Z}$. In particular, if $\mathcal{N}$ is of Heisenberg type, i.e., $j(Z)^2=-|Z|^2 \mathrm{id}_{\mathcal{V}}$ for any $Z\in \mathcal{Z}$ (\cite{Kaplan1}, \cite{Kaplan2}), then $\mathcal{N}$ is nonsingular.

\begin{proposition}\label{nonsingular_ones_are_aligned}
    Let $(\mathcal{N}, \langle \cdot, \cdot \rangle)$ be a metric 2-step nilpotent Lie algebra. Let $\mathcal{N}_1$ be a totally geodesic subalgebra. Let $\mathfrak{z}(\mathcal{N}_1)$ be the center of the subalgebra $\mathcal{N}_1$.
    
    Suppose $[\mathcal{N}_1, \mathcal{N}_1]=\mathfrak{z}(\mathcal{N}_1)$ and $j_1(Z_1)$ is invertible for some $Z_1\in \mathfrak{z}(\mathcal{N}_1)$, where $j_1$ denotes the $j$-map of $\mathcal{N}_1$. Then $\mathcal{N}_1$ is aligned. 
    
    In particular, if $\mathcal{N}_1$ is nonsingular, then $\mathcal{N}_1$ is aligned.
\end{proposition}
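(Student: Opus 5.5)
The plan is to prove the two inclusions $\mathfrak{z}(\mathcal{N}_1)\subseteq \mathcal{Z}$ and $\mathcal{V}_1\subseteq \mathcal{V}$ separately, where $\mathcal{V}_1$ denotes the orthogonal complement of $\mathfrak{z}(\mathcal{N}_1)$ inside $\mathcal{N}_1$. Together they give the aligned decomposition $\mathcal{N}_1=\mathcal{V}_1\oplus\mathfrak{z}(\mathcal{N}_1)$ with $\mathcal{V}_1\subseteq\mathcal{V}$ and $\mathfrak{z}(\mathcal{N}_1)\subseteq\mathcal{Z}$, as Definition \ref{def_of_aligned_totally_geodesic_subalgebras} requires.

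The first inclusion is immediate. Since $\mathcal{N}$ is 2-step nilpotent, $[\mathcal{N},\mathcal{N}]\subseteq\mathcal{Z}$. By hypothesis $\mathfrak{z}(\mathcal{N}_1)=[\mathcal{N}_1,\mathcal{N}_1]\subseteq[\mathcal{N},\mathcal{N}]\subseteq\mathcal{Z}$. Note that $\mathcal{N}_1$, being a subalgebra, is itself 2-step nilpotent (or abelian) with the induced inner product. So its own $j$-map $j_1:\mathfrak{z}(\mathcal{N}_1)\to\mathrm{so}(\mathcal{V}_1)$ and its Levi-Civita connection $\nabla^1$ are given by the same formulas \cite[p. 620, (2.2)]{Eb1}.

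For the second inclusion I would use total geodesy to compare connections. Since $\mathcal{N}_1$ is totally geodesic, the Levi-Civita connection of the subgroup agrees with the restriction of $\nabla$, so $\nabla_{\xi}\eta=\nabla^1_{\xi}\eta$ for $\xi,\eta\in\mathcal{N}_1$. Take $Z_1\in\mathfrak{z}(\mathcal{N}_1)$ with $j_1(Z_1)$ invertible, and let $y\in\mathcal{V}_1$. Decompose $y=v+z$ with $v\in\mathcal{V}$, $z\in\mathcal{Z}$. In $\mathcal{N}_1$ we have $\nabla^1_{Z_1}y=-\frac12 j_1(Z_1)y$. In $\mathcal{N}$, since $Z_1\in\mathcal{Z}$ (by the first step), the formulas $\nabla_{Z}X=-\frac12 j(Z)X$ and $\nabla_Z Z'=0$ give $\nabla_{Z_1}y=-\frac12 j(Z_1)v\in\mathcal{V}$. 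Equating the two shows $j_1(Z_1)y=j(Z_1)v\in\mathcal{V}$ for every $y\in\mathcal{V}_1$, i.e. $\mathrm{Im}\, j_1(Z_1)\subseteq\mathcal{V}$. Because $j_1(Z_1)$ is invertible on $\mathcal{V}_1$, its image is all of $\mathcal{V}_1$, so $\mathcal{V}_1\subseteq\mathcal{V}$. This completes the aligned decomposition.

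For the final assertion: if $\mathcal{N}_1$ is nonsingular (and non-abelian, so that $\mathfrak{z}(\mathcal{N}_1)\neq 0$), then $[\mathcal{N}_1,\mathcal{N}_1]=\mathfrak{z}(\mathcal{N}_1)$, and $j_1(Z)$ is invertible for every nonzero $Z\in\mathfrak{z}(\mathcal{N}_1)$ by \cite[(1.8) Lemma]{Eb1}. So the hypotheses above hold.

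The only delicate point is justifying $\nabla=\nabla^1$ on left-invariant fields of $\mathcal{N}_1$. This is the standard fact that the Levi-Civita connection of a totally geodesic submanifold is the restriction of the ambient one, combined with the fact that the subgroup's left-invariant fields are restrictions of those of $N$. The rest is bookkeeping with the connection formulas.
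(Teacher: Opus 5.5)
Your proof is correct and follows essentially the same route as the paper: you first use $[\mathcal{N}_1,\mathcal{N}_1]=\mathfrak{z}(\mathcal{N}_1)$ and 2-step nilpotency to place $\mathfrak{z}(\mathcal{N}_1)$ inside $\mathcal{Z}$, then compare $\nabla_{Z_1}$ computed in $\mathcal{N}_1$ and in $\mathcal{N}$ and use the invertibility of $j_1(Z_1)$ to conclude $\mathcal{V}_1\subseteq\mathcal{V}$. Your parenthetical that the ``in particular'' clause tacitly requires $\mathcal{N}_1$ to be non-abelian is a fair point, which the paper leaves implicit.
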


\begin{proof}[Proof of Proposition \ref{nonsingular_ones_are_aligned}]
Let $\mathcal{Z}=\mathfrak{z}(\mathcal{N})$ be the center of $\mathcal{N}$, and let $\mathcal{V}$ be the orthogonal complement of $\mathcal{Z}$. Let $\mathcal{Z}_1=\mathcal{Z}\cap \mathcal{N}_1$. Then, $[\mathcal{N}_1, \mathcal{N}_1]\subseteq [\mathcal{N},\mathcal{N}]\cap \mathcal{N}_1\subseteq \mathcal{Z}_1 \subseteq \mathfrak{z}(\mathcal{N}_1)$. Since $[\mathcal{N}_1, \mathcal{N}_1]=\mathfrak{z}(\mathcal{N}_1)$, it follows that $\mathcal{Z}_1=\mathfrak{z}(\mathcal{N}_1)$. 

Let $\mathcal{V}_1=\mathcal{Z}_1^{\perp}\cap \mathcal{N}_1$, so that $\mathcal{N}_1=\mathcal{V}_1\oplus \mathcal{Z}_1$ (an orthogonal direct sum). We will show $\mathcal{V}_1\subseteq \mathcal{V}$. Let $\xi\in \mathcal{V}_1$. Take an element $0\neq Z_1\in \mathcal{Z}_1$ such that $j_1(Z_1):\mathcal{V}_1\to \mathcal{V}_1$ is invertible, where $j_1$ is the $j$-map of $\mathcal{N}_1$. Let $\eta\in \mathcal{V}_1$ such that $\xi=j_1(Z_1)(\eta)$. Write $\eta=x+z$, where $x\in \mathcal{V}$ and $z\in \mathcal{Z}$. 

Let $\nabla$ be the Levi-Civita connection of the ambient Lie algebra $\mathcal{N}$. Since $\mathcal{N}_1$ is totally geodesic, $\nabla_{Z_1} \eta \in \mathcal{N}_1$, and $\nabla$ coincides with the Levi-Civita connection of $\mathcal{N}_1$. In particular, it follows from \cite[p. 620, (2.2)]{Eb1} that $\nabla_{Z_1}\eta = -\frac{1}{2}j_1(Z_1)\eta=-\frac{1}{2}\xi$. On the other hand, again it follows from \cite[p. 620, (2.2)]{Eb1} that $\nabla_{Z_1}\eta=\nabla_{Z_1}x + \nabla_{Z_1} z=-\frac{1}{2}j(Z_1)x\in \mathcal{V}$, where this $j$ denotes the $j$-map of the ambient Lie algebra $\mathcal{N}$. Thus, $\xi\in \mathcal{V}$, so $\mathcal{V}_1\subseteq \mathcal{V}$. This completes the proof. 
\end{proof}

For any subspace $\mathcal{A}$ of $\mathcal{V}$ and $\mathcal{B}$ of $\mathcal{Z}$, write $\mathcal{A}\otimes \mathcal{B}=\{A\wedge B\;|\;A\in \mathcal{A}, B\in \mathcal{B}\}$. We are ready to state a technical lemma that will be used repeatedly in this article. The setting is as follows. 

\begin{setting} \label{setting_totally_geodesic_T}
    Let $(\mathcal{N}, \langle \cdot, \cdot \rangle)$ be a metric 2-step nilpotent Lie algebra. Let $\mathcal{Z}$ be the center of $\mathcal{N}$, and $\mathcal{V}=\mathcal{Z}^{\perp}$ be the orthogonal complement of $\mathcal{Z}$.
    
    Let $\mathcal{N}_1$ be a totally geodesic subalgebra of $\mathcal{N}$ that is aligned, i.e., there exist subspaces $\mathcal{V}_1$ of $\mathcal{V}$ and $\mathcal{Z}_1$ of $\mathcal{Z}$ such that $\mathcal{N}_1=\mathcal{V}_1 \oplus \mathcal{Z}_1$ and $\mathcal{Z}_1$ is the center of $\mathcal{Z}$. Let $\mathcal{V}_2:=\mathcal{V}_1^{\perp}\cap \mathcal{V}$ and $\mathcal{Z}_2:=\mathcal{Z}_1^{\perp}\cap \mathcal{Z}$ so that $\mathcal{V}=\mathcal{V}_1\oplus \mathcal{V}_2$ and $\mathcal{Z}=\mathcal{Z}_1\oplus \mathcal{Z}_2$. Notice, $\mathcal{V}\otimes \mathcal{Z}$ decomposes as an orthogonal direct sum of the subspaces $\mathcal{V}_1\otimes \mathcal{Z}_1, \mathcal{V}_1\otimes \mathcal{Z}_2, \mathcal{V}_2\otimes \mathcal{Z}_1$, and $\mathcal{V}_2\otimes \mathcal{Z}_2$.

    Let $X_1,...,X_s$ be an orthonormal basis of $\mathcal{V}_1$, and $X_{s+1},..., X_q$ an orthonormal basis of $\mathcal{V}_2$. Let $Z_1,..., Z_t$ be an orthonormal basis of $\mathcal{Z}_1$, and $Z_{t+1},..., Z_p$ an orthonormal basis of $\mathcal{Z}_2$, so that $\{X_i\wedge Z_\alpha\;|\;X_i \in \mathcal{V}_a, Z_\alpha \in \mathcal{Z}_b\}$ form an orthonormal basis of $\mathcal{V}_a\otimes \mathcal{Z}_b$ for $a=1, 2$ and $b=1,2$. 
    
    For $i\in \{1,...,s\}, \alpha \in \{1,...,t\}, j\in \{s+1,...,q\}, \beta\in \{t+1,...,p\}$, set $c_{i\alpha j \beta}:=\langle R(X_i\wedge Z_\alpha), X_j\wedge Z_\beta \rangle$. Let $X_i=e_i, i=1,...,q$ and $Z_\alpha=e_{\alpha+q}, \alpha=1,...,p$, so that $\{e_1,...,e_{p+q}\}$ forms an orthonormal basis of $\mathcal{N}$. Then, $\{e_a\wedge e_b\wedge e_c\wedge e_d\;|\;a<b<c<d \}$ forms an orthonormal basis of $\Lambda^4 \mathcal{N}$, where $a,b,c,d\in \{1,...,p+q\}$. Form the dual basis $\{(e_a\wedge e_b\wedge e_c\wedge e_d)^*\;|\; a<b<c<d\}$ of $\Lambda^4 \mathcal{N}^*=(\Lambda^4 \mathcal{N})^*$.
\end{setting}

Aligned totally geodesic subalgebras behave well with respect to the curvature operator, as the next lemma indicates.

\begin{lemma} \label{curv_op_and_totally_geodesic_subalgebras}
Let $(\mathcal{N}, \langle \cdot, \cdot \rangle)$ be a metric 2-step nilpotent Lie algebra. Let $\mathcal{N}_1$ be an aligned totally geodesic subalgebra of $\mathcal{N}$. We use the notations $\mathcal{V}_1, \mathcal{Z}_1, \mathcal{V}_2, \mathcal{Z}_2, X_i,Z_\alpha$, and $c_{i \alpha j \beta}$ as in Setting \ref{setting_totally_geodesic_T}.

    Define $T=\sum_{i,\alpha,j,\beta} c_{i \alpha j \beta} (e_i\wedge e_j\wedge e_{\alpha+q}\wedge e_{\beta+q})^*$, where the sum is over the indices $i\in \{1,...,s\}, \alpha \in \{1,...,t\}, j\in \{s+1,...,q\}, \beta \in \{t+1,...,p\}$. Notice, $i<j<\alpha+q<\beta+q$, so $T$ is well defined. Observe, $T=\sum_{i,\alpha,j,\beta} c_{i \alpha j \beta} (X_i\wedge X_j\wedge Z_{\alpha}\wedge Z_{\beta})^*$ with $X_i\in \mathcal{V}_1, X_j\in \mathcal{V}_2, Z_\alpha\in \mathcal{Z}_1, Z_\beta\in \mathcal{Z}_2$.
    
    Then, 
    \begin{enumerate}
        \item For any $z\in \mathcal{Z}_1$, $j(z)$ leaves $\mathcal{V}_1$ and $\mathcal{V}_2$ invariant.
        \item For any $w\in \mathcal{Z}_2$, $j(w)(\mathcal{V}_1)\subseteq \mathcal{V}_2$.
        \item $R$ leaves $\mathcal{V}\otimes \mathcal{Z}$ invariant, $\langle R(\mathcal{V}_1\otimes \mathcal{Z}_1), \mathcal{V}_1\otimes \mathcal{Z}_2 \rangle =0$, and $\langle R(\mathcal{V}_1\otimes \mathcal{Z}_1), \mathcal{V}_2\otimes \mathcal{Z}_1 \rangle =0$. Hence, $R(\mathcal{V}_1\otimes \mathcal{Z}_1)\subseteq \mathcal{V}_1\otimes \mathcal{Z}_1 \oplus \mathcal{V}_2\otimes \mathcal{Z}_2$.
        \item  $\iota(T)$ leaves $\mathcal{V}\otimes \mathcal{Z}$ invariant, $R_T=R+\iota(T)$ leaves $\mathcal{V}\otimes \mathcal{Z}$, and $\iota(T)(\mathcal{V}_1\otimes \mathcal{Z}_1) \subseteq \mathcal{V}_2\otimes \mathcal{Z}_2$.
        \item $R_T$ leaves $\mathcal{V}_1\otimes \mathcal{Z}_1$ invariant. Hence, $R_T$ leaves each summund of the decomposition $\Lambda^2\mathcal{N}=(\mathcal{V}_1\otimes \mathcal{Z}_1)\oplus ((\mathcal{V}_1\otimes \mathcal{Z}_1)^{\perp}\cap (\mathcal{V}\otimes \mathcal{Z})) \oplus (\Lambda^2\mathcal{V}\oplus \Lambda^2\mathcal{Z})$ invariant. 
        \item For any $x,y\in \mathcal{V}_1, z, w\in \mathcal{Z}_1$, $\langle R_T(x\wedge z), y\wedge w\rangle =\langle R(x\wedge z), y\wedge w\rangle  $.
    \end{enumerate}

\end{lemma}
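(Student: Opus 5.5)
I would prove the six items in order, each resting on the previous ones, using only Lemma \ref{criterion_of_totally_geodesic_when_it_is_aligned}, Lemma \ref{formulas_for_curv_op}, Proposition \ref{decomposition_of_the_curvature_operator}, and the defining identity $\langle j(Z)X,Y\rangle=\langle Z,[X,Y]\rangle$.

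For (1), Lemma \ref{criterion_of_totally_geodesic_when_it_is_aligned} gives $j(z)\mathcal{V}_1\subseteq\mathcal{V}_1$ for $z\in\mathcal{Z}_1$. Since $j(z)$ is skew-symmetric, it also preserves the orthogonal complement $\mathcal{V}_2$ inside $\mathcal{V}$. For (2), take $w\in\mathcal{Z}_2$ and $x,y\in\mathcal{V}_1$. Then $\langle j(w)x,y\rangle=\langle w,[x,y]\rangle$, and $[x,y]\in[\mathcal{N}_1,\mathcal{N}_1]\subseteq \mathcal{N}_1\cap\mathcal{Z}=\mathcal{Z}_1\perp w$. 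So $j(w)x\perp\mathcal{V}_1$, which means $j(w)x\in\mathcal{V}_2$.

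For (3), invariance of $\mathcal{V}\otimes\mathcal{Z}$ is Proposition \ref{decomposition_of_the_curvature_operator}. By Remark \ref{curv_op_on_V_wedge_Z_using_j_map}, $\langle R(x\wedge z),y\wedge w\rangle=-\frac14\langle j(w)j(z)x,y\rangle=\frac14\langle j(z)x,j(w)y\rangle$.
\begin{itemize}
\item If $x,y\in\mathcal{V}_1$, $z\in\mathcal{Z}_1$ and $w\in\mathcal{Z}_2$: $j(z)x\in\mathcal{V}_1$ by (1) and $j(w)y\in\mathcal{V}_2$ by (2), so the pairing vanishes.
\item If $x\in\mathcal{V}_1$, $y\in\mathcal{V}_2$ and $z,w\in\mathcal{Z}_1$: $j(z)x\in\mathcal{V}_1$ and $j(w)y\in\mathcal{V}_2$ by (1), so it again vanishes.
\end{itemize}
Hence $R(\mathcal{V}_1\otimes\mathcal{Z}_1)\subseteq \mathcal{V}_1\otimes\mathcal{Z}_1\oplus\mathcal{V}_2\otimes\mathcal{Z}_2$.

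For (4) I compute $\iota(T)$ directly. Since $\langle\iota(T)\omega_1,\omega_2\rangle=T(\omega_1\wedge\omega_2)$ and $T$ is a combination of duals of $X_i\wedge X_j\wedge Z_\alpha\wedge Z_\beta$, the matrix entry $T(e_a\wedge e_b\wedge e_c\wedge e_d)$ is nonzero only when $\{a,b,c,d\}$ consists of two indices in $\mathcal{V}$ and two in $\mathcal{Z}$. Such a pair of basis bivectors either both lie in $\mathcal{V}\otimes\mathcal{Z}$, or one lies in $\Lambda^2\mathcal{V}$ and the other in $\Lambda^2\mathcal{Z}$. So I must check the second case carefully: $\iota(T)$ may map $\Lambda^2\mathcal{V}$ into $\Lambda^2\mathcal{Z}$, but it does not mix $\mathcal{V}\otimes\mathcal{Z}$ with $\Lambda^2\mathcal{V}\oplus\Lambda^2\mathcal{Z}$, since a wedge of an element of $\mathcal{V}\otimes\mathcal{Z}$ with an element of $\Lambda^2\mathcal{V}$ has three $\mathcal{V}$-indices. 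Hence $\iota(T)$, and with it $R_T$, preserves $\mathcal{V}\otimes\mathcal{Z}$ and its complement.

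Next, for $X_i\wedge Z_\alpha$ with $i\le s$ and $\alpha\le t$, the only nonzero pairings are with $X_j\wedge Z_\beta$ satisfying $j>s$ and $\beta>t$. Computing signs, $(X_i\wedge Z_\alpha)\wedge(X_j\wedge Z_\beta)=X_i\wedge X_j\wedge Z_\alpha\wedge Z_\beta$ after one transposition, so the pairing equals $-c_{i\alpha j\beta}$. Thus $\iota(T)(\mathcal{V}_1\otimes\mathcal{Z}_1)\subseteq\mathcal{V}_2\otimes\mathcal{Z}_2$.

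For (5), combine (3) and (4): the $\mathcal{V}_2\otimes\mathcal{Z}_2$ component of $R_T(X_i\wedge Z_\alpha)$ has coefficient $c_{i\alpha j\beta}-c_{i\alpha j\beta}=0$. Hence $R_T$ preserves $\mathcal{V}_1\otimes\mathcal{Z}_1$. Since $R_T$ is symmetric and preserves $\mathcal{V}\otimes\mathcal{Z}$, it also preserves the relative orthogonal complement, which gives the stated three-block decomposition. For (6), $\iota(T)$ maps $\mathcal{V}_1\otimes\mathcal{Z}_1$ into $\mathcal{V}_2\otimes\mathcal{Z}_2$, which is orthogonal to $\mathcal{V}_1\otimes\mathcal{Z}_1$, so $\langle\iota(T)(x\wedge z),y\wedge w\rangle=0$.

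The main obstacle is the sign bookkeeping in (4) for $T(\omega_1\wedge\omega_2)$, so that the cancellation in (5) is exact. If the sign came out $+$ instead of $-$, I would note that the definition of $T$ must then be read with the reversed orientation convention, and verify it against Definition \ref{def_of_modified_curvature_operator}.
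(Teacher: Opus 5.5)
Your proposal is correct and follows the paper's proof essentially step by step. That includes the index count showing that $\iota(T)$ preserves $\mathcal{V}\otimes\mathcal{Z}$ and sends $\mathcal{V}_1\otimes\mathcal{Z}_1$ into $\mathcal{V}_2\otimes\mathcal{Z}_2$, and the sign $T(X_i\wedge Z_\alpha\wedge X_j\wedge Z_\beta)=-c_{i\alpha j\beta}$, which is right, so your closing hedge about orientation conventions is unnecessary.

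There is one harmless slip in item 3. The formula should read $\langle R(x\wedge z),y\wedge w\rangle=-\frac14\langle j(z)j(w)x,y\rangle=\frac14\langle j(w)x,j(z)y\rangle$; you reversed the order of the two $j$'s. It does not affect the argument, because in both of your cases either $x,y$ or $z,w$ lie in the same summand, so the vanishing holds for either ordering.
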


\begin{proof} By Lemma \ref{criterion_of_totally_geodesic_when_it_is_aligned}, the item 1 follows, as $j(z)$ is skey symmetric. The item 2 follows as $\mathcal{N}_1$ is a subalgebra. By Proposition \ref{decomposition_of_the_curvature_operator}, $R$ leaves $\mathcal{V}\otimes \mathcal{Z}$ invariant. By the items 1 and 2 of this lemma and the item 1 of Lemma \ref{formulas_for_curv_op}, the item 3 follows.
    
    We show the item 4. To see $\iota(T)$ leaves $\mathcal{V}\otimes \mathcal{Z}$ invariant, recall $T=\sum_{i,\alpha,j,\beta} c_{i \alpha j \beta} (X_i\wedge X_j\wedge Z_{\alpha}\wedge Z_{\beta})^*$. Let $i'\in \{1,...,q\}, \alpha' \in \{1,...,p\}$. For $x,y\in \mathcal{N}$, we compute $\langle \iota(T)(X_{i'}\wedge Z_{\alpha'}), x\wedge y\rangle=T(X_{i'} \wedge Z_{\alpha'} \wedge x\wedge y)$. If $x,y\in \{X_1,...,X_q\}$ or $x,y\in\{Z_1,...,Z_p\}$, then $T(X_{i'} \wedge Z_{\alpha'} \wedge x\wedge y)=0$. This means $\iota(T)(X_{i'} \wedge Z_{\alpha'})\in (\mathcal{V}\wedge \mathcal{V} \oplus \mathcal{Z}\wedge \mathcal{Z})^{\perp}=\mathcal{V}\otimes \mathcal{Z}$. Thus, $\iota(T)$ leaves $\mathcal{V}\otimes \mathcal{Z}$ invariant. By the item 3, we see $R_T$ leaves $\mathcal{V}\otimes \mathcal{Z}$ invariant. Next, we show that $\iota(T)(\mathcal{V}_1\otimes \mathcal{Z}_1)\subseteq \mathcal{V}_2\otimes \mathcal{Z}_2$. Indeed, if $i' \in \{1,...,s\}$ and $\alpha'\in \{1,...,t\}$, then $\langle \iota(T)(X_{i'}\wedge Z_{\alpha'}), x\wedge y\rangle=T(X_{i'}\wedge Z_{\alpha'} \wedge x\wedge y)=0$ when $x\wedge y\in \mathcal{V}_1\otimes \mathcal{Z}_1 \oplus \mathcal{V}_1\otimes \mathcal{Z}_2 \oplus \mathcal{V}_2\otimes \mathcal{Z}_1$. This proves the item 4.
    
    Let us show the item 5. We show that $R_T$ leaves $\mathcal{V}_1\otimes \mathcal{Z}_1$ invariant. By the items 3 and 4, we know $R_T(\mathcal{V}_1\otimes \mathcal{Z}_1)\subseteq \mathcal{V}_1\otimes \mathcal{Z}_1 \oplus \mathcal{V}_2\otimes \mathcal{Z}_2.$ We show that $\langle R_T(\mathcal{V}_1\otimes \mathcal{Z}_1), \mathcal{V}_2\otimes \mathcal{Z}_2 \rangle =0$.
    
    For $i\in \{1,...,s\}, \alpha\in \{1,...,t\}, j\in \{s+1,...,q\}, \beta \in \{t+1,...,p\}$, we have $\langle \iota(T)(X_i\wedge Z_\alpha), X_j\wedge Z_\beta \rangle = T(X_i\wedge Z_\alpha \wedge X_j\wedge Z_\beta)=-c_{i \alpha j \beta}=-\langle R(X_i\wedge Z_\alpha), X_j\wedge Z_\beta \rangle$ by definition of $T$. Thus, $\langle R_T(X_i\wedge Z_\alpha), X_j\wedge Z_\beta \rangle = \langle R(X_i\wedge Z_\alpha)+\iota(T)(X_i\wedge Z_\alpha), X_j\wedge Z_\beta \rangle = 0$ . Therefore, $R_T(\mathcal{V}_1\otimes \mathcal{Z}_1)$ is orthogonal to $\mathcal{V}_2\otimes \mathcal{Z}_2$. Thus, $R_T$ leaves $\mathcal{V}_1\otimes \mathcal{Z}_1$ invariant. Since $R_T$ leaves $\mathcal{V}\otimes \mathcal{Z}$ invariant and $R_T$ is symmetric, the item 5 follows. The item 6 immediately follows from the item 4 of this lemma.
\end{proof}

\subsection{Useful Lemmas}

\begin{lemma} \label{rotation_in_2-plane} (\cite[p.460]{EH1})
     Let $\mathcal{N}$ be a metric 2-step nilpotent Lie algebra. Let $\mathcal{Z}$ the center, $\mathcal{V}=\mathcal{Z}^{\perp}$, and $j$ be the $j$-map. Then, for any 2-plane $\pi$ of $\mathcal{N}$, there exist $X, X'\in \mathcal{V}$ and $Z, Z'\in \mathcal{Z}$ such that $X+Z$ and $X'+Z'$ form an orthonormal basis of $\pi$, and $\langle X, X' \rangle = 0 = \langle Z, Z' \rangle.$  
\end{lemma}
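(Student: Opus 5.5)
The plan is to diagonalize the Gram data of the two vectors by rotating the basis inside $\pi$. Start with any orthonormal basis $\xi=X_0+Z_0$, $\eta=X_0'+Z_0'$ of $\pi$, where $X_0,X_0'\in\mathcal{V}$ and $Z_0,Z_0'\in\mathcal{Z}$. Every other orthonormal basis of $\pi$ (up to orientation) has the form
\[
\xi_\theta=\cos\theta\,\xi+\sin\theta\,\eta, \qquad \eta_\theta=-\sin\theta\,\xi+\cos\theta\,\eta .
\]
Its $\mathcal{V}$-components are $X_\theta=\cos\theta X_0+\sin\theta X_0'$ and $X'_\theta=-\sin\theta X_0+\cos\theta X_0'$, and its $\mathcal{Z}$-components $Z_\theta,Z'_\theta$ are given by the same formulas.

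The key observation is that the orthonormality of $\xi,\eta$, together with $\mathcal{V}\perp\mathcal{Z}$, gives
\[
\langle X_0,X_0'\rangle+\langle Z_0,Z_0'\rangle=\langle \xi,\eta\rangle=0 ,
\]
and likewise $\langle X_\theta,X'_\theta\rangle=-\langle Z_\theta,Z'_\theta\rangle$ for every $\theta$. It therefore suffices to find a single $\theta$ with $\langle X_\theta,X'_\theta\rangle=0$; the condition $\langle Z_\theta,Z'_\theta\rangle=0$ then follows automatically.

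To find such a $\theta$, consider the symmetric bilinear form $B(u,v)=\langle P_{\mathcal{V}}u,P_{\mathcal{V}}v\rangle$ restricted to $\pi$, where $P_{\mathcal{V}}$ is the orthogonal projection onto $\mathcal{V}$. By the spectral theorem applied to $B$ with respect to the induced inner product on $\pi$, there is an orthonormal basis of $\pi$ that is also $B$-orthogonal. Concretely, a direct computation gives
\[
\langle X_\theta,X'_\theta\rangle=\tfrac12\sin 2\theta\,\bigl(|X_0'|^2-|X_0|^2\bigr)+\cos 2\theta\,\langle X_0,X_0'\rangle ,
\]
a function of the form $a\sin2\theta+b\cos2\theta$. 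Such a function always has a zero, for example by the intermediate value theorem, since it changes sign between $\theta$ and $\theta+\pi/2$. Taking $X=X_\theta$, $X'=X'_\theta$, $Z=Z_\theta$, $Z'=Z'_\theta$ then completes the argument.

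There is no real obstacle here. The only point needing care is the remark that orthonormality of $\xi_\theta,\eta_\theta$ ties the $\mathcal{V}$ and $\mathcal{Z}$ inner products together, so one rotation kills both at once.
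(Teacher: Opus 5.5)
Your argument is correct. Rotating the orthonormal basis inside $\pi$ until the $\mathcal{V}$-components are orthogonal (a zero of $a\sin 2\theta+b\cos 2\theta$, equivalently diagonalizing $|P_{\mathcal{V}}u|^2$ on $\pi$) automatically makes the $\mathcal{Z}$-components orthogonal, because $\langle X_\theta,X'_\theta\rangle+\langle Z_\theta,Z'_\theta\rangle=\langle\xi_\theta,\eta_\theta\rangle=0$. The paper gives no proof of its own and only cites Eberlein--Heber, so there is no proof to compare against; your rotation argument is a complete justification of the cited fact.
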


We have the following preliminary lemma. 
\begin{lemma}\label{maximum_eigenspace_and_two_interpretations}
    Let $\mathcal{N}$ be a 2-step nilpotent Lie algebra with an inner product. Let $\mathcal{Z}$ be the center of $\mathcal{N}$, and $\mathcal{V}=\mathcal{Z}^{\perp}$.
    \begin{itemize}
        \item There exist $Z\in \mathcal{Z}$, $|Z|=1$, and $X\in \mathcal{V}, |X|=1,$ such that $|j(Z)X|=||j||_{\op}$.
        \item Let $Z\in \mathcal{Z}$, $|Z|=1$, and $X\in \mathcal{V}, |X|=1,$ be any vectors such that $|j(Z)X|=||j||_{\op}$. Set $Y:=\frac{j(Z)X}{||j||_{\op}}$.  Then, 
        
    \begin{enumerate}
        \item $-j(Z)^2 X = ||j||_{\op}^2 X$ and $[X, j(Z)X] = ||j||_{\op}^2 Z.$
        \item $Y$ is a unit vector orthogonal to $X$, $[X,Y]=||j||_{\op}Z$, and $|[X,Y]|=||j||_{\op}$.
        \item $K(X,Z)=\frac{1}{4}||j||_{\op}^2$ and $K(X,Y)=-\frac{3}{4}||j||_{\op}^2$. In particular, $K_{\max}\geq \frac{1}{4}||j||_{\op}^2$ and $K_{\min}\leq -\frac{3}{4}||j||_{\op}^2$.
        \item Let $\mathcal{N}_1=\mathbb{R}\textrm{-span}\{X, Y, Z\}$. Then, $\mathcal{N}_1$ is an aligned totally geodesic subalgebra of $\mathcal{N}$ isomorphic to $\mathrm{heis}_3(\mathbb{R})$.
    \end{enumerate}
    \end{itemize}
    
In particular, any simply connected 2-step nilmanifold admits a totally geodesic subgroup $\cong\mathrm{Heis}_3(\mathbb{R})$.
\end{lemma}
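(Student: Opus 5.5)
Existence of a maximizing pair comes first. The set $\{(Z,X)\in\mathcal{Z}\times\mathcal{V} : |Z|=1=|X|\}$ is a product of unit spheres, hence compact, and $(Z,X)\mapsto |j(Z)X|$ is continuous. So the maximum $||j||_{\op}$ is attained. If $\mathcal{N}$ is abelian, $\mathcal{V}=0$ and the statement is vacuous. We therefore assume $\mathcal{N}$ is non-abelian, so $||j||_{\op}>0$.

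Now fix a maximizing pair $(Z,X)$. For item 1, note that with $Z$ fixed, $X$ maximizes $|j(Z)X|^2=\langle -j(Z)^2X,X\rangle$ over the unit sphere of $\mathcal{V}$. The operator $-j(Z)^2$ is symmetric positive semidefinite, and its largest eigenvalue is at most $||j||_{\op}^2$ (it equals $\max_{|X|=1}|j(Z)X|^2$). This value is attained at $X$, so by the Rayleigh quotient argument $X$ is an eigenvector: $-j(Z)^2X=||j||_{\op}^2X$.

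For the bracket identity, we pair $[X,j(Z)X]\in\mathcal{Z}$ with an arbitrary $W\in\mathcal{Z}$ and use the defining relation of $j$:
\[
\langle [X,j(Z)X],W\rangle=\langle j(W)X,j(Z)X\rangle .
\]
The map $W\mapsto \langle j(W)X,j(Z)X\rangle$ is linear. By Cauchy--Schwarz it is bounded by $|W|\,||j||_{\op}\cdot||j||_{\op}$, and it equals $||j||_{\op}^2$ at $W=Z$. A linear functional of norm at most $||j||_{\op}^2$ that reaches this value at the unit vector $Z$ must be $||j||_{\op}^2\langle Z,\cdot\rangle$. Hence $[X,j(Z)X]=||j||_{\op}^2Z$, which proves item 1.

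Item 2 follows directly. Since $j(Z)$ is skew, $\langle j(Z)X,X\rangle=0$, and $|Y|=1$ by the choice of normalization. The bracket is $[X,Y]=||j||_{\op}^{-1}[X,j(Z)X]=||j||_{\op}Z$.

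For item 3, we apply Proposition \ref{general_formula}. This gives $K(X,Z)=\frac14|j(Z)X|^2=\frac14||j||_{\op}^2$ and $K(X,Y)=-\frac34|[X,Y]|^2=-\frac34||j||_{\op}^2$, and the bounds on $K_{\max}$ and $K_{\min}$ follow.

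For item 4, the span $\mathcal{N}_1$ is closed under the bracket. Indeed $[X,Y]\in\mathbb{R}Z$, and $Z$ is central. Because $[X,Y]\neq0$, $\mathcal{N}_1$ is isomorphic to $\mathrm{heis}_3(\mathbb{R})$ with center $\mathbb{R}Z\subseteq\mathcal{Z}$ and $\mathcal{V}_1=\mathrm{span}\{X,Y\}\subseteq\mathcal{V}$, so $\mathcal{N}_1$ is aligned. By Lemma \ref{criterion_of_totally_geodesic_when_it_is_aligned}, it remains to check that $j(Z)$ preserves $\mathcal{V}_1$. First, $j(Z)X=||j||_{\op}Y$. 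Second, by item 1, $j(Z)Y=||j||_{\op}^{-1}j(Z)^2X=-||j||_{\op}X$. The main subtle point in the whole argument is the bracket identity in item 1, which needs the maximization over $Z$ as well as over $X$. Finally, the connected subgroup of a simply connected $N$ with Lie algebra $\mathcal{N}_1$ is totally geodesic and isomorphic to $\mathrm{Heis}_3(\mathbb{R})$. It is simply connected because connected subgroups of simply connected nilpotent groups are closed and simply connected.
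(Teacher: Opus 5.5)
Your proof is correct and follows the paper's argument: compactness for existence, the Rayleigh quotient for $-j(Z)^2$, skew-symmetry for item 2, Proposition \ref{general_formula} for item 3, and Lemma \ref{criterion_of_totally_geodesic_when_it_is_aligned} for item 4. The only cosmetic difference is in proving $[X,j(Z)X]=||j||_{\op}^2Z$: you use the equality case of Cauchy--Schwarz for the linear functional $W\mapsto\langle j(W)X,j(Z)X\rangle$, whereas the paper notes that $Z$ is a top eigenvector of the symmetric operator $\mathrm{ad}X\circ(\mathrm{ad}X)^*$ on $\mathcal{Z}$; both arguments use the same maximality in $Z$.
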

\begin{proof}
    By compactness, there exist unit vectors $Z\in \mathcal{Z}$ and $X\in \mathcal{V}$ such that $|j(Z)X|=||j||_{\op}$. Assume $Z_0 \in \mathcal{Z}, |Z_0|=1$ and $X_0\in \mathcal{V}, |X_0|=1$ satisfy $|j(Z_0)X_0|=||j||_{\op}$. Observe, $||j||_{\op}^2 = \max \{|j(Z)X|^2 \;|\; Z\in \mathcal{Z}, X\in \mathcal{V}, |Z|=1=|X|\}$. The point is that we can consider $|j(Z)X|^2$ as the quadratic form associated to a symmetric operator in two ways as follows. 

    We show the item 1. First, for any $X\in \mathcal{V}, |X|=1$, we have $\langle -j(Z_0)^2 X, X \rangle =|j(Z_0)X|^2 \leq ||j||_{\op}^2$. Thus, $||j||_{\op}^2$ is the maximum eigenvalue of the symmetric linear map $-j(Z_0)^2:\mathcal{V}\to \mathcal{V}$ and $X_0$ is an eigenvector corresponding to $||j||_{\op}^2$, so we have $-j(Z_0)^2 X_0 = ||j||_{\op}^2 X_0$. Next, for any $Z\in \mathcal{Z}, |Z|=1$, we have $\langle \mathrm{ad} X_0 \circ (\mathrm{ad} X_0)^* Z, Z \rangle =\langle [X_0, j(Z)X_0], Z \rangle 
        =|j(Z)X_0|^2
        \leq ||j||_{\op}^2.$ Thus, $||j||_{\op}^2$ is the maximum eigenvalue of the symmetric linear map $\mathrm{ad} X_0 \circ (\mathrm{ad} X_0)^*:\mathcal{Z}\to \mathcal{Z}$ and $Z_0$ is an eigenvector corresponding to $||j||_{\op}^2$, so we have $\mathrm{ad} X_0 \circ (\mathrm{ad} X_0)^* Z_0 = ||j||_{\op}^2 Z_0$. This means $[X_0, j(Z_0)X_0] = ||j||^2 Z_0$, so the item 1 follows. 
    
    For the item 2, since $j(Z_0)$ is skew-symmetric, the vector $Y_0=\frac{j(Z_0)X_0}{||j||_{\op}}$ is orthogonal to $X_0$. Since $[X_0, j(Z_0)X_0] = ||j||^2 Z_0$, we get $[X_0,Y_0]=||j||_{\op}Z_0$. It follows that $|[X_0,Y_0]|=||j||_{\op}$. Then, the item 3 follows immediately from Proposition \ref{general_formula}.

 Lastly, we show the item 4. Since $[X_0, Y_0]=||j||_{\op}Z_0$, it is a subalgebra, and by definition of aligned subalgebras, it is aligned. We have $j(Z_0)X_0=||j||_{\op}Y_0$ and $j(Z_0)Y_0=-||j||_{\op}X_0$. By Lemma \ref{criterion_of_totally_geodesic_when_it_is_aligned}, it is an aligned totally geodesic subalgebra. Since the dimension is $3$, it is isomorphic to $\mathrm{heis}_3(\mathbb{R})$.
\end{proof}

\begin{remark}
    \cite[p. 1606, Theorem 6.3]{Kerr--Payne} shows that a nontrivial totally geodesic subalgebra of a certain filiform nilpotent Lie algebra (whose step size is $\geq3$) must be abelian.
\end{remark}

Another useful lemma gives a bound of the Lie bracket, borrowed from \cite[p.458]{EH1}.

\begin{lemma} \label{upper_bound_of_Lie_bracket}(\cite[p.458]{EH1})
    If $X,Y\in \mathcal{V}$, then $|[X,Y]|\leq ||j||_{\op}|X||Y|$. Similarly, if $X\in \mathcal{V}$ and $Z\in \mathcal{Z}$, then $|j(Z)X|\leq ||j||_{\op}|Z||X|$.
\end{lemma}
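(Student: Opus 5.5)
The plan is to reduce both inequalities to the defining property of the operator norm together with the relation between the bracket and the $j$-map, $\langle j(Z)X, Y\rangle = \langle Z, [X,Y]\rangle$.

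\emph{Bracket bound.} Let $X,Y\in\mathcal{V}$. If $[X,Y]=0$ there is nothing to prove, so assume $[X,Y]\neq 0$. The bracket lies in $\mathcal{Z}$, so we may take the unit vector $Z:=[X,Y]/|[X,Y]|\in\mathcal{Z}$. Pairing with $Z$ and applying Cauchy--Schwarz gives
\begin{align*}
|[X,Y]| = \langle Z,[X,Y]\rangle = \langle j(Z)X, Y\rangle \leq |j(Z)X|\,|Y|.
\end{align*}
It remains to bound $|j(Z)X|$ by $||j||_{\op}|X|$, which is the second claim with $|Z|=1$. Substituting that bound yields $|[X,Y]|\leq ||j||_{\op}|X||Y|$.

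\emph{The $j$-map bound.} This is a homogeneity argument. If $X=0$ or $Z=0$ both sides vanish. Otherwise, linearity of $j$ in $Z$ and of $j(Z)$ in $X$ gives
\begin{align*}
|j(Z)X| = |Z|\,|X|\,\bigl|j(Z/|Z|)(X/|X|)\bigr|.
\end{align*}
The last factor is at most $||j||_{\op}$ by the definition of the operator norm as a maximum over unit vectors. The maximum exists by compactness, as noted in Lemma \ref{maximum_eigenspace_and_two_interpretations}.

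I expect no genuine obstacle. The only points requiring care are to prove the $j$-map bound first, since the bracket bound uses it, and to note that $[X,Y]\in\mathcal{Z}$, which holds because $\mathcal{N}$ is 2-step nilpotent. This is what makes the normalized vector $Z$ a legitimate argument of $j$.
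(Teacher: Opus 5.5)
Your proof is correct and complete. You normalize $Z=[X,Y]/|[X,Y]|\in\mathcal{Z}$, rewrite $|[X,Y]|=\langle j(Z)X,Y\rangle$ via the defining identity of the $j$-map, and apply Cauchy--Schwarz together with the homogeneity bound $|j(Z)X|\leq ||j||_{\op}|Z||X|$, which is the standard argument. The paper gives no proof of its own here and simply cites Eberlein--Heber, whose estimate presumably rests on this same duality between the bracket and the $j$-map.
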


\section{Minimum of Sectional Curvature} \label{section_minimum}
In this section, we find the minimum of the sectional curvature.
\begin{theorem}\label{minimum_of_sectional_curvature_of_2-step_nilpotent_Lie_groups}
Let $N$ be a 2-step nilpotent Lie group with a left-invariant metric $\langle \cdot, \cdot \rangle$. Let $\mathcal{N}$ be the Lie algebra of $N$. Let $\mathcal{Z}$ be the center, and $\mathcal{V}=\mathcal{Z}^{\perp}$. Then, 
$K_{min}=-\frac{3}{4}||j||_{\op}^2$. Moreover, if $K(\pi)=K_{min}$, then $\pi \subseteq \mathcal{V}$.
\end{theorem}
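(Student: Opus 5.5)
The upper bound $K_{\min}\leq -\frac{3}{4}||j||_{\op}^2$ is already item 3 of Lemma \ref{maximum_eigenspace_and_two_interpretations}. So the real work is the reverse inequality $K(\pi)\geq -\frac34||j||_{\op}^2$ for every 2-plane $\pi$, together with the characterization of equality. I will work directly with the sectional curvature formula of Proposition \ref{general_formula} rather than with modified curvature operators.

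First, by Lemma \ref{rotation_in_2-plane}, choose an orthonormal basis $X+Z$, $X'+Z'$ of $\pi$ with $X,X'\in\mathcal{V}$, $Z,Z'\in\mathcal{Z}$ and $\langle X,X'\rangle=0=\langle Z,Z'\rangle$. Orthonormality then gives $|X|^2+|Z|^2=1=|X'|^2+|Z'|^2$. The two positive terms $\frac14|j(Z')X|^2+\frac14|j(Z)X'|^2$ in the formula can simply be dropped, keeping track of when they vanish. The remaining terms are bounded below as follows. By Lemma \ref{upper_bound_of_Lie_bracket}, $-\frac34|[X,X']|^2\geq -\frac34||j||_{\op}^2|X|^2|X'|^2$. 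By Cauchy--Schwarz and Lemma \ref{upper_bound_of_Lie_bracket},
\[
\tfrac12\langle j(Z)X',j(Z')X\rangle\geq -\tfrac12||j||_{\op}^2|Z||X'||Z'||X|
\quad\text{and}\quad
-\langle j(Z)X,j(Z')X'\rangle\geq -||j||_{\op}^2|Z||X||Z'||X'|.
\]

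The crude sum of these three bounds is too weak, since the cross terms give a total coefficient of $-\frac32$. This is the main obstacle, and I would handle it by not discarding the positive terms. Instead, write $a=\langle j(Z)X',j(Z')X\rangle$ and use
\[
\tfrac14|j(Z')X|^2+\tfrac14|j(Z)X'|^2\geq \tfrac12|j(Z')X||j(Z)X'|\geq \tfrac12|a|.
\]
Then $\frac14|j(Z')X|^2+\frac14|j(Z)X'|^2+\frac12 a\geq 0$, and it remains to bound
\[
-\tfrac34|[X,X']|^2-\langle j(Z)X,j(Z')X'\rangle \geq -||j||_{\op}^2\bigl(\tfrac34 x^2x'^2+ z x z' x'\bigr),
\]
where $x=|X|$, $x'=|X'|$, $z=|Z|=\sqrt{1-x^2}$, $z'=\sqrt{1-x'^2}$. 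The problem thus reduces to the elementary inequality $\frac34x^2x'^2+xx'zz'\leq\frac34$ on $[0,1]^2$. Writing $x=\cos\theta$, $z=\sin\theta$ and similarly with $\theta'$, we have $xx'zz'\leq \frac14\sin 2\theta\sin2\theta'$, and a two-variable calculus check (or AM--GM) shows the maximum $\frac34$ is attained only at $x=x'=1$. For instance, with $u=xx'$ and $zz'\leq 1-u$, the quantity is at most $\frac34u^2+u(1-u)=u-\frac14u^2\leq\frac34$, with equality iff $u=1$.

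For the equality statement: if $K(\pi)=-\frac34||j||_{\op}^2$, all inequalities are equalities. In particular $u=xx'=1$, so $|X|=|X'|=1$ and $Z=Z'=0$, which means $\pi=\mathrm{span}\{X,X'\}\subseteq\mathcal{V}$. Combining this with the upper bound from Lemma \ref{maximum_eigenspace_and_two_interpretations} gives $K_{\min}=-\frac34||j||_{\op}^2$.
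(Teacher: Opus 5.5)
Your proposal is correct and follows essentially the same route as the paper: the paper groups the two positive terms and the $\frac12\langle j(Z)X',j(Z')X\rangle$ term of Proposition \ref{general_formula} into $\frac14|j(Z)X'+j(Z')X|^2\geq 0$, which is exactly what your AM--GM plus Cauchy--Schwarz step establishes. It then bounds the remaining two terms via Lemma \ref{upper_bound_of_Lie_bracket} and Cauchy--Schwarz, and reduces to the same elementary inequality $x^2y^2+\frac43xy\sqrt{1-x^2}\sqrt{1-y^2}\leq 1$, with equality only at $x=y=1$. Your substitution $u=xx'$ with $zz'\leq 1-u$ (valid since $(1-xx')^2-(1-x^2)(1-x'^2)=(x-x')^2$) gives a clean proof of that inequality and its equality case, which the paper only asserts.
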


\begin{proof}
Let $\pi$ be a 2-plane in $\mathcal{N}$, and let $X+Z, X'+Z'$ be an orthonormal basis of $\pi$, where $X, X'\in \mathcal{V}$ and $Z,Z'\in \mathcal{Z}$. We may assume that $\langle X, X' \rangle=0=\langle Z, Z' \rangle$ by Lemma \ref{rotation_in_2-plane}. By Proposition \ref{general_formula}, we have $K(X+Z,X'+Z') 
        =\frac{1}{4}|j(Z)X'+j(Z')X|^2-\frac{3}{4}|[X,X']|^2 - \langle j(Z)X, j(Z')X' \rangle 
        \geq -\frac{3}{4}|[X,X']|^2 - \langle j(Z)X, j(Z')X' \rangle 
        = -\frac{3}{4}(|[X,X']|^2 + \frac{4}{3}\langle j(Z)X, j(Z')X' \rangle).$
By Lemma \ref{upper_bound_of_Lie_bracket}, we have $|[X,X']| \leq ||j||_{\op} |X||X'|$. Using the Cauchy--Schwarz inequality and Lemma \ref{upper_bound_of_Lie_bracket}, we have $\langle j(Z)X, j(Z')X' \rangle \leq |j(Z)X||j(Z')X'|\leq ||j||_{\op}^2|X||Z||X'||Z'|$. 

Therefore, $|[X,X']|^2 + \frac{4}{3}\langle j(Z)X, j(Z')X' \rangle
    \leq ||j||_{\op}^2|X|^2|X'|^2 +  \frac{4}{3}||j||_{\op}^2|X||Z||X'||Z'|.$ Since $X+Z$ and $X'+Z'$ are unit vectors, we have $|X|^2+|Z|^2=1$ and $|X'|^2+|Z'|^2=1$. Thus, $|[X,X']|^2 + \frac{4}{3}\langle j(Z)X, j(Z')X' \rangle\leq ||j||_{\op}^2(|X|^2|X'|^2+ \frac{4}{3}|X||X'|\sqrt{1-|X|^2}\sqrt{1-|X'|^2})$. Let $f(x,y):=x^2y^2 +  \frac{4}{3}xy\sqrt{1-x^2}\sqrt{1-y^2}$. Then, for any $x,y \in [0,1]$,  $f(x,y)\leq 1$ and $f(x,y)=1$ if and only if $x=y=1$. Thus, $K(X+Z, X'+Z') \geq -\frac{3}{4}||j||_{\op}^2$. If $K(X+Z, X'+Z') = -\frac{3}{4}||j||_{\op}^2$, then in particular, $f(|X|,|X'|)=1$. Thus, $|X|=|X'|=1$, and we have $\pi \subseteq \mathcal{V}$. Furthermore, by Lemma \ref{maximum_eigenspace_and_two_interpretations}, there exists a 2-plane $\pi$ such that $K(\pi)=-\frac{3}{4}||j||_{\op}^2$. This completes the proof.
\end{proof}

\section{Maximum of Sectional Curvature} \label{section_maximum}

\subsection{Proof of Theorem \ref{maintheorem_bounds} and Theorem \ref{maintheorem_rigidity}} \label{section_bounds_and_rigidity}

We prove the bounds that appear in Theorem \ref{maintheorem_bounds}.

\begin{proof}[Proof of the bounds of Theorem \ref{maintheorem_bounds}]  

Let $\mathcal{N}$ be the Lie algebra of $N$. Let $\pi$ be any 2-plane of $\mathcal{N}$.
    Let $X+Z, X'+Z'$ be an orthonormal basis of the 2-plane $\pi$, where $X,X'\in \mathcal{V}$ and $Z, Z'\in \mathcal{Z}$. Then, 
    we may assume that $\langle X, X' \rangle=0=\langle Z, Z' \rangle$ by Lemma \ref{rotation_in_2-plane}. By Proposition \ref{general_formula}, we have $K(X+Z,X'+Z')
        =\frac{1}{4}|j(Z')X|^2 + \frac{1}{4}|j(Z)X'|^2 -\frac{3}{4}|[X,X']|^2 +\frac{1}{2}\langle j(Z)X', j(Z')X \rangle 
        - \langle j(Z)X, j(Z')X' \rangle
        =\frac{1}{4}|j(Z)X'+j(Z')X|^2-\frac{3}{4}|[X,X']|^2 - \langle j(Z)X, j(Z')X' \rangle.$

    By applying Lemma \ref{upper_bound_of_Lie_bracket} and the Cauchy--Schwarz inequality repeatedly, we have
\begin{align}
       K(X+Z,X'+Z') \nonumber&=\frac{1}{4}|j(Z)X'+j(Z')X|^2-\frac{3}{4}|[X,X']|^2 - \langle j(Z)X, j(Z')X' \rangle \\ \label{first_ineq_in_rigidity_of_maximum}
        &\leq \frac{1}{4}|j(Z)X'+j(Z')X|^2 - \langle j(Z)X, j(Z')X' \rangle \\        
        \label{second_ineq_in_rigidity_of_maximum}
        &\leq \frac{1}{4}|j(Z)X'+j(Z')X|^2 + |j(Z)X||j(Z')X'| \\        
        \label{third_ineq_in_rigidity_of_maximum}
        &\leq \frac{1}{4}|j(Z)X'+j(Z')X|^2 + ||j||_{\op}^2|Z||X||Z'||X'| \\
        \label{fourth_ineq_in_rigidity_of_maximum}
        &\leq \frac{1}{4}(|j(Z)X'|+|j(Z')X|)^2 + ||j||_{\op}^2|Z||X||Z'||X'| \\
        \label{fifth_ineq_in_rigidity_of_maximum}
        &\leq \frac{1}{4}||j||_{\op}^2(|Z||X'|+|Z'||X|)^2 + ||j||_{\op}^2|Z||X||Z'||X'|. 
\end{align}
Set $|X|=\cos \theta$ and $|X'|=\cos \varphi$, where $0\leq \theta, \varphi \leq \frac{\pi}{2}$. Then, the last line can be calculated as $\frac{1}{4}||j||_{\op}^2(|Z||X'|+|Z'||X|)^2 + ||j||_{\op}^2|Z||X||Z'||X'|=\frac{1}{4}||j||_{\op}^2(( \cos\varphi \sin\theta + \cos\theta \sin\varphi)^2 + 4\cos\theta \cos\varphi \sin\theta \sin\varphi) =\frac{1}{4}||j||_{\op}^2(\sin^2(\theta+\varphi)+\sin 2\theta \sin 2\varphi)$. Now we use the last inequality:

\begin{equation}\label{sixth_ineq_in_rigidity_of_maximum}
    \frac{1}{4}||j||_{\op}^2(\sin^2(\theta+\varphi)+\sin 2\theta \sin 2\varphi)  
    \leq \frac{1}{4}||j||_{\op}^2(1+1)
    =\frac{1}{2}||j||_{\op}^2.
\end{equation}

Therefore, $K_{\max}\leq \frac{1}{2}||j||_{\op}^2$. On the other hand, by the item 3 of Lemma \ref{maximum_eigenspace_and_two_interpretations}, we always have a 2-plane whose sectional curvature is $\frac{1}{4}||j||_{\op}^2$. Therefore, $K_{\max}\geq \frac{1}{4}||j||_{\op}^2$. Combining this with Theorem \ref{minimum_of_sectional_curvature_of_2-step_nilpotent_Lie_groups} proves the bounds.
\end{proof}

We show $\mathrm{Heis}_3(\mathbb{C})$ with a Ricci soliton metric satisfies $K_{\max}=\frac{1}{2}||j||_{\op}^2$, $K_{\min}=-\frac{3}{4}||j||_{\op}^2$, and hence the pinching constant is given by $\frac{K_{\min}}{K_{\max}}=-\frac{3}{2}$.

\begin{example} \label{example_heis_3_C_with_Ricci_soliton_metric}

    The 3-dimensional complex Heisenberg algebra $\mathrm{heis}_3(\mathbb{C})$ is a complex 2-step nilpotent Lie algebra that has a $\mathbb{C}$-basis $e_1, e_2, e_3$ with the Lie bracket relation $[e_1, e_2]=e_3$, and $e_3$ is in the center. The corresponding simply connected Lie group $\mathrm{Heis}_3(\mathbb{C})$ is the 3-dimensional complex Heisenberg group. The algebra $\mathrm{heis}_3(\mathbb{C})$ has a $\mathbb{R}$-basis $\{e_1,\sqrt{-1}e_1, e_2, \sqrt{-1}e_2, e_3, \sqrt{-1}e_3\}$. Set $X_1=e_1, X_2=\sqrt{-1}e_1, X_3=e_2, X_4=-\sqrt{-1}e_2, Z_1=e_3, Z_2=-\sqrt{-1}e_3$. Then, $[X_1, X_2]=0, [X_1, X_3]=Z_1, [X_1, X_4]=Z_2, [X_2, X_3]=-Z_2,
    [X_2, X_4]=Z_1, [X_3, X_4]=0$. Let $\langle \cdot, \cdot \rangle$ be the inner product on $\mathrm{heis}_3(\mathbb{C})$ for which $X_1,X_2,X_3,X_4,Z_1,Z_2$ form an orthonormal basis. The center $\mathcal{Z}$ is given by $\mathcal{Z}=\mathbb{R}\textrm{-span}\{Z_1, Z_2\}$. The orthogonal complement $\mathcal{V}=\mathcal{Z}^{\perp}$ is given by $ \mathcal{V}=\mathbb{R}\textrm{-span}\{X_1,X_2,X_3,X_4\}$.

    The $j$-map of $\mathrm{heis}_3(\mathbb{C})$ with respect to the basis $\{X_1, X_2,X_3,X_4\}$ is given by
 
        $j(Z_1) =A_1:= \begin{bmatrix}
        0 & 0 & -1 & 0 \\
        0 & 0 & 0 & -1 \\
        1 & 0 & 0 & 0\\
        0 & 1 & 0 & 0
    \end{bmatrix}, 
    j(Z_2) = A_2:= \begin{bmatrix}
        0 & 0 & 0 & -1 \\
        0 & 0 & 1 & 0 \\
        0 & -1 & 0 & 0\\
        1 & 0 & 0 & 0
    \end{bmatrix}.$
    It follows that $j(z)^2=-|z|^2 \mathrm{id}_{\mathcal{V}}$ for any $z\in \mathcal{Z}$, so one sees that this metric becomes of Heisenberg type. In particular, this metric is a Ricci soliton metric (\cite[Example 3.3 (i)]{Lauret_Ricci_Soliton_Nilmanifolds}). Moreover, it follows that $||j||_{\op}=1$. We show that $K_{\max}=\frac{1}{2}||j||_{\op}^2=\frac{1}{2}$.

    Let $X=\frac{1}{\sqrt{2}}X_1, X'=\frac{1}{\sqrt{2}}X_2, Z=\frac{1}{\sqrt{2}}Z_1, Z'=\frac{1}{\sqrt{2}}Z_2$. Then, $\{X+Z, X'+Z'\}$ is orthonormal. Observe, $j(Z)X=\frac{1}{2}j(Z_1)X_1=\frac{1}{2}X_3, j(Z')X'=\frac{1}{2}j(Z_2)X_2=-\frac{1}{2}X_3, j(Z)X'=\frac{1}{2}j(Z_1)X_2=\frac{1}{2}X_4, j(Z')X=\frac{1}{2}j(Z_2)X_1=\frac{1}{2}X_4$, and $[X,X']=\frac{1}{2}[X_1,X_2]=0$. By Proposition \ref{general_formula}, it follows that$ K(X+Z,X'+Z')=\frac{1}{2}=\frac{1}{2}||j||_{\op}^2.$
    Thus, $\frac{1}{2}||j||_{\op}^2\leq K_{\max} \leq \frac{1}{2}||j||_{\op}^2$ from the bounds in Theorem \ref{maintheorem_bounds} that were proved above. Therefore, $K_{\max}=\frac{1}{2}||j||_{\op}^2$. By Theorem \ref{minimum_of_sectional_curvature_of_2-step_nilpotent_Lie_groups}, we have $K_{\min}=-\frac{3}{4}||j||_{\op}^2$. Therefore, the pinching constant is given by $\frac{K_{\min}}{K_{\max}}=-\frac{3}{2}$.
\end{example}

\begin{remark}\label{rescaling_heis_3_c_ricci_soliton}
    Let $\mathcal{N}=\mathrm{heis}_3(\mathbb{C})$. Define an inner product $\langle \cdot, \cdot\rangle_\xi$ on $\mathcal{N}$ so that $\xi X_1, ..., \xi X_4, \xi Z_1, \xi Z_2$ form an orthonormal basis, where $\xi>0$. Then the corresponding $j$-map (denoted by $j_\xi$) is given by  

        $j_\xi(\xi Z_1) = \begin{bmatrix}
        0 & 0 & -\xi & 0 \\
        0 & 0 & 0 & -\xi \\
        \xi & 0 & 0 & 0\\
        0 & \xi & 0 & 0
    \end{bmatrix}, 
    j_\xi(\xi Z_2) = \begin{bmatrix}
        0 & 0 & 0 & -\xi \\
        0 & 0 & \xi & 0 \\
        0 & -\xi & 0 & 0\\
        \xi & 0 & 0 & 0
    \end{bmatrix}$ with respect to $\xi X_1, ..., \xi X_4$.

    It follows that any metric 2-step nilpotent Lie algebra with the $j$-map given by the matrices above is $\mathrm{heis}_3(\mathbb{C})$ with a Ricci soliton inner product. In this basis, the Lie bracket relations are given by 
    \begin{align*}
        [\xi X_1, \xi X_2]&=0, & [\xi X_1, \xi X_3]&=\xi(\xi Z_1), & [\xi X_1, \xi X_4]&=\xi (\xi Z_2) \\
        [\xi X_2, \xi X_3]&=-\xi (\xi Z_2), & [\xi X_2, \xi X_4]&=\xi(\xi Z_1), & [\xi X_3, \xi X_4]&=0.
    \end{align*}

    For later use, it is useful to compute the matrix representation of $R|_{\mathcal{V}\otimes \mathcal{Z}}$ using the formula in Remark \ref{curv_op_on_V_wedge_Z_using_j_map}: $R|_{\mathcal{V}\otimes \mathcal{Z}}=   \begin{bmatrix}
           -\frac{1}{4}[j_\xi(\xi Z_1)]^2 & -\frac{1}{4} [j_\xi(\xi Z_2)][j_\xi(\xi Z_1) ] \\
           -\frac{1}{4}[j_\xi(\xi Z_1)][j_\xi(\xi Z_2)] & -\frac{1}{4}[j_\xi(\xi Z_2)]^2
       \end{bmatrix}$, where $[j_\xi(Z_i)]$ denotes the matrix representation of $j_\xi(Z_i)$ ($i=1,2$) with respect to $\xi X_1, ..., \xi X_4$. In particular, $-\frac{1}{4}[j_\xi(\xi Z_1)]^2=\frac{\xi^2}{4}I_4=-\frac{1}{4}[j_\xi(\xi Z_2)]^2$ and $-\frac{1}{4}[j_\xi(\xi Z_1)][j_\xi(\xi Z_2)]  =\begin{bmatrix} 0 & -\frac{\xi^2}{4} & 0 & 0\\ \frac{\xi^2}{4} & 0 & 0 & 0\\ 0 & 0 & 0 & \frac{\xi^2}{4}\\ 0 & 0 & -\frac{\xi^2}{4} & 0 \end{bmatrix}=-(-\frac{1}{4}[j_\xi(\xi Z_2)][j_\xi(\xi Z_1)])$. 

       In this case, the maximum of the sectional curvature is given by $\frac{1}{2}\xi^2$.
\end{remark}
\begin{proof}[Proof of Theorem \ref{maintheorem_bounds}]
    We already proved the bounds (see above). Example \ref{example_heis_3} and Example \ref{example_heis_3_C_with_Ricci_soliton_metric} give examples that achieve those bounds.
\end{proof}

We give a proof of Theorem \ref{maintheorem_rigidity} here. We first reduce it to the Lie algebra level. Suppose that $\pi$ is contained in a totally geodesic subalgebra $\mathcal{N}_1$ isomorphic to $\mathrm{heis}_3(\mathbb{C})$ with a Ricci soliton inner product as metric Lie algebras. 

Let $N_1=\exp(\mathcal{N}_1)$, where $\exp:\mathcal{N}\to N$ is a Lie group exponential map. Since $N$ is simply connected, $\exp$ is a diffeomorphism. Hence, $N_1$ is closed and simply connected in $N$. By the Baker--Campbell--Hausdorff formula together with $[\mathcal{N}, [\mathcal{N}, \mathcal{N}]]=0$ (see \cite[p. 616, (1.1)]{Eb1}), $N_1$ is a closed Lie subgroup of $N_1$. Since the Lie algebra of $N_1$ is $\mathcal{N}_1$, $N_1$ is isomorphic and isometric to $\mathrm{Heis}_3(\mathbb{C})$ with a Ricci soliton metric, as $N_1$ is simply connected. Now the result follows.

Therefore, it suffices to show that $\pi$ is contained in a totally geodesic subalgebra $\mathcal{N}_1$ isomorphic to $\mathrm{heis}_3(\mathbb{C})$ with a Ricci soliton inner product as metric Lie algebras. 

Let $\mathcal{Z}$ be the center of $\mathcal{N}$ and $\mathcal{V}=\mathcal{Z}^{\perp}$. Let $\pi$ be a 2-plane such that $K(\pi)=\frac{1}{2}||j||_{\op}^2$.
    Let $X+Z, X'+Z'$ be an orthonormal basis of the 2-plane $\pi$, where $X,X'\in \mathcal{V}$ and $Z, Z'\in \mathcal{Z}$. Then, 
    we may assume that $\langle X, X' \rangle=0=\langle Z, Z' \rangle$ by Lemma \ref{rotation_in_2-plane}. 
    
Since $K(\pi)=K(X+Z,X'+Z')=\frac{1}{2}||j||_{\op}^2$, all the 6 inequalities (\ref{first_ineq_in_rigidity_of_maximum}), (\ref{second_ineq_in_rigidity_of_maximum}), (\ref{third_ineq_in_rigidity_of_maximum}), (\ref{fourth_ineq_in_rigidity_of_maximum}), (\ref{fifth_ineq_in_rigidity_of_maximum}), (\ref{sixth_ineq_in_rigidity_of_maximum}) in the proof of Theorem \ref{maintheorem_bounds} must be the equalities. 

\begin{lemma} \label{lemma_consequence_of_sixth_ineq}
    We have
\begin{enumerate}
    \item $|X|=|X'|=|Z|=|Z'|=\frac{1}{\sqrt{2}}.$
    \item $|j(Z)X|=||j||_{\op}|Z||X|=\frac{1}{2}||j||_{\op}$ and $|j(Z')X'|=||j||_{\op}|Z'||X'|=\frac{1}{2}||j||_{\op}.$
    \item $|j(Z)X'|=||j||_{\op}|Z||X'|=\frac{1}{2}||j||_{\op}$ and $|j(Z')X|=||j||_{\op}|Z'||X|=\frac{1}{2}||j||_{\op}$.
\end{enumerate}
\end{lemma}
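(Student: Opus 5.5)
We extract each item from the equality cases of the chain (\ref{first_ineq_in_rigidity_of_maximum})--(\ref{sixth_ineq_in_rigidity_of_maximum}), starting with the last inequality. Equality in (\ref{sixth_ineq_in_rigidity_of_maximum}) requires $\sin^2(\theta+\varphi)=1$ and $\sin 2\theta\sin 2\varphi=1$, where $\theta,\varphi\in[0,\frac{\pi}{2}]$. The second condition forces $\sin 2\theta=\sin 2\varphi=1$, so $\theta=\varphi=\frac{\pi}{4}$, and the first then holds automatically. Hence $|X|=\cos\theta=\frac{1}{\sqrt{2}}$ and $|X'|=\cos\varphi=\frac{1}{\sqrt{2}}$. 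Since $X+Z$ and $X'+Z'$ are unit vectors with $X\perp Z$, $X'\perp Z'$, we get $|Z|^2=1-|X|^2=\frac{1}{2}$ and $|Z'|^2=\frac{1}{2}$. This proves item 1. We need $||j||_{\op}\neq 0$ to divide out the common factor; this holds because $N$ is non-abelian, as it is 2-step nilpotent (otherwise $K_{\max}=0$ and the pinching constant would be undefined).

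For item 2, we use equality in (\ref{third_ineq_in_rigidity_of_maximum}): $|j(Z)X||j(Z')X'|=||j||_{\op}^2|Z||X||Z'||X'|$. By Lemma \ref{upper_bound_of_Lie_bracket}, each factor satisfies $|j(Z)X|\le ||j||_{\op}|Z||X|$ and $|j(Z')X'|\le ||j||_{\op}|Z'||X'|$. The right-hand sides are strictly positive by item 1, so equality of the products forces equality in each factor. Substituting the norms from item 1 gives the value $\frac{1}{2}||j||_{\op}$.

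For item 3, we use equality in (\ref{fifth_ineq_in_rigidity_of_maximum}): $(|j(Z)X'|+|j(Z')X|)^2=||j||_{\op}^2(|Z||X'|+|Z'||X|)^2$. Both bases are nonnegative, so this is equivalent to $|j(Z)X'|+|j(Z')X|=||j||_{\op}(|Z||X'|+|Z'||X|)$. By Lemma \ref{upper_bound_of_Lie_bracket}, each summand on the left is bounded by the corresponding summand on the right, so both must be equalities. With item 1 this again yields $\frac{1}{2}||j||_{\op}$.

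The only mildly delicate point is the strict positivity used to split the product in item 2, which is exactly what item 1 supplies. For this reason the order of the argument (item 1 first) matters. Everything else follows directly from the equality cases.
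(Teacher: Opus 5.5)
Your proposal is correct and is the same argument as the paper's: equality in (\ref{sixth_ineq_in_rigidity_of_maximum}) forces $\theta=\varphi=\frac{\pi}{4}$, and then equality in (\ref{third_ineq_in_rigidity_of_maximum}) and (\ref{fifth_ineq_in_rigidity_of_maximum}), combined with Lemma \ref{upper_bound_of_Lie_bracket} and item 1, gives items 2 and 3. Your extra remarks make explicit two details the paper leaves implicit: that $||j||_{\op}\neq 0$, and that the positivity from item 1 is what lets you split the product equality in item 2.
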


\begin{proof}
    The equality for the sixth inequality (\ref{sixth_ineq_in_rigidity_of_maximum}) together with the condition $\theta, \varphi \in [0,\frac{\pi}{2}]$ imply that $\theta=\varphi=\frac{\pi}{4}$, and the item 1 follows. The equality for the third inequality (\ref{third_ineq_in_rigidity_of_maximum}) and the fifth inequality (\ref{fifth_ineq_in_rigidity_of_maximum}) 
    together with the item 1 imply the items 2 and 3.
\end{proof}

Define

\begin{align} \label{basis_equations_in_rigidity}
    Z_1 = \frac{Z}{|Z|},\;\;\; 
    Z_2 = \frac{Z'}{|Z'|},\;\;\; 
    X_1 = \frac{X}{|X|}, \;\;\;
    X_2 = \frac{X'}{|X'|}, \;\;\;
    X_3 = \frac{j(Z)X}{|j(Z)X|}, \;\;\;
    X_4 = \frac{j(Z')X}{|j(Z')X|}.
\end{align}

Let $\mathcal{N}_1=\mathbb{R}\textrm{-span}\{Z_1, Z_2, X_1, X_2, X_3, X_4\}$. We will show that $\mathcal{N}_1$ is the desired totally geodesic subalgebra isomorphic to $\mathrm{heis}_3(\mathbb{C})$ with a Ricci soliton inner product as metric Lie algebras. More specifically, we will see the following.
\begin{proposition} \label{proposition_key_for_rigidity} The following statements hold.
    \begin{enumerate}
    \item \label{j-map_of_key_proposition_for_rigidity} The $j$-map satisfies 
    \begin{align*}
    j(Z_1)X_1 &=  ||j||_{\text{op}}X_3, & j(Z_1)X_2 &=  ||j||_{\text{op}}X_4, & j(Z_1)X_3 &= -||j||_{\text{op}}X_1, & j(Z_1)X_4 &= -||j||_{\text{op}}X_2, \\
    j(Z_2)X_1 &=  ||j||_{\text{op}}X_4, & j(Z_2)X_2 &= -||j||_{\text{op}}X_3, & j(Z_2)X_3 &=  ||j||_{\text{op}}X_2, & j(Z_2)X_4 &= -||j||_{\text{op}}X_1.
\end{align*}
    \item \label{Lie_brackets_of_key_proposition_for_rigidity} $\mathcal{N}_1$ is closed under the Lie bracket, satisfying
    \begin{align*}
    [X_1, X_2] &= 0,                & [X_1, X_3] &= ||j||_{\text{op}} Z_1,  & [X_1, X_4] &= ||j||_{\text{op}} Z_2, \\
    [X_2, X_3] &= -||j||_{\text{op}} Z_2, & [X_2, X_4] &= ||j||_{\text{op}} Z_1,  & [X_3, X_4] &= 0.
\end{align*}
\end{enumerate}
\end{proposition}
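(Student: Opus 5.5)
The plan is to extract everything from the equality cases of the six inequalities (\ref{first_ineq_in_rigidity_of_maximum})--(\ref{sixth_ineq_in_rigidity_of_maximum}), together with Lemma \ref{lemma_consequence_of_sixth_ineq}. The main tool is the eigenvector characterization in Lemma \ref{maximum_eigenspace_and_two_interpretations}. Suppose $Z_0\in\mathcal{Z}$ and $X_0\in\mathcal{V}$ are unit vectors with $|j(Z_0)X_0|=||j||_{\op}$; call such $(Z_0,X_0)$ a \emph{maximal pair}. Then $-j(Z_0)^2X_0=||j||_{\op}^2X_0$ and $[X_0,j(Z_0)X_0]=||j||_{\op}^2Z_0$. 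By Lemma \ref{lemma_consequence_of_sixth_ineq}(2),(3), all four of $(Z_1,X_1),(Z_2,X_2),(Z_1,X_2),(Z_2,X_1)$ are maximal pairs.

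\textbf{Step 1: the $j$-map.} The definition (\ref{basis_equations_in_rigidity}) together with Lemma \ref{lemma_consequence_of_sixth_ineq} gives $j(Z_1)X_1=||j||_{\op}X_3$ and $j(Z_2)X_1=||j||_{\op}X_4$. Equality in (\ref{second_ineq_in_rigidity_of_maximum}) is equality in Cauchy--Schwarz with a minus sign. Since $j(Z)X$ and $j(Z')X'$ have equal norms, this forces $j(Z')X'=-j(Z)X$, that is, $j(Z_2)X_2=-||j||_{\op}X_3$. Equality in (\ref{fourth_ineq_in_rigidity_of_maximum}) is equality in the triangle inequality, with equal norms. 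This gives $j(Z)X'=j(Z')X$, that is, $j(Z_1)X_2=||j||_{\op}X_4$.

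The remaining four relations come from the eigen-equations $-j(Z_a)^2X_b=||j||_{\op}^2X_b$ of the maximal pairs. For example, $j(Z_1)X_4=||j||_{\op}^{-1}j(Z_1)^2X_2=-||j||_{\op}X_2$, and $j(Z_2)X_3=-||j||_{\op}^{-1}j(Z_2)^2X_2=||j||_{\op}X_2$. The other two are analogous.

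Orthonormality of $X_1,\dots,X_4$ needs a separate check:
\begin{itemize}
\item $X_3\perp X_1$ and $X_4\perp X_1,X_2$ follow from skew-symmetry.
\item $X_3\perp X_2$ follows from $X_3=-j(Z_2)X_2/||j||_{\op}$.
\item $\langle X_3,X_4\rangle=||j||_{\op}^{-2}\langle -j(Z_1)^2X_1,X_2\rangle=\langle X_1,X_2\rangle=0$.
\end{itemize}
Finally, $\langle X_1,X_2\rangle=\langle Z_1,Z_2\rangle=0$ holds by Lemma \ref{rotation_in_2-plane}.

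\textbf{Step 2: brackets.} Equality in (\ref{first_ineq_in_rigidity_of_maximum}) gives $[X_1,X_2]=0$. Applying $[X_0,j(Z_0)X_0]=||j||_{\op}^2Z_0$ to the four maximal pairs yields, directly and in full (not merely the components along $Z_1,Z_2$):
\begin{itemize}
\item $[X_1,X_3]=||j||_{\op}Z_1$,
\item $[X_2,X_4]=||j||_{\op}Z_1$,
\item $[X_1,X_4]=||j||_{\op}Z_2$,
\item $[X_2,X_3]=-||j||_{\op}Z_2$.
\end{itemize}

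\textbf{Step 3: the obstacle, $[X_3,X_4]=0$.} This is the step I expect to be the main obstacle. The pairing $\langle[X_3,X_4],W\rangle=\langle j(W)X_3,X_4\rangle$ is computable only for $W\in\mathrm{span}\{Z_1,Z_2\}$, and it is not clear how to control the components orthogonal to that span.

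My plan is to manufacture further maximal pairs. Take $u=\cos t\,X_1+\sin t\,X_4$. Then $j(Z_1)u=||j||_{\op}(\cos t\,X_3-\sin t\,X_2)$ has norm $||j||_{\op}$, since $X_2\perp X_3$. So $(Z_1,u)$ is a maximal pair, and Lemma \ref{maximum_eigenspace_and_two_interpretations} gives $[u,j(Z_1)u]=||j||_{\op}^2Z_1$. Expanding with the brackets from Step 2, the $\cos^2 t$ and $\sin^2 t$ terms each contribute $||j||_{\op}^2Z_1$. The term $[X_1,X_2]$ vanishes. What remains is $||j||_{\op}\sin t\cos t\,[X_4,X_3]=0$, and choosing $t=\pi/4$ yields $[X_3,X_4]=0$.

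Closing up: Step 2 and Step 3 together show $\mathcal{N}_1$ is closed under the bracket, with exactly the stated relations. This completes both items of the proposition.
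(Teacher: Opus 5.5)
Your proposal is correct. For item 1 and the first five brackets you follow essentially the paper's route: the equality cases of Cauchy--Schwarz and of the triangle inequality, together with the two eigenvector interpretations of Lemma~\ref{maximum_eigenspace_and_two_interpretations}. Your proof of $[X_3,X_4]=0$, however, is genuinely different.

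The paper argues by contradiction. It first shows $[X_3,X_4]\perp\mathbb{R}\textrm{-span}\{Z_1,Z_2\}$ and sets $Z_3=[X_3,X_4]/|[X_3,X_4]|$. It then extends to bases, writes $j(Z_1)$ and $j(Z_3)$ in block form, and optimizes over the rotated central direction $uZ_1+vZ_3$, with test vector $\frac{1}{\sqrt2}(X_2+X_3)$, to exceed $||j||_{\op}$.

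You perturb on the $\mathcal{V}$ side instead. Since $X_1,\dots,X_4$ all lie in the $||j||_{\op}^2$-eigenspace of $-j(Z_1)^2$, the vector $u=\cos t\,X_1+\sin t\,X_4$ is again a maximal partner of $Z_1$. The full vector identity $[u,j(Z_1)u]=||j||_{\op}^2Z_1$ then reduces, once the diagonal terms are accounted for, to $||j||_{\op}\sin t\cos t\,([X_4,X_3]-[X_1,X_2])=0$. That is, $[X_3,X_4]=-[X_1,X_2]$, which vanishes by the first equality case. The expansion checks out.

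Your route is shorter and more transparent. Lemma~\ref{maximum_eigenspace_and_two_interpretations} gives a vector identity rather than componentwise information, so all components of $[X_3,X_4]$ are handled at once. The orthogonality sublemma, the auxiliary $Z_3$, and the trigonometric estimate all become unnecessary. It also shows exactly why $[X_1,X_2]=0$ is the decisive input: in the quaternionic Heisenberg algebra of Section~\ref{section_quaternionic_Heisenberg_group} the relations of item 1 hold, yet $[X_1,X_2]=-Z_3=-[X_3,X_4]\neq 0$.

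The paper's argument, for its part, uses only the definition of $||j||_{\op}$ as a maximum and produces an explicit violating pair, at the cost of more bookkeeping.
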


\begin{proof}[Proof of Theorem \ref{maintheorem_rigidity} assuming Proposition \ref{proposition_key_for_rigidity}] 

Let $\mathcal{V}_1$ be the span of $X_1, ..., X_4$. Let $\mathcal{Z}_1$ be the span of $Z_1, Z_2$. The Lie bracket relations and the $j$-map given in Proposition \ref{proposition_key_for_rigidity} coincide with the ones given in Remark \ref{rescaling_heis_3_c_ricci_soliton} with $\xi=||j||_{\op}$. Hence, the subalgebra $\mathcal{N}_1$ is isomorphic to $\mathrm{heis}_3(\mathbb{C})$ with a Ricci soliton inner product as metric Lie algebras. In particular, $\mathcal{Z}_1$ is the center and $\mathcal{N}_1$ is an aligned subalgebra by design.

For any $z\in \mathcal{Z}_1$, $j(z)(\mathcal{V}_1)\subseteq \mathcal{V}_1$ by the item \ref{j-map_of_key_proposition_for_rigidity} of Proposition \ref{proposition_key_for_rigidity}, so it follows from Lemma \ref{criterion_of_totally_geodesic_when_it_is_aligned} that $\mathcal{N}_1$ is an aligned totally geodesic subalgebra of $\mathcal{N}$. By the discussion above, the result follows.
\end{proof}

We prove Proposition \ref{proposition_key_for_rigidity} by putting together all the conditions we gain from the inequalities (\ref{first_ineq_in_rigidity_of_maximum}), (\ref{second_ineq_in_rigidity_of_maximum}), (\ref{third_ineq_in_rigidity_of_maximum}), (\ref{fourth_ineq_in_rigidity_of_maximum}), (\ref{fifth_ineq_in_rigidity_of_maximum}), analyzing them in the backward order.

Using the inequality (\ref{fifth_ineq_in_rigidity_of_maximum}), we have the following lemma.

\begin{lemma} \label{lemma_consequence_of_fifth_ineq}
    \begin{enumerate}
        \item $|j(Z_1)X_2|=||j||_{\op}$ and $|j(Z_2)X_1|=||j||_{\op}$.
        \item $-j(Z_1)^2 X_2 = ||j||_{\op}^2 X_2$ and $[X_2, j(Z_1)X_2] =||j||_{\op}^2 Z_1$.
        \item $-j(Z_2)^2 X_1 =||j||_{\op}^2 X_1$ and $[X_1, j(Z_2)X_1]=||j||_{\op}^2 Z_2$.
        \item $X_4=\frac{1}{||j||_{\op}}j(Z_2)X_1$ and $[X_1, X_4]=||j||_{\op}Z_2$.
    \end{enumerate}
\end{lemma}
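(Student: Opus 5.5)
The plan is to read off Lemma \ref{lemma_consequence_of_fifth_ineq} from the equality cases already recorded in Lemma \ref{lemma_consequence_of_sixth_ineq}, and then to invoke Lemma \ref{maximum_eigenspace_and_two_interpretations} for the eigenvector statements.

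For item 1, I will use item 3 of Lemma \ref{lemma_consequence_of_sixth_ineq}. It gives $|j(Z)X'|=\frac{1}{2}||j||_{\op}$ and $|j(Z')X|=\frac{1}{2}||j||_{\op}$. By item 1 of the same lemma, $|Z|=|X'|=|Z'|=|X|=\frac{1}{\sqrt 2}$. Hence by linearity
\[
|j(Z_1)X_2| = \frac{|j(Z)X'|}{|Z||X'|} = ||j||_{\op},
\]
and in the same way $|j(Z_2)X_1|=||j||_{\op}$. Note that $Z_1,Z_2$ and $X_1,X_2$ are unit vectors by construction in (\ref{basis_equations_in_rigidity}).

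For items 2 and 3, the pairs $(Z_1,X_2)$ and $(Z_2,X_1)$ are unit vectors in $\mathcal{Z}\times\mathcal{V}$ realizing $||j||_{\op}$. Lemma \ref{maximum_eigenspace_and_two_interpretations} applies to any such maximizing pair. Its item 1 then yields $-j(Z_1)^2X_2=||j||_{\op}^2X_2$ and $[X_2,j(Z_1)X_2]=||j||_{\op}^2Z_1$, together with the analogous identities for $(Z_2,X_1)$. This is the two-sided eigenvalue argument for $-j(Z)^2$ and $\mathrm{ad}X\circ(\mathrm{ad}X)^*$, so nothing new needs to be proved.

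For item 4, by definition $X_4=j(Z')X/|j(Z')X|$. Item 3 of Lemma \ref{lemma_consequence_of_sixth_ineq} gives $|j(Z')X|=||j||_{\op}|Z'||X|$, and $j(Z')X=|Z'||X|\,j(Z_2)X_1$. Combining these,
\[
X_4=\frac{1}{||j||_{\op}}\,j(Z_2)X_1.
\]
Then item 3 of the present lemma gives
\[
[X_1,X_4]=\frac{1}{||j||_{\op}}[X_1,j(Z_2)X_1]=||j||_{\op}Z_2.
\]
This is also item 2 of Lemma \ref{maximum_eigenspace_and_two_interpretations} with $Y=X_4$.

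There is no real obstacle in this argument. The only care needed is to check that the normalizations in (\ref{basis_equations_in_rigidity}) are legitimate: the vectors $Z,Z',X,X'$ and $j(Z')X$ must all be nonzero. This follows from Lemma \ref{lemma_consequence_of_sixth_ineq}, since each has positive norm (note $||j||_{\op}>0$ because $N$ is non-abelian).
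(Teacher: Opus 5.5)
Your proposal is correct and follows the same route as the paper: item 1 from the equality cases in Lemma \ref{lemma_consequence_of_sixth_ineq}, items 2 and 3 by applying Lemma \ref{maximum_eigenspace_and_two_interpretations} to the maximizing pairs $(Z_1,X_2)$ and $(Z_2,X_1)$, and item 4 from the definition of $X_4$. You also check explicitly that the normalizations in (\ref{basis_equations_in_rigidity}) are legitimate, which the paper leaves implicit.
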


\begin{proof}
    The items 1 is a direct consequence of Lemma \ref{lemma_consequence_of_sixth_ineq}.  Then, the items 2 and 3 follow from Lemma \ref{maximum_eigenspace_and_two_interpretations}. The item 4 follows by definition of $X_4$.
\end{proof}

By the inequality (\ref{fourth_ineq_in_rigidity_of_maximum}), we show the following.

\begin{lemma}\label{lemma_consequence_of_fourth_ineq}
    \begin{enumerate}
        \item $j(Z')X=j(Z)X'$ and $j(Z_2)X_1=j(Z_1)X_2=||j||_{\op}X_4$
        \item $X_4=\frac{1}{||j||_{\op}}j(Z_1)X_2$ and $[X_2, X_4]=||j||_{\op}Z_1$.
        \item $j(Z_1)X_4=-||j||_{\op}X_2$ and $j(Z_2)X_4=-||j||_{\op}X_1$.
    \end{enumerate}
\end{lemma}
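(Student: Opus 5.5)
The plan is to extract everything from equality in inequality (\ref{fourth_ineq_in_rigidity_of_maximum}), which is the triangle inequality $|j(Z)X'+j(Z')X|\leq |j(Z)X'|+|j(Z')X|$. Equality there forces $j(Z)X'$ and $j(Z')X$ to be nonnegative multiples of each other. By item 3 of Lemma \ref{lemma_consequence_of_sixth_ineq}, both vectors have the same length $\frac{1}{2}||j||_{\op}\neq 0$. Hence $j(Z')X=j(Z)X'$.

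Next I rescale using $|X|=|X'|=|Z|=|Z'|=\frac{1}{\sqrt 2}$ from item 1 of Lemma \ref{lemma_consequence_of_sixth_ineq}. This gives $j(Z_2)X_1=j(Z_1)X_2$. By definition (\ref{basis_equations_in_rigidity}) and item 4 of Lemma \ref{lemma_consequence_of_fifth_ineq}, we have $j(Z_2)X_1=||j||_{\op}X_4$, so $j(Z_1)X_2=||j||_{\op}X_4$ as well. That proves item 1.

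For item 2, the identity $X_4=\frac{1}{||j||_{\op}}j(Z_1)X_2$ is immediate from item 1. The bracket follows from item 2 of Lemma \ref{lemma_consequence_of_fifth_ineq}: $[X_2,j(Z_1)X_2]=||j||_{\op}^2Z_1$. Dividing by $||j||_{\op}$ gives $[X_2,X_4]=||j||_{\op}Z_1$.

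For item 3, I apply $j(Z_1)$ to $||j||_{\op}X_4=j(Z_1)X_2$. Using $-j(Z_1)^2X_2=||j||_{\op}^2X_2$ (item 2 of Lemma \ref{lemma_consequence_of_fifth_ineq}), this gives $||j||_{\op}\,j(Z_1)X_4=j(Z_1)^2X_2=-||j||_{\op}^2X_2$, so $j(Z_1)X_4=-||j||_{\op}X_2$. In the same way, applying $j(Z_2)$ to $||j||_{\op}X_4=j(Z_2)X_1$ and using item 3 of Lemma \ref{lemma_consequence_of_fifth_ineq} gives $j(Z_2)X_4=-||j||_{\op}X_1$.

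The only delicate point is the equality case of the triangle inequality. This step needs both vectors to be nonzero, which holds because $||j||_{\op}>0$ for a non-abelian $\mathcal{N}$ (and if $||j||_{\op}=0$ the statement is vacuous). All the remaining steps are direct substitutions.
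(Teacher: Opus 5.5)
Your proposal is correct and follows the paper's proof essentially step for step: equality in (\ref{fourth_ineq_in_rigidity_of_maximum}), combined with the equal lengths from Lemma \ref{lemma_consequence_of_sixth_ineq}, forces $j(Z')X=j(Z)X'$, and items 2 and 3 then follow from Lemma \ref{lemma_consequence_of_fifth_ineq} exactly as you compute (you simply write out the computation the paper summarizes as ``combining'' the lemmas). Your caveat about the vectors being nonzero is unnecessary, since $\langle a,b\rangle=|a||b|$ together with $|a|=|b|$ already gives $|a-b|^2=0$.
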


\begin{proof}
    We show the item 1. By the inequality (\ref{fourth_ineq_in_rigidity_of_maximum}), we have $\langle j(Z)X', j(Z')X\rangle = |j(Z)X'||j(Z')X|$, so two vectors $j(Z')X$ and $j(Z)X'$ are linearly dependent. We already know that  $|j(Z')X|=|j(Z)X'|=\frac{1}{2}||j||_{\op}^2$ from Lemma \ref{lemma_consequence_of_sixth_ineq}, so we have $j(Z')X=\pm j(Z)X'$. The equation above implies that the inner product of the two vectors is positive, so we have $j(Z')X=j(Z)X'$. Using Lemma \ref{lemma_consequence_of_sixth_ineq}, we have $j(Z_2)X_1=j(Z_1)X_2$. By the item 4 of Lemma \ref{lemma_consequence_of_fifth_ineq}, $j(Z_2)X_1=j(Z_1)X_2=||j||_{\op}X_4$. 
    
    We show the item 2. Using the item 2 of Lemma \ref{lemma_consequence_of_fifth_ineq}, we have $||j||_{\op}^2Z_1=[X_2, j(Z_1)X_2]=[X_2, ||j||_{\op}X_4]$. Thus, the item 2 follows. 
    
    By combining Lemma \ref{lemma_consequence_of_fifth_ineq} and the item 1 of this lemma, the item 3 follows.
\end{proof}
The inequality (\ref{third_ineq_in_rigidity_of_maximum}) implies the following.
\begin{lemma} \label{lemma_consequence_of_third_ineq}
    \begin{enumerate}
        \item $|j(Z_1)X_1|=||j||_{\op}$ and $|j(Z_2)X_2|=||j||_{\op}$.
        \item $-j(Z_1)^2 X_1 = ||j||_{\op}^2 X_1$ and $[X_1, j(Z_1)X_1] = ||j||_{\op}^2 Z_1$.
        \item $-j(Z_2)^2 X_2 = ||j||_{\op}^2 X_2$ and $[X_2, j(Z_2)X_2] = ||j||_{\op}^2 Z_2.$
        \item $X_3=\frac{1}{||j||_{\op}}j(Z_1)X_1$ and $[X_1, X_3] = ||j||_{\op}Z_1.$
    \end{enumerate}
\end{lemma}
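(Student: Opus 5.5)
This follows the pattern of Lemma \ref{lemma_consequence_of_fifth_ineq}, now drawing on the equality case of the third inequality (\ref{third_ineq_in_rigidity_of_maximum}). Equality in (\ref{third_ineq_in_rigidity_of_maximum}) means $|j(Z)X||j(Z')X'| = ||j||_{\op}^2|Z||X||Z'||X'|$. Lemma \ref{upper_bound_of_Lie_bracket} gives $|j(Z)X|\leq ||j||_{\op}|Z||X|$ and $|j(Z')X'|\leq ||j||_{\op}|Z'||X'|$. By item 1 of Lemma \ref{lemma_consequence_of_sixth_ineq}, all of $|X|,|X'|,|Z|,|Z'|$ are nonzero, so both of these inequalities must be equalities. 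This is exactly item 2 of Lemma \ref{lemma_consequence_of_sixth_ineq}.

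Dividing by $|Z||X|=\frac12$ and $|Z'||X'|=\frac12$ and using the definitions (\ref{basis_equations_in_rigidity}), item 1 follows:
\[
|j(Z_1)X_1|=||j||_{\op}, \qquad |j(Z_2)X_2|=||j||_{\op},
\]
by linearity of $j$ in both arguments.

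Items 2 and 3 then follow from Lemma \ref{maximum_eigenspace_and_two_interpretations}. The unit vectors $Z_1\in\mathcal{Z}$ and $X_1\in\mathcal{V}$ realize the operator norm, so item 1 of that lemma yields
\[
-j(Z_1)^2X_1=||j||_{\op}^2X_1, \qquad [X_1,j(Z_1)X_1]=||j||_{\op}^2Z_1.
\]
The same argument applied to the pair $(Z_2,X_2)$ gives item 3.

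For item 4, the definition gives $X_3=j(Z)X/|j(Z)X|$, and $j(Z)X=\frac12 j(Z_1)X_1$ has norm $\frac12||j||_{\op}$. Hence $X_3=\frac{1}{||j||_{\op}}j(Z_1)X_1$. Substituting this into the bracket identity of item 2 and dividing by $||j||_{\op}$ gives $[X_1,X_3]=||j||_{\op}Z_1$.

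No step is a real obstacle. The only point needing care is the nonvanishing of $|X|,|Z|,|X'|,|Z'|$, which item 1 of Lemma \ref{lemma_consequence_of_sixth_ineq} guarantees. This is what lets us split the product equality into the two separate equalities.
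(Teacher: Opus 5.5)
Your proposal is correct and follows the paper's proof. Item 1 comes from item 2 of Lemma \ref{lemma_consequence_of_sixth_ineq}, which you re-derive from equality in (\ref{third_ineq_in_rigidity_of_maximum}) where the paper simply cites it; items 2 and 3 follow from Lemma \ref{maximum_eigenspace_and_two_interpretations}, and item 4 from the definition of $X_3$, exactly as in the paper.
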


\begin{proof}
    Lemma \ref{lemma_consequence_of_sixth_ineq} implies the item 1. Then the items 2 and 3 follow from Lemma \ref{maximum_eigenspace_and_two_interpretations}. The item 4 follows by definition of $X_3$.
\end{proof}

The equality for the second inequality (\ref{second_ineq_in_rigidity_of_maximum}) shows the following.

\begin{lemma}\label{lemma_consequence_of_second_ineq}
\begin{enumerate}
    \item $-j(Z')X'=j(Z)X$ and $-j(Z_2)X_2=j(Z_1)X_1=||j||_{\op}X_3$. 
    \item $[X_2, X_3]=-||j||_{\op} Z_2$.
    \item $j(Z_1)X_3=-||j||_{\op}X_1$ and $j(Z_2)X_3=||j||_{\op}X_2$.
\end{enumerate}
    
\end{lemma}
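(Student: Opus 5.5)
Since equality holds in (\ref{second_ineq_in_rigidity_of_maximum}), we have $-\langle j(Z)X, j(Z')X'\rangle = |j(Z)X||j(Z')X'|$. This is the equality case of Cauchy--Schwarz with a negative sign, so $j(Z)X$ and $j(Z')X'$ are antiparallel. By Lemma \ref{lemma_consequence_of_sixth_ineq} both vectors have norm $\frac{1}{2}||j||_{\op}$, hence $j(Z)X=-j(Z')X'$; when this common norm is nonzero the inner product is then negative, which fixes the sign. Dividing by $|Z||X|=|Z'||X'|=\frac{1}{2}$ (item 1 of Lemma \ref{lemma_consequence_of_sixth_ineq}) gives $j(Z_1)X_1=-j(Z_2)X_2$. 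By definition $X_3=j(Z)X/|j(Z)X|=j(Z_1)X_1/||j||_{\op}$ (item 4 of Lemma \ref{lemma_consequence_of_third_ineq}), so $j(Z_1)X_1=||j||_{\op}X_3$ and item 1 follows.

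For item 2, use item 3 of Lemma \ref{lemma_consequence_of_third_ineq}: $[X_2, j(Z_2)X_2]=||j||_{\op}^2 Z_2$. Substituting $j(Z_2)X_2=-||j||_{\op}X_3$ gives $-||j||_{\op}[X_2,X_3]=||j||_{\op}^2Z_2$. Since $||j||_{\op}>0$ ($\mathcal{N}$ is nonabelian), we get $[X_2,X_3]=-||j||_{\op}Z_2$.

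For item 3, apply Lemma \ref{maximum_eigenspace_and_two_interpretations} through item 2 of Lemma \ref{lemma_consequence_of_third_ineq}: $-j(Z_1)^2X_1=||j||_{\op}^2X_1$. Write $j(Z_1)^2X_1=j(Z_1)(||j||_{\op}X_3)$; then $j(Z_1)X_3=-||j||_{\op}X_1$. Similarly, $-j(Z_2)^2X_2=||j||_{\op}^2X_2$ combined with $j(Z_2)X_2=-||j||_{\op}X_3$ gives $-j(Z_2)(-||j||_{\op}X_3)=||j||_{\op}^2X_2$, that is, $j(Z_2)X_3=||j||_{\op}X_2$.

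There is no real obstacle in this argument. The only delicate point is the sign in the equality case of Cauchy--Schwarz: the inequality (\ref{second_ineq_in_rigidity_of_maximum}) replaces $-\langle\cdot,\cdot\rangle$ by $+|\cdot||\cdot|$, so equality forces antiparallel vectors, not parallel ones.
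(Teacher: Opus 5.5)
Your proof is correct and follows the paper's argument. Item 1 comes from the equality case of Cauchy--Schwarz in (\ref{second_ineq_in_rigidity_of_maximum}) together with the equal norms from Lemma \ref{lemma_consequence_of_sixth_ineq}; items 2 and 3 come from substituting $j(Z_2)X_2=-||j||_{\op}X_3$ and $j(Z_1)X_1=||j||_{\op}X_3$ into the identities of Lemma \ref{lemma_consequence_of_third_ineq}, and you have simply written out the steps the paper compresses into ``by a similar argument'' and ``by combining.''
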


\begin{proof}
The equality for the second inequality (\ref{second_ineq_in_rigidity_of_maximum}) implies $-\langle j(Z)X, j(Z')X' \rangle =|j(Z)X||j(Z')X'|.$ By a similar argument to the proof of the item 1 of Lemma \ref{lemma_consequence_of_fourth_ineq}, the item 1 of this lemma follows. 

The item 2 follows from the item 3 of Lemma \ref{lemma_consequence_of_third_ineq} together with $j(Z_2)X_2=-||j||_{\op}X_3$ from the item 1 of this lemma. By combining Lemma \ref{lemma_consequence_of_third_ineq} and the item 1 of this lemma, the item 3 follows.
\end{proof}

The first inequality (\ref{first_ineq_in_rigidity_of_maximum}) implies:
\begin{lemma}\label{lemma_consequence_of_first_ineq}
    $[X_1, X_2]=0$.
\end{lemma}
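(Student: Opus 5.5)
The plan is to use the equality case of the first inequality (\ref{first_ineq_in_rigidity_of_maximum}). That inequality was obtained by dropping the term $-\frac{3}{4}|[X,X']|^2$ from the expression for $K(X+Z,X'+Z')$. Every inequality in the chain is now an equality, since $K(\pi)=\frac{1}{2}||j||_{\op}^2$. In particular equality in (\ref{first_ineq_in_rigidity_of_maximum}) forces
\begin{align*}
-\frac{3}{4}|[X,X']|^2=0, \qquad\text{so}\qquad [X,X']=0.
\end{align*}

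Next I would rescale. By the first item of Lemma \ref{lemma_consequence_of_sixth_ineq}, $|X|=|X'|=\frac{1}{\sqrt{2}}$. Using the definitions in (\ref{basis_equations_in_rigidity}), $X_1=\sqrt{2}X$ and $X_2=\sqrt{2}X'$. Bilinearity of the bracket then gives $[X_1,X_2]=2[X,X']=0$.

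No real obstacle is expected. The only point to check is that (\ref{first_ineq_in_rigidity_of_maximum}) was derived from the exact identity $K=\frac{1}{4}|j(Z)X'+j(Z')X|^2-\frac{3}{4}|[X,X']|^2-\langle j(Z)X,j(Z')X'\rangle$ by discarding only the nonpositive bracket term. This is visible in the proof of the bounds of Theorem \ref{maintheorem_bounds}, so equality there is equivalent to $|[X,X']|=0$.
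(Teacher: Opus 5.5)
Your proposal is correct and is the paper's argument. The paper likewise notes that equality in (\ref{first_ineq_in_rigidity_of_maximum}) forces $-\frac{3}{4}|[X,X']|^2=0$, hence $[X,X']=0$. You also spell out the rescaling to $X_1=\sqrt{2}X$, $X_2=\sqrt{2}X'$, which the paper leaves implicit.
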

This is because $-\frac{3}{4}|[X,X']|^2=0$ implies $[X,X']=0$.

\begin{lemma} \label{lemma_orthogonality}
    $X_1, X_2, X_3, X_4, Z_1, Z_2$ are orthonormal.
\end{lemma}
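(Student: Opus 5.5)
Every vector in the list is a unit vector by construction, so only pairwise orthogonality needs proof. It falls into three groups: vectors in $\mathcal{Z}$ against vectors in $\mathcal{V}$, pairs inside $\mathcal{Z}$, and pairs inside $\mathcal{V}$. The first group is automatic. $Z_1,Z_2\in\mathcal{Z}$, while $X_1,X_2\in\mathcal{V}$, and $X_3,X_4$ are images of vectors under the skew maps $j(\cdot)$, which take values in $\mathcal{V}$. Since $\mathcal{V}=\mathcal{Z}^\perp$, these pairs are orthogonal.

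Inside $\mathcal{Z}$, the normalization from Lemma \ref{rotation_in_2-plane} gives $\langle Z,Z'\rangle=0$, hence $\langle Z_1,Z_2\rangle=0$. Inside $\mathcal{V}$, the same lemma gives $\langle X_1,X_2\rangle=0$. For the pairs involving $X_3$ or $X_4$ with $X_1$ or $X_2$, I plan to use skew-symmetry of $j$ together with the identities already established.
\begin{itemize}
\item $\langle X_1,X_3\rangle=\tfrac{1}{\|j\|_{\op}}\langle X_1,j(Z_1)X_1\rangle=0$.
\item $\langle X_2,X_3\rangle=-\tfrac{1}{\|j\|_{\op}}\langle X_2,j(Z_2)X_2\rangle=0$, using $X_3=-j(Z_2)X_2/\|j\|_{\op}$ from Lemma \ref{lemma_consequence_of_second_ineq}.
\item $\langle X_1,X_4\rangle=\tfrac{1}{\|j\|_{\op}}\langle X_1,j(Z_2)X_1\rangle=0$.
\item $\langle X_2,X_4\rangle=\tfrac{1}{\|j\|_{\op}}\langle X_2,j(Z_1)X_2\rangle=0$, using Lemma \ref{lemma_consequence_of_fourth_ineq}.
\end{itemize}

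The only real step is $\langle X_3,X_4\rangle=0$. Write $X_3=j(Z_1)X_1/\|j\|_{\op}$ and $X_4=j(Z_1)X_2/\|j\|_{\op}$. By skew-symmetry,
\[
\langle X_3,X_4\rangle=-\tfrac{1}{\|j\|_{\op}^2}\langle X_1,j(Z_1)^2X_2\rangle .
\]
Lemma \ref{lemma_consequence_of_fifth_ineq} gives $-j(Z_1)^2X_2=\|j\|_{\op}^2X_2$, so this equals $\langle X_1,X_2\rangle=0$.

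As a cross-check I could also write $X_4=-j(Z_2)X_2\cdots$. Alternatively, I can use $j(Z_1)X_4=-\|j\|_{\op}X_2$ from Lemma \ref{lemma_consequence_of_fourth_ineq}, which gives $\langle X_3,X_4\rangle=-\tfrac1{\|j\|_{\op}}\langle X_1,j(Z_1)X_4\rangle=\langle X_1,X_2\rangle=0$.

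I expect this last pair to be the main obstacle, mild as it is. It is the only pair that genuinely needs the eigenvector identities, rather than just skew-symmetry, together with $\langle X,X'\rangle=0$.
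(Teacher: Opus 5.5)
Your proof is correct and follows the paper's argument. Both use unit length by construction, $\mathcal{V}\perp\mathcal{Z}$ and Lemma~\ref{rotation_in_2-plane} for the easy pairs, skew-symmetry of $j$ together with Lemmas~\ref{lemma_consequence_of_second_ineq} and~\ref{lemma_consequence_of_fourth_ineq} for the mixed pairs, and an eigenvector identity for $\langle X_3,X_4\rangle$. The only difference there is immaterial: the paper uses $-j(Z_1)^2X_1=\|j\|_{\op}^2X_1$ from Lemma~\ref{lemma_consequence_of_third_ineq}, while you use the $X_2$ version from Lemma~\ref{lemma_consequence_of_fifth_ineq}.

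Delete the unfinished cross-check ``$X_4=-j(Z_2)X_2\cdots$''. It is false as written, since $-j(Z_2)X_2=\|j\|_{\op}X_3$. Your alternative argument via $j(Z_1)X_4=-\|j\|_{\op}X_2$ is fine.
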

\begin{proof}
    By definition, each of the vectors has unit length. Since $X, X'$ are orthogonal, so are $X_1, X_2$. Similarly, the orthogonality of $Z, Z'$ implies the orthogonality of $Z_1, Z_2$. Since $\mathcal{V}\perp\mathcal{Z}$, we show that $X_1,X_2,Z_1,Z_2$ are orthogonal.
    
    Since $j(Z_1)$ and $j(Z_2)$ are skew-symmetric, $\langle X_1, X_3 \rangle=0$ and $\langle X_2, X_3 \rangle=0$ follow from the item 1 of Lemma \ref{lemma_consequence_of_second_ineq}. Similarly, $\langle X_1, X_4 \rangle=0, \langle X_2, X_4 \rangle=0$ by the item 1 of Lemma \ref{lemma_consequence_of_fourth_ineq}.

    We show that $\langle X_3, X_4\rangle=0$.
    Observe, $X_4=\frac{1}{||j||_{\op}}j(Z_1)X_2$ by the item 1 of Lemma \ref{lemma_consequence_of_fourth_ineq}, and $X_3=\frac{1}{||j||_{\op}}j(Z_1)X_1$ by the item 4 of Lemma \ref{lemma_consequence_of_third_ineq}. Using the item 2 of Lemma \ref{lemma_consequence_of_third_ineq}, we have $\langle X_3, X_4 \rangle=\frac{1}{||j||_{\op}^2}\langle -j(Z_1)^2 X_1, X_2\rangle=\langle X_1,X_2\rangle=0$. This completes the proof.
\end{proof}

We are ready to prove the item \ref{j-map_of_key_proposition_for_rigidity} of Proposition \ref{proposition_key_for_rigidity}.

\begin{proof}[Proof of the item \ref{j-map_of_key_proposition_for_rigidity} of Proposition \ref{proposition_key_for_rigidity}]
    These are shown in Lemma \ref{lemma_consequence_of_fifth_ineq}, Lemma \ref{lemma_consequence_of_fourth_ineq}, Lemma \ref{lemma_consequence_of_third_ineq}, and
    Lemma \ref{lemma_consequence_of_second_ineq}.
\end{proof} 

\begin{lemma} \label{first_five_brackets}
    The first 5 Lie bracket relations of Proposition \ref{Lie_brackets_of_key_proposition_for_rigidity} hold.
\end{lemma}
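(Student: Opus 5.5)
The five relations to establish are $[X_1,X_2]=0$, $[X_1,X_3]=||j||_{\op}Z_1$, $[X_1,X_4]=||j||_{\op}Z_2$, $[X_2,X_3]=-||j||_{\op}Z_2$ and $[X_2,X_4]=||j||_{\op}Z_1$. Each of them has already been obtained in one of the preceding lemmas, so the plan is to collect them in order.

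\begin{enumerate}
\item The relation $[X_1,X_2]=0$ is Lemma \ref{lemma_consequence_of_first_ineq}. It comes from equality in (\ref{first_ineq_in_rigidity_of_maximum}): this forces $[X,X']=0$, and $X_1,X_2$ are positive multiples of $X,X'$.
\item The relation $[X_1,X_3]=||j||_{\op}Z_1$ is item 4 of Lemma \ref{lemma_consequence_of_third_ineq}. There, $X_3=j(Z_1)X_1/||j||_{\op}$, and Lemma \ref{maximum_eigenspace_and_two_interpretations} gives $[X_1,j(Z_1)X_1]=||j||_{\op}^2Z_1$.
\item The relation $[X_1,X_4]=||j||_{\op}Z_2$ is item 4 of Lemma \ref{lemma_consequence_of_fifth_ineq}, proved the same way using $X_4=j(Z_2)X_1/||j||_{\op}$.
\item The relation $[X_2,X_3]=-||j||_{\op}Z_2$ is item 2 of Lemma \ref{lemma_consequence_of_second_ineq}. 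Item 3 of Lemma \ref{lemma_consequence_of_third_ineq} gives $[X_2,j(Z_2)X_2]=||j||_{\op}^2Z_2$. Substituting $j(Z_2)X_2=-||j||_{\op}X_3$ yields the relation.
\item The relation $[X_2,X_4]=||j||_{\op}Z_1$ is item 2 of Lemma \ref{lemma_consequence_of_fourth_ineq}. It uses $j(Z_1)X_2=||j||_{\op}X_4$ together with $[X_2,j(Z_1)X_2]=||j||_{\op}^2Z_1$.
\end{enumerate}

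The only substantive inputs are the identifications $j(Z_1)X_2=j(Z_2)X_1$ and $j(Z_2)X_2=-j(Z_1)X_1$. These come from the equality cases of Cauchy--Schwarz in (\ref{fourth_ineq_in_rigidity_of_maximum}) and (\ref{second_ineq_in_rigidity_of_maximum}), combined with the equal norms in Lemma \ref{lemma_consequence_of_sixth_ineq}, and both are already established. The proof is therefore a bookkeeping citation of those lemmas.

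There is no real obstacle here. The genuinely nontrivial remaining relation is the sixth one, $[X_3,X_4]=0$, which is not among the five and needs a separate argument.
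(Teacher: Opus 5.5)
Your proposal is correct and matches the paper's proof: the paper also establishes these five relations by citing Lemma \ref{lemma_consequence_of_first_ineq}, item 4 of Lemma \ref{lemma_consequence_of_third_ineq}, item 4 of Lemma \ref{lemma_consequence_of_fifth_ineq}, item 2 of Lemma \ref{lemma_consequence_of_second_ineq}, and item 2 of Lemma \ref{lemma_consequence_of_fourth_ineq}. Your brief justifications of each cited item, including the substitutions $j(Z_2)X_2=-||j||_{\op}X_3$ and $j(Z_1)X_2=||j||_{\op}X_4$, are accurate.
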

\begin{proof}
 This is due to Lemma \ref{lemma_consequence_of_first_ineq}, the item 4 of Lemma \ref{lemma_consequence_of_third_ineq}, the item 4 of Lemma \ref{lemma_consequence_of_fifth_ineq}, the item 2 of Lemma \ref{lemma_consequence_of_second_ineq}, and the item 2 of Lemma \ref{lemma_consequence_of_fourth_ineq}.
\end{proof}

Our goal is to show that $[X_3, X_4]=0$. However, this requires more work.

\begin{lemma}\label{lemma_last_bracket}
    $[X_3, X_4]=0$.
\end{lemma}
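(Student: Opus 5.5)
The plan is to show that $[X_3,X_4]$ vanishes by polarizing the identity $[x,j(Z_1)x]=||j||_{\op}^2 Z_1$. This identity comes from the second interpretation in Lemma \ref{maximum_eigenspace_and_two_interpretations}. The key observation is that it holds not just for $X_1$ or $X_2$ but for \emph{every} unit vector $x$ in $\mathcal{V}_1:=\mathbb{R}\textrm{-span}\{X_1,X_2,X_3,X_4\}$. Evaluating the polarized identity at a suitable pair then produces $[X_3,X_4]$ alongside the known bracket $[X_1,X_2]=0$.

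\textbf{Step 1: $-j(Z_1)^2=||j||_{\op}^2\,\mathrm{id}$ on $\mathcal{V}_1$.} The item \ref{j-map_of_key_proposition_for_rigidity} of Proposition \ref{proposition_key_for_rigidity}, which is already proved, gives $j(Z_1)X_1=||j||_{\op}X_3$, $j(Z_1)X_3=-||j||_{\op}X_1$, $j(Z_1)X_2=||j||_{\op}X_4$ and $j(Z_1)X_4=-||j||_{\op}X_2$. Hence $-j(Z_1)^2X_k=||j||_{\op}^2X_k$ for $k=1,\dots,4$. By Lemma \ref{lemma_orthogonality} the $X_k$ are orthonormal, so for every unit $x\in\mathcal{V}_1$ we get $|j(Z_1)x|^2=\langle -j(Z_1)^2x,x\rangle=||j||_{\op}^2$. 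Thus the pair $(Z_1,x)$ attains the operator norm.

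\textbf{Step 2: the bracket identity and its polarization.} By item 1 of Lemma \ref{maximum_eigenspace_and_two_interpretations}, Step 1 gives $[x,j(Z_1)x]=||j||_{\op}^2 Z_1$ for every unit $x\in\mathcal{V}_1$. By homogeneity this becomes $[x,j(Z_1)x]=||j||_{\op}^2|x|^2Z_1$ for all $x\in\mathcal{V}_1$. Both sides are quadratic in $x$, so polarizing yields, for all $x,y\in\mathcal{V}_1$,
\begin{align*}
[x,j(Z_1)y]+[y,j(Z_1)x]=2||j||_{\op}^2\langle x,y\rangle Z_1 .
\end{align*}

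\textbf{Step 3: choose $x=X_3$, $y=X_2$.} We have $\langle X_3,X_2\rangle=0$, $j(Z_1)X_2=||j||_{\op}X_4$ and $j(Z_1)X_3=-||j||_{\op}X_1$. Substituting gives $||j||_{\op}[X_3,X_4]-||j||_{\op}[X_2,X_1]=0$. Lemma \ref{lemma_consequence_of_first_ineq} gives $[X_1,X_2]=0$, and $||j||_{\op}\neq 0$ since $\mathcal{N}$ is non-abelian. Therefore $[X_3,X_4]=0$.

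\textbf{Main obstacle.} The only delicate point is recognizing that the maximality statement extends from single vectors to all of $\mathcal{V}_1$, which is what Step 1 establishes. A naive componentwise approach gets stuck: one can check $\langle[X_3,X_4],Z_1\rangle=\langle j(Z_1)X_3,X_4\rangle=0$ and similarly for $Z_2$, but this gives no control over components of $[X_3,X_4]$ in $\mathcal{Z}\cap\{Z_1,Z_2\}^\perp$. The polarization in Step 2 handles all center directions at once, which is why I would use it instead.
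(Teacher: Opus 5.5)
Your proof is correct, and it takes a genuinely different and shorter route than the paper's.

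\textbf{The paper's route.} The paper argues by contradiction. In Lemma \ref{sublemma_Z_3_is_orthogonal} it shows that $[X_3,X_4]$ is orthogonal to $\mathbb{R}\textrm{-span}\{Z_1,Z_2\}$. It then sets $Z_3=[X_3,X_4]/|[X_3,X_4]|$, extends to orthonormal bases, and writes $j(Z_1),j(Z_2),j(Z_3)$ in block form. In Lemma \ref{sublemma_contradiction} it tests the unit vector $x=\frac{1}{\sqrt{2}}(X_2+X_3)$ against $\cos\theta\,Z_1+\sin\theta\,Z_3$. A trigonometric optimization then shows that $||j||_{\op}$ would be exceeded. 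That computation is in effect a hands-on re-derivation, in one particular direction, of the first-order optimality condition already recorded in item 1 of Lemma \ref{maximum_eigenspace_and_two_interpretations}: $Z$ must be a top eigenvector of $\mathrm{ad}x\circ(\mathrm{ad}x)^*$.

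\textbf{Your route.} You observe that $-j(Z_1)^2=||j||_{\op}^2\,\mathrm{id}$ on $\mathbb{R}\textrm{-span}\{X_1,\dots,X_4\}$. Hence every unit vector there forms a maximizing pair with $Z_1$, and you apply that lemma directly. The two routes are closely related. Evaluating your identity $[x,j(Z_1)x]=||j||_{\op}^2|x|^2Z_1$ at the paper's own test vector $x=X_2+X_3$, and using Lemma \ref{first_five_brackets}, gives $2||j||_{\op}^2Z_1+||j||_{\op}[X_3,X_4]=2||j||_{\op}^2Z_1$. This is the same conclusion your polarized identity at $(X_3,X_2)$ produces.

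\textbf{Comparison.} Your argument is direct rather than by contradiction. It needs neither Lemma \ref{sublemma_Z_3_is_orthogonal} nor the block decomposition. It also controls all central directions at once, which is exactly the obstacle you identified. The paper's route is more computational, but it makes explicit how much $|j(\cdot)x|$ would exceed $||j||_{\op}$ if $[X_3,X_4]\neq 0$. As a minor point, your Step 1 does not actually need the orthonormality of $X_1,\dots,X_4$. Since $-j(Z_1)^2$ acts as the scalar $||j||_{\op}^2$ on a spanning set, it acts as that scalar on the whole span.
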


We give a proof of Lemma \ref{lemma_last_bracket} here. To begin with, we show:

\begin{sublemma} \label{sublemma_Z_3_is_orthogonal}
    $[X_3, X_4]\in \mathbb{R}\textrm{-span}\{Z_1, Z_2\}^{\perp}$, that is, $[X_3, X_4]$ is orthogonal to the subspace spanned by $Z_1, Z_2$. 
\end{sublemma}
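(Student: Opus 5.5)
The plan is to test $[X_3,X_4]$ against $Z_1$ and $Z_2$ directly, using the defining identity of the $j$-map, $\langle j(Z)X, Y\rangle = \langle Z, [X,Y]\rangle$ for $X,Y\in\mathcal{V}$ and $Z\in\mathcal{Z}$. Since $\mathcal{N}$ is 2-step nilpotent, $[X_3,X_4]\in\mathcal{Z}$. It therefore suffices to show $\langle Z_1,[X_3,X_4]\rangle = 0$ and $\langle Z_2,[X_3,X_4]\rangle=0$.

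For $Z_1$, we have $\langle Z_1,[X_3,X_4]\rangle = \langle j(Z_1)X_3, X_4\rangle$. By the item 3 of Lemma \ref{lemma_consequence_of_second_ineq}, $j(Z_1)X_3=-||j||_{\op}X_1$. Hence this pairing equals $-||j||_{\op}\langle X_1,X_4\rangle$, which vanishes by Lemma \ref{lemma_orthogonality}.

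For $Z_2$, similarly $\langle Z_2,[X_3,X_4]\rangle=\langle j(Z_2)X_3,X_4\rangle$. The item 3 of Lemma \ref{lemma_consequence_of_second_ineq} gives $j(Z_2)X_3=||j||_{\op}X_2$, so this pairing is $||j||_{\op}\langle X_2,X_4\rangle = 0$, again by Lemma \ref{lemma_orthogonality}. Thus $[X_3,X_4]$ is orthogonal to $\mathbb{R}\textrm{-span}\{Z_1,Z_2\}$.

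There is essentially no obstacle: every ingredient (the action of $j(Z_1)$ and $j(Z_2)$ on $X_3$, and the orthonormality of $X_1,\dots,X_4$) has already been established. The only care needed is the sign convention in the $j$-map identity, and the argument does not depend on it anyway, since each pairing reduces to a multiple of an inner product that is zero. The substantive difficulty of Lemma \ref{lemma_last_bracket} lies afterwards, in ruling out a component of $[X_3,X_4]$ in $\mathcal{Z}\cap\{Z_1,Z_2\}^{\perp}$. That is where I would expect to use the maximality of $K(\pi)$ more globally, e.g.\ through a modified curvature operator as in Lemma \ref{curv_op_and_totally_geodesic_subalgebras}.
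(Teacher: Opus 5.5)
Your proposal is correct and is the paper's own proof: pair $[X_3,X_4]$ with $Z_1$ and $Z_2$ via $\langle j(Z)X,Y\rangle=\langle Z,[X,Y]\rangle$. Then substitute $j(Z_1)X_3=-||j||_{\op}X_1$ and $j(Z_2)X_3=||j||_{\op}X_2$ from Lemma \ref{lemma_consequence_of_second_ineq}, and conclude with the orthonormality in Lemma \ref{lemma_orthogonality}. (As an aside, the later step ruling out a component of $[X_3,X_4]$ orthogonal to $Z_1,Z_2$ does not use a modified curvature operator in the paper; it is a direct contradiction with the definition of $||j||_{\op}$, obtained by exhibiting a unit vector $uZ_1+vZ_3$ and $x=\frac{1}{\sqrt{2}}(X_2+X_3)$ with $|j(uZ_1+vZ_3)x|>||j||_{\op}$.)
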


\begin{proof}
    Recall the item \ref{j-map_of_key_proposition_for_rigidity} of Proposition \ref{proposition_key_for_rigidity} and Lemma \ref{lemma_orthogonality}. We have $\langle [X_3, X_4], Z_1 \rangle =\langle j(Z_1)X_3, X_4 \rangle$ by definition, and$\langle j(Z_1)X_3, X_4 \rangle=-||j||_{\op}\langle X_1, X_4 \rangle=0.$
    
    Similarly, $\langle [X_3, X_4], Z_2 \rangle =\langle j(Z_2)X_3, X_4 \rangle =||j||_{\op}\langle X_2, X_4 \rangle=0$.
\end{proof}

Now, we prove $[X_3, X_4]=0$ by contradiction. Suppose that $[X_3, X_4]\neq 0$, and set $Z_3 = \frac{[X_3, X_4]}{|[X_3, X_4]|}$. Let $\mathcal{N}_0$ be the subspace spanned by $X_1, X_2, X_3, X_4, Z_1, Z_2$ and $Z_3$.  By Lemma \ref{first_five_brackets}, $\mathcal{N}_0$ is a subalgebra of $\mathcal{N}$. Since $Z_3\in \mathcal{Z}\perp \mathcal{V}$, it follows from Lemma \ref{sublemma_Z_3_is_orthogonal} that $\{X_1, X_2, X_3, X_4, Z_1, Z_2, Z_3\}$ forms an orthonormal basis of $\mathcal{N}_0$. Extend $X_1,...,X_4$ to an orthonormal basis $X_1,...,X_q$ of $\mathcal{V}$ and $Z_1,Z_2,Z_3$ to $Z_1,...,Z_p$ of $\mathcal{Z}$.

Our strategy is to show that the norm $||j||_{\op}<|j(uZ_1+vZ_3)|$ for some $u,v$ with $u^2+v^2=1$, which would lead to a contradiction.

Write each $j(Z_i)$ as a block form $j(Z_i) = \begin{bmatrix}
        A_i & -^tB_i  \\
        B_i & C_i  \\
    \end{bmatrix}$, where the partition is with respect to $\mathbb{R}\textrm{-span}\{X_1,...,X_4\}$ and $\mathbb{R}\textrm{-span}\{X_5,...,X_q\}$, and $^tB_i$ is the transpose of $B_i$.

\begin{sublemma} \label{sublemma_matrices}
    \begin{align*}
    A_1&=\begin{bmatrix}
        0 & 0 & -||j||_{\op} & 0 \\
        0 & 0 & 0 & -||j||_{\op} \\
        ||j||_{\op} & 0 & 0 & 0\\
        0 & ||j||_{\op} & 0 & 0
    \end{bmatrix};
    A_2=\begin{bmatrix}
        0 & 0 & 0 & -||j||_{\op} \\
        0 & 0 & ||j||_{\op} & 0 \\
        0 & -||j||_{\op} & 0 & 0\\
        ||j||_{\op} & 0 & 0 & 0
    \end{bmatrix},\\
    A_3&=\begin{bmatrix}
        0 & 0 & 0 & 0 \\
        0 & 0 & 0 & 0 \\
        0 & 0 & 0 & -|[X_3,X_4]|\\
        0 & 0 & |[X_3,X_4]| & 0
    \end{bmatrix}; B_1=B_2=0.
\end{align*}
\end{sublemma}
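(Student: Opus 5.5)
The plan is to read off each block directly from the relations already established, using $\langle j(Z_i)X_a, X_b\rangle = \langle Z_i,[X_a,X_b]\rangle$ and orthonormality (Lemma \ref{lemma_orthogonality}).

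For $A_1$ and $A_2$, the entries are $(A_i)_{ba}=\langle j(Z_i)X_a, X_b\rangle$ for $a,b\in\{1,\dots,4\}$. The item \ref{j-map_of_key_proposition_for_rigidity} of Proposition \ref{proposition_key_for_rigidity} expresses each $j(Z_i)X_a$, $i=1,2$, as $\pm\|j\|_{\op}$ times a single basis vector among $X_1,\dots,X_4$. Taking inner products then gives exactly the displayed matrices. For instance, $j(Z_1)X_1=\|j\|_{\op}X_3$ gives column 1 of $A_1$ equal to $(0,0,\|j\|_{\op},0)^t$, and the other columns follow in the same way.

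The vanishing $B_1=B_2=0$ follows from the same relations. The entries of $B_i$ are $\langle j(Z_i)X_a, X_k\rangle$ with $a\le 4<k$. Since $j(Z_i)X_a\in\mathrm{span}\{X_1,\dots,X_4\}$, which is orthogonal to $X_k$, these entries vanish. This is consistent with Lemma \ref{criterion_of_totally_geodesic_when_it_is_aligned}.

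For $A_3$ we use $(A_3)_{ba}=\langle Z_3,[X_a,X_b]\rangle$. By the five brackets in Lemma \ref{first_five_brackets}, every bracket $[X_a,X_b]$ with $\{a,b\}\neq\{3,4\}$ lies in $\mathrm{span}\{Z_1,Z_2\}$, and $Z_3$ is orthogonal to this span by Lemma \ref{sublemma_Z_3_is_orthogonal} and the construction of $Z_3$. So the only nonzero entries are $(A_3)_{43}=\langle Z_3,[X_3,X_4]\rangle=|[X_3,X_4]|$ and $(A_3)_{34}=-|[X_3,X_4]|$ by skew-symmetry.

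All steps are direct bookkeeping. The only thing I expect to need care with is the matrix index and sign convention, namely that column $a$ holds the coordinates of $j(Z_i)X_a$. The upper-right block being $-{}^tB_i$ is simply skew-symmetry.
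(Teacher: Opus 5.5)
Your proposal is correct and follows the paper's proof. You read off $A_1$, $A_2$ and $B_1=B_2=0$ from item (\ref{j-map_of_key_proposition_for_rigidity}) of Proposition \ref{proposition_key_for_rigidity}, and you get $A_3$ from the five brackets of Lemma \ref{first_five_brackets} together with $Z_3\perp\mathrm{span}\{Z_1,Z_2\}$ (Lemma \ref{sublemma_Z_3_is_orthogonal}), with your column-convention bookkeeping making explicit what the paper leaves implicit.
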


\begin{proof}
     The item \ref{j-map_of_key_proposition_for_rigidity} of Proposition \ref{proposition_key_for_rigidity} computes $A_1$ and $A_2$ and shows $B_1=B_2=0$. Since $[X_3,X_4]=|[X_3,X_4]|Z_3$, it follows from Lemma \ref{first_five_brackets} that the only non-zero $\langle Z_3, [X_i, X_j]\rangle$ are $i=3, j=4$ or $i=4, j=3$. Thus, $A_3$ has the form above.
\end{proof}

Note, $B_3$ could be nonzero. Now, let $a=||j||_{\op}$ and $c=|[X_3,X_4]|$. Let $u,v\in \mathbb{R}$ with $u^2+v^2=1$. For any $x\in \mathbb{R}\textrm{-span}\{X_1,...,X_4\}$ with $|x|=1$, observe that $B_3x \in \mathbb{R}\textrm{-span}\{X_5, ..., X_q\}$ is orthogonal to $(uA_1+vA_3)x\in \mathbb{R}\textrm{-span}\{X_1, X_2, X_3, X_4\}$. Thus, we have $a^2=||j||_{\op}^2\geq |j(uZ_1+vZ_3)x|^2=|(uA_1+vA_3)x+vB_3x|^2=|(uA_1+vA_3)x|^2+|vB_3x|^2\geq |(uA_1+vA_3)x|^2.$ If $A=uA_1+vA_3$, then $a^2\geq |(uA_1+vA_3)x|^2=\langle -A^2 x, x\rangle$. 

\begin{sublemma}\label{sublemma_contradiction}
    There exists $u,v\in \mathbb{R}$ such that $u^2+v^2=1$ and a unit vector $x\in \mathbb{R}\textrm{-span}\{X_1, ..., X_4\}$ such that $|(uA_1+vA_3)x|^2=\langle -A^2 x, x\rangle > a^2$, where $A=uA_1+vA_3$, which leads to a contradiction.
\end{sublemma}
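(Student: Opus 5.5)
The plan is to compute the quadratic form $x\mapsto |(uA_1+vA_3)x|^2$ explicitly from Sublemma \ref{sublemma_matrices}. Then I would choose $x$ and $(u,v)$ by a first-order perturbation of the maximizing configuration for $j(Z_1)$. The idea is that $-A_1^2=a^2 I_4$, so \emph{every} unit $x$ in $\mathbb{R}\textrm{-span}\{X_1,\dots,X_4\}$ is a maximizer for $j(Z_1)$. Tilting $Z_1$ slightly toward $Z_3$ should therefore increase $|j(\cdot)x|^2$ at first order, as soon as the cross term between $A_1x$ and $A_3x$ is nonzero. Here $a=||j||_{\op}$, and $c=|[X_3,X_4]|>0$ by the contradiction hypothesis.

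First, I would write $x=x_1X_1+x_2X_2+x_3X_3+x_4X_4$. Sublemma \ref{sublemma_matrices} gives $A_1$ and $A_3$: $A_1$ sends $X_1\mapsto aX_3$, $X_2\mapsto aX_4$, $X_3\mapsto -aX_1$, $X_4\mapsto -aX_2$, and $A_3$ sends $X_3\mapsto cX_4$, $X_4\mapsto -cX_3$ and kills $X_1,X_2$. Hence
\begin{align*}
(uA_1+vA_3)x = -uax_3X_1 - uax_4X_2 + (uax_1 - vcx_4)X_3 + (uax_2+vcx_3)X_4,
\end{align*}
so that
\begin{align*}
|(uA_1+vA_3)x|^2 = u^2a^2(x_3^2+x_4^2) + (uax_1-vcx_4)^2 + (uax_2+vcx_3)^2 .
\end{align*}

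Next, I would restrict to $x=\cos t\,X_1+\sin t\,X_4$ and set $v=-s$, $u=\sqrt{1-s^2}$ with $s>0$ small. The expression then becomes
\begin{align*}
u^2a^2 + 2uasc\cos t\sin t + s^2c^2\sin^2 t .
\end{align*}
Taking $t=\frac{\pi}{4}$ and $u^2=1-s^2$, this equals
\begin{align*}
a^2 + s\bigl(uac - s a^2 + \tfrac{1}{2}sc^2\bigr).
\end{align*}
Since $a>0$ (as $\mathcal{N}$ is non-abelian) and $c>0$, the bracket tends to $ac>0$ as $s\to 0^+$. So for sufficiently small $s>0$ the value is strictly larger than $a^2$.

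This contradicts the inequality $a^2\geq |(uA_1+vA_3)x|^2$ derived just before the statement. That inequality came from the orthogonality of $B_3x$ and $(uA_1+vA_3)x$, together with $B_1=0$ and the definition of $||j||_{\op}$ applied to the unit vector $uZ_1+vZ_3$. Hence $[X_3,X_4]=0$.

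The only real obstacle is choosing $x$ so that the cross term $2uv\langle A_1x,A_3x\rangle$ is nonzero. Vectors in $\mathrm{span}\{X_1,X_2\}$ alone give zero, because $A_3$ kills them. Mixing $X_1$ with $X_4$ works, since $A_1X_1=aX_3$ and $A_3X_4=-cX_3$ are parallel. The sign of $v$ is then fixed to make this cross term positive. Everything else is routine algebra.
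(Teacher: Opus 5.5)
Your proof is correct and follows essentially the same route as the paper: the paper also picks an explicit unit vector so that the cross term $2uv\langle A_1x, A_3x\rangle$ is nonzero. It uses $x=\frac{1}{\sqrt{2}}(X_2+X_3)$, which gives $\frac{1}{2}(2a^2u^2+2acuv+c^2v^2)$, and then contradicts $a^2\geq |(uA_1+vA_3)x|^2$. The differences are only cosmetic: you take $x=\frac{1}{\sqrt{2}}(X_1+X_4)$ and a small-tilt argument ($s\to 0^+$), whereas the paper maximizes exactly over $\theta$ by choosing $\sin(2\theta+\theta')=1$.
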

\begin{proof}
    Let $x=\frac{1}{\sqrt{2}}(X_2+X_3)$. By computations, one gets $\langle -A^2x,x\rangle=\frac{1}{2}( 2a^2 u^2 + 2acuv + c^2 v^2)$.  Let $u=\cos \theta$ and $v=\sin \theta$. Then, $\langle -A^2x,x\rangle=\frac{1}{2}(\frac{2a^2+c^2}{2} + \frac{2a^2-c^2}{2}\cos 2\theta +ac\sin 2\theta).$ There exists some $\theta'\in \mathbb{R}$ such that $\frac{2a^2-c^2}{2}\cos 2\theta +ac\sin 2\theta=\sqrt{\left(\frac{2a^2-c^2}{2}\right)^2+(ac)^2}\sin (2\theta + \theta')$. Pick $\theta$ so that $\sin (2\theta + \theta')=1$. Note that $c\leq |j(Z_3)X_3|\leq a$ so $2a^2-c^2>0$. Then, $a^2\geq \langle -A^2x,x\rangle=\frac{1}{2}\left(\frac{2a^2+c^2}{2} +\sqrt{\left(\frac{2a^2-c^2}{2}\right)^2+(ac)^2}\right)>\frac{1}{2}(\frac{2a^2+c^2}{2}+\frac{2a^2-c^2}{2})=a^2$ as we have $ac>0$. This is a contradiction. 
\end{proof}
Therefore, $c=0$, and $[X_3, X_4]=0.$ This proves Lemma \ref{lemma_last_bracket}. Together with Lemma \ref{first_five_brackets}, this finishes the proof of Proposition \ref{proposition_key_for_rigidity}, and Theorem \ref{maintheorem_rigidity} follows.

\subsection{Proof of Theorem \ref{maintheorem_characterization}} \label{section_characterization}

If a 2-step nilpotent Lie algebra $\mathcal{N}$ admits an inner product with $\frac{K_{\min}}{K_{\max}}=-\frac{3}{2}$, then $\mathcal{N}$ admits a subalgebra isomorphic to $\mathrm{heis}_3(\mathbb{C})$ by Theorem \ref{maintheorem_rigidity}. The following example shows the converse is not true.

\begin{example} \label{almost_heis_C}
    We construct an example of a 2-step nilpotent Lie algebra $\mathcal{N}$ that admits a subalgebra isomorphic to $\mathrm{heis}_3(\mathbb{C})$ such that $\mathcal{N}$ does not admit an inner product with $\frac{K_{\min}}{K_{\max}}=-\frac{3}{2}$.
    
    Let $\mathcal{V}$ be an inner product space with an orthonormal basis $\{X_1,X_2,X_3,X_4,X_5\}$. Let $\mathcal{Z}$ be an inner product space with an orthonormal basis $\{Z_1,Z_2\}$. Define a metric 2-step nilpotent Lie algebra $\mathcal{N}=\mathcal{V}\oplus\mathcal{Z}$ by declaring that $j(Z_1) =\begin{bmatrix}
            A_1 & -^te_1 \\
            e_1 & 0 
        \end{bmatrix}, 
    j(Z_2) = \begin{bmatrix}
        A_2 & 0 \\
        0 & 0
    \end{bmatrix}$
    where $A_1, A_2$ are the $j$-maps of $\mathrm{heis}_3(\mathbb{C})$ given in Example \ref{example_heis_3_C_with_Ricci_soliton_metric} and $e_1=[1,0,0,0,0]$. We have the following Lie bracket relations: $[X_1, X_2]=0, [X_1, X_3]=Z_1, [X_1, X_4]=Z_2, [X_1, X_5]=Z_1, [X_2, X_3]=-Z_2, [X_2, X_4]=Z_1, [X_2, X_5]=0, [X_3, X_4]=0, [X_3,X_5]=0, [X_4,X_5]=0$. Thus, the center is $\mathcal{Z}$, and $[\mathcal{N}, \mathcal{N}]=\mathcal{Z}$.
    
     In particular, $\mathbb{R}\textrm{-span}\{X_1,X_2,X_3,X_4,Z_1,Z_2\}$ is a subalgebra isomorphic to $\mathrm{heis}_3(\mathbb{C})$. If it admits an inner product with $\frac{K_{\min}}{K_{\max}}=-\frac{3}{2}$, then it follows from Proposition \ref{proposition_ridigity_in_dim_7} below that $[\mathcal{N}, \mathcal{N}]\neq \mathcal{Z}$, which is a contradiction. Thus, $\mathcal{N}$ does not admit an inner product with $\frac{K_{\min}}{K_{\max}}=-\frac{3}{2}$.

\end{example}

\begin{proposition} \label{proposition_ridigity_in_dim_7}
    If a 7-dimensional 2-step nilpotent Lie algebra $\mathcal{N}$ admits an inner product $\langle \cdot, \cdot \rangle$ with $\frac{K_{\min}}{K_{\max}}=-\frac{3}{2}$, then $(\mathcal{N}, \langle \cdot, \cdot\rangle)$ is isomorphic to the orthogonal direct sum of Lie algebras $\mathrm{heis}_3(\mathbb{C})\oplus \mathbb{R}$ as metric Lie algebras, where $\mathrm{heis}_3(\mathbb{C})$ be equipped with a Ricci soliton inner product. 
\end{proposition}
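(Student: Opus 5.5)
By Theorem \ref{maintheorem_rigidity} (applied at the Lie algebra level, as in its proof), the maximal pinching constant yields an aligned totally geodesic subalgebra $\mathcal{N}_1=\mathcal{V}_1\oplus\mathcal{Z}_1$ of $\mathcal{N}$. Here $\dim\mathcal{V}_1=4$, $\dim\mathcal{Z}_1=2$, and $\mathcal{N}_1$ is isometric and isomorphic to $\mathrm{heis}_3(\mathbb{C})$ with the Ricci soliton inner product scaled by $\xi=||j||_{\op}$, as in Remark \ref{rescaling_heis_3_c_ricci_soliton}. Since $\dim\mathcal{N}=7$, the orthogonal complement of $\mathcal{N}_1$ is spanned by one unit vector $E$, and $E=X_5+W$ with $X_5\in\mathcal{V}$, $W\in\mathcal{Z}$. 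Because $\mathcal{N}_1$ is aligned, $\mathcal{V}_1\subseteq\mathcal{V}$ and $\mathcal{Z}_1\subseteq\mathcal{Z}$. Hence $\mathcal{V}_2=\mathcal{V}\cap\mathcal{V}_1^\perp$ and $\mathcal{Z}_2=\mathcal{Z}\cap\mathcal{Z}_1^\perp$ satisfy $\dim\mathcal{V}_2+\dim\mathcal{Z}_2=1$. This leaves two cases: either $\mathcal{Z}_2=\mathbb{R}W$ with $\mathcal{V}_2=0$, or $\mathcal{V}_2=\mathbb{R}X_5$ with $\mathcal{Z}_2=0$.

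In the first case, $\mathcal{V}=\mathcal{V}_1$. By item 2 of Lemma \ref{curv_op_and_totally_geodesic_subalgebras}, $j(W)(\mathcal{V}_1)\subseteq\mathcal{V}_2=0$, so $j(W)=0$. Then $\langle W,[\mathcal{N},\mathcal{N}]\rangle=0$, and $W$ is an orthogonal central direction with $\mathcal{N}=\mathcal{N}_1\oplus\mathbb{R}W$ as an orthogonal direct sum of ideals. This gives the claim directly.

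In the second case, $\mathcal{Z}=\mathcal{Z}_1$, and item 1 of Lemma \ref{curv_op_and_totally_geodesic_subalgebras} says each $j(z)$, $z\in\mathcal{Z}_1$, preserves both $\mathcal{V}_1$ and $\mathcal{V}_2=\mathbb{R}X_5$. A skew-symmetric map preserving a line acts on it by zero, so $j(z)X_5=0$ for all $z$. This means $[X_5,\mathcal{N}]=0$, so $X_5$ is central. That contradicts $X_5\in\mathcal{V}=\mathcal{Z}^\perp$ with $X_5\neq0$, so this case cannot occur. It remains to check that $\mathcal{Z}=\mathcal{Z}_1$ exactly, with no extra center. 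This holds because the center of $\mathcal{N}_1$ is $\mathcal{Z}_1$ and $\mathcal{V}_1\subseteq\mathcal{V}$.

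The step needing the most care is the bookkeeping that the complement really splits as $\mathcal{V}_2\oplus\mathcal{Z}_2$ with total dimension one. This is exactly where alignedness (Proposition \ref{nonsingular_ones_are_aligned}) and the explicit $j$-maps from Proposition \ref{proposition_key_for_rigidity} are used. After that, both cases are immediate from Lemma \ref{curv_op_and_totally_geodesic_subalgebras}. Finally, an orthogonal sum of ideals with bracket matching Remark \ref{rescaling_heis_3_c_ricci_soliton} is isomorphic as a metric Lie algebra to $\mathrm{heis}_3(\mathbb{C})\oplus\mathbb{R}$ with the soliton inner product, up to the scaling $\xi$. That scaling is absorbed into the chosen Ricci soliton inner product, which is unique up to scaling.
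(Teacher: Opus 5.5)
Your proof is correct and follows essentially the paper's argument. You take the aligned totally geodesic $\mathrm{heis}_3(\mathbb{C})$ from Theorem \ref{maintheorem_rigidity}, split its one-dimensional orthogonal complement as $\mathcal{V}_2\oplus\mathcal{Z}_2$, rule out $\mathcal{V}_2\neq 0$ because its spanning vector would be central, and split off the remaining central line orthogonally. Your extra observation that $j(W)=0$ and your ``no extra center'' check are harmless but unnecessary, since $\mathcal{Z}=\mathcal{Z}_1\oplus\mathcal{Z}_2$ holds by definition.
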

\begin{proof}
By Theorem \ref{maintheorem_rigidity}, there exists a totally geodesic subalgebra $\mathcal{N}_1$ isomorphic to $\mathrm{heis}_3(\mathbb{C})$ with a Ricci soliton metric as metric Lie algebras. Let $\mathcal{Z}$ be the center of $\mathcal{N}$, and $\mathcal{V}=\mathcal{Z}^{\perp}$. Since $\mathcal{N}_1$ is aligned by Proposition \ref{nonsingular_ones_are_aligned}, $\mathcal{N}_1=\mathcal{V}_1\oplus \mathcal{Z}_1$ for some subspace $\mathcal{V}_1\subseteq \mathcal{V}$ and $\mathcal{Z}_1\subseteq \mathcal{Z}$, where $\mathcal{Z}_1$ is the center of $\mathcal{N}_1$. Let $\mathcal{V}_2=\mathcal{V}_1^{\perp}\cap \mathcal{V}$ and $\mathcal{Z}_2=\mathcal{Z}_1^{\perp}\cap \mathcal{Z}$.

Suppose $\mathcal{V}_2\neq 0$. Then $\dim \mathcal{V}_2=1$ and $\mathcal{Z}_2=0$ by $\dim \mathcal{N}=7$. Let $x_0$ be a vector spanning $\mathcal{V}_2$. For any $z\in \mathcal{Z}_1=\mathcal{Z}$, $x\in \mathcal{V}_1$, $\langle [x, x_0], z\rangle=\langle j(z)x, x_0\rangle=0$ as $j(z)x\in \mathcal{V}_1$ by Lemma \ref{criterion_of_totally_geodesic_when_it_is_aligned}. Thus, $x_0$ is central but not in $\mathcal{Z}$, which is a contradiction. Thus, $\mathcal{V}_2=0$, so $\dim \mathcal{Z}_2=1$. Since $[\mathcal{N}_1, \mathcal{Z}_2]=0$ and $\mathcal{N}_1\perp \mathcal{Z}_2$, $\mathcal{N}=\mathcal{N}_1\oplus \mathcal{Z}_2$ is the desired orthogonal direct sum of Lie algebras.
\end{proof}

We give a proof of Theorem \ref{maintheorem_characterization}.

\subsubsection{Proof of the equivalence of the items 2 and 3 of Theorem \ref{maintheorem_characterization}.}

We split the proof into two lemmas.
\begin{lemma}
    The item 2 implies the item 3.
\end{lemma}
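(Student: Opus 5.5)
Starting from item 2, I take $\mathcal{N}_1$ to be the Lie algebra of the totally geodesic subgroup isometric and isomorphic to $\mathrm{Heis}_3(\mathbb{C})$ with a Ricci soliton metric. I will show that $\mathcal{N}_2:=\mathcal{N}_1^{\perp}$, the orthogonal complement in $(\mathcal{N},\langle\cdot,\cdot\rangle)$, has the properties required in item 3. The decomposition $\mathcal{N}=\mathcal{N}_1\oplus\mathcal{N}_2$ is clear, and $\mathcal{N}_1\cong \mathrm{heis}_3(\mathbb{C})$ holds by hypothesis. What remains is the condition $[\mathcal{N}_1,\mathcal{N}_2]\subseteq\mathcal{N}_2$.

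First I make $\mathcal{N}_1$ aligned. The Ricci soliton metric on $\mathrm{heis}_3(\mathbb{C})$ is of Heisenberg type by Example \ref{example_heis_3_C_with_Ricci_soliton_metric}, so $\mathcal{N}_1$ is nonsingular. Proposition \ref{nonsingular_ones_are_aligned} then gives $\mathcal{N}_1=\mathcal{V}_1\oplus\mathcal{Z}_1$ with $\mathcal{V}_1\subseteq\mathcal{V}$, $\mathcal{Z}_1=\mathfrak{z}(\mathcal{N}_1)\subseteq\mathcal{Z}$. With $\mathcal{V}_2,\mathcal{Z}_2$ as in Setting \ref{setting_totally_geodesic_T}, we get $\mathcal{N}_2=\mathcal{V}_2\oplus\mathcal{Z}_2$. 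The brackets with $\mathcal{Z}_1$ or $\mathcal{Z}_2$ vanish because these lie in the center. So it suffices to check $[\mathcal{V}_1,\mathcal{V}_2]\subseteq\mathcal{N}_2$, and since $[\mathcal{V},\mathcal{V}]\subseteq\mathcal{Z}$ this means showing $[\mathcal{V}_1,\mathcal{V}_2]\perp\mathcal{Z}_1$.

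For $x\in\mathcal{V}_1$, $y\in\mathcal{V}_2$ and $z\in\mathcal{Z}_1$ we have $\langle[x,y],z\rangle=\langle j(z)x,y\rangle$. By Lemma \ref{criterion_of_totally_geodesic_when_it_is_aligned} (equivalently, item 1 of Lemma \ref{curv_op_and_totally_geodesic_subalgebras}), $j(z)x\in\mathcal{V}_1$, which is orthogonal to $y$. Hence the pairing is zero and $[\mathcal{N}_1,\mathcal{N}_2]\subseteq\mathcal{Z}_2\subseteq\mathcal{N}_2$.

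The only step that needs care is the alignment. A totally geodesic subalgebra need not sit compatibly with $\mathcal{V}\oplus\mathcal{Z}$, and Proposition \ref{nonsingular_ones_are_aligned} is exactly what removes that issue. Once alignment holds, the argument is a one-line use of the skew-symmetry and invariance of $j(z)$.
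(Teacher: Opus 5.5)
Your proof is correct and takes essentially the same route as the paper. You get alignment from Proposition \ref{nonsingular_ones_are_aligned}, take $\mathcal{N}_2=\mathcal{V}_2\oplus\mathcal{Z}_2$ (which is exactly your $\mathcal{N}_1^{\perp}$), and use the $j(z)$-invariance of $\mathcal{V}_1$ from Lemma \ref{criterion_of_totally_geodesic_when_it_is_aligned} to conclude $[\mathcal{V}_1,\mathcal{V}_2]\subseteq\mathcal{Z}_2$; the detour through the Heisenberg-type metric is unnecessary, since nonsingularity of $\mathrm{heis}_3(\mathbb{C})$ is purely algebraic, but it is harmless.
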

\begin{proof}
    At the Lie algebra level, $\mathcal{N}$ admits a subalgebra $\mathcal{N}_1$ and an inner product $\langle \cdot, \cdot \rangle$ such that $\mathcal{N}_1$ is a totally geodesic subalgebra isomorphic to $\mathrm{heis}_3(\mathbb{C})$ with a Ricci soliton inner product as metric Lie algebras. Since $\mathrm{heis}_3(\mathbb{C})$ is nonsingular, it follows from Proposition \ref{nonsingular_ones_are_aligned} that $\mathcal{N}_1$ is an aligned totally geodesic subalgebra. Let $\mathcal{V}$ be the orthogonal complement of the center $\mathcal{Z}$ of $\mathcal{N}$ so that $\mathcal{N}=\mathcal{V}\oplus \mathcal{Z}$.

    Since $\mathcal{N}_1$ is an aligned totally geodesic subalgebra, there exist subspaces $\mathcal{V}_1$ of $\mathcal{V}$ and $\mathcal{Z}_1$ of $\mathcal{Z}$ such that $\mathcal{N}_1=\mathcal{V}_1\oplus \mathcal{Z}_1$ and $\mathcal{Z}_1$ is the center of $\mathcal{N}_1$.  Let $\mathcal{V}_2=\mathcal{V}_1^{\perp} \cap \mathcal{V}$, $\mathcal{Z}_2 =\mathcal{Z}_1^{\perp} \cap \mathcal{Z}$, and $\mathcal{N}_2=\mathcal{V}_2\oplus \mathcal{Z}_2$. 

    We have $[\mathcal{N}_1, \mathcal{N}_2]\subseteq [\mathcal{V}_1, \mathcal{V}_2]$. For any $z\in \mathcal{Z}_1, x\in \mathcal{V}_1, y\in \mathcal{V}_2$, we have $\langle z, [x,y]\rangle = \langle j(z)x, y \rangle=0$ as $j(z)x\in \mathcal{V}_1$ by Lemma \ref{criterion_of_totally_geodesic_when_it_is_aligned}. Hence, $[\mathcal{V}_1, \mathcal{V}_2]\subseteq \mathcal{Z}_2\subseteq \mathcal{N}_2$.
\end{proof}

\begin{lemma}
    The item 3 implies the item 2.
\end{lemma}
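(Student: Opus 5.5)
We must show: given a subalgebra $\mathcal{N}_1\cong\mathrm{heis}_3(\mathbb{C})$ and a complementary subspace $\mathcal{N}_2$ with $[\mathcal{N}_1,\mathcal{N}_2]\subseteq\mathcal{N}_2$, there is an inner product on $\mathcal{N}$ for which $\mathcal{N}_1$ is a totally geodesic subalgebra, isometric to $\mathrm{heis}_3(\mathbb{C})$ with a Ricci soliton metric. The plan is to build the inner product directly and then invoke Lemma \ref{criterion_of_totally_geodesic_when_it_is_aligned}.

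\textbf{Choice of inner product.} Declare $\mathcal{N}_1\perp\mathcal{N}_2$. On $\mathcal{N}_1$, take the inner product of Example \ref{example_heis_3_C_with_Ricci_soliton_metric}, transported by a Lie algebra isomorphism $\mathcal{N}_1\cong\mathrm{heis}_3(\mathbb{C})$. On $\mathcal{N}_2$, take any inner product.

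\textbf{Alignment.} We must check that $\mathcal{N}_1$ is aligned. Let $\mathcal{Z}_1=\mathfrak{z}(\mathcal{N}_1)=[\mathcal{N}_1,\mathcal{N}_1]$.
\begin{itemize}
\item $\mathcal{Z}_1$ is central in $\mathcal{N}$. It commutes with $\mathcal{N}_1$. For $\mathcal{N}_2$, write $z=[x,y]$ with $x,y\in\mathcal{N}_1$. By Jacobi and 2-step nilpotency, $[z,w]=0$ for all $w\in\mathcal{N}$. Hence $\mathcal{Z}_1\subseteq\mathcal{Z}$.
\item The complement of $\mathcal{Z}_1$ in $\mathcal{N}_1$ lies in $\mathcal{V}$. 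Let $\mathcal{V}_1=\mathcal{Z}_1^\perp\cap\mathcal{N}_1$. We must show $\mathcal{V}_1\perp\mathcal{Z}$.
\end{itemize}

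For the second point, take $c\in\mathcal{Z}$ and write $c=c_1+c_2$ with $c_i\in\mathcal{N}_i$. For $x\in\mathcal{N}_1$ we have $0=[x,c]=[x,c_1]+[x,c_2]$. The first term lies in $\mathcal{N}_1$ and the second in $\mathcal{N}_2$, so both vanish. Thus $c_1$ is central in $\mathcal{N}_1$, i.e. $c_1\in\mathcal{Z}_1$, and $c_2\in\mathcal{N}_2$. In particular $c\perp\mathcal{V}_1$, so $\mathcal{V}_1\subseteq\mathcal{V}$. This gives $\mathcal{N}_1=\mathcal{V}_1\oplus\mathcal{Z}_1$ with $\mathcal{V}_1\subseteq\mathcal{V}$, $\mathcal{Z}_1\subseteq\mathcal{Z}$, and $\mathcal{Z}_1=\mathfrak{z}(\mathcal{N}_1)$, so $\mathcal{N}_1$ is aligned.

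\textbf{Invariance of $\mathcal{V}_1$ under $j(z)$.} Let $z\in\mathcal{Z}_1$ and $x\in\mathcal{V}_1$. We need $j(z)x\in\mathcal{V}_1$, i.e. $\langle j(z)x,y\rangle=\langle z,[x,y]\rangle=0$ for all $y\in\mathcal{V}\cap\mathcal{V}_1^\perp$.
\begin{itemize}
\item Such $y$ is orthogonal to $\mathcal{V}_1$ and to $\mathcal{Z}_1\subseteq\mathcal{Z}$, hence $y\in\mathcal{N}_1^\perp=\mathcal{N}_2$.
\item Then $[x,y]\in[\mathcal{N}_1,\mathcal{N}_2]\subseteq\mathcal{N}_2\perp z$.
\end{itemize}
So $j(z)\mathcal{V}_1\subseteq\mathcal{V}_1$.

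\textbf{Conclusion.} Lemma \ref{criterion_of_totally_geodesic_when_it_is_aligned} now shows that $\mathcal{N}_1$ is an aligned totally geodesic subalgebra, with $j$-map the restriction of $j$. Its induced inner product is the soliton one, so it matches Remark \ref{rescaling_heis_3_c_ricci_soliton}. Passing to the group exactly as in the reduction preceding Proposition \ref{proposition_key_for_rigidity} gives the totally geodesic subgroup.

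The main obstacle is the alignment step. Our inner product is defined relative to $\mathcal{N}_1\oplus\mathcal{N}_2$, not $\mathcal{V}\oplus\mathcal{Z}$, and showing that these interact correctly is exactly where the hypothesis $[\mathcal{N}_1,\mathcal{N}_2]\subseteq\mathcal{N}_2$ is used.
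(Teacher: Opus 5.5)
Your proof is correct and follows essentially the paper's argument: an inner product with $\mathcal{N}_1\perp\mathcal{N}_2$ and the soliton metric on $\mathcal{N}_1$, the decomposition $c=c_1+c_2$ of a central element forcing $c_1\in\mathcal{Z}_1$ via $[\mathcal{N}_1,\mathcal{N}_2]\subseteq\mathcal{N}_2$, and then Lemma \ref{criterion_of_totally_geodesic_when_it_is_aligned}. The only differences are cosmetic. The paper proves the full splitting $\mathcal{Z}=\mathcal{Z}_1\oplus(\mathcal{Z}\cap\mathcal{N}_2)$ and then uses $[\mathcal{V}_1,\mathcal{V}_2]\subseteq\mathcal{Z}\cap\mathcal{N}_2$. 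You use only the half of the splitting you need and argue via $\mathcal{V}\cap\mathcal{V}_1^\perp\subseteq\mathcal{N}_2$. Also, the appeal to Jacobi for $\mathcal{Z}_1\subseteq\mathcal{Z}$ is unnecessary, since 2-step nilpotency gives it directly.
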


\begin{proof}
    Let $\varphi:\mathcal{N}_1\to \mathrm{heis}_3(\mathbb{C})$ be the Lie algebra isomorphism from the item (b) of the item 3. Let $\mathrm{heis}_3(\mathbb{C})$ be equipped with a Ricci soliton inner product. Let $\langle \cdot, \cdot \rangle_1$ be the pullback of a Ricci soliton inner product on $\mathrm{heis}_3(\mathbb{C})$ so that $\varphi$ is a metric Lie algebra isomorphism. Let $\mathcal{Z}_1=\varphi^{-1}(\mathfrak{z}(\mathrm{heis}_3(\mathbb{C})))$ and $\mathcal{V}_1=\varphi^{-1}(\mathfrak{z}(\mathrm{heis}_3(\mathbb{C}))^{\perp})$, where $\mathfrak{z}(\mathrm{heis}_3(\mathbb{C}))$ is the center of $\mathrm{heis}_3(\mathbb{C})$. Note, $\mathcal{Z}_1=[\mathcal{N}_1, \mathcal{N}_1]$, and $\mathcal{N}_1=\mathcal{V}_1\oplus \mathcal{Z}_1$. Let $\mathcal{Z}$ be the center of $\mathcal{N}$.
    
    Let $\mathcal{Z}_2=\mathcal{Z}\cap \mathcal{N}_2$. Since $\mathcal{Z}_2$ is a subspace of $\mathcal{N}_2$, there exists a subspace $\mathcal{V}_2$ of $\mathcal{N}_2$ such that $\mathcal{N}_2=\mathcal{V}_2\oplus \mathcal{Z}_2$. Since $\mathcal{N}=\mathcal{N}_1\oplus \mathcal{N}_2$ as a vector space, we have $\mathcal{N}=\mathcal{Z}_1\oplus \mathcal{Z}_2 \oplus \mathcal{V}_1 \oplus \mathcal{V}_2$. Take any inner product $\langle \cdot, \cdot\rangle$ on $\mathcal{N}$ such that $\mathcal{Z}_1, \mathcal{Z}_2, \mathcal{V}_1$, and $\mathcal{V}_2$ are orthogonal, and the restriction of $\langle \cdot, \cdot \rangle$ on $\mathcal{N}_1=\mathcal{V}_1\oplus \mathcal{Z}_1$ is $\langle \cdot, \cdot \rangle_1$.

    We prove that $\mathcal{Z}_1\oplus \mathcal{Z}_2=\mathcal{Z}$, i.e., the center of $\mathcal{N}$. Observe that $\mathcal{Z}_2\subseteq \mathcal{Z}$ by definition. Since the center and the commutator ideal coincides for $\mathrm{heis}_3(\mathbb{C})$, we have $\mathcal{Z}_1=[\mathcal{N}_1, \mathcal{N}_1]\subseteq [\mathcal{N}, \mathcal{N}]\subseteq \mathcal{Z}$. Here, we used the fact that $\mathcal{N}$ is 2-step nilpotent. Thus, we have $\mathcal{Z}_1\oplus \mathcal{Z}_2 \subseteq \mathcal{Z}$. 

    To show $\mathcal{Z}\subseteq \mathcal{Z}_1\oplus \mathcal{Z}_2$, let $x\in \mathcal{Z}$. Since $x\in \mathcal{N}=\mathcal{N}_1\oplus \mathcal{N}_2$, we can write $x=x_1+x_2$, where $x_1\in \mathcal{N}_1$ and $x_2\in \mathcal{N}_2$. For any $y\in \mathcal{N}_1$, since $x$ is in the center of $\mathcal{N}$, we have $0=[x,y]=[x_1,y]+[x_2,y]$. Since $\mathcal{N}_1$ is a subalgebra, we have $[x_1, y]\in \mathcal{N}_1$. By $[\mathcal{N}_1, \mathcal{N}_2]\subseteq \mathcal{N}_2$, we have $[x_2, y]\in \mathcal{N}_2$. Since $\mathcal{N}=\mathcal{N}_1\oplus \mathcal{N}_2$ is a direct sum, one obtains that $[x_1, y]=0=[x_2,y]$. In particular, $x_1$ is in the center of $\mathcal{N}_1$, i.e., $x_1\in \mathcal{Z}_1$. Then, $x_2=x-x_1 \in \mathcal{Z}$ because $\mathcal{Z}_1\subseteq \mathcal{Z}$ as we observed above. On the other hand, $x_2\in \mathcal{N}_2$, so $x_2\in \mathcal{N}_2\cap \mathcal{Z}=\mathcal{Z}_2$ by definition of $\mathcal{Z}_2$. Thus, $x=x_1+x_2\in \mathcal{Z}_1\oplus \mathcal{Z}_2$. This shows $\mathcal{Z}=\mathcal{Z}_1\oplus \mathcal{Z}_2$.

    Set $\mathcal{V}:=\mathcal{Z}^{\perp}=\mathcal{V}_1\oplus \mathcal{V}_2$. We have $[\mathcal{V}_1, \mathcal{V}_2]\subseteq [\mathcal{N}_1, \mathcal{N}_2] \subseteq \mathcal{N}_2\cap [\mathcal{N}, \mathcal{N}]\subseteq \mathcal{N}_2\cap \mathcal{Z}=\mathcal{Z}_2$. Thus, for any $z\in \mathcal{Z}_1$, $j(z)(\mathcal{V}_1)\subseteq \mathcal{V}_1$. By definition, $\mathcal{N}_1$ is an aligned subalgebra, so by Lemma \ref{criterion_of_totally_geodesic_when_it_is_aligned}, $\mathcal{N}_1$ is an aligned totally geodesic subalgebra isomorphic to $\mathrm{heis}_3(\mathbb{C})$ with a Ricci soliton inner product as metric Lie algebras. 
\end{proof}

\subsubsection{Proof of the equivalence of the items 1 and 2 of Theorem \ref{maintheorem_characterization}} Let  $(\mathcal{N}, [\cdot,\cdot], \langle\cdot, \cdot\rangle)$ be the metric Lie algebra of $(N, \langle \cdot, \cdot \rangle)$ of the item 2. Let $\mathcal{Z}$ be the center of $\mathcal{N}$, and let $\mathcal{V}=\mathcal{Z}^{\perp}$ be the orthogonal complement of $\mathcal{Z}$. Let $N_1$ be the totally geodesic subgroup isomorphic and isometric to $\mathrm{Heis}_3(\mathbb{C})$ with a Ricci soliton metric. Let $\mathcal{N}_1=\mathrm{Lie} N_1 \subseteq \mathcal{N}$. First, $\mathcal{N}_1$ is nonsingular. By Proposition \ref{nonsingular_ones_are_aligned}, $\mathcal{N}_1$ is an aligned totally geodesic subalgebra. Hence, there are subspaces $\mathcal{Z}_1$ of $\mathcal{Z}$ and $\mathcal{V}_1$ of $\mathcal{V}$ such that $\mathcal{N}_1=\mathcal{V}_1\oplus \mathcal{Z}_1$ and $\mathcal{Z}_1$ is the center of $\mathcal{N}_1$. Let $\mathcal{V}_2=\mathcal{V}_1^{\perp}\cap \mathcal{V}$ be the orthogonal complement of $\mathcal{V}_1$ inside $\mathcal{V}$. Let $\mathcal{Z}_2=\mathcal{Z}_1^{\perp}\cap \mathcal{Z}$ be the orthogonal complement of $\mathcal{Z}_1$ inside $\mathcal{Z}$. For each $\lambda>0$, define $\varphi \in \mathrm{GL}(\mathcal{N})$ by setting 
\begin{align*}
    \varphi&=\lambda \mathrm{Id} \text{ on $\mathcal{V}_2$}, \;\varphi=\mathrm{Id} \text{ on $\mathcal{V}_2^{\perp}=\mathcal{V}_1\oplus \mathcal{Z}$.}  
\end{align*}
\begin{proposition} \label{proposition_key_for_characterization}
    For each $\lambda>0$, define a Lie bracket $[\cdot,\cdot]^{(\lambda)}:=\varphi^{-1}[\varphi\cdot, \varphi\cdot]$ on $\mathcal{N}$. Then, the metric 2-step nilpotent Lie algebra $\mathcal{N}^{(\lambda)}:=(\mathcal{N}, [\cdot,\cdot]^{(\lambda)},\langle\cdot,\cdot\rangle)$ satisfies the following statements.
    \begin{enumerate}
        \item \label{item_curvature_operator_prop_for_char} There exists a constant $\xi>0$ such that for $\lambda>0$ sufficiently small, $||j^{(\lambda)}||_{\op}=\xi$, where $j^{(\lambda)}$ denotes the $j$-map of $\mathcal{N}^{(\lambda)}$.
        \item \label{item_sectional_curvature_prop_for_char} There exists a 2-plane $\pi\subseteq \mathcal{N}_1$ such that for any $\lambda>0$, $K^{(\lambda)}(\pi)=\frac{1}{2}\xi^2$, where $K^{(\lambda)}$ denotes the sectional curvature of $\mathcal{N}^{(\lambda)}$.
        \item \label{item_pinching_constant_for_char} For $\lambda>0$ sufficiently small, $K_{\max}=\frac{1}{2}||j^{(\lambda)}||_{\op}^2, K_{\min}=-\frac{3}{4}||j^{(\lambda)}||_{\op}^2$, and   $\frac{K_{\min}}{K_{\max}}=-\frac{3}{2}$.
    \end{enumerate}
\end{proposition}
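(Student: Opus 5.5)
The plan is to compute explicitly how the $j$-map changes under the deformation $[\cdot,\cdot]^{(\lambda)}=\varphi^{-1}[\varphi\cdot,\varphi\cdot]$. Everything will then follow from the general bounds already proved, namely the bounds in the proof of Theorem \ref{maintheorem_bounds} and Theorem \ref{minimum_of_sectional_curvature_of_2-step_nilpotent_Lie_groups}.

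\textbf{Step 1: the deformed bracket and $j$-map.} Since $\varphi$ is the identity on $\mathcal{Z}$ and all brackets land in $\mathcal{Z}$, the new bracket satisfies
\begin{align*}
[x,y]^{(\lambda)}&=[x,y] && (x,y\in\mathcal{V}_1),\\
[x,y]^{(\lambda)}&=\lambda[x,y] && (x\in\mathcal{V}_1,\ y\in\mathcal{V}_2),\\
[x,y]^{(\lambda)}&=\lambda^2[x,y] && (x,y\in\mathcal{V}_2).
\end{align*}
Because $\varphi$ is an automorphism-conjugation, the center is still $\mathcal{Z}$, so $\mathcal{V}$ is unchanged.

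By Lemma \ref{curv_op_and_totally_geodesic_subalgebras}:
\begin{itemize}
\item for $z\in\mathcal{Z}_1$, $j(z)$ preserves $\mathcal{V}_1$ and $\mathcal{V}_2$;
\item for $w\in\mathcal{Z}_2$, $j(w)\mathcal{V}_1\subseteq\mathcal{V}_2$.
\end{itemize}
Hence, in block form relative to $\mathcal{V}_1\oplus\mathcal{V}_2$,
\begin{align*}
j^{(\lambda)}(z+w)=\begin{bmatrix} j(z)|_{\mathcal{V}_1} & -\lambda\,{}^tB(w)\\ \lambda B(w) & \lambda^2 C(z+w)\end{bmatrix}.
\end{align*}
Here $j(z)|_{\mathcal{V}_1}$ is the $\mathrm{heis}_3(\mathbb{C})$ soliton $j$-map, so by Remark \ref{rescaling_heis_3_c_ricci_soliton} it equals $\xi|z|$ times an orthogonal map for some $\xi>0$. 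In particular, $\mathcal{N}_1$ stays an aligned totally geodesic subalgebra of $\mathcal{N}^{(\lambda)}$, with exactly the same metric Lie algebra structure, by Lemma \ref{criterion_of_totally_geodesic_when_it_is_aligned}.

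\textbf{Step 2: item \ref{item_curvature_operator_prop_for_char}.} This is the main obstacle. Take unit vectors $Z=z+w$ and $X=x_1+x_2$, and let $C_0$ bound the operator norms of $B$ and $C$. The $\mathcal{V}_1$-component of $j^{(\lambda)}(Z)X$ has norm at most $\xi|z||x_1|+\lambda C_0|w||x_2|$. The $\mathcal{V}_2$-component has norm at most $\lambda C_0|w||x_1|+\lambda^2C_0(|z|+|w|)|x_2|$.

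The target is
\[
\xi^2(|z|^2+|w|^2)(|x_1|^2+|x_2|^2)=\xi^2\bigl(|z|^2|x_1|^2+|w|^2|x_1|^2+|z|^2|x_2|^2+|w|^2|x_2|^2\bigr).
\]
I will expand the sum of the squared component norms and absorb each term:
\begin{itemize}
\item the cross term $2\xi\lambda C_0|z||x_1||w||x_2|$ is bounded via AM--GM by $\xi\lambda C_0(|z|^2|x_2|^2+|w|^2|x_1|^2)$;
\item every other term carries a factor $\lambda^2$ and one of the monomials $|w|^2|x_1|^2$, $|w|^2|x_2|^2$, $|z|^2|x_2|^2$ (or products bounded by these).
\end{itemize}
The only monomial with coefficient exactly $\xi^2$ and no $\lambda$ is $|z|^2|x_1|^2$. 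So for $\lambda$ small, all the remaining terms are dominated by the corresponding $\xi^2$-terms, giving $\|j^{(\lambda)}\|_{\op}\le\xi$. Equality is attained inside $\mathcal{N}_1$, so $\|j^{(\lambda)}\|_{\op}=\xi$.

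\textbf{Step 3: items \ref{item_sectional_curvature_prop_for_char} and \ref{item_pinching_constant_for_char}.}
\begin{itemize}
\item For item \ref{item_sectional_curvature_prop_for_char}, take $\pi$ spanned by $X+Z$ and $X'+Z'$ as in Example \ref{example_heis_3_C_with_Ricci_soliton_metric}, inside $\mathcal{N}_1$. Proposition \ref{general_formula} involves only $j^{(\lambda)}(Z)$ and $j^{(\lambda)}(Z')$ applied to vectors of $\mathcal{V}_1$, and brackets of $\mathcal{V}_1$-vectors. These are $\lambda$-independent and equal to the $\mathrm{heis}_3(\mathbb{C})$ data. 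Hence $K^{(\lambda)}(\pi)=\frac12\xi^2$, as computed in Remark \ref{rescaling_heis_3_c_ricci_soliton}.
\item For item \ref{item_pinching_constant_for_char}, combine the general bound $K_{\max}\le\frac12\|j^{(\lambda)}\|_{\op}^2=\frac12\xi^2$ with item \ref{item_sectional_curvature_prop_for_char} to get $K_{\max}=\frac12\xi^2$. Theorem \ref{minimum_of_sectional_curvature_of_2-step_nilpotent_Lie_groups} gives $K_{\min}=-\frac34\xi^2$, so the ratio is $-\frac32$.
\end{itemize}
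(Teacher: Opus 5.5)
Your proof is correct. Items 2 and 3 go essentially as in the paper. The paper gets $K^{(\lambda)}(\pi)=\frac12\xi^2$ from $\mathcal{N}_1$ being totally geodesic in $\mathcal{N}^{(\lambda)}$, while you read it off Proposition \ref{general_formula} directly, since every quantity in that formula for $\pi\subseteq\mathcal{N}_1$ is $\lambda$-independent; both work.

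The genuine difference is item 1. The paper never estimates $|j^{(\lambda)}(Z)X|$ directly. Instead it builds $S_\lambda=T_\lambda+T'\in\Lambda^4\mathcal{N}^*$:
\begin{itemize}
\item $T_\lambda$ removes the coupling between $\mathcal{V}_1\otimes\mathcal{Z}_1$ and $\mathcal{V}_2\otimes\mathcal{Z}_2$ (Lemma \ref{curv_op_and_totally_geodesic_subalgebras});
\item $T'$ cancels the off-diagonal blocks $-\frac14A_2A_1$ and $-\frac14A_1A_2$.
\end{itemize}
Then $R_\lambda+\iota(S_\lambda)$ is $\frac{\xi^2}{4}\mathrm{id}$ on $\mathcal{V}_1\otimes\mathcal{Z}_1$ and tends to $0$ on the rest of $\mathcal{V}\otimes\mathcal{Z}$. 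For a maximizing pair $x,z$ this gives $\frac14\|j^{(\lambda)}\|_{\op}^2=K^{(\lambda)}(x,z)\le\frac{\xi^2}{4}$.

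Your route instead uses the block form, the bound $|j(z)x_1|\le\xi|z||x_1|$ on $\mathcal{V}_1$, and AM--GM to move the $O(\lambda)$ cross term onto the monomials $|z|^2|x_2|^2$ and $|w|^2|x_1|^2$. This has three advantages:
\begin{itemize}
\item it is more elementary and avoids modified curvature operators entirely;
\item it gives an explicit threshold for $\lambda$ in terms of $C_0/\xi$;
\item it is more general, since it only uses $\|j(z)|_{\mathcal{V}_1}\|\le\|j_1\|_{\op}|z|$. So it shows $\|j^{(\lambda)}\|_{\op}=\|j_1\|_{\op}$ for small $\lambda$ for any non-abelian aligned totally geodesic $\mathcal{N}_1$.
\end{itemize}
By contrast, the paper's choice of $T'$ relies on the Clifford relations $A_\alpha^2=-\xi^2 I$ and $A_1A_2=-A_2A_1$. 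These make the diagonal blocks scalar and the off-diagonal block skew, so that $\iota(T')$ can cancel it. What the paper's route buys is uniformity with the technique it reuses for Theorem \ref{maintheorem_invariant}.

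One small citation fix: in item 2 you need the normal form of $j(z)|_{\mathcal{V}_1}$ to get $K^{(\lambda)}(\pi)=\frac12\xi^2$. That comes from uniqueness of nilsoliton inner products up to isometry and scaling, as the paper notes. It does not come from Remark \ref{rescaling_heis_3_c_ricci_soliton}, which states the converse.
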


\begin{proof}[Proof of Theorem \ref{maintheorem_characterization} assuming Proposition \ref{proposition_key_for_characterization}]

We showed the equivalence of the items 2 and 3 above, and the item 1 implies the item 2 by Theorem \ref{maintheorem_rigidity}. We will assume the item 2 and show the item 1. Let $\langle \cdot, \cdot \rangle^{(\lambda)}:=\langle \varphi^{-1}\cdot, \varphi^{-1}\cdot \rangle$. Then $\varphi:\mathcal{N}^{(\lambda)}\to (\mathcal{N}, [\cdot, \cdot], \langle \cdot, \cdot \rangle^{(\lambda)})$ is an isomorphism of metric Lie algebras. Thus, for $\lambda>0$ sufficiently small, the corresponding left-invariant metric on $N$ has $\frac{K_{\min}}{K_{\max}}=-\frac{3}{2}$.
\end{proof}

\begin{remark}
    As $\mathcal{N}^{(1)}=\mathcal{N}$,  the degeneration $\lambda\to 0$ is a degeneration of $\mathcal{N}$ to $\mathrm{heis}_3(\mathbb{C})\oplus \mathbb{R}^{\dim \mathcal{N}-6}$.
\end{remark}

Thus, it suffices to show Proposition \ref{proposition_key_for_characterization}. We begin the proof of Proposition \ref{proposition_key_for_characterization} here. 
     Let $j$ denote the $j$-map of $\mathcal{N}$. By Lemma \ref{criterion_of_totally_geodesic_when_it_is_aligned}, $\mathcal{V}_1$ is invariant under $j(z)$ for any $z\in \mathcal{Z}_1$, and the restriction $j(z)|_{\mathcal{V}_1}$ is the $j$-map of $\mathcal{N}_1$. 

    A Ricci soliton inner product on any nilpotent Lie algebra is unique up to isometry and scaling (\cite[Theorem 3.5]{Lauret_Ricci_Soliton_Nilmanifolds}). Thus, one can find orthonormal bases $X_1, ..., X_4$ and $Z_1, Z_2$ of $\mathcal{V}_1$ and $\mathcal{Z}_1$, respectively, such that 
    \begin{align}\label{matrices_heis_3_C_for_characterizing_maximum}
        j(Z_1)|_{\mathcal{V}_1}=A_1:=\begin{bmatrix}
        0 & 0 & -\xi & 0 \\
        0 & 0 & 0 & -\xi \\
        \xi & 0 & 0 & 0\\
        0 & \xi & 0 & 0
    \end{bmatrix}, 
    j(Z_2)|_{\mathcal{V}_1}=A_2:=\begin{bmatrix}
        0 & 0 & 0 & -\xi \\
        0 & 0 & \xi & 0 \\
        0 & -\xi & 0 & 0\\
        \xi & 0 & 0 & 0
    \end{bmatrix}
    \end{align}
    with respect to $X_1,...,X_4$. The constant $\xi$ is due to scaling (see Remark \ref{rescaling_heis_3_c_ricci_soliton}).

    Let $X_5,...,X_q$ and $Z_3,...,Z_p$ be orthonormal bases of $\mathcal{V}_2$ and $\mathcal{Z}_2$, respectively.     Let $e_1=X_1, ..., e_q=X_q$ and $e_{q+1}=Z_1, ..., e_{q+p}=Z_p$, so that $e_1,...,e_{p+q}$ form an orthonormal basis of $\mathcal{N}$. Then, $e_a\wedge e_b\wedge e_c\wedge e_d$, $a<b<c<d$, form an orthonormal basis of $\Lambda^4 \mathcal{N}$. Form the dual basis $(e_a\wedge e_b\wedge e_c\wedge e_d)^*$. Then, Setting \ref{setting_totally_geodesic_T} is satisfied.

    Recall that if $\langle \cdot, \cdot \rangle^{(\lambda)}:=\langle \varphi^{-1}\cdot, \varphi^{-1}\cdot \rangle$, then $\varphi:\mathcal{N}^{(\lambda)}\to (\mathcal{N}, [\cdot, \cdot], \langle \cdot, \cdot \rangle^{(\lambda)})$ is an isomorphism of metric Lie algebras. Let $\mathcal{N}_1^{(\lambda)}=(\mathcal{N}_1, [\cdot, \cdot]^{(\lambda)}, \langle \cdot, \cdot \rangle)=\varphi^{-1}(\mathcal{N}_1, [\cdot,\cdot], \langle \cdot, \cdot\rangle_{\lambda})$ denote the subalgebra $\mathcal{N}_1$ with the Lie bracket induced from $[\cdot,\cdot]^{(\lambda)}$. Let $j^{(\lambda)}$ be the $j$-map of $(\mathcal{N}, [\cdot,\cdot]^{(\lambda)}, \langle \cdot, \cdot\rangle)$.

\begin{lemma} \label{lemma_N_1_lambda_is_N_1}
    The subalgebra $\mathcal{N}_1^{(\lambda)}=(\mathcal{N}_1, [\cdot, \cdot]^{(\lambda)}, \langle \cdot, \cdot \rangle)$ coincides with $(\mathcal{N}_1, [\cdot, \cdot], \langle \cdot, \cdot \rangle)$ as $\varphi$ is the identity on $\mathcal{N}_1$. In particular, $\mathcal{N}_1^{(\lambda)}$ is an aligned subalgebra of $\mathcal{N}^{(\lambda)}=(\mathcal{N}, [\cdot, \cdot]^{(\lambda)}, \langle \cdot, \cdot \rangle)$ isomorphic to $\mathrm{heis}_3(\mathbb{C})$ with a Ricci soliton inner product as metric Lie algebras. The center of $\mathcal{N}^{(\lambda)}$ is the subspace $\mathcal{Z}$, and the orthogonal complement of the center of $\mathcal{N}^{(\lambda)}$ is the subspace $\mathcal{V}$.

\end{lemma}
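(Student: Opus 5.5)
The plan is to compute $[\cdot,\cdot]^{(\lambda)}$ explicitly on each block of the orthogonal decomposition $\mathcal{N}=\mathcal{V}_1\oplus\mathcal{V}_2\oplus\mathcal{Z}_1\oplus\mathcal{Z}_2$, using that $\varphi$ acts as the identity on $\mathcal{V}_1\oplus\mathcal{Z}$ and as $\lambda\,\mathrm{Id}$ on $\mathcal{V}_2$. Since $[\mathcal{N},\mathcal{N}]\subseteq\mathcal{Z}$ and $\varphi^{-1}$ is the identity on $\mathcal{Z}$, we have
\begin{align*}
[x,y]^{(\lambda)}&=[x,y] \quad (x,y\in\mathcal{V}_1),\\
[x,y]^{(\lambda)}&=\lambda[x,y] \quad (x\in\mathcal{V}_1,\ y\in\mathcal{V}_2),\\
[x,y]^{(\lambda)}&=\lambda^2[x,y] \quad (x,y\in\mathcal{V}_2),
\end{align*}
and every bracket involving $\mathcal{Z}$ vanishes. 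The first identity already gives the first assertion of the lemma: $\mathcal{N}_1=\mathcal{V}_1\oplus\mathcal{Z}_1$ is closed under $[\cdot,\cdot]^{(\lambda)}$, with the restricted bracket equal to the original one. The inner product on $\mathcal{N}_1$ is unchanged, so $\mathcal{N}_1^{(\lambda)}$ equals $(\mathcal{N}_1,[\cdot,\cdot],\langle\cdot,\cdot\rangle)$. This is $\mathrm{heis}_3(\mathbb{C})$ with its Ricci soliton inner product, by the hypothesis of item 2 together with the normal form (\ref{matrices_heis_3_C_for_characterizing_maximum}).

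Next I show that the center of $\mathcal{N}^{(\lambda)}$ is exactly $\mathcal{Z}$. Since $\varphi:(\mathcal{N},[\cdot,\cdot]^{(\lambda)})\to(\mathcal{N},[\cdot,\cdot])$ is a Lie algebra isomorphism, the center of $\mathcal{N}^{(\lambda)}$ is $\varphi^{-1}(\mathcal{Z})$. But $\varphi$ is the identity on $\mathcal{Z}$, so this center is $\mathcal{Z}$. Its orthogonal complement for the fixed inner product $\langle\cdot,\cdot\rangle$ is therefore $\mathcal{V}$.

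Finally, alignment. Set $\mathcal{N}_1=\mathcal{V}_1\oplus\mathcal{Z}_1$ with $\mathcal{V}_1\subseteq\mathcal{V}$ and $\mathcal{Z}_1\subseteq\mathcal{Z}$. Here $\mathcal{V}$ is the orthogonal complement of the center of $\mathcal{N}^{(\lambda)}$ (just shown), and $\mathcal{Z}_1$ is the center of $\mathcal{N}_1^{(\lambda)}=\mathcal{N}_1$. This is precisely Definition \ref{def_of_aligned_totally_geodesic_subalgebras}(2).

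The only real care point is the center computation. Working via the isomorphism $\varphi$ avoids a direct check that no element of $\mathcal{V}$ becomes central after rescaling; a direct check would also be easy, since brackets are only multiplied by nonzero factors $1,\lambda,\lambda^2$ on the separate blocks.
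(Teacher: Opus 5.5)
Your proposal is correct and is essentially the paper's argument. The paper notes that $\varphi:\mathcal{N}^{(\lambda)}\to(\mathcal{N},[\cdot,\cdot],\langle\cdot,\cdot\rangle^{(\lambda)})$ is a metric Lie algebra isomorphism onto an algebra where the statements already hold, and transports them back; you instead make the bracket on $\mathcal{N}_1$ explicit and use $\varphi$ only as a Lie algebra isomorphism to get the center $\varphi^{-1}(\mathcal{Z})=\mathcal{Z}$, with the fixed inner product then giving $\mathcal{V}$ directly. (Your closing aside about a direct check is slightly loose, since the blockwise factors alone allow cross-block cancellations; what actually makes it work is $[x,y]^{(\lambda)}=[\varphi x,\varphi y]$ together with $\varphi(\mathcal{V})=\mathcal{V}$, but you do not rely on it.)
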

\begin{proof} 
This is because $\varphi$ is a metric Lie algebra isomorphism, and the statements are true for $(\mathcal{N}, [\cdot,\cdot], \langle \cdot,\cdot \rangle^{(\lambda)})$.
\end{proof}

For each $\alpha\in \{1,...,p\}$, we write $j(Z_\alpha)$ and $j^{(\lambda)}(Z_\alpha)$ in 2-by-2 block form relative to the subspaces $\mathcal{V}_1$ and $\mathcal{V}_2$.

\begin{lemma} \label{matrix_representation_of_j_lambda}
    For $\alpha\in\{1,2\}, \beta\in \{3,...,p\}$, write $j(Z_\alpha)=\begin{bmatrix}
        A_\alpha & 0 \\
        0 & D_\alpha
    \end{bmatrix}$, $j(Z_\beta)=\begin{bmatrix}
        0 & -B_{\beta}^T \\
        B_\beta & D_\beta
    \end{bmatrix}$ relative to the subspaces $\mathcal{V}_1$ and $\mathcal{V}_2$. Then, for $\alpha \in \{1,2\}, \beta\in \{3,...,p\}$, we have $j^{(\lambda)}(Z_\alpha)=\begin{bmatrix}
        A_\alpha & 0 \\
        0 & \lambda^2 D_\alpha
    \end{bmatrix}$, 
    $j^{(\lambda)}(Z_\beta)=\begin{bmatrix}
        0 & -\lambda B_{\beta}^T \\
        \lambda B_\beta & \lambda^2 D_\beta 
    \end{bmatrix}$
    relative to the subspaces $\mathcal{V}_1$ and $\mathcal{V}_2$. The $j$-map of $\mathcal{N}_1^{(\lambda)}$ is given by $j(Z_\alpha)|_{\mathcal{V}_1}=A_\alpha$, $\alpha\in \{1,2\}$. 

    In particular, $\mathcal{N}_1^{(\lambda)}$ is an aligned totally geodesic subalgebra. With respect to the orthonormal basis $X_1,...,X_q$ of $\mathcal{V}$ and $Z_1,...,Z_p$ of $\mathcal{Z}$, Setting \ref{setting_totally_geodesic_T} is satisfied for $\mathcal{N}^{(\lambda)}$ and $\mathcal{N}_1^{(\lambda)}$.
\end{lemma}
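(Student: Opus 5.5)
The plan is to write down the block form of $j(Z_\gamma)$ for the original bracket, and then obtain $j^{(\lambda)}$ by a direct computation from $[\cdot,\cdot]^{(\lambda)}=\varphi^{-1}[\varphi\cdot,\varphi\cdot]$.

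\emph{Block form of $j$.} Since $\mathcal{N}_1$ is an aligned totally geodesic subalgebra, Lemma \ref{curv_op_and_totally_geodesic_subalgebras} applies.
\begin{itemize}
\item By item 1 of that lemma, $j(Z_\alpha)$ preserves both $\mathcal{V}_1$ and $\mathcal{V}_2$ for $\alpha\in\{1,2\}$. Hence its off-diagonal blocks vanish, and its $\mathcal{V}_1$ block is $A_\alpha$ by (\ref{matrices_heis_3_C_for_characterizing_maximum}).
\item By item 2, $j(Z_\beta)(\mathcal{V}_1)\subseteq \mathcal{V}_2$ for $\beta\ge 3$, so the $\mathcal{V}_1$-$\mathcal{V}_1$ block of $j(Z_\beta)$ is zero.
\item Skew-symmetry of $j(Z_\beta)$ then gives the form $\begin{bmatrix}0&-B_\beta^T\\ B_\beta&D_\beta\end{bmatrix}$.
\end{itemize}

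\emph{Computing $j^{(\lambda)}$.} Take $Z\in\mathcal{Z}$ and $x,y\in\mathcal{V}$. Since $\varphi$ is the identity on $\mathcal{Z}$ and $\varphi$ is symmetric with respect to $\langle\cdot,\cdot\rangle$,
\begin{align*}
\langle j^{(\lambda)}(Z)x,y\rangle=\langle Z,\varphi^{-1}[\varphi x,\varphi y]\rangle=\langle Z,[\varphi x,\varphi y]\rangle=\langle \varphi j(Z)\varphi x,y\rangle .
\end{align*}
Here $\varphi^{-1}$ is self-adjoint and fixes $\mathcal{Z}$, so $\langle Z,\varphi^{-1}w\rangle=\langle Z,w\rangle$. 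We also use that $\varphi$ preserves $\mathcal{V}$. This gives $j^{(\lambda)}(Z)=\varphi|_{\mathcal{V}}\, j(Z)\,\varphi|_{\mathcal{V}}$, where $\varphi|_{\mathcal{V}}=\mathrm{diag}(\mathrm{Id}_{\mathcal{V}_1},\lambda\,\mathrm{Id}_{\mathcal{V}_2})$. Conjugating the block forms multiplies each block as follows:
\begin{itemize}
\item the $\mathcal{V}_1$-$\mathcal{V}_1$ block by $1$;
\item the off-diagonal blocks by $\lambda$;
\item the $\mathcal{V}_2$-$\mathcal{V}_2$ block by $\lambda^2$.
\end{itemize}
This yields exactly the stated matrices.

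There is one step where care is needed: $j^{(\lambda)}$ must be defined with respect to the center of $\mathcal{N}^{(\lambda)}$ and its orthogonal complement. Lemma \ref{lemma_N_1_lambda_is_N_1} identifies these as $\mathcal{Z}$ and $\mathcal{V}$, because $\varphi$ is an automorphism of vector spaces preserving $\mathcal{Z}$. So the computation above is legitimate.

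\emph{Remaining claims.} By Lemma \ref{lemma_N_1_lambda_is_N_1}, $\mathcal{N}_1^{(\lambda)}$ is an aligned subalgebra with center $\mathcal{Z}_1$. From the formula, $j^{(\lambda)}(Z_\alpha)$ preserves $\mathcal{V}_1$ and restricts to $A_\alpha$. Lemma \ref{criterion_of_totally_geodesic_when_it_is_aligned} then shows that $\mathcal{N}_1^{(\lambda)}$ is an aligned totally geodesic subalgebra whose $j$-map is given by the $A_\alpha$.

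Setting \ref{setting_totally_geodesic_T} holds for $\mathcal{N}^{(\lambda)}$ because nothing in its hypotheses changed:
\begin{itemize}
\item the bases $X_i$, $Z_\alpha$ are still orthonormal, since the inner product is unchanged;
\item they are adapted to $\mathcal{V}_1,\mathcal{V}_2,\mathcal{Z}_1,\mathcal{Z}_2$;
\item the center and its complement are the same subspaces as before.
\end{itemize}
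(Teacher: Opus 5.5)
Your proof is correct and follows the paper's route: Lemma \ref{curv_op_and_totally_geodesic_subalgebras} gives the block form of $j$, the identity $[x,y]^{(\lambda)}=[\varphi x,\varphi y]$ for $x,y\in\mathcal{V}$ gives $j^{(\lambda)}$, and Lemma \ref{criterion_of_totally_geodesic_when_it_is_aligned} gives total geodesy. Your formula $j^{(\lambda)}(Z)=\varphi|_{\mathcal{V}}\,j(Z)\,\varphi|_{\mathcal{V}}$ just makes the paper's ``use this'' step explicit; note that this is a congruence, not a conjugation, although the blockwise scaling by $1,\lambda,\lambda^2$ you state is right.
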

\begin{proof}
    Since $\mathcal{N}_1$ is an aligned totally geodesic subalgebra, Lemma \ref{curv_op_and_totally_geodesic_subalgebras} explains the block form of $j(Z_\alpha)$ and $j(Z_\beta)$. Recall that $\varphi$ is the identity on $\mathcal{Z}$, and $[\mathcal{N},\mathcal{N}]\subseteq \mathcal{Z}$. For any $x,y\in \mathcal{V}$, $[x,y]^{(\lambda)}=\varphi^{-1}([\varphi(x),\varphi(y)])=[\varphi(x),\varphi(y)]$. Use this to get the block form of $j^{(\lambda)}(Z_\alpha)$ and $j^{(\lambda)}(Z_\beta)$. Then Lemma \ref{criterion_of_totally_geodesic_when_it_is_aligned} implies that $\mathcal{N}_1^{(\lambda)}$ is an aligned totally geodesic subalgebra.
\end{proof}

Let $R_\lambda$ be the curvature operator of $\mathcal{N}^{(\lambda)}$. Recall from Section \ref{section_curv_op_of_general_riem_mfd} that any element $T\in \Lambda^4(\mathcal{N}^{(\lambda)})^*$ gives rise to a modified curvature operator $R_{\lambda}+\iota(T)$, which is a symmetric operator, and $\langle\iota(T)(x\wedge y),z\wedge w\rangle=T(x\wedge y\wedge z \wedge w)$ by definition. 

We will show that $\xi^2=||j^{(\lambda)}||_{\op}^2$ for a sufficiently small $\lambda>0$. For $i\in \{1,...,4\}, j\in \{5,...,q\}, \alpha \in \{1,2\}, \beta\in \{3,...,p\}$, set $c_{i\alpha j\beta}(\lambda):=\langle R_\lambda(X_i\wedge Z_\alpha), X_j\wedge Z_\beta \rangle$. Define $T_{\lambda}=\sum c_{i\alpha j\beta}(\lambda) (X_i  \wedge X_j\wedge Z_\alpha \wedge Z_\beta)^*$, where the sum is over the indices $i\in \{1,...,4\}, \alpha\in \{1,2\}, j\in \{5,...,q\}, \beta\in \{3,...,p\}$. Let $T_{\lambda}'=-\frac{\xi^2}{4}(X_1\wedge Z_1\wedge X_2\wedge Z_2)^*+\frac{\xi^2}{4}(X_3\wedge Z_1\wedge X_4\wedge Z_2)^*$. (This $T_\lambda'$ is independent of $\lambda$, so we could write $T_\lambda'=T'$.) Let $S_{\lambda}=T_\lambda+T_{\lambda}'$.

\begin{lemma} \label{lemma_existence_of_appropriate_T} We have the following.   
    \begin{enumerate}
        \item $R_\lambda$ and $\iota(T_\lambda)$ leaves $\mathcal{V}\otimes \mathcal{Z}$ invariant, and the modified curvature operator $R_\lambda + \iota(T_{\lambda})$ leaves $\mathcal{V}\otimes \mathcal{Z}$, $\mathcal{V}_1\otimes \mathcal{Z}_1$, and $(\mathcal{V}_1\otimes \mathcal{Z}_1)^{\perp}\cap (\mathcal{V}\otimes \mathcal{Z})$ invariant. Moreover, $T_\lambda\to 0$ as $\lambda\to 0$.
        \item $\iota(T_{\lambda}')$ leaves $\mathcal{V}\otimes \mathcal{Z}$, $\mathcal{V}_1\otimes \mathcal{Z}_1$, $(\mathcal{V}_1\otimes \mathcal{Z}_1)^{\perp}\cap (\mathcal{V}\otimes \mathcal{Z})$ invariant, and $\iota(T_{\lambda}')=0$ on $(\mathcal{V}_1\otimes \mathcal{Z}_1)^{\perp}\cap (\mathcal{V}\otimes \mathcal{Z})$.
        \item $R_\lambda+\iota(S_\lambda)=R_\lambda+\iota(T_\lambda)+\iota(T_{\lambda}')$ leaves $\mathcal{V}\otimes \mathcal{Z}$, $\mathcal{V}_1\otimes \mathcal{Z}_1$, and $(\mathcal{V}_1\otimes \mathcal{Z}_1)^{\perp}\cap (\mathcal{V}\otimes \mathcal{Z})$ invariant.
        \item $(R_\lambda+\iota(S_{\lambda}))|_{\mathcal{V}_1\otimes \mathcal{Z}_1}=\frac{\xi^2}{4}\mathrm{id}|_{\mathcal{V}_1\otimes \mathcal{Z}_1}$.
        \item $(R_\lambda+\iota(S_{\lambda}))|_{(\mathcal{V}_1\otimes \mathcal{Z}_1)^{\perp}\cap (\mathcal{V}\otimes \mathcal{Z})}=(R_\lambda+\iota(T_\lambda))|_{(\mathcal{V}_1\otimes \mathcal{Z}_1)^{\perp}\cap (\mathcal{V}\otimes \mathcal{Z})}\to 0$ as $\lambda\to 0$.
    \end{enumerate}
    In particular, for $\lambda>0$ sufficiently small, the maximum eigenvalue of $(R_\lambda + \iota(S_{\lambda}))|_{\mathcal{V}\otimes \mathcal{Z}}$ is $\frac{\xi^2}{4}$.
\end{lemma}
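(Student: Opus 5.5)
Items 1 and 3 come essentially for free from Lemma \ref{curv_op_and_totally_geodesic_subalgebras}, once we check that its hypotheses hold for each $\lambda$. Lemma \ref{matrix_representation_of_j_lambda} says that Setting \ref{setting_totally_geodesic_T} holds for $\mathcal{N}^{(\lambda)}$ and the aligned totally geodesic subalgebra $\mathcal{N}_1^{(\lambda)}$. Also, $T_\lambda$ is exactly the form built in that lemma from the coefficients $c_{i\alpha j\beta}(\lambda)$. So items 3, 4, 5 of Lemma \ref{curv_op_and_totally_geodesic_subalgebras} give the invariance claims in item 1.

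For $T_\lambda\to 0$, we use Remark \ref{curv_op_on_V_wedge_Z_using_j_map}:
\[
c_{i\alpha j\beta}(\lambda)=-\tfrac14\langle j^{(\lambda)}(Z_\beta)j^{(\lambda)}(Z_\alpha)X_i,X_j\rangle .
\]
Since $j^{(\lambda)}(Z_\alpha)X_i=A_\alpha X_i\in\mathcal{V}_1$ and the $\mathcal{V}_2$-component of $j^{(\lambda)}(Z_\beta)$ on $\mathcal{V}_1$ is $\lambda B_\beta$, this coefficient is $O(\lambda)$.

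For item 2, we compute $\iota(T')$ directly from Definition \ref{def_of_modified_curvature_operator}. The only nonzero pairings are $\langle\iota(T')(X_1\wedge Z_1),X_2\wedge Z_2\rangle$, $\langle\iota(T')(X_3\wedge Z_1),X_4\wedge Z_2\rangle$ and their symmetric and sign-permuted versions. All of these involve only basis vectors of $\mathcal{V}_1\otimes\mathcal{Z}_1$. Hence $\iota(T')$ maps $\mathcal{V}_1\otimes\mathcal{Z}_1$ into itself and vanishes on its orthogonal complement in $\Lambda^2\mathcal{N}$, which gives item 2. Item 3 follows by adding items 1 and 2.

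Item 4 is the main computation. By item 6 of Lemma \ref{curv_op_and_totally_geodesic_subalgebras}, $R_\lambda+\iota(T_\lambda)$ restricted to $\mathcal{V}_1\otimes\mathcal{Z}_1$ equals $R_\lambda$ restricted there. This block depends only on $A_1,A_2$, since $j^{(\lambda)}(Z_\alpha)|_{\mathcal{V}_1}=A_\alpha$. By Remark \ref{rescaling_heis_3_c_ricci_soliton}, it is the $8\times 8$ matrix with diagonal blocks $\frac{\xi^2}{4}I_4$ and off-diagonal block $-\frac14 A_1A_2$, whose entries are $\pm\frac{\xi^2}{4}$ in positions $(1,2),(2,1),(3,4),(4,3)$. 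We then check entry by entry that $\iota(T')$ exactly cancels this off-diagonal block. The pairing $\langle\iota(T')(X_1\wedge Z_1),X_2\wedge Z_2\rangle=T'(X_1\wedge Z_1\wedge X_2\wedge Z_2)=-\frac{\xi^2}{4}$ should cancel the corresponding entry of $R_\lambda$, and similarly for $(X_2\wedge Z_1,X_1\wedge Z_2)$, $(X_3\wedge Z_1,X_4\wedge Z_2)$ and $(X_4\wedge Z_1,X_3\wedge Z_2)$. The main obstacle is the signs here: the orientation of each wedge, the reordering $X_1\wedge Z_1\wedge X_2\wedge Z_2=-X_1\wedge X_2\wedge Z_1\wedge Z_2$ relative to the dual basis, and the convention $\langle R(X\wedge Y),Z\wedge W\rangle=R(X,Y,W,Z)$ all have to be tracked consistently. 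I would verify that the stated coefficients $-\frac{\xi^2}{4}$ and $+\frac{\xi^2}{4}$ match the matrix $-\frac14[A_1][A_2]$ in Remark \ref{rescaling_heis_3_c_ricci_soliton}. Once they do, the restriction is $\frac{\xi^2}{4}\mathrm{id}$.

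For item 5, equality on the complement follows from item 2. For convergence to $0$, we use Remark \ref{curv_op_on_V_wedge_Z_using_j_map} again. Every entry of $R_\lambda$ on $(\mathcal{V}_1\otimes\mathcal{Z}_1)^\perp\cap(\mathcal{V}\otimes\mathcal{Z})$ pairs vectors lying in $\mathcal{V}_2\otimes\mathcal{Z}$ or $\mathcal{V}_1\otimes\mathcal{Z}_2$. Each is $-\frac14\langle j^{(\lambda)}(Z_\beta)j^{(\lambda)}(Z_\alpha)X_i,X_j\rangle$ with at least one factor of $\lambda$. This is because $j^{(\lambda)}$ acts on $\mathcal{V}_2$ with $O(\lambda)$ entries, and $j^{(\lambda)}(Z_\beta)$ for $\beta\ge 3$ is $O(\lambda)$ as well. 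Combined with $\iota(T_\lambda)\to 0$ from item 1, this gives item 5.

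For the final claim, the operator $(R_\lambda+\iota(S_\lambda))|_{\mathcal{V}\otimes\mathcal{Z}}$ is block diagonal by item 3. Its first block has the single eigenvalue $\frac{\xi^2}{4}>0$ by item 4. By continuity of eigenvalues, the second block has all eigenvalues below $\frac{\xi^2}{4}$ for small $\lambda$, by item 5. So the maximum eigenvalue is $\frac{\xi^2}{4}$.
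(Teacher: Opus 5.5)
Your proposal is correct and follows the paper's proof essentially step for step: Lemma~\ref{curv_op_and_totally_geodesic_subalgebras} for item 1, cancellation of the off-diagonal blocks $-\frac14A_1A_2$ and $-\frac14A_2A_1$ by $\iota(T')$ for item 4, the $\lambda$-powers from Lemma~\ref{matrix_representation_of_j_lambda} for item 5, and continuity of eigenvalues at the end. The sign check you defer does work out; for example, $\langle R_\lambda(X_1\wedge Z_1),X_2\wedge Z_2\rangle=\frac14\langle A_1X_2,A_2X_1\rangle=\frac{\xi^2}{4}$ is cancelled by $T'(X_1\wedge Z_1\wedge X_2\wedge Z_2)=-\frac{\xi^2}{4}$, and $\langle R_\lambda(X_3\wedge Z_1),X_4\wedge Z_2\rangle=-\frac{\xi^2}{4}$ is cancelled by $+\frac{\xi^2}{4}$.

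There are two harmless slips.
\begin{enumerate}
\item $\iota(T')$ does not vanish on the whole orthogonal complement of $\mathcal{V}_1\otimes\mathcal{Z}_1$ in $\Lambda^2\mathcal{N}$. For instance, $\iota(T')(X_1\wedge X_2)=\frac{\xi^2}{4}Z_1\wedge Z_2\neq 0$. It vanishes only on the complement inside $\mathcal{V}\otimes\mathcal{Z}$, which is all item 2 claims. Invariance of $\mathcal{V}\otimes\mathcal{Z}$ follows because $T'$ kills 4-vectors of type $\mathcal{V}\mathcal{V}\mathcal{V}\mathcal{Z}$ and $\mathcal{V}\mathcal{Z}\mathcal{Z}\mathcal{Z}$.
\item The correct expression is $c_{i\alpha j\beta}(\lambda)=-\frac14\langle j^{(\lambda)}(Z_\alpha)j^{(\lambda)}(Z_\beta)X_i,X_j\rangle$, with the order of the two factors reversed from what you wrote. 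This is $O(\lambda^3)$, so $T_\lambda\to 0$ still holds.
\end{enumerate}
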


\begin{proof} We show the item 1. By Lemma \ref{formulas_for_curv_op} and Lemma \ref{matrix_representation_of_j_lambda}, we have 
\begin{align*}
    c_{i\alpha j\beta}(\lambda)= \frac{1}{4}\langle j^{(\lambda)}(Z_\alpha)X_j, j^{(\lambda)}(Z_\beta)X_i\rangle= \frac{1}{4}\langle \lambda^2 j(Z_\alpha)X_j, \lambda j(Z_\beta)X_i\rangle \to 0
\end{align*}
as $\lambda\to 0$. Hence, $T_{\lambda}\to 0$ as $\lambda\to 0$. By Lemma \ref{matrix_representation_of_j_lambda}, Setting \ref{setting_totally_geodesic_T} is satisfied and we can use Lemma \ref{curv_op_and_totally_geodesic_subalgebras}. The $T_\lambda$ defined above corresponds to the one in Lemma \ref{curv_op_and_totally_geodesic_subalgebras}. The item 1 follows from Lemma \ref{curv_op_and_totally_geodesic_subalgebras}.

To show the items 2, 3, and 4, we make the following observation. For $i, i'\in \{1,..,4\}$ and $\alpha, \alpha'\in \{1,2\}$, we have $\langle (R_\lambda+\iota(T_\lambda))(X_i\wedge Z_\alpha), X_{i'}\wedge Z_{\alpha'}\rangle=\langle R_\lambda(X_i\wedge Z_\alpha), X_{i'}\wedge Z_{\alpha'}\rangle$ by Lemma \ref{curv_op_and_totally_geodesic_subalgebras}. By Lemma \ref{formulas_for_curv_op} and Lemma \ref{matrix_representation_of_j_lambda}, we have $\langle R_\lambda(X_i\wedge Z_\alpha), X_{i'}\wedge Z_{\alpha'}\rangle=\frac{1}{4}\langle j^{(\lambda)}(Z_{\alpha'})X_{i}, j^{(\lambda)}(Z_{\alpha})X_{i'}\rangle=\frac{1}{4}\langle j(Z_{\alpha'})X_{i}, j(Z_{\alpha})X_{i'}\rangle$, which is exactly the entries of the curvature operator of $\mathrm{heis}_3(\mathbb{C})$ with respect to a Ricci soliton inner product. By this fact, the matrix representation of $(R_\lambda+\iota(T_\lambda))|_{\mathcal{V}_1\otimes \mathcal{Z}_1}$ with respect to $X_1\wedge Z_1, ..., X_4 \wedge Z_1, X_1,\wedge Z_2, ..., X_4\wedge Z_2$ is given by the 8 by 8 matrix given in Remark \ref{rescaling_heis_3_c_ricci_soliton}, which is $(R_\lambda+\iota(T_\lambda))|_{\mathcal{V}_1\otimes \mathcal{Z}_1}= \begin{bmatrix}
           -\frac{1}{4}A_1^2 & -\frac{1}{4} A_2 A_1 \\
           -\frac{1}{4}A_1 A_2 & -\frac{1}{4} A_2^2
       \end{bmatrix}$.

Observe that $\iota(T_{\lambda}')(\mathcal{V}\otimes \mathcal{Z})\subseteq \mathcal{V}\otimes \mathcal{Z}$. By definition, we see $\iota(T_{\lambda}')(\mathcal{V}_1\otimes \mathcal{Z}_1)\subseteq \mathcal{V}_1\otimes \mathcal{Z}_1$ and $\iota(T_{\lambda}')=0$ on $(\mathcal{V}_1\otimes \mathcal{Z}_1)^{\perp}\cap (\mathcal{V}\otimes \mathcal{Z})$, since for $x,y\in \mathcal{V}, z,w\in \mathcal{Z}, \langle \iota(T_{\lambda}')(x\wedge z), y \wedge w\rangle=T_{\lambda}'(x\wedge z\wedge y\wedge w)\neq 0$ only when $x,y\in \mathcal{V}_1$ and $w,z\in \mathcal{Z}_1$. Since $S_\lambda=T_\lambda+T_{\lambda}'$, $R_\lambda + \iota(S_\lambda)$ leaves $\mathcal{V}_1\otimes \mathcal{Z}_1$ invariant because both of $R_\lambda+\iota(T_\lambda)$ and $\iota(T_{\lambda}')$ leave $\mathcal{V}_1\otimes \mathcal{Z}_1$ invariant. This shows the items 2 and 3.

    The item 4 follows from a direct computation using the definition of $T_{\lambda}'$, where one shows the matrix representation of $\iota(T_{\lambda}')|_{\mathcal{V}_1\otimes \mathcal{Z}_1}$ with respect to $X_1\wedge Z_1, ..., X_4 \wedge Z_1, X_1,\wedge Z_2, ..., X_4\wedge Z_2$ is given by $\begin{bmatrix}
          0 & \frac{1}{4} A_2 A_1 \\
           \frac{1}{4}A_1 A_2 & 0
       \end{bmatrix}$. For example, $\langle \iota(T_\lambda')(X_1\wedge Z_1), X_2\wedge Z_2\rangle=T_\lambda'(X_1\wedge Z_1\wedge X_2\wedge Z_2)=-\frac{\xi^2}{4}$.
    Therefore, $(R_\lambda+\iota(S_\lambda))|_{\mathcal{V}_1\otimes \mathcal{Z}_1}=(R_\lambda+\iota(T_\lambda))|_{\mathcal{V}_1\otimes \mathcal{Z}_1} + \iota(T_{\lambda}')|_{\mathcal{V}_1\otimes \mathcal{Z}_1}= \frac{\xi^2}{4} \mathrm{id}_{\mathcal{V}_1\otimes \mathcal{Z}_1}$.

         We show the item 5. We have $(R_\lambda + \iota(S_\lambda))|_{(\mathcal{V}_1\otimes \mathcal{Z}_1)^{\perp}\cap (\mathcal{V}\otimes \mathcal{Z})}=(R_\lambda + \iota(T_\lambda))|_{(\mathcal{V}_1\otimes \mathcal{Z}_1)^{\perp}\cap (\mathcal{V}\otimes \mathcal{Z})}$ by the item 2. We have $T_\lambda\to 0$ as $\lambda\to 0$ by the item 1. We show $R_\lambda|_{(\mathcal{V}_1\otimes \mathcal{Z}_1)^{\perp}\cap (\mathcal{V}\otimes \mathcal{Z})}\to 0$ as $\lambda\to 0$. For $z, w \in \mathcal{Z}$ and $x,y\in \mathcal{V}$, we have $\langle R_\lambda(x\wedge z), y\wedge w \rangle=\frac{1}{4}\langle j^{(\lambda)}(z)y, j^{(\lambda)}(w)x\rangle$. Notice, a basis vector $X_j\wedge Z_\beta$ is in $(\mathcal{V}_1\otimes \mathcal{Z}_1)^{\perp}\cap (\mathcal{V}\otimes \mathcal{Z})$ if and only if $j\geq 5$ or $\beta\geq 3$. Let $x=X_j, z=Z_\beta, y=X_i, w=Z_\alpha$, where $X_j\wedge Z_\beta \in (\mathcal{V}_1\otimes \mathcal{Z}_1)^{\perp}\cap (\mathcal{V}\otimes \mathcal{Z})$. If $\beta\geq 3$, then $j^{(\lambda)}(Z_\beta)X_i\to 0$ as $\lambda \to 0$ by Lemma \ref{matrix_representation_of_j_lambda}. If $j\geq 5$, then $j^{(\lambda)}(Z_\alpha)X_j\to 0$ as $\lambda\to 0$ again by Lemma \ref{matrix_representation_of_j_lambda}. Either way, $\frac{1}{4}\langle j^{(\lambda)}(z)y, j^{(\lambda)}(w)x\rangle\to 0$ as $\lambda \to 0$. This shows the item 5.
         
      By combining the items 4 and 5, for $\lambda>0$ sufficiently small, the maximum eigenvalue of $(R_\lambda+\iota(S_\lambda))|_{\mathcal{V}\otimes \mathcal{Z}}$ is $\frac{\xi^2}{4}$. This proves Lemma \ref{lemma_existence_of_appropriate_T}.
\end{proof}

\begin{proof}[Proof of Proposition \ref{proposition_key_for_characterization}]
     To show the item \ref{item_curvature_operator_prop_for_char}, fix a sufficiently small $\lambda>0$ from Lemma \ref{lemma_existence_of_appropriate_T} so that the maximum eigenvalue of $(R_\lambda + \iota(S_{\lambda}))|_{\mathcal{V}\otimes \mathcal{Z}}$ is $\frac{\xi^2}{4}$. By compactness, there exists some unit vectors $x\in \mathcal{V}$ and $z\in \mathcal{Z}$ such that $||j^{(\lambda)}||_{\op}^2=|j^{(\lambda)}(z)x|^2$. By Proposition \ref{general_formula}, we have $K^{(\lambda)}(x,z)=\frac{1}{4}|j^{(\lambda)}(z)x|^2$, so $||j^{(\lambda)}||_{\op}^2=4K^{(\lambda)}(x,z)=4\langle (R_\lambda+\iota(S_{\lambda})|_{\mathcal{V}\otimes \mathcal{Z}}(x\wedge z), x\wedge z\rangle \leq \xi^2$. Here, $K^{(\lambda)}$ denotes the sectional curvature of $\mathcal{N}^{(\lambda)}$, and we used $\langle \iota(S_\lambda)(X\wedge Z), X\wedge Z\rangle=0$. On the other hand, $\xi^2=|j^{(\lambda)}(Z_1)X_1|^2\leq ||j^{(\lambda)}||_{\op}^2$. Therefore, $\xi^2=||j^{(\lambda)}||_{\op}^2$. 

    To see the item \ref{item_sectional_curvature_prop_for_char}, recall that $\mathcal{N}_1^{(\lambda)}$ coincides with $\mathcal{N}_1$ by Lemma \ref{lemma_N_1_lambda_is_N_1}, and it is isomorphic to $\mathrm{heis}_3(\mathbb{C})$ with a Ricci soliton inner product whose $j$-map is given by (\ref{matrices_heis_3_C_for_characterizing_maximum}). Let $\pi$ be a 2-plane in $\mathcal{N}_1^{(\lambda)}$ that achieves the maximum sectional curvature, i.e., $\frac{1}{2}\xi^2$ (Remark \ref{rescaling_heis_3_c_ricci_soliton}). By  Lemma \ref{matrix_representation_of_j_lambda}, $\mathcal{N}_1^{(\lambda)}$ is totaly geodesic in $\mathcal{N}^{(\lambda)}$, and the sectional curvature $K^{(\lambda)}(\pi)$ of $\pi$ inside the ambient metric Lie algebra $\mathcal{N}^{(\lambda)}$ is given by $K^{(\lambda)}(\pi)=\frac{1}{2}\xi^2$.
    
    To show the item \ref{item_pinching_constant_for_char}, let $K^{(\lambda)}_{\max}$ denote the maximum of the sectional curvature of $\mathcal{N}^{(\lambda)}$. By (\ref{item_curvature_operator_prop_for_char}) and (\ref{item_sectional_curvature_prop_for_char}) of Proposition \ref{proposition_key_for_characterization}, for $\lambda>0$ sufficiently small, we have $\frac{1}{2}||j^{(\lambda)}||_{\op}^2=\frac{1}{2}\xi^2=K^{(\lambda)}(\pi)\leq K^{(\lambda)}_{\max}\leq \frac{1}{2}||j^{(\lambda)}||_{\op}^2$, so $K^{(\lambda)}_{\max}= \frac{1}{2}||j^{(\lambda)}||_{\op}^2$. Note, $K^{(\lambda)}_{\max}\leq \frac{1}{2}||j^{(\lambda)}||_{\op}^2$ follows from Theorem \ref{maintheorem_bounds} and Theorem \ref{minimum_of_sectional_curvature_of_2-step_nilpotent_Lie_groups}. By Theorem \ref{minimum_of_sectional_curvature_of_2-step_nilpotent_Lie_groups}, $K_{\min}=-\frac{3}{4}||j^{(\lambda)}||_{\op}^2$ and $\frac{K_{\min}}{K_{\max}}=-\frac{3}{2}$. This completes the proof.
\end{proof}

\subsection{Proof of Theorem \ref{maintheorem_invariant} and Theorem \ref{maintheorem_open_set}} \label{section_invariant_and_open_set}
First, we prove the item 1 of Theorem \ref{maintheorem_invariant}. 
\begin{lemma} \label{lemmma_invariant_using_norms}
    Let $N$ be a 2-step nilpotent Lie group with a left-invariant metric $\langle \cdot, \cdot \rangle$. Then, $\mathcal{C}(N, \langle \cdot, \cdot \rangle)=\frac{||j||_{\Fr}^2-2||j||_{\op}^2}{||j||_{\op}}$.
\end{lemma}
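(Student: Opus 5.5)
The plan is to substitute the two curvature formulas already established into the definition of $\mathcal{C}$ and simplify. Two inputs are needed. The first is the scalar curvature formula $\mathrm{scal}=-\frac{1}{4}||j||_{\Fr}^2$ from Proposition \ref{general_formula}. The second is the identification of the minimum of the sectional curvature, $K_{\min}=-\frac{3}{4}||j||_{\op}^2$, from Theorem \ref{minimum_of_sectional_curvature_of_2-step_nilpotent_Lie_groups}. Both are stated for an arbitrary left-invariant metric on a 2-step nilpotent Lie group, so no further hypotheses need checking. If $N$ is not simply connected, one passes to $\tilde N$, which changes neither the metric Lie algebra nor any of the quantities involved.

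Next I would compute the numerator and the denominator of $\mathcal{C}(N,\langle\cdot,\cdot\rangle)=\frac{-3\,\mathrm{scal}+2K_{\min}}{-K_{\min}}$ separately. The numerator is
\begin{align*}
-3\,\mathrm{scal}+2K_{\min}=\frac{3}{4}||j||_{\Fr}^2-\frac{3}{2}||j||_{\op}^2=\frac{3}{4}\left(||j||_{\Fr}^2-2||j||_{\op}^2\right),
\end{align*}
and the denominator is $-K_{\min}=\frac{3}{4}||j||_{\op}^2$. The denominator is nonzero because $\mathcal{N}$ is non-abelian, which forces $||j||_{\op}>0$; this is also Wolf's result $K_{\min}<0$.

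Dividing, the factors $\frac{3}{4}$ cancel and we obtain $\mathcal{C}(N,\langle\cdot,\cdot\rangle)=\frac{||j||_{\Fr}^2-2||j||_{\op}^2}{||j||_{\op}^2}$. This makes the quantity manifestly scale invariant, since both norms scale the same way. The exponent in the denominator must therefore be $2$, and I would read the denominator $||j||_{\op}$ in the statement as $||j||_{\op}^2$, which is what the computation forces.

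There is no real obstacle here; the lemma is bookkeeping once Theorem \ref{minimum_of_sectional_curvature_of_2-step_nilpotent_Lie_groups} is available. The substantive content lies in the later use of this formula. Using the totally geodesic $\mathrm{heis}_3(\mathbb{R})$ from Lemma \ref{maximum_eigenspace_and_two_interpretations}, the quantity $||j||_{\Fr}^2-2||j||_{\op}^2$ should equal the squared Frobenius norm of the part of $j$ complementary to that $\mathrm{heis}_3(\mathbb{R})$ block. That block contributes exactly $2||j||_{\op}^2$, coming from the two entries $\pm||j||_{\op}$ of $j(Z)$ on $\mathrm{span}\{X,Y\}$. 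This is what will give $\mathcal{C}\geq 0$, but it is not needed for the present lemma.
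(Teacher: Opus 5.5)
Your proof is correct and matches the paper's, which likewise just substitutes $K_{\min}=-\frac{3}{4}||j||_{\op}^2$ from Theorem \ref{minimum_of_sectional_curvature_of_2-step_nilpotent_Lie_groups} and $\mathrm{scal}=-\frac{1}{4}||j||_{\Fr}^2$ from Proposition \ref{general_formula} into the definition of $\mathcal{C}$. You are also right that the denominator should be $||j||_{\op}^2$; this is what scale invariance requires, and it is the form the paper itself uses in Step 4.
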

\begin{proof}
    This follows from Theorem \ref{minimum_of_sectional_curvature_of_2-step_nilpotent_Lie_groups} and $\mathrm{scal}=-\frac{1}{4}||j||_{\Fr}^2$ (Proposition \ref{general_formula}).
\end{proof}

Throughout the proof of Theorem \ref{maintheorem_invariant}, we use the following setup.

\begin{setting} \label{setting_for_the_proof_of_lower_bound_theorem}
    Let $(\mathcal{N}, \langle \cdot, \cdot \rangle)$ be an arbitrary 2-step nilpotent Lie algebra with any inner product. Let $\mathcal{Z}$ be the center of $\mathcal{N}$. Let $\mathcal{V}=\mathcal{Z}^{\perp}$. 

    By Lemma \ref{maximum_eigenspace_and_two_interpretations}, there exist unit vectors $X_1, X_2 \in \mathcal{V}$ and $Z_1\in \mathcal{Z}$ such that $|j(Z_1)X_1|=||j||_{\op}$, $j(Z_1)^2X_1=-||j||_{\op}^2X_1$, $X_2=\frac{j(Z_1)X_1}{||j||_{\op}}$, and $[X_1,X_2]=||j||_{\op}Z_1$. Extend $X_1,X_2$ to an orthonormal basis $X_1, X_2, ..., X_q$ of $\mathcal{V}$. Extend $Z_1$ to an orthonormal basis $Z_1,...,Z_p$ of $\mathcal{Z}$. We fix these bases and use them throughout the proof.

    Let $\mathcal{N}_1 =\mathbb{R}\textrm{-span}\{X_1,X_2, Z_1\}$. Then, it follows from Lemma \ref{maximum_eigenspace_and_two_interpretations} that $\mathcal{N}_1$ is an aligned totally geodesic subalgebra isomorphic to $\mathrm{heis}_3(\mathbb{R})$. Moreover, $j(Z_1)X_1=||j||_{\op}X_2$ and $j(Z_1)X_2=-||j||_{\op}X_1$.

We order the basis as $e_1=X_1, ..., e_q=X_q, e_{q+1}=Z_1, ..., e_{p+q}=Z_p$, and form a basis $e_a\wedge e_b\wedge e_c\wedge e_d$ with $a<b<c<d$ of $\Lambda^4 \mathcal{N}$. Then consider the dual basis $(e_a\wedge e_b\wedge e_c\wedge e_d)^*$ with $a<b<c<d$.

Let $\mathcal{V}_1=\mathbb{R}\textrm{-span}\{X_1, X_2\}$ and $\mathcal{V}_2=\mathbb{R}\textrm{-span}\{X_3,...,X_q\}$. Let $\mathcal{Z}_1=\mathbb{R}\textrm{-span}\{Z_1\}$ and $\mathcal{Z}_2=\mathbb{R}\textrm{-span}\{Z_2,...,Z_p\}$. Write $\mathcal{V}_1\otimes \mathcal{Z}_1 = \mathbb{R}\textrm{-span}\{X_1\wedge Z_1, X_2\wedge Z_1\}$. Under this setting, Setting \ref{setting_totally_geodesic_T} is satisfied.
\end{setting}

\begin{lemma}\label{lemma_key_estimate}
For the orthonormal basis described in Setting \ref{setting_for_the_proof_of_lower_bound_theorem}, we have
    \begin{align*}
         ||j||_{\Fr}^2-2||j||_{\op}^2=\sum_{\textrm{$s\geq 3$ or $t\geq 3$}} |j(Z_1)_{st}|^2 + \sum_{\alpha=2}^p \sum_{s,t=1}^q |j(Z_\alpha)_{st}|^2,
    \end{align*}
    where $j(Z_\alpha)X_t=\sum_{s=1}^q j(Z_\alpha)_{st}X_s$, $j(Z_\alpha)_{st}\in \mathbb{R}$.
    In particular, if $\alpha\geq 2$, then for any $j=1,...,q$, $|j(Z_\alpha)X_j|\leq \sqrt{||j||_{\Fr}^2-2||j||_{\op}^2}$. If $j\geq 3$, then for any $\alpha=1,...,p$, $|j(Z_\alpha)X_j|\leq \sqrt{||j||_{\Fr}^2-2||j||_{\op}^2}$.
\end{lemma}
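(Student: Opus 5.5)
The identity follows by expanding the Frobenius norm in the fixed orthonormal bases of Setting \ref{setting_for_the_proof_of_lower_bound_theorem} and isolating the entries of $j(Z_1)$ on $\mathcal{V}_1=\mathbb{R}\textrm{-span}\{X_1,X_2\}$. Since $\|j\|_{\Fr}$ is basis independent (Proposition \ref{general_formula}), we may write
\begin{align*}
\|j\|_{\Fr}^2=\sum_{\alpha=1}^p\sum_{s,t=1}^q |j(Z_\alpha)_{st}|^2 .
\end{align*}
The $\alpha=1$ term splits into the entries with $s,t\in\{1,2\}$ and those with $s\geq 3$ or $t\geq 3$.

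It remains to compute the $2\times 2$ block of $j(Z_1)$ on $\mathcal{V}_1$. By Lemma \ref{maximum_eigenspace_and_two_interpretations}, as recorded in Setting \ref{setting_for_the_proof_of_lower_bound_theorem}, we have $j(Z_1)X_1=\|j\|_{\op}X_2$ and $j(Z_1)X_2=-\|j\|_{\op}X_1$. Hence
\begin{align*}
j(Z_1)_{21}=\|j\|_{\op},\qquad j(Z_1)_{12}=-\|j\|_{\op},\qquad j(Z_1)_{11}=j(Z_1)_{22}=0 .
\end{align*}
These four entries contribute exactly $2\|j\|_{\op}^2$, and subtracting gives the displayed identity. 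The identity shows in passing that the entries $j(Z_1)_{st}$ with $s\geq3,t\leq 2$ vanish, which is consistent with Lemma \ref{curv_op_and_totally_geodesic_subalgebras} but is not needed.

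For the two ``in particular'' inequalities we use $|j(Z_\alpha)X_t|^2=\sum_{s=1}^q|j(Z_\alpha)_{st}|^2$, which is the squared norm of the $t$-th column.

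\emph{Case $\alpha\geq 2$.} This column sum is one of the nonnegative summands in $\sum_{\alpha\geq2}\sum_{s,t}|j(Z_\alpha)_{st}|^2$. It is therefore bounded by the right-hand side, hence by $\|j\|_{\Fr}^2-2\|j\|_{\op}^2$.

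\emph{Case $t\geq 3$.} If $\alpha\geq 2$ we are in the previous case. If $\alpha=1$, every entry $j(Z_1)_{st}$ has $t\geq 3$, so each lies in the first sum of the identity, and the same bound follows.

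Taking square roots gives both inequalities.

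There is no real obstacle here; the only point needing care is basis independence of the Frobenius norm, which was noted after Proposition \ref{general_formula}.
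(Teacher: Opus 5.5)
Your proof is correct and follows essentially the same route as the paper: you expand $\|j\|_{\Fr}^2$ entrywise in the fixed bases, use $j(Z_1)X_1=\|j\|_{\op}X_2$ and $j(Z_1)X_2=-\|j\|_{\op}X_1$ to see that the $\mathcal{V}_1$-block of $j(Z_1)$ contributes exactly $2\|j\|_{\op}^2$, and bound each squared column norm by the remaining nonnegative sums. One small quibble with your aside: the vanishing of $j(Z_1)_{st}$ for $s\geq 3$, $t\leq 2$ follows directly from those two column equations, not from the displayed identity (which is pure bookkeeping and does not force those entries to vanish), but as you say it is not needed.
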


\begin{proof}
    By definition, $||j||_{\Fr}^2=\sum_{\alpha=1}^p |j(Z_\alpha)|_{\Fr}^2=\sum_{\alpha=1}^p \sum_{s,t=1}^q |j(Z_\alpha)_{st}|^2$. By Setting \ref{setting_for_the_proof_of_lower_bound_theorem}, we have $j(Z_1)X_1=||j||_{\op}X_2$ and $j(Z_1)X_2=-||j||_{\op}X_1$. Thus, $j(Z_1)_{21}=||j||_{\op}$, $j(Z_1)_{12}=-||j||_{\op}$, and $j(Z_1)_{11}=0=j(Z_1)_{22}$. This means $\sum_{\textrm{$s\leq 2$ and $t\leq 2$}}|j(Z_1)_{st}|^2=2||j||_{\op}^2$. Thus, $ 2||j||_{\op}^2+ \sum_{\textrm{$s\geq 3$ or $t\geq 3$}} |j(Z_1)_{st}|^2 + \sum_{\alpha=2}^p \sum_{s,t=1}^q |j(Z_\alpha)|_{st}^2 = ||j||_{\Fr}^2$. This shows the first identity. 
    
    If $\alpha\geq 2$, then for any $j=1,...,q$, $|j(Z_\alpha)X_j|^2=\sum_{s=1}^q |j(Z_\alpha)_{sj}|^2\leq ||j||_{\Fr}^2-2||j||_{\op}^2$. If $j\geq 3$ and $\alpha=1$, then $|j(Z_1)X_j|^2=\sum_{s=1}^q |j(Z_1)_{sj}|^2 \leq \sum_{\textrm{$s\geq 3$ or $t\geq 3$}} |j(Z_1)_{st}|^2 \leq ||j||_{\Fr}^2-2||j||_{\op}^2$. This completes the proof.
\end{proof}
\begin{remark}
    It is crucial that we are using the orthonormal basis $X_1,...,X_q$ and $Z_1,...,Z_p$ described in Setting \ref{setting_for_the_proof_of_lower_bound_theorem}.
\end{remark}

\begin{proof}[Proof of the item 1 of Theorem \ref{maintheorem_invariant}]
    We use Setting \ref{setting_for_the_proof_of_lower_bound_theorem}. We have $\mathcal{C}(N, \langle \cdot, \cdot \rangle)\geq 0$ by Lemma \ref{lemmma_invariant_using_norms} and Lemma \ref{lemma_key_estimate}. The invariant $\mathcal{C}(N, \langle \cdot, \cdot \rangle)$ is zero if and only if the right hand side of Lemma \ref{lemma_key_estimate} is zero, i.e., $j(Z_1)_{st}=0$ if $s\geq 3$ or $t\geq 3$, and $j(Z_\alpha)=0$ for any $\alpha \in \{2, ..., p\}$. The only non-trivial entries of the $j$-map are given by $j(Z_1)X_1=||j||_{\op}X_2$ and $j(Z_1)X_2=-||j||_{\op}X_1$. Thus, $\mathcal{C}(N, \langle \cdot, \cdot \rangle)=0$ if and only if $(\mathcal{N}, \langle \cdot, \cdot \rangle)$ is isomorphic to $\mathrm{heis}_3(\mathbb{R})\oplus \mathbb{R}^{n-3}$ as metric Lie algebras. This shows the item 1 of Theorem \ref{maintheorem_invariant}.
\end{proof}

\subsubsection{Strategy}\label{strategy}
The strategy to prove the items 2 and 3 of Theorem \ref{maintheorem_invariant} and Theorem \ref{maintheorem_open_set} is as follows. Recall, $p=\dim \mathcal{Z}, q=\dim \mathcal{V}$. Let $m=\dim \Lambda^2 \mathcal{V} \oplus \Lambda^2\mathcal{Z}=\frac{1}{2}p(p-1)+\frac{1}{2}q(q-1)$, and $C(p,q)=16m^2+4(pq-2)^2$. Since $p\leq n, q\leq n$, and $m\leq \dim \Lambda^2\mathcal{N}=\frac{1}{2}n(n-1)$, we have $C(p,q)\leq 4n^2(n-1)^2+4(n^2-2)^2$. Thus, if $\mathcal{C}(N, \langle \cdot, \cdot \rangle) \leq \frac{1}{4n^2(n-1)^2+4(n^2-2)^2}$, then $\mathcal{C}(N, \langle \cdot, \cdot \rangle)\leq \frac{1}{C(p,q)}$.

We will show that if $\mathcal{C}(N, \langle \cdot, \cdot \rangle)\leq \frac{1}{C(p,q)}$, then $\frac{K_{\min}}{K_{\max}}=-3$.

\subsubsection*{Step 1.} We find an upper bound of the sectional curvature using a modified curvature operator $R_T$ introduced in Definition \ref{def_of_modified_curvature_operator}, where $T\in \Lambda^4 \mathcal{N}^*$. By Proposition \ref{bounds_by_curv_op_general_case}, $K_{\max}$ is bounded above by the maximum eigenvalue of $R_T$ for any $T\in \Lambda^4 \mathcal{N}^*$.

\subsubsection*{Step 2.} We find a suitable $T\in \Lambda^4 \mathcal{N}^*$ so that we can control the eigenvalues of the corresponding modified curvature operator $R_T$. 

Our choice of $R_T$ will have the following block form relative to $\mathcal{V}_1\otimes \mathcal{Z}_1$, $(\mathcal{V}_1\otimes \mathcal{Z}_1)^{\perp} \cap (\mathcal{V}\otimes \mathcal{Z})$, and $\Lambda^2\mathcal{V} \oplus \Lambda^2 \mathcal{Z}$:

\begin{align*}
    R_T=\begin{bmatrix}
        M & 0 & 0 \\
        0 & A & 0 \\
        0 & 0 & B
    \end{bmatrix}.
\end{align*}
The matrix representation of $M$ with respect to an orthonormal basis $X_1\wedge Z_1, X_2\wedge Z_1$ of $\mathcal{V}_1 \otimes \mathcal{Z}_1$ is given by a 2-by-2 matrix: $M=\begin{bmatrix}
    \frac{1}{4}||j||_{\op}^2 & 0\\
    0 & \frac{1}{4}||j||_{\op}^2
\end{bmatrix}$. 
 
\subsubsection*{Step 3.} We control the maximum eigenvalues of $A$ and $B$, and the goal of Step 3 is to show that the maximum eigenvalues of $A$ and $B$ are less than or equal to $C||j||_{\op}\sqrt{||j||_{\Fr}^2-2||j||_{\op}^2}$, where $C>0$ is a constant such that $\frac{1}{C(p,q)}\leq \frac{1}{16C^2}$. For this step, we take two different approaches for $A$ and $B$, respectively. In both cases, our basic strategy is to control each entry of the matrix by $||j||_{\op}\sqrt{||j||_{\Fr}^2-2||j||_{\op}^2}$.  

\subsubsection*{Step 4.} We use the estimate from Step 3 to conclude that the maximum eigenvalue of $R_T$ is $\frac{1}{4}||j||_{\op}^2$ if $\frac{||j||_{\Fr}^2-2||j||_{\op}^2}{||j||_{\op}^2}\leq \frac{1}{C(p,q)}$. By the item 3 of Lemma \ref{maximum_eigenspace_and_two_interpretations} and Step 1, it follows that $\frac{1}{4}||j||_{\op}^2=K(X_1,Z_1)\leq K_{\max}\leq \frac{1}{4}||j||_{\op}^2$. This implies $K_{\max}=\frac{1}{4}||j||_{\op}^2$. By Theorem \ref{minimum_of_sectional_curvature_of_2-step_nilpotent_Lie_groups}, we conclude that $\frac{K_{\min}}{K_{\max}}=-3$. 

\begin{proof}[Proof of Theorem \ref{maintheorem_invariant} and Theorem \ref{maintheorem_open_set} given Steps 1--4]
    The item 1 of Theorem \ref{maintheorem_invariant} is proved above. Once Steps 1--4 are carried out, the item 2 of Theorem \ref{maintheorem_invariant} immediately follows, and $U=\{\mu\in \mathcal{L}_n\;|\; \mathcal{C}(\mu)<\frac{1}{16n^2(n-1)^2+4(n^2-2)^2}\}$ is an open set in $\mathcal{L}_n$ such that $\frac{K_{\min}}{K_{\max}}=-3$ on $U$. Let $N$ be any 2-step nilpotent Lie group with a left-invariant metric, and let $\mu\in \mathcal{L}_n$ be the corresponding element. Suppose $\tilde{N}$ is not isomorphic to $\mathrm{Heis}_3(\mathbb{R})\times \mathbb{R}^{n-3}$, or equivalently, $\mu \in \mathcal{L}_n$ is not isomorphic to $\mu_{\mathrm{heis}_3(\mathbb{R})\oplus \mathbb{R}^{n-3}}$. Then, the item 1 of Theorem \ref{maintheorem_invariant} (shown above) implies that $C(\mu)=C(N, \langle \cdot, \cdot \rangle)>0$. As explained in Introduction, $\mu$ degenerates to $\mu_{\mathrm{heis}_3(\mathbb{R})\oplus \mathbb{R}^{n-3}}$: for any $\epsilon>0$, there exists $\varphi_\epsilon\in \mathrm{GL}(n)$ such that $\lim_{\epsilon\to 0} \varphi_{\epsilon}^{-1}.\mu=\mu_{\mathrm{heis}_3(\mathbb{R})\oplus \mathbb{R}^{n-3}}$ (see the proof of \cite[Theorem 5.2]{Lauret_Degenerations}). For $\epsilon>0$ sufficiently small, we have $0<C(\varphi_{\epsilon}^{-1}.\mu)$ and $\varphi_{\epsilon}^{-1}.\mu \in U$. Each of them gives rise to a left-invariant metric on $N$ with $\frac{K_{\min}}{K_{\max}}=-3$. The invariant $\mathcal{C}(\varphi_{\epsilon}^{-1}.\mu)\to 0$ while $\frac{K_{\min}}{K_{\max}}=-3$ as $\epsilon \to 0$, so all of these metrics are not isometric up to scaling. This proves the item 3 of Theorem \ref{maintheorem_invariant} and Theorem \ref{maintheorem_open_set}.
\end{proof}
We carry out Steps 1--4 below, and we use the following notation. Given a matrix $U\in M_n(\mathbb{R})$, let $||U||_{\op}=\max_{|x|=1} |\langle Ux,x \rangle|$ (operator norm) and $||U||_{\Fr}=\sqrt{\sum_{s,t=1}^n |U_{st}|^2}$ (Frobenius norm). Here, $U_{st}$ denotes the $(s,t)$-entry of $U$. 

Suppose $U$ is symmetric. Let $\lambda_1\leq ... \leq \lambda_n$ be its eigenvalues. Then, observe that $||U||_{\op}=\max_{j} |\lambda_j|$, and $||U||_{\Fr}=\sqrt{\sum_{j=1}^n \lambda_j^2}$. Thus, $\max_{j} \lambda_j \leq ||U||_{\op}\leq ||U||_{\Fr}$. 

Moreover, suppose there exists a constant $C$ such that for any $s,t\in \{1,...,n\}, |U_{st}|\leq C$. Then, $||U||_{\Fr}^2 = \sum_{s,t=1}^n |U_{st}|^2\leq n^2 C^2.$ Thus, $||U||_{\Fr}\leq nC$.
\subsubsection{Carrying out Step 1.} For any $T\in \Lambda^4 \mathcal{N}^*$, it follows from Proposition \ref{bounds_by_curv_op_general_case} that 
$K_{\max} \leq \lambda_{\max}(R_T)$.

\subsubsection{Carrying out Step 2.}
 We use the notations in Setting \ref{setting_for_the_proof_of_lower_bound_theorem}. The desired $T\in \Lambda^4 \mathcal{N}^*$ described in Step 2 of Strategy \ref{strategy} is given in the following lemma.

\begin{lemma} \label{lemma_T_for_lower_bound}
    Let $c_{i1j\alpha}=\langle R(X_i\wedge Z_1), X_j\wedge Z_\alpha \rangle$ for $i\in \{1,2\}, j\in \{3,...,q\}, \alpha\geq 2$. Set 
    \begin{align*}
        T=\sum_{i,j,\alpha} c_{i1j\alpha}(X_i\wedge X_j\wedge Z_1\wedge Z_\alpha)^*,
    \end{align*}
    where the sum runs over the indices $i\in \{1,2\}, j\in \{3,...,q\}, \alpha\geq 2$.
    Then, $\iota(T)$ leaves $\mathcal{V}\otimes \mathcal{Z}$ invariant, and ${R_T}|_{\mathcal{V}\otimes \mathcal{Z}}=(R+\iota(T))|_{\mathcal{V}\otimes \mathcal{Z}}$ leaves $\mathcal{V}_1\otimes \mathcal{Z}_1 = \mathbb{R}\textrm{-span}\{X_1\wedge Z_1, X_2\wedge Z_1\}$ invariant.

    In particular, $R_T$ leaves $\mathcal{V}_1\otimes \mathcal{Z}_1$, $(\mathcal{V}_1\otimes \mathcal{Z}_1)^{\perp} \cap (\mathcal{V}\otimes \mathcal{Z})$, and $\Lambda^2\mathcal{V} \oplus \Lambda^2\mathcal{Z}$.
\end{lemma}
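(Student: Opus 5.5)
This lemma is a direct specialization of Lemma \ref{curv_op_and_totally_geodesic_subalgebras}, so the plan is to verify its hypotheses and read off the conclusions. By Setting \ref{setting_for_the_proof_of_lower_bound_theorem} and Lemma \ref{maximum_eigenspace_and_two_interpretations}, $\mathcal{N}_1=\mathbb{R}\textrm{-span}\{X_1,X_2,Z_1\}$ is an aligned totally geodesic subalgebra, with $\mathcal{V}_1=\mathbb{R}\textrm{-span}\{X_1,X_2\}\subseteq\mathcal{V}$ and $\mathcal{Z}_1=\mathbb{R}Z_1\subseteq\mathcal{Z}$. Here $\mathcal{Z}_1$ is the center of $\mathcal{N}_1$ because $[X_1,X_2]=||j||_{\op}Z_1\neq 0$. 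The complements are $\mathcal{V}_2=\mathbb{R}\textrm{-span}\{X_3,\dots,X_q\}$ and $\mathcal{Z}_2=\mathbb{R}\textrm{-span}\{Z_2,\dots,Z_p\}$. Together with the chosen ordered orthonormal basis $e_1,\dots,e_{p+q}$, this is exactly Setting \ref{setting_totally_geodesic_T} with $s=2$ and $t=1$.

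Next I would check that the $T$ defined here coincides with the $T$ of Lemma \ref{curv_op_and_totally_geodesic_subalgebras}. There the sum runs over $i\in\{1,\dots,s\}$, $\alpha\in\{1,\dots,t\}$, $j\in\{s+1,\dots,q\}$ and $\beta\in\{t+1,\dots,p\}$. With $s=2$ and $t=1$, the index $\alpha$ is forced to be $1$, and the coefficients $c_{i1j\beta}=\langle R(X_i\wedge Z_1),X_j\wedge Z_\beta\rangle$ are the same as here. Since the ordering $i<j<q+1<q+\beta$ holds, $T$ is a well-defined element of $\Lambda^4\mathcal{N}^*$ written in the dual basis.

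With these identifications, item 4 of Lemma \ref{curv_op_and_totally_geodesic_subalgebras} gives that $\iota(T)$ and $R_T$ leave $\mathcal{V}\otimes\mathcal{Z}$ invariant. Item 5 of the same lemma gives that $R_T$ leaves $\mathcal{V}_1\otimes\mathcal{Z}_1$ invariant, and hence also each summand of
\[
\Lambda^2\mathcal{N}=(\mathcal{V}_1\otimes\mathcal{Z}_1)\oplus\big((\mathcal{V}_1\otimes\mathcal{Z}_1)^{\perp}\cap(\mathcal{V}\otimes\mathcal{Z})\big)\oplus(\Lambda^2\mathcal{V}\oplus\Lambda^2\mathcal{Z}).
\]
For the last two summands, $R_T$ is symmetric, so it preserves the orthogonal complement of an invariant subspace inside the invariant space $\mathcal{V}\otimes\mathcal{Z}$, and it preserves $(\mathcal{V}\otimes\mathcal{Z})^\perp=\Lambda^2\mathcal{V}\oplus\Lambda^2\mathcal{Z}$.

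There is no real obstacle. The only care needed is the bookkeeping that the index ranges and the ordering conventions match Setting \ref{setting_totally_geodesic_T}, so that the earlier lemma applies verbatim.
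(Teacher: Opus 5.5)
Your proposal is correct and follows the same route as the paper. The paper also observes that $\mathcal{N}_1=\mathbb{R}\textrm{-span}\{X_1,X_2,Z_1\}$ is an aligned totally geodesic subalgebra, so Setting~\ref{setting_for_the_proof_of_lower_bound_theorem} is the special case $s=2$, $t=1$ of Setting~\ref{setting_totally_geodesic_T}, and then reads the conclusions off Lemma~\ref{curv_op_and_totally_geodesic_subalgebras}; your explicit matching of the index ranges for $T$ simply spells out what the paper leaves implicit.
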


\begin{proof}
    As pointed out in Setting \ref{setting_for_the_proof_of_lower_bound_theorem}, $\mathcal{N}_1$ is an aligned totally geodesic subalgebra of $\mathcal{N}$. Also, observe that Setting \ref{setting_for_the_proof_of_lower_bound_theorem} is a special case of Setting \ref{setting_totally_geodesic_T}. In particular, Lemma \ref{curv_op_and_totally_geodesic_subalgebras} applies to our setting, and the result follows.
\end{proof}
By the item 6 of Lemma \ref{curv_op_and_totally_geodesic_subalgebras} and Lemma \ref{formulas_for_curv_op}, $\langle R_T(X_1\wedge Z_1), X_2\wedge Z_1 \rangle=\langle R(X_1\wedge Z_1), X_2\wedge Z_1 \rangle=\frac{1}{4}\langle j(Z_1)X_1, j(Z_1)X_2\rangle=0$. The other entries of $M$ are computed similarly.

\subsubsection{Carrying out Step 3.} Recall $m=\dim \Lambda^2\mathcal{V}\oplus\Lambda^2 \mathcal{Z}$. In Step 3, we show that the maximum eigenvalues of the matrices $A$ and $B$ above are less than or equal to $\frac{1}{2}(pq-2)||j||_{\op}\sqrt{||j||_{\Fr}^2-2||j||_{\op}^2}$ and $m||j||_{\op}\sqrt{||j||_{\Fr}^2-2||j||_{\op}^2}$, respectively.  For this step, we take two different approaches for $A$ and $B$, respectively. In both cases, our basic strategy is to control each entry of the matrix by $||j||_{\op}\sqrt{||j||_{\Fr}^2-2||j||_{\op}^2}$.

Note that $\{X_i\wedge Z_\alpha\;|\;i\in \{1,...,q\}, \alpha \in\{2,...,p\}\}\cup \{X_3\wedge Z_1, ..., X_q\wedge Z_1\}$ span the subspace $(\mathcal{V}_1\otimes \mathcal{Z}_1)^{\perp} \cap (\mathcal{V}\otimes \mathcal{Z})$, so 
\begin{align*}
    (\mathcal{V}_1\otimes \mathcal{Z}_1)^{\perp} \cap (\mathcal{V}\otimes \mathcal{Z})=\mathbb{R}\textrm{-span}\{X_i\wedge Z_\alpha \;|\;i\in \{1,...,q\}, \alpha \in\{1,...,p\}, i\geq 3 \text{ or } \alpha \geq 2\}.
\end{align*}

First, we will use the lexicographical order of the basis elements for the matrix representations $A$ and $B$. More precisely, let $A$ be the matrix representation of $R_T|_{(\mathcal{V}_1\otimes \mathcal{Z}_1)^{\perp}\cap(\mathcal{V}\otimes \mathcal{Z})}$ with respect to the basis $X_3\wedge Z_1, ..., X_q\wedge Z_1, X_1\wedge Z_2, ..., X_q\wedge Z_2, X_1\wedge Z_3, ..., X_q\wedge Z_3, ..., X_1\wedge Z_p, ..., X_q\wedge Z_p$ of $(\mathcal{V}_1\otimes \mathcal{Z}_1)^{\perp}\cap(\mathcal{V}\otimes \mathcal{Z})$. 

Let $B$ be the matrix representation of $R_T|_{\Lambda^2\mathcal{V} \oplus \Lambda^2\mathcal{Z}}$ with respect to $X_i\wedge X_j$ with $i<j$ and $Z_\alpha\wedge Z_\beta$ with $\alpha<\beta$. Within $\Lambda^2\mathcal{V}$, the order is $X_1\wedge X_2, X_1\wedge X_3, ..., X_1\wedge X_q$, $X_2\wedge X_3, ..., X_2 \wedge X_q, ..., X_{q-1}\wedge X_q$. The basis of $\Lambda^2\mathcal{Z}$ follows next: $Z_1\wedge Z_2,..., Z_1\wedge Z_p, Z_2\wedge Z_3,..., Z_2\wedge Z_p,...,Z_{p-1}\wedge Z_p$. 

Next, let us describe what each entry of $A$ and $B$ looks like.
\begin{lemma} \label{entries_of_A_and_B}
    The entries of $A$ are of the form $\langle R_T(X_i\wedge Z_\alpha), X_j\wedge Z_\beta\rangle$ (analyzed in Lemma \ref{VZVZ_upper_bound_entrywise}), where $X_i\wedge Z_\alpha$ and $X_j\wedge Z_\beta$ are the basis element of $(\mathcal{V}_1\otimes \mathcal{Z}_1)^{\perp}\cap(\mathcal{V}\otimes \mathcal{Z})$. 

    The entries of $B$ are of the form
    \begin{enumerate} 
    \item $\langle R_T(X_i\wedge X_j), Z_\alpha\wedge Z_\beta \rangle, i<j, \alpha<\beta$ (analyzed in Lemma \ref{VVZZ_upper_bound_entrywise}),
    \item $\langle R_T(X_i\wedge X_j), X_k\wedge X_l\rangle, i<j, k<l$ (analyzed in Lemma \ref{VVVV_upper_bound_entrywise}), and
    \item $\langle R_T(Z_\alpha\wedge Z_\beta), Z_\gamma\wedge Z_\delta\rangle=0$, $\alpha<\beta, \gamma<\delta$.
\end{enumerate}
\end{lemma}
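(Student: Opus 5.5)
This statement is essentially bookkeeping, and I would prove it by reading off the block structure already established for $R_T$ and then expanding along the chosen ordered bases. First, Lemma \ref{lemma_T_for_lower_bound} (via Lemma \ref{curv_op_and_totally_geodesic_subalgebras}) shows that $R_T$ leaves invariant each summand of
\begin{align*}
\Lambda^2\mathcal{N}=(\mathcal{V}_1\otimes\mathcal{Z}_1)\oplus\big((\mathcal{V}_1\otimes\mathcal{Z}_1)^{\perp}\cap(\mathcal{V}\otimes\mathcal{Z})\big)\oplus(\Lambda^2\mathcal{V}\oplus\Lambda^2\mathcal{Z}).
\end{align*}
Hence $A$ and $B$ are simply the Gram-type matrices $\langle R_T(e),e'\rangle$, where $e,e'$ range over the listed orthonormal bases of the second and third summands. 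Since $R_T$ is symmetric, the $(s,t)$ entry is $\langle R_T(e_t),e_s\rangle$ with $e_s,e_t$ basis vectors.

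For $A$, the basis of $(\mathcal{V}_1\otimes\mathcal{Z}_1)^{\perp}\cap(\mathcal{V}\otimes\mathcal{Z})$ is
\begin{align*}
\{X_i\wedge Z_\alpha \;|\; i\geq 3 \text{ or } \alpha\geq 2\},
\end{align*}
as noted just before the statement. Every entry therefore has the claimed form $\langle R_T(X_i\wedge Z_\alpha),X_j\wedge Z_\beta\rangle$, and nothing further is needed.

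For $B$, the basis consists of the $X_i\wedge X_j$ with $i<j$ and the $Z_\alpha\wedge Z_\beta$ with $\alpha<\beta$. A pair of basis vectors falls into one of three types: $(\Lambda^2\mathcal{V},\Lambda^2\mathcal{Z})$ (together with its symmetric transpose), $(\Lambda^2\mathcal{V},\Lambda^2\mathcal{V})$, or $(\Lambda^2\mathcal{Z},\Lambda^2\mathcal{Z})$. These give items 1, 2 and 3 respectively.

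The only point with content is the vanishing in item 3. I would split $R_T=R+\iota(T)$. The term $\langle R(Z_\alpha\wedge Z_\beta),Z_\gamma\wedge Z_\delta\rangle$ vanishes by item 4 of Lemma \ref{formulas_for_curv_op}. The other term is
\begin{align*}
\langle\iota(T)(Z_\alpha\wedge Z_\beta),Z_\gamma\wedge Z_\delta\rangle=T(Z_\alpha\wedge Z_\beta\wedge Z_\gamma\wedge Z_\delta).
\end{align*}
This is zero because every summand of $T$ is a dual basis element $(X_i\wedge X_j\wedge Z_1\wedge Z_\alpha)^*$ containing two $\mathcal{V}$-factors, and such an element vanishes on a wedge of four central vectors. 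I do not expect any real obstacle here; the substantive estimates are deferred to Lemmas \ref{VZVZ_upper_bound_entrywise}, \ref{VVZZ_upper_bound_entrywise}, and \ref{VVVV_upper_bound_entrywise}.
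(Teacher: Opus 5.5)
Your proposal is correct and follows the paper's argument. The paper's proof likewise treats everything but item 3 as reading off entries in the chosen ordered bases, and it obtains the vanishing in item 3 from item 4 of Lemma \ref{formulas_for_curv_op} together with $\langle \iota(T)(Z_\alpha\wedge Z_\beta), Z_\gamma\wedge Z_\delta\rangle = T(Z_\alpha\wedge Z_\beta\wedge Z_\gamma\wedge Z_\delta)=0$.
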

\begin{proof}
     The item 3 follows from Lemma \ref{formulas_for_curv_op} and $\langle \iota(T)(Z_\alpha\wedge Z_\beta), Z_\gamma\wedge Z_\delta\rangle=T(Z_\alpha \wedge Z_\beta \wedge Z_\gamma \wedge Z_\delta )=0$. 
\end{proof}

Now, we start analyzing the matrices $A$ and $B$ separately.

\subsubsection*{Analysis of the matrix A}

As in Lemma \ref{entries_of_A_and_B}, the entry of $A$ is of the form $\langle R_T(X_i\wedge Z_\alpha), X_j\wedge Z_\beta\rangle$, where $X_i\wedge Z_\alpha$ and $X_j\wedge Z_\beta$ are the basis element of $(\mathcal{V}_1\otimes \mathcal{Z}_1)^{\perp}\cap(\mathcal{V}\otimes \mathcal{Z})$. Hence, $j\geq 3$ or $\beta \geq 2$.

\begin{lemma} \label{VZVZ_upper_bound_entrywise}
Let $i,j\in \{1,...,q\}$ and $\alpha, \beta\in \{1,...,p\}$. If $j\geq 3$ or $\beta \geq 2$, then
    $|\langle R(X_i\wedge Z_\alpha), X_j\wedge Z_\beta \rangle| \leq \frac{1}{4}||j||_{\op}\sqrt{||j||_{\Fr}^2-2||j||_{\op}^2}$. In particular, the maximum eigenvalue of $R_T|_{(\mathcal{V}_1\otimes \mathcal{Z}_1)^{\perp}\cap(\mathcal{V}\otimes \mathcal{Z})}$ is less than or equal to $\frac{1}{2}(pq-2)||j||_{\op}\sqrt{||j||_{\Fr}^2-2||j||_{\op}^2}$.
\end{lemma}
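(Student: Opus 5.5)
The plan is to combine item 1 of Lemma \ref{formulas_for_curv_op} with Cauchy--Schwarz and then use the entrywise bounds of Lemma \ref{lemma_key_estimate}. By Lemma \ref{formulas_for_curv_op},
\begin{align*}
\langle R(X_i\wedge Z_\alpha), X_j\wedge Z_\beta\rangle=\tfrac14\langle j(Z_\alpha)X_j, j(Z_\beta)X_i\rangle ,
\end{align*}
so its absolute value is at most $\frac14 |j(Z_\alpha)X_j|\,|j(Z_\beta)X_i|$. Lemma \ref{upper_bound_of_Lie_bracket} bounds either factor by $||j||_{\op}$, since all vectors involved are unit vectors. It therefore suffices to show that one of the two factors is at most $\sqrt{||j||_{\Fr}^2-2||j||_{\op}^2}$.

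We split into the two cases of the hypothesis. If $\beta\geq 2$, Lemma \ref{lemma_key_estimate} gives $|j(Z_\beta)X_i|\leq \sqrt{||j||_{\Fr}^2-2||j||_{\op}^2}$ for every $i$, and we bound $|j(Z_\alpha)X_j|\leq ||j||_{\op}$. If $j\geq 3$, Lemma \ref{lemma_key_estimate} gives $|j(Z_\alpha)X_j|\leq \sqrt{||j||_{\Fr}^2-2||j||_{\op}^2}$ for every $\alpha$, and we bound $|j(Z_\beta)X_i|\leq ||j||_{\op}$. In both cases the entry is bounded by $\frac14||j||_{\op}\sqrt{||j||_{\Fr}^2-2||j||_{\op}^2}$. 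This settles the first claim. Because of the symmetry of $R$, the same bound holds when the condition is placed on $(i,\alpha)$ instead.

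For the eigenvalue statement, we pass from $R$ to $R_T$. The entries of $A$ are $\langle R_T(X_i\wedge Z_\alpha),X_j\wedge Z_\beta\rangle$, where both index pairs satisfy $i\geq 3$ or $\alpha\geq 2$. We then evaluate $\iota(T)$ on these entries. By definition $T$ only involves $(X_i\wedge X_j\wedge Z_1\wedge Z_\alpha)^*$ with $i\in\{1,2\}$, $j\geq 3$, $\alpha\geq 2$, so the $\iota(T)$ term contributes only when one basis element is $X_i\wedge Z_1$ with $i\leq 2$. That element lies in $\mathcal V_1\otimes\mathcal Z_1$, hence is not in the basis of $A$. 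Consequently, on $(\mathcal V_1\otimes\mathcal Z_1)^\perp\cap(\mathcal V\otimes\mathcal Z)$ the entries of $R_T$ coincide with those of $R$. I expect this vanishing of $\iota(T)$ to be the only point needing care: one has to check that $T(X_i\wedge Z_\alpha\wedge X_j\wedge Z_\beta)$ vanishes unless one of $i,j$ is at most $2$ with the paired $Z$ index equal to $1$.

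The dimension of this block is $pq-2$. With every entry bounded by $C:=\frac14||j||_{\op}\sqrt{||j||_{\Fr}^2-2||j||_{\op}^2}$, the observation preceding Step 1 gives
\begin{align*}
\lambda_{\max}(A)\leq ||A||_{\Fr}\leq (pq-2)C=\tfrac14(pq-2)||j||_{\op}\sqrt{||j||_{\Fr}^2-2||j||_{\op}^2}.
\end{align*}
This is at most the claimed $\frac12(pq-2)||j||_{\op}\sqrt{||j||_{\Fr}^2-2||j||_{\op}^2}$.
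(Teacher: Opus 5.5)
Your first part, the bound on the entries of $R$ via Lemma \ref{formulas_for_curv_op}, Cauchy--Schwarz and Lemma \ref{lemma_key_estimate}, is correct and is exactly the paper's argument. The gap is in the passage from $R$ to $R_T$. It is not true that $\iota(T)$ vanishes on $(\mathcal{V}_1\otimes \mathcal{Z}_1)^{\perp}\cap(\mathcal{V}\otimes \mathcal{Z})$. Since $T$ is an alternating $4$-form, it does not record which $X$ is paired with which $Z$. Take $i\in\{1,2\}$, $j\geq 3$ and $\alpha\geq 2$. Then both $X_i\wedge Z_\alpha\in\mathcal{V}_1\otimes\mathcal{Z}_2$ and $X_j\wedge Z_1\in\mathcal{V}_2\otimes\mathcal{Z}_1$ are basis vectors of the block $A$, and
\[
\langle \iota(T)(X_i\wedge Z_\alpha), X_j\wedge Z_1\rangle = T(X_i\wedge Z_\alpha\wedge X_j\wedge Z_1)=T(X_i\wedge X_j\wedge Z_1\wedge Z_\alpha)=c_{i1j\alpha}=\frac{1}{4}\langle j(Z_1)X_j, j(Z_\alpha)X_i\rangle ,
\]
which is in general nonzero.

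For a concrete case, take $q=4$ and $p=2$ with
\[
j(Z_1)X_1=X_2,\quad j(Z_1)X_2=-X_1,\quad j(Z_1)X_3=dX_4,\quad j(Z_1)X_4=-dX_3,
\]
\[
j(Z_2)X_1=bX_3,\quad j(Z_2)X_3=-bX_1,\quad j(Z_2)X_2=j(Z_2)X_4=0,
\]
where $b,d>0$ and $b^2+d^2\leq 1$. Then $||j||_{\op}=1$ is attained at $(Z_1,X_1)$, so Setting \ref{setting_for_the_proof_of_lower_bound_theorem} holds, and $c_{1142}=-\frac{1}{4}bd\neq 0$. Hence the entries of $R_T$ on this block are not those of $R$. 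This means neither your entrywise bound $\frac{1}{4}||j||_{\op}\sqrt{||j||_{\Fr}^2-2||j||_{\op}^2}$ for $A$ nor the resulting $\frac{1}{4}(pq-2)$ estimate is justified. You correctly flagged this as the delicate point, but the check fails. Nonvanishing also occurs when the index $\leq 2$ is paired with some $Z_\alpha$, $\alpha\geq 2$, and the index $\geq 3$ is paired with $Z_1$.

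The repair is what the paper does. The only nonzero $\iota(T)$-entries on the block are $\pm c_{i1j\alpha}$ with $\alpha\geq 2$. Each $c_{i1j\alpha}=\langle R(X_i\wedge Z_1),X_j\wedge Z_\alpha\rangle$ is itself bounded by $\frac{1}{4}||j||_{\op}\sqrt{||j||_{\Fr}^2-2||j||_{\op}^2}$ by your first part, since $\alpha\geq 2$. So each entry of $A$ is at most $\frac{1}{2}||j||_{\op}\sqrt{||j||_{\Fr}^2-2||j||_{\op}^2}$ in absolute value. The Frobenius estimate then gives exactly the stated bound $\frac{1}{2}(pq-2)||j||_{\op}\sqrt{||j||_{\Fr}^2-2||j||_{\op}^2}$. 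The factor $\frac{1}{2}$ in the lemma comes precisely from this $\iota(T)$ contribution.
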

\begin{proof}
    By Lemma \ref{formulas_for_curv_op}, we have $\langle R(X_i\wedge Z_\alpha), X_j\wedge Z_\beta \rangle=\frac{1}{4}\langle j(Z_\alpha)X_j, j(Z_\beta)X_i\rangle.$ By Cauchy--Schwarz inequality, we have $|\langle R(X_i\wedge Z_\alpha), X_j\wedge Z_\beta \rangle|=\frac{1}{4}|\langle j(Z_\alpha)X_j, j(Z_\beta)X_i\rangle|\leq \frac{1}{4}|j(Z_\alpha)X_j||j(Z_\beta)X_i|$. We always have $|j(Z_\alpha)X_j|\leq ||j||_{\op}$ and $|j(Z_\beta)X_i|\leq ||j||_{\op}$. Since $j\geq 3$ or $\beta\geq 2$, it follows from Lemma \ref{lemma_key_estimate} that $|j(Z_\alpha)X_j||j(Z_\beta)X_i|\leq ||j||_{\op}\sqrt{||j||_{\Fr}^2-2||j||_{\op}^2}$, and the inequality follows. 
    
    Recall, $A$ is the matrix representation of $R_T|_{(\mathcal{V}_1\otimes \mathcal{Z}_1)^{\perp}\cap(\mathcal{V}\otimes \mathcal{Z})}$ with respect to the basis $X_3\wedge Z_1, ..., X_q\wedge Z_1$, and $X_1\wedge Z_l, ..., X_q\wedge Z_l$, $l\geq 2$ of $(\mathcal{V}_1\otimes \mathcal{Z}_1)^{\perp}\cap(\mathcal{V}\otimes \mathcal{Z})$. All of the basis elements are of the form $X_i\wedge Z_\alpha$, $i\geq 3$ or $\alpha\geq 2$. Thus, if $X_i\wedge Z_\alpha, X_j\wedge Z_\beta$ are the basis elements above, then $|\langle R_T(X_i\wedge Z_\alpha), X_j\wedge Z_\beta \rangle|\leq |\langle R(X_i\wedge Z_\alpha), X_j\wedge Z_\beta \rangle|+|T(X_i\wedge Z_\alpha\wedge X_j\wedge Z_\beta)| \leq \frac{1}{4}||j||_{\op}\sqrt{||j||_{\Fr}^2-2||j||_{\op}^2} + |T(X_i\wedge Z_\alpha\wedge X_j\wedge Z_\beta)|$ as $j\geq 3$ or $\beta\geq 2$. Recall, $T$ is defined in Lemma \ref{lemma_T_for_lower_bound}, as $T=\sum_{i',j',\alpha'} c_{i'1j'\alpha'}(X_{i'}\wedge X_{j'}\wedge Z_1\wedge Z_{\alpha'})^*$, where the sum runs over the indices $i'\in \{1,2\}, j'\in \{3,...,q\}, \alpha' \geq 2$.

    To evaluate $|T(X_i\wedge Z_\alpha\wedge X_j\wedge Z_\beta)|=|T(X_i\wedge X_j\wedge Z_\alpha\wedge Z_\beta)|$, we may assume $i\leq j$ and $\alpha\leq \beta$ by permutation. If $i=j$ or $\alpha=\beta$, then it is zero. Hence, we assume $i<j$ and $\alpha<\beta$.
 Observe that it follows from Lemma \ref{lemma_T_for_lower_bound} that if $|T(X_i\wedge X_j\wedge Z_\alpha\wedge Z_\beta)|\neq 0$, then $i\in \{1,2\}, j\in \{3,...,q\}, \alpha=1$, and $\beta\geq 2$, and $T(X_i\wedge X_j\wedge Z_\alpha\wedge Z_\beta)=c_{i1j\beta}=\langle R(X_i\wedge Z_1), X_j\wedge Z_\beta\rangle$. Since $\beta\geq 2$, by the first part of this lemma, we see $|T(X_i\wedge X_j\wedge Z_\alpha \wedge Z_\beta)|=|\langle R(X_i\wedge Z_1), X_j\wedge Z_\beta\rangle|\leq  \frac{1}{4}||j||_{\op}\sqrt{||j||_{\Fr}^2-2||j||_{\op}^2}$ as well. Thus, $|\langle R_T(X_i\wedge Z_\alpha), X_j\wedge Z_\beta \rangle|\leq |\langle R(X_i\wedge Z_\alpha), X_j\wedge Z_\beta \rangle|+|T(X_i\wedge Z_\alpha\wedge X_j\wedge Z_\beta)| \leq \frac{1}{4}||j||_{\op}\sqrt{||j||_{\Fr}^2-2||j||_{\op}^2} + |T(X_i\wedge Z_\alpha\wedge X_j\wedge Z_\beta)| \rangle|\leq \frac{1}{2}||j||_{\op}\sqrt{||j||_{\Fr}^2-2||j||_{\op}^2}$.
    
    The absolute value of each entry of $A$ is bounded by $\frac{1}{2}||j||_{\op}\sqrt{||j||_{\Fr}^2-2||j||_{\op}^2}$. We have $||A||_{\op}^2\leq ||A||_{\Fr}^2 \leq \frac{1}{4}(pq-2)^2||j||_{\op}^2(||j||_{\Fr}^2-2||j||_{\op}^2),$ where $pq-2$ is the dimension of $(\mathcal{V}_1\otimes \mathcal{Z}_1)^{\perp}\cap(\mathcal{V}\otimes \mathcal{Z})$. Therefore, the maximum eigenvalue $||A||_{\op}$ of $A$ is bounded by $\frac{1}{2}(pq-2)||j||_{\op}\sqrt{||j||_{\Fr}^2-2||j||_{\op}^2}$. This completes the proof.
\end{proof}

\subsubsection*{Analysis of the matrix B.}
\begin{lemma} \label{B_11_computation}
    The $(1,1)$-entry of the matrix $B$ is given by $B_{11}=\langle R_T(X_1 \wedge X_2), X_1\wedge X_2 \rangle=-\frac{3}{4}||j||_{\op}^2$ 
\end{lemma}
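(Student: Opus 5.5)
The plan is to split the entry into its two contributions, $B_{11}=\langle R(X_1\wedge X_2), X_1\wedge X_2\rangle + T(X_1\wedge X_2\wedge X_1\wedge X_2)$, and treat them separately. The second summand vanishes because $T$ is alternating and its argument repeats $X_1$ and $X_2$; concretely, $\langle \iota(T)(\omega),\omega\rangle = T(\omega\wedge\omega)=0$ for any decomposable $\omega$. This is exactly the observation after Definition \ref{def_of_modified_curvature_operator} that $\langle R_T(\omega),\omega\rangle=K(\omega)$ for $\omega\in Gr_2^+$. Since $X_1,X_2$ are orthonormal (Setting \ref{setting_for_the_proof_of_lower_bound_theorem}), the unit bivector $X_1\wedge X_2$ lies in $Gr_2^+(\mathcal{N})$, so $B_{11}=K(X_1,X_2)$.

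It then remains to evaluate $K(X_1,X_2)$. The vectors $X_1,X_2$ lie in $\mathcal{V}$, so Proposition \ref{general_formula} gives $K(X_1,X_2)=-\frac{3}{4}|[X_1,X_2]|^2$. Setting \ref{setting_for_the_proof_of_lower_bound_theorem}, which rests on Lemma \ref{maximum_eigenspace_and_two_interpretations}, gives $[X_1,X_2]=\|j\|_{\op}Z_1$ with $|Z_1|=1$. Hence $B_{11}=-\frac{3}{4}\|j\|_{\op}^2$, as claimed.

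The same value also comes out directly from item 3 of Lemma \ref{formulas_for_curv_op} with $X=X'=X_1$ and $Y=Y'=X_2$. That gives $-\frac{1}{2}|[X_1,X_2]|^2+\frac{1}{4}\langle [X_2,X_1],[X_1,X_2]\rangle-\frac14\langle [X_1,X_1],[X_2,X_2]\rangle=-\frac34|[X_1,X_2]|^2$, which serves as a consistency check.

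There is no real obstacle here. The only point needing care is the sign convention relating $\langle R(X\wedge Y),X\wedge Y\rangle$ to $K(X,Y)$. The paper fixes this in Section \ref{section_curv_op_of_general_riem_mfd} as $K(X,Y)=\langle R(X\wedge Y),X\wedge Y\rangle$, so the identification above is legitimate.
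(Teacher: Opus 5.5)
Your proof is correct and follows the paper's own argument: split $B_{11}$ into the $R$-term and the $T$-term, observe that the $T$-term vanishes, and evaluate $K(X_1,X_2)=-\frac{3}{4}|[X_1,X_2]|^2=-\frac{3}{4}||j||_{\op}^2$ using Proposition~\ref{general_formula} and Setting~\ref{setting_for_the_proof_of_lower_bound_theorem}. Your justification for the vanishing, namely that $T(\omega\wedge\omega)=0$ for every $T$, is slightly cleaner than the paper's appeal to the definition of $T$.
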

\begin{proof}
    We have $B_{11}= \langle R(X_1\wedge X_2), X_1\wedge X_2\rangle + T(X_1\wedge X_2\wedge X_1\wedge X_2).$ The first terms is $K(X_1, X_2)=-\frac{3}{4}|[X_1, X_2]|^2=-\frac{3}{4}||j||_{\op}^2$ by Proposition \ref{general_formula} and Setting \ref{setting_for_the_proof_of_lower_bound_theorem}. The second term is 0 by definition of $T$.
\end{proof}
We will obtain bounds on the entries of $B$ in Lemma \ref{VVZZ_upper_bound_entrywise} and Lemma \ref{VVVV_upper_bound_entrywise}. Then, we control $||B||_{\op}$ in Lemma \ref{upper_bound_of_maximum_eigenvalue_of_B}.

\begin{lemma}\label{VVZZ_upper_bound_entrywise}
    Let $i,j\in \{1,...,q\}$ with $i<j$, and $\alpha,\beta\in \{1,...,p\}$ with $\alpha<\beta$. Then, $|\langle R_T(X_i\wedge X_j), Z_\alpha \wedge Z_\beta \rangle|\leq \frac{3}{4}||j||_{\op}\sqrt{||j||_{\Fr}^2-2||j||_{\op}^2}$.
\end{lemma}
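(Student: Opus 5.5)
Write $\epsilon:=\sqrt{||j||_{\Fr}^2-2||j||_{\op}^2}$. The plan is to split $\langle R_T(X_i\wedge X_j), Z_\alpha\wedge Z_\beta\rangle$ into its $R$-part and its $\iota(T)$-part, and to bound each by Lemma \ref{formulas_for_curv_op}, Cauchy--Schwarz, and the entrywise estimates of Lemma \ref{lemma_key_estimate}, all in the basis of Setting \ref{setting_for_the_proof_of_lower_bound_theorem}.

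\textbf{The $\iota(T)$-part vanishes.} This part is $T(X_i\wedge X_j\wedge Z_\alpha\wedge Z_\beta)$. By Lemma \ref{lemma_T_for_lower_bound}, $T$ is supported only on $(X_{i'}\wedge X_{j'}\wedge Z_1\wedge Z_{\alpha'})^*$ with $i'\in\{1,2\}$, $j'\geq 3$ and $\alpha'\geq 2$. So the term is nonzero only when $i\in\{1,2\}$, $j\geq 3$, $\alpha=1$ and $\beta\geq 2$, and then it equals $c_{i1j\beta}=\langle R(X_i\wedge Z_1),X_j\wedge Z_\beta\rangle$.

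That quantity is an entry of type $\mathcal{V}\otimes\mathcal{Z}$ against $\mathcal{V}\otimes\mathcal{Z}$. It need not be zero, so this part must be bounded rather than discarded. By Lemma \ref{VZVZ_upper_bound_entrywise} (applicable since $\beta\geq 2$), it is at most $\frac14||j||_{\op}\epsilon$.

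\textbf{The $R$-part.} By item 2 of Lemma \ref{formulas_for_curv_op},
\[
\langle R(X_i\wedge X_j),Z_\alpha\wedge Z_\beta\rangle=\tfrac14\langle j(Z_\alpha)X_j,j(Z_\beta)X_i\rangle-\tfrac14\langle j(Z_\alpha)X_i,j(Z_\beta)X_j\rangle .
\]
Since $\alpha<\beta$, we have $\beta\geq 2$. Each term contains a factor $j(Z_\beta)X_\ast$, and Lemma \ref{lemma_key_estimate} gives $|j(Z_\beta)X_k|\leq\epsilon$ for every $k$. The other factor is at most $||j||_{\op}$ by Lemma \ref{upper_bound_of_Lie_bracket}. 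Hence each term is at most $\frac14||j||_{\op}\epsilon$ by Cauchy--Schwarz, and the $R$-part is at most $\frac12||j||_{\op}\epsilon$.

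\textbf{Conclusion.} Adding the two bounds gives
\[
|\langle R_T(X_i\wedge X_j),Z_\alpha\wedge Z_\beta\rangle|\leq \tfrac12||j||_{\op}\epsilon+\tfrac14||j||_{\op}\epsilon=\tfrac34||j||_{\op}\epsilon,
\]
which is exactly the claimed constant $\frac34$.

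The main point needing care is the bookkeeping of the $T$-term: signs and ordering of the wedge, and confirming that it is controlled by the estimate already proved in Lemma \ref{VZVZ_upper_bound_entrywise}. The observation that $\alpha<\beta$ forces $\beta\geq2$, so that the key estimate applies to $j(Z_\beta)$ in both terms, is what makes the $R$-part routine.
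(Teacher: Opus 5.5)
Your proposal is correct and follows the paper's proof essentially verbatim. The $\iota(T)$-contribution is nonzero only when $\alpha=1$; there it equals $c_{i1j\beta}$ and is bounded by $\frac14||j||_{\op}\sqrt{||j||_{\Fr}^2-2||j||_{\op}^2}$ via Lemma \ref{VZVZ_upper_bound_entrywise}. The $R$-part is at most twice that bound, using $\beta\geq 2$ and Lemma \ref{lemma_key_estimate}. The only blemish is your heading ``The $\iota(T)$-part vanishes,'' which contradicts what you then correctly show: this part need not vanish, but it is bounded.
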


\begin{proof}
    If $\langle \iota(T)(X_i\wedge X_j), Z_\alpha\wedge Z_\beta \rangle=T(X_i\wedge X_j \wedge Z_\alpha \wedge Z_\beta)$ is not zero, then $\alpha=1$ and it is of the form $T(X_i\wedge X_j \wedge Z_\alpha \wedge Z_\beta)=c_{i1j\beta}=\langle R(X_i\wedge Z_1), X_j\wedge Z_\beta\rangle$. Observe,  $\alpha<\beta$ implies $\beta\geq 2$. Since $\beta\geq 2$, it follows from Lemma \ref{VZVZ_upper_bound_entrywise} that $|\langle \iota(T)(X_i\wedge X_j), Z_\alpha\wedge Z_\beta \rangle|\leq \frac{1}{4}||j||_{\op}\sqrt{||j||_{\Fr}^2-2||j||_{\op}^2}$. 

    By Lemma \ref{formulas_for_curv_op}, $\langle R(X_i\wedge X_j), Z_\alpha \wedge Z_\beta \rangle = \frac{1}{4}\langle j(Z_\alpha)X_j, j(Z_\beta)X_i \rangle - \frac{1}{4}\langle j(Z_\alpha)X_i, j(Z_\beta)X_j \rangle$. Then,  $|\langle R(X_i\wedge X_j), Z_\alpha \wedge Z_\beta \rangle| \leq \frac{1}{4}|\langle j(Z_\alpha)X_j, j(Z_\beta)X_i \rangle| + \frac{1}{4}|\langle j(Z_\alpha)X_i, j(Z_\beta)X_j \rangle|\leq \frac{1}{4}(|j(Z_\alpha)X_j||j(Z_\beta)X_i|+|j(Z_\alpha)X_i||j(Z_\beta)X_j|)$ by Cauchy--Schwarz inequality. We always have $|j(Z_\alpha)X_j|\leq ||j||_{\op}$. Since $\beta\geq 2$ it follows from Lemma \ref{lemma_key_estimate} that $|j(Z_\beta)X_i|\leq \sqrt{||j||_{\Fr}^2-2||j||_{\op}^2}$. Thus, $|j(Z_\alpha)X_j||j(Z_\beta)X_i|\leq ||j||_{\op} \sqrt{||j||_{\Fr}^2-2||j||_{\op}^2}$. Similarly, $|\langle j(Z_\alpha)X_i, j(Z_\beta)X_j \rangle|\leq ||j||_{\op} \sqrt{||j||_{\Fr}^2-2||j||_{\op}^2}$. 
    
    Thus, we have $|\langle R(X_i\wedge X_j), Z_\alpha\wedge Z_\beta\rangle|\leq \frac{1}{2}||j||_{\op}\sqrt{||j||_{\Fr}^2-2||j||_{\op}^2}$. Therefore, $|\langle R_T(X_i\wedge X_j), Z_\alpha \wedge Z_\beta \rangle|\leq  |\langle R(X_i\wedge X_j), Z_\alpha \wedge Z_\beta \rangle|+ |\langle \iota(T)(X_i\wedge X_j), Z_\alpha\wedge Z_\beta \rangle| \leq \frac{3}{4}||j||_{\op}\sqrt{||j||_{\Fr}^2-2||j||_{\op}^2}$.
\end{proof}

\begin{lemma}\label{VVVV_upper_bound_entrywise}
   Let $i,j,k,l\in \{1,...,q\}$, $i<j, k<l$. Suppose $l\geq 3$. Then, $|\langle R_T(X_i\wedge X_j), X_k\wedge X_l\rangle|=|\langle X_i\wedge X_j, R_T(X_k\wedge X_l)\rangle| \leq ||j||_{\op}\sqrt{||j||_{\Fr}^2 - 2||j||_{\op}^2}$.
\end{lemma}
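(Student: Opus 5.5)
First I would dispose of the $\iota(T)$ contribution. We have $\langle \iota(T)(X_i\wedge X_j), X_k\wedge X_l\rangle = T(X_i\wedge X_j\wedge X_k\wedge X_l)$, and every basis element in the definition of $T$ in Lemma \ref{lemma_T_for_lower_bound} contains two $Z$-factors. Hence this value is $0$, so $\langle R_T(X_i\wedge X_j), X_k\wedge X_l\rangle=\langle R(X_i\wedge X_j), X_k\wedge X_l\rangle$. Symmetry of $R_T$ gives the first equality in the statement. It therefore suffices to bound the $R$-entry using item 3 of Lemma \ref{formulas_for_curv_op}:
\begin{align*}
\langle R(X_i\wedge X_j), X_k\wedge X_l\rangle = -\tfrac{1}{2}\langle [X_i,X_j],[X_k,X_l]\rangle + \tfrac{1}{4}\langle [X_j,X_k],[X_i,X_l]\rangle - \tfrac{1}{4}\langle [X_i,X_k],[X_j,X_l]\rangle .
\end{align*}

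The key observation is that each of the three terms contains a bracket $[X_a,X_l]$ with $l\geq 3$. Its partner bracket is bounded by $||j||_{\op}$ via Lemma \ref{upper_bound_of_Lie_bracket}, since the $X$'s are unit vectors. So I aim for an estimate
\begin{align*}
|[X_a,X_l]|\leq \sqrt{||j||_{\Fr}^2-2||j||_{\op}^2} \quad \text{for any } a \text{ and any } l\geq 3 .
\end{align*}
To obtain it, write $|[X_a,X_l]|^2=\sum_{\alpha}\langle Z_\alpha,[X_a,X_l]\rangle^2=\sum_\alpha \langle j(Z_\alpha)X_a, X_l\rangle^2=\sum_\alpha |j(Z_\alpha)_{la}|^2$. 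Every entry in this sum has row index $l\geq 3$ when $\alpha=1$, or has $\alpha\geq 2$. So the whole sum is a subsum of the right-hand side of the identity in Lemma \ref{lemma_key_estimate}, and therefore is at most $||j||_{\Fr}^2-2||j||_{\op}^2$.

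I would then combine via Cauchy--Schwarz. The first term is at most $\frac{1}{2}|[X_i,X_j]||[X_k,X_l]|\leq \frac12 ||j||_{\op}\sqrt{||j||_{\Fr}^2-2||j||_{\op}^2}$. The second and third terms each contain $[X_i,X_l]$ or $[X_j,X_l]$, so each is at most $\frac14||j||_{\op}\sqrt{||j||_{\Fr}^2-2||j||_{\op}^2}$. Summing gives $\frac12+\frac14+\frac14=1$, which is exactly the claimed constant. The only step needing care is the subsum argument, namely checking that the entries $j(Z_1)_{la}$ with $l\geq 3$ are not among the excluded $s,t\leq 2$ block. This holds because the row index is $l\geq 3$.
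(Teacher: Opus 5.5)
Your proposal is correct and follows the paper's proof essentially step for step. You kill the $\iota(T)$ term because $T$ only involves basis covectors with two $\mathcal{Z}$-factors, and you expand via item 3 of Lemma \ref{formulas_for_curv_op}. You then bound the bracket containing $X_l$ (with $l\geq 3$) by $\sqrt{||j||_{\Fr}^2-2||j||_{\op}^2}$ through the subsum of Lemma \ref{lemma_key_estimate}, bound its partner by $||j||_{\op}$, and sum the coefficients $\frac12+\frac14+\frac14=1$. Your explicit check that $\sum_\alpha |j(Z_\alpha)_{la}|^2$ is a subsum of that lemma's right-hand side is exactly what the paper's ``similar computations'' for the second and third terms rely on.
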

\begin{remark} \label{why_everything_except_B_11_is_covered}
    If $l<3$, then $k=1, l=2$. Also, you can switch the roles of $i<j$ and $k<l$. Thus, this lemma provides an estimate of $|\langle R_T(X_i\wedge X_j), X_k\wedge X_l\rangle|$ except $B_{11}=\langle R_T(X_1\wedge X_2), X_1\wedge X_2\rangle=-\frac{3}{4}||j||_{\op}^2$ (see Lemma \ref{B_11_computation}). 
\end{remark}

\begin{proof}
First, $\langle R_T(X_i\wedge X_j), X_k\wedge X_l\rangle=\langle R(X_i\wedge X_j), X_k\wedge X_l\rangle$, because $T(X_i\wedge X_j\wedge X_k\wedge X_l)=0$. By Lemma \ref{formulas_for_curv_op}, we have $\langle R(X_i\wedge X_j), X_k\wedge X_l\rangle =-\frac{1}{2}\langle[X_i, X_j], [X_k, X_l]\rangle + \frac{1}{4}\langle [X_j, X_k], [X_i, X_l]\rangle - \frac{1}{4}\langle [X_i, X_k],[X_j,X_l]\rangle$.

    We analyze $\langle[X_i, X_j], [X_k, X_l]\rangle$. We have $|\langle[X_i, X_j], [X_k, X_l]\rangle|\leq ||j||_{\op}|[X_k,X_l]|$ by Cauchy--Schwarz inequality and Lemma \ref{upper_bound_of_Lie_bracket}. Note, $[X_k,X_l]=\sum_{\alpha=1}^p \langle j(Z_{\alpha})X_k, X_l\rangle Z_{\alpha}=\sum_{\alpha=1}^p j(Z_\alpha)_{lk} Z_{\alpha}$. Hence, $|[X_k,X_l]|^2=|j(Z_1)_{lk}|^2 + \sum_{\alpha=2}^p |j(Z_{\alpha})_{lk}|^2\leq ||j||_{\Fr}^2 - 2||j||_{\op}^2.$
    This inequality follows from $l\geq 3$ and Lemma \ref{lemma_key_estimate}. This implies $|[X_k,X_l]|\leq \sqrt{||j||_{\Fr}^2 - 2||j||_{\op}^2}$. Thus, $|\langle[X_i, X_j], [X_k, X_l]\rangle|\leq ||j||_{\op}\sqrt{||j||_{\Fr}^2 - 2||j||_{\op}^2}.$ Similar computations hold for $\langle [X_j, X_k], [X_i, X_l]\rangle$ and $\langle [X_i, X_k],[X_j,X_l]\rangle$ as $l\geq 3$. 
    
    We have $|\langle R_T(X_i\wedge X_j), X_k\wedge X_l\rangle|=|\langle R(X_i\wedge X_j), X_k\wedge X_l\rangle|\leq (\frac{1}{2}+\frac{1}{4}+\frac{1}{4})||j||_{\op}\sqrt{||j||_{\Fr}^2 - 2||j||_{\op}^2}=||j||_{\op}\sqrt{||j||_{\Fr}^2 - 2||j||_{\op}^2}$.
\end{proof}

Now, we give an upper bound of the maximum eigenvalue of $B$.

\begin{lemma} \label{upper_bound_of_maximum_eigenvalue_of_B}
    The maximum eigenvalue of $B$ is less than or equal to $m||j||_{\op}\sqrt{||j||_{\Fr}^2 - 2||j||_{\op}^2}$, where $m=\dim \Lambda^2 \mathcal{V}\oplus \Lambda^2\mathcal{Z}$.
\end{lemma}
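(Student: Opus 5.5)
The plan is to split off the single large diagonal entry of $B$, which is negative, and bound everything else entrywise. Set $\delta:=||j||_{\op}\sqrt{||j||_{\Fr}^2-2||j||_{\op}^2}$ and write
\begin{align*}
B = B' + D,
\end{align*}
where $D$ is the $m\times m$ matrix whose only nonzero entry is $D_{11}=B_{11}$, and $B'$ agrees with $B$ except that $B'_{11}=0$. By Lemma \ref{B_11_computation}, $D_{11}=-\frac{3}{4}||j||_{\op}^2\leq 0$, so $D$ is negative semidefinite. Both $B'$ and $D$ are symmetric, since $B$ is the matrix of the symmetric operator $R_T|_{\Lambda^2\mathcal{V}\oplus\Lambda^2\mathcal{Z}}$ in an orthonormal basis.

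First, we bound every entry of $B'$ in absolute value by $\delta$, using the case split of Lemma \ref{entries_of_A_and_B}.
\begin{enumerate}
\item Entries pairing $X_i\wedge X_j$ with $Z_\alpha\wedge Z_\beta$ are at most $\frac{3}{4}\delta\leq\delta$ by Lemma \ref{VVZZ_upper_bound_entrywise}.
\item Entries pairing $X_i\wedge X_j$ with $X_k\wedge X_l$ are at most $\delta$ by Lemma \ref{VVVV_upper_bound_entrywise}. By Remark \ref{why_everything_except_B_11_is_covered} (and the symmetry of $R_T$), this covers every such entry except $B_{11}$, and that entry is $0$ in $B'$.
\item Entries pairing $Z_\alpha\wedge Z_\beta$ with $Z_\gamma\wedge Z_\delta$ vanish by item 3 of Lemma \ref{entries_of_A_and_B}.
\end{enumerate}
Using the observation recorded before Step 1, namely $||U||_{\Fr}\leq nC$ whenever all entries of $U$ are bounded by $C$, we get $||B'||_{\op}\leq ||B'||_{\Fr}\leq m\delta$.

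Next, for any unit vector $x\in\mathbb{R}^m$, the Rayleigh quotient satisfies
\begin{align*}
\langle Bx,x\rangle=\langle B'x,x\rangle + D_{11}x_1^2\leq \langle B'x,x\rangle\leq ||B'||_{\op}\leq m\delta .
\end{align*}
Taking the maximum over unit $x$ gives $\lambda_{\max}(B)\leq m||j||_{\op}\sqrt{||j||_{\Fr}^2-2||j||_{\op}^2}$, which is the claim.

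The only delicate point is the bookkeeping in the first step: checking that the entrywise lemmas really cover every entry of $B$ other than $B_{11}$. This requires using the symmetry of $R_T$ to swap the roles of $(i,j)$ and $(k,l)$ in Lemma \ref{VVVV_upper_bound_entrywise}, and noting that the mixed $\Lambda^2\mathcal{Z}$–$\Lambda^2\mathcal{V}$ entries are the transposes of those handled in Lemma \ref{VVZZ_upper_bound_entrywise}. The removed entry $B_{11}$ is harmless precisely because it is negative, so it can only lower the largest eigenvalue.
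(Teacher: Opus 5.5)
Your proof is correct and follows the paper's argument. You use the same splitting of $B$ into the single $(1,1)$-entry $-\frac{3}{4}||j||_{\op}^2$ plus a remainder whose entries are bounded by $||j||_{\op}\sqrt{||j||_{\Fr}^2-2||j||_{\op}^2}$, and the same Frobenius-norm bound on that remainder. The only difference is that where the paper cites Weyl's perturbation theorem (using $\lambda_1^\downarrow(B_1)=0$), you argue directly with the Rayleigh quotient and the negative semidefiniteness of the split-off part; this is the elementary one-sided case of Weyl's inequality, which suffices because the lemma only claims an upper bound.
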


\begin{proof}
    Recall, $B$ is the matrix representation of $R_T|_{\Lambda^2\mathcal{V}\oplus \Lambda^2\mathcal{Z}}$ with respect to $X_i\wedge X_j$ with $i<j$ and $Z_\alpha\wedge Z_\beta$ with $\alpha<\beta$. By Lemma \ref{B_11_computation}, the $(1,1)$-entry of the matrix $B$ is $\langle R_T(X_1\wedge X_2), X_1\wedge X_2\rangle = -\frac{3}{4}||j||_{\op}^2$.
    
    Let $B_1\in M_{m}(\mathbb{R})$ be the matrix whose $(1,1)$-entry is $\langle R_T(X_1\wedge X_2), X_1\wedge X_2\rangle = -\frac{3}{4}||j||_{\op}^2$, and all the other entries are $0$. Let $B_2=B-B_1$. Then, the absolute value of each entry of $B_2$ is bounded by $||j||_{\op}\sqrt{||j||_{\Fr}^2 - 2||j||_{\op}^2}$ by Lemma \ref{VVZZ_upper_bound_entrywise} and Lemma \ref{VVVV_upper_bound_entrywise} (see also Lemma \ref{entries_of_A_and_B} and Remark \ref{why_everything_except_B_11_is_covered}). This implies $||B_2||_{\Fr}^2\leq m^2||j||_{\op}^2(||j||_{\Fr}^2 - 2||j||_{\op}^2)$. Now, given a symmetric matrix $S\in M_m(\mathbb{R})$, let $\lambda^\downarrow(S)=(\lambda_1^\downarrow(S), ..., \lambda_m^\downarrow(S))$ be the ordered tuple of eigenvalues of $S$, where $\lambda_1^\downarrow(S)\geq ... \geq \lambda_m^\downarrow(S)$. We know $\lambda^\downarrow(B_1)=(0,...,0, -\frac{3}{4}||j||_{\op}^2)$. By Weyl's perturbation theorem (\cite[Corollary III.2.6]{Bhatia}), we have $\max_t|\lambda_t^\downarrow(B)-\lambda_t^\downarrow(B_1)|\leq ||B-B_1||_{\op}$. Note, $\lambda_1^\downarrow(B_1)=0$. Thus, we have $|\lambda_1^\downarrow(B)|\leq ||B-B_1||_{\op}=||B_2||_{\op}\leq ||B_2||_{Fr}\leq m||j||_{\op}\sqrt{||j||_{\Fr}^2 - 2||j||_{\op}^2}$. 
\end{proof}

By Lemma \ref{VZVZ_upper_bound_entrywise}, the maximum eigenvalue of $A=R_T|_{(\mathcal{V}_1\otimes \mathcal{Z}_1)^{\perp}\cap(\mathcal{V}\otimes \mathcal{Z})}$ is less than or equal to $\frac{1}{2}(pq-2)||j||_{\op}\sqrt{||j||_{\Fr}^2 - 2||j||_{\op}^2}$. By Lemma \ref{upper_bound_of_maximum_eigenvalue_of_B}, the maximum eigenvalue of $B=R_T|_{\Lambda^2\mathcal{V}\oplus \Lambda^2\mathcal{Z}}$ is less than or equal to $m||j||_{\op}\sqrt{||j||_{\Fr}^2 - 2||j||_{\op}^2}$. Let $C=\max\{m, \frac{1}{2}(pq-2)\}$ so that $\max\{||A||_{\op}, ||B||_{\op}\}\leq C ||j||_{\op}\sqrt{||j||_{\Fr}^2 - 2||j||_{\op}^2}$. Then $\frac{1}{C(p,q)}\leq \frac{1}{16C^2}$. This finishes Step 3.

\subsubsection{Carrying out Step 4.}

Suppose that the maximum eigenvalues of $A$ and $B$ are less than or equal to $C||j||_{\op}\sqrt{||j||_{\Fr}^2-2||j||_{\op}^2}$, where $C>0$ is the constant from Step 3 satisfying $\frac{1}{C(p,q)}\leq \frac{1}{16C^2}$. We want the following conditions to be true:
\begin{align*}
    C||j||_{\op}\sqrt{||j||_{\Fr}^2-2||j||_{\op}^2}\leq \frac{1}{4}||j||_{\op}^2.
\end{align*}
This is equivalent to 
\begin{align*}
    \sqrt{\frac{||j||_{\Fr}^2-2||j||_{\op}^2}{||j||_{\op}^2}} \leq \frac{1}{4C}, \text{ or } \frac{||j||_{\Fr}^2-2||j||_{\op}^2}{||j||_{\op}^2}\leq \frac{1}{16C^2}.
\end{align*}
Since $\frac{1}{C(p,q)}\leq \frac{1}{16C^2}$, the condition $\frac{||j||_{\Fr}^2-2||j||_{\op}^2}{||j||_{\op}^2}\leq \frac{1}{C(p,q)}$ implies $C||j||_{\op}\sqrt{||j||_{\Fr}^2-2||j||_{\op}^2}\leq \frac{1}{4}||j||_{\op}^2$, and the maximum eigenvalues of $A$ and $B$ are less than or equal to $\frac{1}{4}||j||_{\op}^2$. By Step 2, it follows that the maximum eigenvalue of $R_T$ is $\frac{1}{4}||j||_{\op}^2$ if $\frac{||j||_{\Fr}^2-2||j||_{\op}^2}{||j||_{\op}^2}\leq \frac{1}{C(p,q)}$. This finishes Step 4, and Theorem \ref{maintheorem_invariant} and Theorem \ref{maintheorem_open_set} follow.

\section{Various Examples} \label{section_examples}
\subsection{Product Metrics}  \label{section_product_metrics}

In this section, we explain Example \ref{product_of_heis_3_over_r_and_c}.
\begin{lemma}\label{max_and_min_of_product} 
    Let $M\times N$ be a Riemannian product of Riemannian manifolds $M$ and $N$. Then,  $K_{\max}(M\times N)=\max \{K_{\max}(M), K_{\max}(N),0\} $ and 
    $K_{\min}(M\times N)=\min \{K_{\min}(M), K_{\min}(N), 0 \}$.
\end{lemma}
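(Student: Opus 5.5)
The plan is to work pointwise with the curvature tensor of the product and reduce the claim to a two-variable inequality. Fix $(p,q)\in M\times N$ and identify $T_{(p,q)}(M\times N)=T_pM\oplus T_qN$. Since the metric is a product, the curvature tensor splits as $R=R^M\oplus R^N$, and every mixed component vanishes. Concretely, $R(u+v,u'+v')w=R^M(u,u')w$ for $u,u'\in T_pM$, $v,v'\in T_qN$, and $w\in T_pM$; there is an analogous formula with the roles of $M$ and $N$ swapped. For any orthonormal pair $\xi_1=u_1+v_1$, $\xi_2=u_2+v_2$, this gives
\begin{align*}
K(\xi_1,\xi_2)=\langle R^M(u_1,u_2)u_2,u_1\rangle+\langle R^N(v_1,v_2)v_2,v_1\rangle
=K^M(u_1,u_2)|u_1\wedge u_2|^2+K^N(v_1,v_2)|v_1\wedge v_2|^2 .
\end{align*}
If $u_1\wedge u_2=0$, the first term is read as $0$, and likewise for the second term.

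The central step is the bound $|u_1\wedge u_2|^2+|v_1\wedge v_2|^2\le |\xi_1\wedge\xi_2|^2=1$. To prove it, expand $\xi_1\wedge\xi_2=u_1\wedge u_2+(u_1\wedge v_2+v_1\wedge u_2)+v_1\wedge v_2$. The three pieces lie in the mutually orthogonal subspaces $\Lambda^2T_pM$, $T_pM\otimes T_qN$ and $\Lambda^2T_qN$ of $\Lambda^2T_{(p,q)}$. This is exactly the same kind of orthogonal decomposition used earlier for $\Lambda^2\mathcal{N}$. Write $a=|u_1\wedge u_2|^2$ and $b=|v_1\wedge v_2|^2$, so $a,b\ge0$ and $a+b\le1$. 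Then
\[
K(\xi_1,\xi_2)\le aK_{\max}(M)+bK_{\max}(N)\le\max\{K_{\max}(M),K_{\max}(N),0\},
\]
because a combination with nonnegative coefficients summing to at most $1$ is bounded by the largest of the two values and $0$. The same reasoning gives the lower bound with $K_{\min}$ in place of $K_{\max}$.

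It remains to show that each of the three candidate values is attained. A plane tangent to $M$ has curvature equal to the corresponding curvature of $M$, so $K_{\max}(M)$ and $K_{\min}(M)$ are attained, and the same holds for $N$. The value $0$ is attained by a mixed plane spanned by $u\in T_pM$ and $v\in T_qN$, since there $a=b=0$. For the manifolds in this section the extrema are maxima and minima over compact Grassmannians by left-invariance, so the values are genuinely attained; in general the statement should be read with sup and inf in place of max and min.

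The main obstacle is the splitting of the curvature of the product. This is standard, and I would justify it by noting that the Levi-Civita connection of a product is the direct sum of the two connections.
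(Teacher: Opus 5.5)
The paper states this lemma without proof and uses it as a standard fact in Section~\ref{section_product_metrics} and in Proposition~\ref{max_and_min_of_heis_2n+1}, so there is no argument in the paper to compare yours against. Your proof is correct and supplies the missing justification. Each step checks:
\begin{itemize}
\item the splitting $R=R^M\oplus R^N$ of the product curvature;
\item the mutual orthogonality of $\Lambda^2T_pM$, $T_pM\otimes T_qN$ and $\Lambda^2T_qN$, which gives $a+b\le 1$;
\item the bound of $aK^M+bK^N$ by a convex combination with weights $a$, $b$, $1-a-b$ of the two extremal values and $0$;
\item attainment of each value, by planes tangent to each factor and by a mixed plane.
\end{itemize}

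Writing the argument out also exposes hypotheses the paper leaves implicit. Both factors must have positive dimension, since otherwise no mixed plane exists and the value $0$ need not occur; for example, $M$ times a point is just $M$. A one-dimensional factor has no $K_{\max}$ or $K_{\min}$ term at all, and your convention that $b=0$ when $v_1\wedge v_2=0$ handles this correctly. That case actually arises in the paper's use of the lemma for $\mathrm{heis}_{2n+1}(\mathbb{R})\oplus\mathbb{R}$ in Proposition~\ref{max_and_min_of_heis_2n+1}. Finally, as you note, in general the statement should be read with $\sup$ and $\inf$ in place of $\max$ and $\min$. This makes no difference for the paper's examples, which are homogeneous, so the extrema are attained on the compact Grassmannian at the identity.
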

\begin{proof}[Proof of the statements in Example \ref{product_of_heis_3_over_r_and_c}]
    Take $\langle \cdot, \cdot \rangle_{\mathrm{Heis}_3(\mathbb{R})}$ from Example \ref{example_heis_3} with $\lambda=||j||_{\op}=1$ and $\langle \cdot, \cdot \rangle_{\mathrm{Heis}_3(\mathbb{C})}$ from Example \ref{example_heis_3_C_with_Ricci_soliton_metric}, so $K_{\min}=-\frac{3}{4}$ for both metrics (Theorem \ref{minimum_of_sectional_curvature_of_2-step_nilpotent_Lie_groups}). 

    Use the notations of Example \ref{example_heis_3}. For $\frac{1}{a^2}\langle \cdot, \cdot \rangle_{\mathrm{Heis}_3(\mathbb{R})}$, $ae_1, ae_2, ae_3$ form an orthonormal frame, and $j(ae_3)=\begin{bmatrix} 0 & -a \\ a & 0 \end{bmatrix}$, where the matrix representation is with respect to $ae_1, ae_2$. Thus, $||j||_{\op}=a$ for this metric, and $K_{\max}(\mathrm{Heis}_3(\mathbb{R}), \frac{1}{a^2}\langle \cdot, \cdot \rangle_{\mathrm{Heis}_3(\mathbb{R})})=\frac{1}{4}a^2$ and $K_{\min}(\mathrm{Heis}_3(\mathbb{R}), \frac{1}{a^2}\langle \cdot, \cdot \rangle_{\mathrm{Heis}_3(\mathbb{R})})=-\frac{3}{4}a^2$ by Example \ref{example_heis_3}. On the other hand, by Example \ref{example_heis_3_C_with_Ricci_soliton_metric}, $K_{\max}(\mathrm{Heis}_3(\mathbb{C}), \langle \cdot, \cdot \rangle_{\mathrm{Heis}_3(\mathbb{C})})=\frac{1}{2}$ and $K_{\min}(\mathrm{Heis}_3(\mathbb{C}), \langle \cdot, \cdot \rangle_{\mathrm{Heis}_3(\mathbb{C})})=-\frac{3}{4}$. Now the result follows from Lemma \ref{max_and_min_of_product}. 
\end{proof}

The following example shows that a 2-plane that achieves $K_{\max}$ may not be unique up to isometry.

\begin{example} \label{example_non-uniqueness_of_maximal_plane}
    Let $N$ be a simply connected 2-step nilpotent Lie group whose Lie algebra is given by $\mathcal{N}=\mathbb{R}\textrm{-span}\{X_1, X_2, X_3, X_4,X_5,X_6,Z_1,Z_2\}$, where non-trivial Lie bracket relations are given by $ [X_1,X_3]=Z_1, [X_1,X_4]=Z_2, [X_2,X_3]=-Z_2, [X_2,X_4]=Z_1$, and $[X_5,X_6]=\lambda^2 Z_1$. Let $\langle \cdot, \cdot \rangle$ be the left-invariant metric on $N$ such that $X_1,...,X_6,Z_1,Z_2$ form an orthonormal basis at $T_eN\cong \mathcal{N}$. Let $N'=\mathrm{Heis}_3(\mathbb{C})$ with a Ricci soliton left-invariant metric, normalized as $K_{\min}=-\frac{3}{4}$. Let $\mathcal{N}'=\mathrm{Lie}N'$. Let $N\times N'$ be equipped with a product Riemannian left-invariant metric.
\end{example}
\begin{proposition} Let $\lambda>0$ be sufficiently small. Then, $N\times N'$ has $\frac{K_{\min}}{K_{\max}}=-\frac{3}{2}$, and there exist 2-planes $\pi\subseteq \mathcal{N}_1, \pi'\subseteq \mathcal{N}'$ such that the following holds.
\begin{enumerate}
    \item $K(\pi_1)=K(\pi_2)=K_{\max}$.
    \item There does not exist an isometry $\Phi$ of $N\times N'$ with $\Phi(e)=e$ that takes $\pi_1$ to $\pi_2$.
\end{enumerate}
\end{proposition}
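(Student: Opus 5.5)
The plan is to recognize $\mathcal{N}$ as one of the metric Lie algebras $\mathcal{N}^{(\lambda)}$ from Proposition \ref{proposition_key_for_characterization}, use that proposition together with Lemma \ref{max_and_min_of_product} to get the pinching constant, and then separate the two maximal planes with a Ricci-curvature invariant that is preserved by isometries fixing $e$.

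\textbf{Identifying $\mathcal{N}$ with $\mathcal{N}^{(\lambda)}$.} Put $\mathcal{V}_1=\mathbb{R}\textrm{-span}\{X_1,\dots,X_4\}$, $\mathcal{Z}_1=\mathcal{Z}=\mathbb{R}\textrm{-span}\{Z_1,Z_2\}$, $\mathcal{V}_2=\mathbb{R}\textrm{-span}\{X_5,X_6\}$, and $\mathcal{N}_1=\mathcal{V}_1\oplus\mathcal{Z}_1$. With respect to the splitting $\mathcal{V}_1\oplus\mathcal{V}_2$ we have
\[
j(Z_1)=A_1\oplus \lambda^2 J, \qquad j(Z_2)=A_2\oplus 0,
\]
where $A_1,A_2$ are the matrices of Example \ref{example_heis_3_C_with_Ricci_soliton_metric} and $J=\begin{bmatrix}0&-1\\1&0\end{bmatrix}$. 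Let $\mathcal{N}^{(1)}$ be the metric Lie algebra with $\lambda=1$, that is, with $[X_5,X_6]=Z_1$. Rescaling $\mathcal{V}_2$ by $\varphi=\lambda\,\mathrm{Id}$ on $\mathcal{V}_2$ turns $\mathcal{N}^{(1)}$ into exactly our $\mathcal{N}$, by Lemma \ref{matrix_representation_of_j_lambda}.

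\textbf{Pinching constant of $N\times N'$.} Lemma \ref{criterion_of_totally_geodesic_when_it_is_aligned} shows that $\mathcal{N}_1$ is an aligned totally geodesic copy of $\mathrm{heis}_3(\mathbb{C})$ with its soliton metric, and $\xi=1$. Proposition \ref{proposition_key_for_characterization} then gives, for $\lambda$ small,
\[
\|j\|_{\op}=1,\qquad K_{\max}(N)=\tfrac12,\qquad K_{\min}(N)=-\tfrac34.
\]
Here $\|j\|_{\op}=1$ can also be seen directly: $\|j(uZ_1+vZ_2)\|_{\op}=\max(1,|u|\lambda^2)=1$ whenever $\lambda\le 1$. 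The factor $N'$ has $K_{\max}=\tfrac12$ and $K_{\min}=-\tfrac34$ by Example \ref{example_heis_3_C_with_Ricci_soliton_metric}. By Lemma \ref{max_and_min_of_product}, the product has $K_{\max}=\tfrac12$ and $K_{\min}=-\tfrac34$, so its pinching constant is $-\tfrac32$.

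\textbf{The two maximal planes.} Take
\[
\pi_1=\mathrm{span}\Big\{\tfrac{1}{\sqrt2}(X_1+Z_1),\ \tfrac{1}{\sqrt2}(X_2+Z_2)\Big\}\subseteq\mathcal{N}_1,
\]
and let $\pi_2\subseteq\mathcal{N}'$ be the corresponding plane built from the orthonormal basis of Example \ref{example_heis_3_C_with_Ricci_soliton_metric}. Both have curvature $\tfrac12$ by that example, since $\mathcal{N}_1$ is totally geodesic and $\mathcal{N}'$ is a Riemannian factor. So both planes realize $K_{\max}$.

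\textbf{Ruling out an isometry.} If $\Phi$ is an isometry with $\Phi(e)=e$, then $d\Phi_e$ is a linear isometry of $T_e(N\times N')$ preserving the Ricci tensor. Hence $\mathrm{tr}(\mathrm{Ric}|_{\pi})$ must agree for $\pi=\pi_1$ and $\pi=\pi_2$, and I will show it does not. I will use the standard 2-step formulas (cf.\ \cite{Eb1}): for unit $X\in\mathcal{V}$ and unit $Z\in\mathcal{Z}$,
\[
\mathrm{Ric}(Z,Z)=\tfrac14\,\mathrm{tr}\big(-j(Z)^2\big),\qquad
\mathrm{Ric}(X,X)=-\tfrac12\sum_i|[X,X_i]|^2,
\]
with $\mathrm{Ric}(\mathcal{V},\mathcal{Z})=0$. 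The Ricci tensor of a product is the direct sum of the Ricci tensors of the factors, so computing in $\mathcal{N}$ and in $\mathcal{N}'$ separately suffices. The brackets $[X_1,\cdot]$ and $[X_2,\cdot]$ in $\mathcal{N}$ are identical to those in $\mathrm{heis}_3(\mathbb{C})$, and so is $\mathrm{Ric}(Z_2,Z_2)$. The only difference is
\[
\mathrm{Ric}(Z_1,Z_1)=\tfrac14\big(4+2\lambda^4\big)\quad\text{versus}\quad 1 \text{ in } \mathcal{N}'.
\]
The plane $\pi_1$ contains $Z_1$-components of weight $\tfrac12$ (and the cross terms vanish), so the two traces differ by $\tfrac14\lambda^4\neq0$. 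Therefore no such $\Phi$ exists.

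The main obstacle is this last step: one must carefully justify that the Ricci traces on $\pi_1$ and $\pi_2$ differ only through the $\lambda^4$ term. This requires checking that $\mathrm{Ric}$ has no $\mathcal{V}$–$\mathcal{Z}$ cross terms, and that the extra bracket $[X_5,X_6]=\lambda^2Z_1$ affects $\mathrm{Ric}(X_1,X_1)$ and $\mathrm{Ric}(X_2,X_2)$ not at all.
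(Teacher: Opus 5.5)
Your proof is correct. The pinching part is the same as the paper's: you realize $\mathcal{N}$ as $\mathcal{N}^{(\lambda)}$ in Proposition~\ref{proposition_key_for_characterization} and then apply Lemma~\ref{max_and_min_of_product}. Your direct observation $||j||_{\op}=\max(1,|u|\lambda^2)=1$, combined with the bound $K_{\max}\le\frac12||j||_{\op}^2$ from the proof of Theorem~\ref{maintheorem_bounds}, already gives $K_{\max}(N)=\frac12$ for every $\lambda\le 1$.

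You genuinely differ from the paper in ruling out $\Phi$. The paper argues algebraically. It treats $\varphi=d\Phi_e$ as a metric Lie algebra automorphism, which implicitly uses that isometries of a nilmanifold fixing $e$ are automorphisms. It then rebuilds the totally geodesic $\mathrm{heis}_3(\mathbb{C})$ from a maximal plane via (\ref{basis_equations_in_rigidity}) to get $\varphi(\mathcal{N}_1)=\mathcal{N}'$. Finally, $[X_5,X_6]=\lambda^2 Z_1$ with $X_5,X_6\perp\mathcal{N}_1$ forces $\lambda^2\varphi(Z_1)\in\mathcal{N}\cap\mathcal{N}'=0$.

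You instead use the Riemannian invariant $\mathrm{tr}(\mathrm{Ric}|_{\pi})$, which needs only naturality of $\mathrm{Ric}$ under isometries. Your numbers are right. In both factors $\mathrm{Ric}(X_1,X_1)=\mathrm{Ric}(X_2,X_2)=-1$ and $\mathrm{Ric}(Z_2,Z_2)=1$. However, $\mathrm{Ric}(Z_1,Z_1)=1+\frac12\lambda^4$ in $\mathcal{N}$ versus $1$ in $\mathcal{N}'$, so the traces are $\frac14\lambda^4$ and $0$. The step you flag as the main obstacle is immediate: $X_1,X_2$ bracket trivially with $X_5,X_6$, and $\mathrm{Ric}(\mathcal{V},\mathcal{Z})=0$.

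Your route is shorter and needs neither the automorphism fact nor the rigidity reconstruction. The paper's route explains conceptually why the two copies of $\mathrm{heis}_3(\mathbb{C})$ are inequivalent: only $\mathcal{N}_1$ has orthogonal directions bracketing into it.

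Your invariant also separates every choice of maximal planes, not just the two you picked. By Lemma~\ref{lemma_consequence_of_sixth_ineq}, any plane $\pi\subseteq\mathcal{N}_1$ or $\pi\subseteq\mathcal{N}'$ with $K(\pi)=\frac12$ has orthonormal basis $X+Z,\,X'+Z'$. Here $X\perp X'$ lie in the horizontal part of that factor, $Z\perp Z'$ lie in its $2$-dimensional center, and all four have length $1/\sqrt2$. Hence its Ricci trace is $-1+\frac12\,\mathrm{tr}(\mathrm{Ric}|_{\mathrm{center\ of\ the\ factor}})$, which equals $\frac14\lambda^4$ in $\mathcal{N}_1$ and $0$ in $\mathcal{N}'$.
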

\begin{proof} Let $j, j'$ denote the $j$-map of $\mathcal{N}, \mathcal{N}'$, respectively. Let $\mathcal{N}_1=\mathbb{R}\textrm{-span}\{X_1,...,X_4,Z_1,Z_2\}$. Then $\mathcal{N}_1$ is a totally geodesic subalgebra isomorphic to $\mathrm{heis}_3(\mathbb{C})$ with a Ricci soliton inner product.  Indeed, if $A_1$ and $A_2$ are the matrices in Example \ref{example_heis_3_C_with_Ricci_soliton_metric} and $J=\begin{bmatrix} 0 & -1 \\ 1 & 0 \end{bmatrix}$, then $j(Z_1)=\begin{bmatrix} A_1 & 0 \\ 0 & \lambda^2 J \end{bmatrix}$ and $j(Z_2)=\begin{bmatrix} A_2 & 0 \\ 0 & 0 \end{bmatrix}$. Let $\pi_1, \pi_2$ be the 2-planes of $\mathcal{N}_1, \mathcal{N}'$, respectively, such that $K(\pi_1)=\frac{1}{2}=K(\pi_2)$. For $\lambda>0$ sufficiently small, $K_{\max}=\frac{1}{2}$ and $||j||_{\op}=1$. This is because the degeneration $\lambda\to 0$ is a special case of the degeneration considered in Proposition \ref{proposition_key_for_characterization} (see also Lemma \ref{matrix_representation_of_j_lambda}). The item 1 follows. By Theorem \ref{minimum_of_sectional_curvature_of_2-step_nilpotent_Lie_groups} and Lemma \ref{max_and_min_of_product}, we see $\frac{K_{\min}}{K_{\max}}=-\frac{3}{2}$. Next, we show any isometry $\Phi$ with $\Phi(e)=e$ taking $\pi_1$ to $\pi_2$ would need to satisfy $\varphi(\mathcal{N}_1)=\mathcal{N}'$, where $\varphi=d\Phi_e$. 
    
    To see this, let $Z,Z'\in \mathfrak{z}(\mathcal{N}_1)$ and $X,X'\in \mathfrak{z}(\mathcal{N}_1)^{\perp}\cap \mathcal{N}_1$ such that $X+Z, X+Z'$ form an orthonormal basis of $\pi$. Note that $K(\pi)=K_{\max}(\mathcal{N}_1)$. Define $X_1, ..., X_4, Z_1, Z_2$ as in (\ref{basis_equations_in_rigidity}) right below Lemma \ref{lemma_consequence_of_sixth_ineq}. Then $X_1, ..., X_4, Z_1, Z_2$ form an orthonormal basis of $\mathcal{N}_1$ (see Proposition \ref{proposition_key_for_rigidity}). The key observation is that $\varphi(X)+\varphi(Z), \varphi(X')+\varphi(Z')$ also form an orthonormal basis of $\pi'$, and $\varphi(X_1), ..., \varphi(X_4), \varphi(Z_1), \varphi(Z_2)$ form an orthonormal basis of $\mathcal{N}'$. For example, $X_3=\frac{j(Z)X}{|j(Z)X|}$, and $\varphi(X_3)=\frac{j'(\varphi(Z))\varphi(X)}{|j'(\varphi(Z))\varphi(X)|}$, since $\varphi$ is a metric Lie algebra isomorphism.
    
    Given $\varphi(\mathcal{N}_1)=\mathcal{N}'$ and $X_5, X_6\in \mathcal{N}_1^{\perp}$, we have $\varphi(X_5), \varphi(X_6)\in (\mathcal{N}')^{\perp}=\mathcal{N}$. Since $[X_5, X_6]=\lambda^2 Z_1$ and $Z_1\in \mathcal{N}_1$, we have $\lambda^2 \varphi(Z_1)\in \mathcal{N}\cap \mathcal{N}'=0$, a contradiction.
\end{proof}
    
\subsection{Heisenberg Groups}
In this section, we compute the pinching constants of any left-invariant metric on the direct product of $\mathrm{Heis}_{2n+1}(\mathbb{R})$ and the abelian Lie group $\mathbb{R}^m$. The $(2n+1)$-dimensional Heisenberg algebra $\mathrm{heis}_{2n+1}(\mathbb{R})$ is defined in \cite[p. 617, Example 1]{Eb1}. The $(2n+1)$-dimensional Heisenberg group $\mathrm{Heis}_{2n+1}(\mathbb{R})$ is a simply connected Lie group with Lie algebra $\mathrm{heis}_{2n+1}(\mathbb{R})$.

\begin{lemma} \label{classification_1-dimensional_commutator}
   Let $\mathcal{N}$ be a metric 2-step nilpotent Lie algebra with a 1-dimensional commutator ideal $[\mathcal{N}, \mathcal{N}]$. Then, there exist orthogonal ideals $\mathcal{N}_1$ and $\mathcal{N}_2$ of $\mathcal{N}$ such that $\mathcal{N}=\mathcal{N}_1\oplus \mathcal{N}_2$ (an orthogonal direct sum of metric Lie algebras), where  $\mathcal{N}_1$ is isomorphic to $\mathrm{heis}_{2n+1}(\mathbb{R})$ for some $n\geq 1$, and $\mathcal{N}_2$ is abelian, i.e., isomorphic to $\mathbb{R}^m$ for some $m\geq 0$.
    
\end{lemma}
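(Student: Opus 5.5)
Since $[\mathcal{N},\mathcal{N}]$ is one-dimensional, I would work with the $j$-map. Pick a unit vector $Z_0$ spanning $[\mathcal{N},\mathcal{N}]$. Because $\mathcal{N}$ is 2-step nilpotent, $[\mathcal{N},\mathcal{N}]\subseteq\mathcal{Z}$. Decompose $\mathcal{Z}=\mathbb{R}Z_0\oplus\mathcal{Z}'$ orthogonally, where $\mathcal{Z}'=Z_0^{\perp}\cap\mathcal{Z}$, and let $\mathcal{V}=\mathcal{Z}^{\perp}$. For $W\in\mathcal{Z}'$ and $X,Y\in\mathcal{V}$ we have $\langle j(W)X,Y\rangle=\langle W,[X,Y]\rangle=0$, so $j(W)=0$. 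The only nontrivial piece of data is therefore the skew map $J:=j(Z_0)\in\mathrm{so}(\mathcal{V})$, and $[X,Y]=\langle JX,Y\rangle Z_0$ for $X,Y\in\mathcal{V}$.

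Next I would show that $J$ is invertible on $\mathcal{V}$. Take $X\in\ker J$. Then $\langle Z_0,[X,Y]\rangle=\langle JX,Y\rangle=0$ for all $Y\in\mathcal{V}$. Together with $[\mathcal{N},\mathcal{N}]=\mathbb{R}Z_0$ and $[X,\mathcal{Z}]=0$, this shows that $X$ is central. Since $X\in\mathcal{V}=\mathcal{Z}^{\perp}$, we get $X=0$. A nondegenerate skew map forces $\dim\mathcal{V}=2n$ to be even, and $n\geq1$ because $J\neq0$ (the derived algebra is nonzero).

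Now set $\mathcal{N}_1=\mathcal{V}\oplus\mathbb{R}Z_0$ and $\mathcal{N}_2=\mathcal{Z}'$. These are orthogonal by construction. $\mathcal{N}_2$ is central, hence an abelian ideal. $\mathcal{N}_1$ contains $[\mathcal{N},\mathcal{N}]$, so it is an ideal. Moreover $[\mathcal{N}_1,\mathcal{N}_2]=0$, so $\mathcal{N}=\mathcal{N}_1\oplus\mathcal{N}_2$ is an orthogonal direct sum of metric Lie algebras with $\mathcal{N}_2\cong\mathbb{R}^m$, where $m=\dim\mathcal{Z}'$.

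Finally I would identify $\mathcal{N}_1$ with $\mathrm{heis}_{2n+1}(\mathbb{R})$. By the normal form of skew-symmetric matrices there is an orthonormal basis $X_1,Y_1,\dots,X_n,Y_n$ of $\mathcal{V}$ with $JX_k=\lambda_kY_k$, $JY_k=-\lambda_kX_k$ and $\lambda_k>0$. Then $[X_k,Y_k]=\lambda_kZ_0$ and all other brackets of basis vectors vanish. The rescaling $X_k\mapsto X_k/\lambda_k$ gives the standard Heisenberg relations $[X_k',Y_k]=Z_0$, which is the defining presentation of $\mathrm{heis}_{2n+1}(\mathbb{R})$ in \cite{Eb1}.

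There is no real obstacle here. The one point needing care is the centrality argument in the second paragraph, which uses $\mathcal{V}=\mathcal{Z}^{\perp}$ to rule out $\ker J\neq0$.
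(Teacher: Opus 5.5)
Your proof is correct and takes essentially the same route as the paper: both put the skew map attached to a unit generator of $[\mathcal{N},\mathcal{N}]$ into the normal form of skew-symmetric matrices. The paper applies $j'(Z_1)$ on $[\mathcal{N},\mathcal{N}]^{\perp}$ and takes its kernel as the abelian factor, while you use $j(Z_0)$ on $\mathcal{V}=\mathcal{Z}^{\perp}$, split off $\mathcal{Z}\cap Z_0^{\perp}$ first and check invertibility directly; the two decompositions coincide, and you supply the centrality and ideal checks that the paper leaves implicit.
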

\begin{proof} In general, given any metric 2-step nilpotent Lie algebra $\mathcal{N}$, one can formulate the $j$-map as a linear map $j':[\mathcal{N}, \mathcal{N}]\to \mathrm{so}([\mathcal{N}, \mathcal{N}]^{\perp})$ defined by $\langle j'(Z)X, Y\rangle=\langle Z, [X,Y]\rangle$ for any $Z\in [\mathcal{N}, \mathcal{N}]$ and $X, Y\in [\mathcal{N}, \mathcal{N}]^{\perp}$ (\cite[p. 174]{Eberlein_prescribed_Ricci}). Now, let $\mathcal{N}$ be as in the statement. Let $Z_1$ be a unit vector such that $[\mathcal{N}, \mathcal{N}]=\mathbb{R}\textrm{-span}\{Z_1\}$. Since $j'(Z_1)$ is skew-symmetric, by the spectral theorem, there exists an orthonormal basis $X_1, Y_1, ..., X_n, Y_n, U_1, ..., U_m$ of $[\mathcal{N}, \mathcal{N}]^{\perp}$ and non-zero constants $a_1,...,a_n\in \mathbb{R}\setminus \{0\}$ such that $j'(Z_1)X_i=a_i Y_i, j'(Z_1)Y_i=-a_i X_i$, and $j'(Z_1)U_j=0$ for any $i\in \{1,...,n\}, j\in \{1,...,m\}$. Let $\mathcal{N}_1=\mathbb{R}\textrm{-span}\{X_1, Y_1, ..., X_n, Y_n, Z_1\}$ and $\mathcal{N}_2=\mathbb{R}\textrm{-span}\{U_1, ..., U_m\}$. Then the result follows.
\end{proof}

\begin{proposition} \label{max_and_min_of_heis_2n+1}
    Let $\mathcal{N}$ be a metric 2-step nilpotent Lie algebra with a 1-dimensional commutator ideal $[\mathcal{N}, \mathcal{N}]$. Then, $\frac{K_{\min}}{K_{\max}}=-3$.
\end{proposition}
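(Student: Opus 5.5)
By Theorem \ref{minimum_of_sectional_curvature_of_2-step_nilpotent_Lie_groups} we already have $K_{\min}=-\frac{3}{4}||j||_{\op}^2$. It therefore suffices to show $K_{\max}=\frac{1}{4}||j||_{\op}^2$. The lower bound $K_{\max}\geq \frac{1}{4}||j||_{\op}^2$ comes from item 3 of Lemma \ref{maximum_eigenspace_and_two_interpretations}, so all the work is in the upper bound $K_{\max}\leq\frac14||j||_{\op}^2$.

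First I reduce to the Heisenberg case. By Lemma \ref{classification_1-dimensional_commutator}, $\mathcal{N}=\mathcal{N}_1\oplus\mathcal{N}_2$ is an orthogonal direct sum of metric Lie algebras, with $\mathcal{N}_1\cong\mathrm{heis}_{2n+1}(\mathbb{R})$ and $\mathcal{N}_2$ abelian. The simply connected group is then a Riemannian product of a Heisenberg group and a flat $\mathbb{R}^m$. Lemma \ref{max_and_min_of_product} gives $K_{\max}(\mathcal{N})=\max\{K_{\max}(\mathcal{N}_1),0\}=K_{\max}(\mathcal{N}_1)$, since $K_{\max}(\mathcal{N}_1)>0$. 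The $j$-map of $\mathcal{N}$ is the $j$-map of $\mathcal{N}_1$ extended by zero, so $||j||_{\op}$ is unchanged. Hence I may assume $\mathcal{N}=\mathcal{N}_1$.

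Now take the basis from the proof of Lemma \ref{classification_1-dimensional_commutator}: $\mathcal{Z}=\mathbb{R}Z_1$, and $\mathcal{V}$ has orthonormal basis $X_1,Y_1,\dots,X_n,Y_n$ with $j(Z_1)X_i=a_iY_i$ and $j(Z_1)Y_i=-a_iX_i$. Then $||j||_{\op}=\max_i|a_i|$. Since $\dim\mathcal{Z}=1$, $\Lambda^2\mathcal{Z}=0$ and $\mathcal{V}\otimes\mathcal{Z}=\mathcal{V}\wedge Z_1$. By Remark \ref{curv_op_on_V_wedge_Z_using_j_map}, $R|_{\mathcal{V}\otimes\mathcal{Z}}=-\frac14 j(Z_1)^2$, which is diagonal with entries $\frac14a_i^2$. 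Its largest eigenvalue is therefore $\frac14||j||_{\op}^2$.

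On $\Lambda^2\mathcal{V}$ I use Lemma \ref{formulas_for_curv_op}(3). Every bracket equals $\langle j(Z_1)\cdot,\cdot\rangle Z_1$, so with $J=j(Z_1)$ the formula becomes
\[
\langle R(x\wedge y),x'\wedge y'\rangle=-\tfrac12\omega(x,y)\omega(x',y')+\tfrac14\omega(y,x')\omega(x,y')-\tfrac14\omega(x,x')\omega(y,y'),
\]
where $\omega(x,y)=\langle Jx,y\rangle$. The main obstacle is bounding the top eigenvalue of $R$ on $\Lambda^2\mathcal{V}$. It need not be nonpositive, since the last two terms contribute positive pieces. I would not try to show this. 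Instead I would add a modified curvature operator term $T\in\Lambda^4\mathcal{V}^*$ that cancels the mixed terms, and I expect $T=\frac14\,\omega\wedge\omega$ (suitably normalized) to work. The idea is that the quadratic form $\omega(y,x')\omega(x,y')-\omega(x,x')\omega(y,y')$ is, up to a multiple of $\omega\otimes\omega$, the alternation of $\omega\otimes\omega$ into $\Lambda^4$. Hence $R+\iota(T)$ on $\Lambda^2\mathcal{V}$ should equal $-\frac34\,\omega\otimes\omega$ (viewing $\omega$ as an element of $\Lambda^2\mathcal{V}^*$). That operator is negative semidefinite, with eigenvalues $0$ and $-\frac34|\omega|^2$.

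I need to confirm that this $T$ does not affect the $\mathcal{V}\otimes\mathcal{Z}$ block. This holds because $T$ vanishes whenever any argument lies in $\mathcal{Z}$, so $\iota(T)$ is zero on $\mathcal{V}\otimes\mathcal{Z}$ and does not mix it with $\Lambda^2\mathcal{V}$. If the exact cancellation fails, the fallback is to check it directly on the basis $X_i\wedge Y_i$, $X_i\wedge X_k$, etc., where $R|_{\Lambda^2\mathcal{V}}$ splits into small blocks whose eigenvalues can be computed by hand. With the cancellation in place, $\lambda_{\max}(R_T)=\frac14||j||_{\op}^2$, and Proposition \ref{bounds_by_curv_op_general_case} gives $K_{\max}\leq\frac14||j||_{\op}^2$. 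Combined with the lower bound, this yields $\frac{K_{\min}}{K_{\max}}=-3$.
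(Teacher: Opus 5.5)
Your proof is correct, but you obtain the upper bound $K_{\max}\le\frac14\|j\|_{\op}^2$ differently from the paper.

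After the same reduction to $\mathrm{heis}_{2n+1}(\mathbb{R})$, the paper uses no curvature operator. It writes an arbitrary 2-plane in the normal form of Lemma~\ref{rotation_in_2-plane}. It then notes that $\langle Z,Z'\rangle=0$ inside a one-dimensional center forces one of $Z,Z'$ to vanish. The formula of Proposition~\ref{general_formula} then collapses to $K=\frac14|j(Z)X'|^2-\frac34|[X,X']|^2\le\frac14\|j\|_{\op}^2$.

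You instead produce a P\"uttmann-style certificate $\lambda_{\max}(R_T)=\frac14\|j\|_{\op}^2$. Your guessed cancellation is right once the sign and constant are fixed. Take $T(x,y,x',y')=-\frac14\bigl[\omega(x,y)\omega(x',y')-\omega(x,x')\omega(y,y')+\omega(x,y')\omega(y,x')\bigr]$, extended by zero on $\mathcal{Z}$. In the convention $(\omega\wedge\omega)(v_1,\dots,v_4)=2(\omega_{12}\omega_{34}-\omega_{13}\omega_{24}+\omega_{14}\omega_{23})$ with $\omega_{ab}=\omega(v_a,v_b)$, this is $-\frac18\,\omega\wedge\omega$, not $+\frac14\,\omega\wedge\omega$. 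With this $T$ you get exactly $R_T|_{\Lambda^2\mathcal{V}}=-\frac34\,\omega\otimes\omega$, and $\iota(T)$ vanishes on $\mathcal{V}\otimes\mathcal{Z}$.

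The paper's argument is shorter and needs nothing beyond the sectional curvature formula. Yours is more structural and shows where the hypothesis enters. The analogous $T=-\frac18\sum_\alpha\omega_\alpha\wedge\omega_\alpha$ makes the $\Lambda^2\mathcal{V}$ block negative semidefinite in any 2-step algebra. So what the one-dimensional commutator really provides is twofold:
\begin{itemize}
\item there is no $\Lambda^2\mathcal{V}$--$\Lambda^2\mathcal{Z}$ coupling;
\item the $\mathcal{V}\otimes\mathcal{Z}$ block is just $-\frac14 j(Z_1)^2$, with no cross terms $-\frac14 j(Z_\beta)j(Z_\alpha)$. Those cross terms are what push this block of $R$ up to $\frac12\|j\|_{\op}^2$ in $\mathrm{heis}_3(\mathbb{C})$.
\end{itemize}

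Incidentally, the modification is not even needed here. With $T=0$, $R|_{\Lambda^2\mathcal{V}}$ is negative definite on $\mathrm{span}\{X_i\wedge Y_i\}$. It otherwise splits into $2\times 2$ blocks on $\{X_i\wedge X_k,\,Y_i\wedge Y_k\}$ and $\{X_i\wedge Y_k,\,Y_i\wedge X_k\}$ ($i\neq k$), with eigenvalues $\pm\frac14 a_ia_k$. So your fallback also closes the argument.
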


\begin{proof} Let $\mathcal{N}=\mathcal{N}_1\oplus \mathcal{N}_2$ as in Lemma \ref{classification_1-dimensional_commutator}. Since $K\equiv 0$ on $\mathcal{N}_2$ and $K_{\max}>0$ and $K_{\min}<0$ on $\mathcal{N}_1$, we may assume $\mathcal{N}=\mathcal{N}_1$ by Lemma \ref{max_and_min_of_product}.
    Let $\mathcal{Z}$ be the center of $\mathrm{heis}_{2n+1}(\mathbb{R})$ and $Z_1$ be a unit vector such that $\mathcal{Z}=\mathbb{R}\textrm{-span}\{Z_1\}$. Let $\mathcal{V}$ be the orthogonal complement of $\mathcal{Z}$. First, we have $K_{\min}=-\frac{3}{4}||j||_{\op}^2$ by Theorem \ref{minimum_of_sectional_curvature_of_2-step_nilpotent_Lie_groups}. We will show $K_{\max}=\frac{1}{4}||j||_{\op}^2$. 

    Let $\pi$ be any 2-plane. There exists an orthonormal basis $X+Z, X'+Z'$ of $\pi$ such that $X, X'\in \mathcal{V}$ and $Z, Z'\in \mathcal{Z}$ with $\langle X, X'\rangle =0=\langle Z, Z'\rangle$ by Lemma \ref{rotation_in_2-plane}. Since $\mathcal{Z}$ is 1-dimensional, if $Z\neq 0, Z'\neq 0$, it is impossible to have $\langle Z, Z'\rangle=0$. Thus, we may assume $Z'=0$ by symmetry. By Proposition \ref{general_formula}, $K(X+Z,X'+Z')=\frac{1}{4}|j(Z')X|^2 + \frac{1}{4}|j(Z)X'|^2 -\frac{3}{4}|[X,X']|^2+\frac{1}{2}\langle j(Z)X', j(Z')X \rangle - \langle j(Z)X, j(Z')X' \rangle=\frac{1}{4}|j(Z)X'|^2-\frac{3}{4}|[X,X']|^2\leq \frac{1}{4}|j(Z)X'|^2$. By Lemma \ref{upper_bound_of_Lie_bracket}, we have $K(X+Z,X'+Z')\leq \frac{1}{4}|j(Z)X'|^2\leq \frac{1}{4}||j||_{\op}^2|Z|^2|X'|^2\leq \frac{1}{4}||j||_{\op}^2$. 

    On the other hand, $K_{\max}\geq \frac{1}{4}||j||_{\op}^2$ by the item 3 of Lemma \ref{maximum_eigenspace_and_two_interpretations}. Thus, $K_{\max}= \frac{1}{4}||j||_{\op}^2$, and $\frac{K_{\min}}{K_{\max}}=-3$. This completes the proof.
\end{proof}

\begin{corollary} \label{pinching_constant_of_heis_2n+1_plus_abelian}
    Let $N=\mathrm{Heis}_{2n+1}(\mathbb{R})\times \mathbb{R}^m$ with $n\geq 1$ and $m\geq 0$, i.e., $N$ is a Lie group direct product of $\mathrm{Heis}_{2n+1}(\mathbb{R})$ and an abelian Lie group $\mathbb{R}^m$. (When $m=0$, $N=\mathrm{Heis}_{2n+1}(\mathbb{R})$.) Then, $\frac{K_{\min}}{K_{\max}}=-3$.
\end{corollary}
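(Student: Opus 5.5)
The plan is to reduce Corollary \ref{pinching_constant_of_heis_2n+1_plus_abelian} directly to Proposition \ref{max_and_min_of_heis_2n+1}. Let $\langle \cdot, \cdot \rangle$ be any left-invariant metric on $N=\mathrm{Heis}_{2n+1}(\mathbb{R})\times \mathbb{R}^m$, and let $\mathcal{N}=\mathrm{heis}_{2n+1}(\mathbb{R})\oplus \mathbb{R}^m$ be its Lie algebra with the induced inner product. The corollary asserts the conclusion for every such metric, so the only thing to check is the hypothesis of Proposition \ref{max_and_min_of_heis_2n+1}, namely that the commutator ideal $[\mathcal{N},\mathcal{N}]$ is $1$-dimensional. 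This is purely algebraic and does not depend on the metric.

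To verify it, I would take the standard basis of $\mathrm{heis}_{2n+1}(\mathbb{R})$ with $[x_i,y_i]=z$ and all other brackets zero, as in \cite[p. 617, Example 1]{Eb1}. Then $[\mathrm{heis}_{2n+1}(\mathbb{R}),\mathrm{heis}_{2n+1}(\mathbb{R})]=\mathbb{R}z$. The abelian factor $\mathbb{R}^m$ brackets trivially with everything, because the product is a direct product of Lie algebras. Therefore $[\mathcal{N},\mathcal{N}]=\mathbb{R}z$, which is nonzero since $n\geq 1$ and is $1$-dimensional. Also, $\mathcal{N}$ is $2$-step nilpotent, because $z$ is central.

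Proposition \ref{max_and_min_of_heis_2n+1} then gives $\frac{K_{\min}}{K_{\max}}=-3$ for the metric Lie algebra $(\mathcal{N},\langle\cdot,\cdot\rangle)$. The curvature of a left-invariant metric is determined at $T_eN\cong\mathcal{N}$, so this is exactly the pinching constant of $(N,\langle\cdot,\cdot\rangle)$.

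No step here is a genuine obstacle; the only subtlety is that the metric need not be a product metric. This is handled because Proposition \ref{max_and_min_of_heis_2n+1}, through Lemma \ref{classification_1-dimensional_commutator}, already produces an orthogonal splitting adapted to the given inner product.
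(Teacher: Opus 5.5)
Your proposal is correct and follows the paper's intended route. The paper states the corollary as an immediate consequence of Proposition \ref{max_and_min_of_heis_2n+1}, whose only hypothesis, $\dim[\mathcal{N},\mathcal{N}]=1$, holds for $\mathrm{heis}_{2n+1}(\mathbb{R})\oplus\mathbb{R}^m$ whatever the inner product; you are also right that non-product metrics are handled by the metric-adapted orthogonal splitting of Lemma \ref{classification_1-dimensional_commutator}.
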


\subsection{2-step Nilmanifolds of Dimensions 5 or Less}
In this section, we compute the pinching constants of 2-step nilmanifolds of $\dim \leq 5$.
\begin{definition}
    Let $\mathcal{V}=\mathbb{R}\textrm{-span}\{e_1, e_2, e_3\}$ and $\mathcal{Z}=\mathbb{R}\textrm{-span}\{w_1, w_2\}$ be vector spaces and $\mathcal{N}_5=\mathcal{V}\oplus \mathcal{Z}$ be the direct sum of the vector spaces. Define the Lie bracket on $\mathcal{N}_5$ such that $[e_1, e_2]=w_1$, $[e_1, e_3]=w_2$ with the other Lie bracket relations 0.

    Then, $\mathcal{N}_5$ is a 2-step nilpotent Lie algebra with a 2-dimensional center $\mathcal{Z}$, and $[\mathcal{N}_5, \mathcal{N}_5]=\mathcal{Z}$.
\end{definition}

\begin{lemma} \label{classification_up_to_dim_5}
    Let $\mathcal{N}$ be a 2-step nilpotent Lie algebra of dimension 5 or less. Then, $\mathcal{N}$ is isomorphic to $\mathrm{heis}_3(\mathbb{R})$, $\mathrm{heis}_3(\mathbb{R})\oplus \mathbb{R}$, $\mathrm{heis}_3(\mathbb{R})\oplus \mathbb{R}^2$, $\mathrm{heis}_5(\mathbb{R})$, or $\mathcal{N}_5$.
\end{lemma}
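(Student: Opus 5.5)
The plan is to classify by the dimension of the commutator ideal $\mathcal{D}=[\mathcal{N},\mathcal{N}]$, which is contained in the center $\mathcal{Z}$ because $\mathcal{N}$ is 2-step nilpotent. Put $n=\dim\mathcal{N}\le 5$ and $d=\dim\mathcal{D}\ge 1$. Choose a complement $\mathcal{W}$ of $\mathcal{D}$, so that $\mathcal{N}=\mathcal{W}\oplus\mathcal{D}$. The bracket is then determined by a surjective alternating map $\Lambda^2\mathcal{W}\to\mathcal{D}$. Set $k=\dim\mathcal{W}=n-d$. Surjectivity forces $d\le\binom{k}{2}$. We also need $k\ge 2$, so the options are: $d=1$ with $k\ge 2$; $d=2$ with $k=3$ (since $k\ge 3$ and $k+d\le 5$); and $d\ge 3$, which would require $k\ge 3$ and hence $n\ge 6$, so it cannot occur.

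\textbf{Case $d=1$.} I would not repeat the argument; Lemma \ref{classification_1-dimensional_commutator} already does it. Choose any inner product. That lemma splits $\mathcal{N}$ as $\mathrm{heis}_{2r+1}(\mathbb{R})\oplus\mathbb{R}^m$ with $2r+1+m=n\le 5$. The options are $r=1$ with $m=0,1,2$, and $r=2$ with $m=0$. These are $\mathrm{heis}_3(\mathbb{R})$, $\mathrm{heis}_3(\mathbb{R})\oplus\mathbb{R}$, $\mathrm{heis}_3(\mathbb{R})\oplus\mathbb{R}^2$ and $\mathrm{heis}_5(\mathbb{R})$.

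\textbf{Case $d=2$, $k=3$.} Here the bracket is a surjective linear map $\mu:\Lambda^2\mathcal{W}\cong\mathbb{R}^3\to\mathcal{D}\cong\mathbb{R}^2$, so its kernel $L$ is a line in $\Lambda^2\mathcal{W}$. Since $\dim\mathcal{W}=3$, every element of $\Lambda^2\mathcal{W}$ is decomposable. Hence $L$ is spanned by some $e_2\wedge e_3$ with $e_2,e_3$ independent. Extend to a basis $e_1,e_2,e_3$ of $\mathcal{W}$. Then $[e_2,e_3]=0$, and $e_1\wedge e_2$, $e_1\wedge e_3$ project to a basis of $\Lambda^2\mathcal{W}/L$. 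Consequently $w_1:=[e_1,e_2]$ and $w_2:=[e_1,e_3]$ form a basis of $\mathcal{D}$. These are exactly the relations defining $\mathcal{N}_5$.

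Finally, I would check that the center of $\mathcal{N}_5$ is exactly $\mathcal{D}$, so that no abelian factor is left over. This is immediate from the bracket table: the brackets with $e_2$ and with $e_3$ kill the $e_1$-coefficient of a central element, and the bracket with $e_1$ kills the $e_2$- and $e_3$-coefficients.

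The only step that is not pure bookkeeping is the observation that every bivector on $\mathbb{R}^3$ is decomposable. This is what pins the $d=2$ case down to a single isomorphism class.
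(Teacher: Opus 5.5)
Your proof is correct, but it takes a different route from the paper. The paper gives no argument of its own: it cites the known classification of low-dimensional nilpotent Lie algebras (Console--Fino, and Homolya--Kowalski) and reads off the 2-step entries.

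You instead prove the lemma from scratch, in four steps.
\begin{enumerate}
\item You stratify by $d=\dim[\mathcal{N},\mathcal{N}]$ and encode the bracket as a surjection $\Lambda^2\mathcal{W}\to[\mathcal{N},\mathcal{N}]$.
\item The bound $d\le\binom{k}{2}$ rules out $d\ge 3$ and forces $k=3$ when $d=2$.
\item For $d=1$ you reuse the paper's own Lemma \ref{classification_1-dimensional_commutator}. That lemma is proved independently of this one, so there is no circularity.
\item For $d=2$ you use the fact that every bivector in $\Lambda^2\mathbb{R}^3$ is decomposable, which pins the kernel line down to $\mathbb{R}\, e_2\wedge e_3$ and yields exactly the relations of $\mathcal{N}_5$.
\end{enumerate}

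The citation buys brevity and places the result inside a complete classification. Your route is self-contained and shows exactly why only these five algebras occur, without having to match notation against a table that also lists higher-step algebras.

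One small remark: your final computation of the center of $\mathcal{N}_5$ is not needed. Once $e_1,e_2,e_3,w_1,w_2$ is a basis of the $5$-dimensional $\mathcal{N}$ satisfying exactly the defining relations of $\mathcal{N}_5$, the isomorphism is already established.
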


\begin{proof}
    One can consult to \cite[p.22]{Console_Sergio_Fino_Evangelia} (see also \cite[Theorem 10]{Homolya_Kowalski}). 
\end{proof}

\begin{proposition} \label{proposition_dim_5}
    Let $N_5$ be a simply connected 2-step nilpotent Lie algebra of dimension 5 whose Lie algebra is $\mathcal{N}_5$. Then, for any left-invariant metric on $N_5$, we have $K_{\max}=\frac{1}{4}||j||_{\op}^2, K_{\min}=-\frac{3}{4}||j||_{\op}^2$, and $\frac{K_{\min}}{K_{\max}}=-3$. 
\end{proposition}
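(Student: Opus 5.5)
Since $K_{\min}=-\frac{3}{4}||j||_{\op}^2$ holds by Theorem \ref{minimum_of_sectional_curvature_of_2-step_nilpotent_Lie_groups} and $K_{\max}\geq \frac{1}{4}||j||_{\op}^2$ holds by Lemma \ref{maximum_eigenspace_and_two_interpretations}, it suffices to prove $K_{\max}\leq \frac{1}{4}||j||_{\op}^2$. Fix any inner product on $\mathcal{N}_5$. Here $\mathcal{Z}=[\mathcal{N}_5,\mathcal{N}_5]$ is $2$-dimensional and $\mathcal{V}=\mathcal{Z}^{\perp}$ is $3$-dimensional. The key structural fact is that every $j(Z)$ is a skew-symmetric map of a $3$-dimensional space, so it has rank $\le 2$. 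Moreover, all brackets in $\mathcal{N}_5$ involve a fixed direction. Indeed, $\mathrm{ad}$ of the line spanned by (the $\mathcal{V}$-component of) $e_1$ is onto $\mathcal{Z}$, while the $\mathcal{V}$-projections $y_2,y_3$ of $e_2,e_3$ commute. Hence the plane $\mathcal{W}:=\mathrm{span}\{y_2,y_3\}\subseteq\mathcal{V}$ is abelian: $[\mathcal{W},\mathcal{W}]=0$.

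My plan is to translate this into a normal form for $j$. Choose an orthonormal basis $X_1\perp\mathcal{W}$ and $X_2,X_3$ of $\mathcal{W}$. Since $\langle j(Z)X_2,X_3\rangle=\langle Z,[X_2,X_3]\rangle=0$, each $j(Z)$ has only $(1,2)$ and $(1,3)$ entries. Concretely, $j(Z)X_1=-\ell(Z)$ with $\ell:\mathcal{Z}\to\mathcal{W}$ a linear isomorphism, and $j(Z)w=\langle \ell(Z),w\rangle X_1$ for $w\in\mathcal{W}$. Taking the singular value decomposition of $\ell$, I then pick orthonormal $Z_1,Z_2$ and rotate $X_2,X_3$ so that $j(Z_1)X_1=aX_2$ and $j(Z_2)X_1=bX_3$ with $a\ge b>0$, which gives $||j||_{\op}=a$. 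So up to isometry the metrics form a one-parameter family (up to scaling).

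Next I would bound $K(X+Z,X'+Z')$ as in the proof of Proposition \ref{max_and_min_of_heis_2n+1}, using the reduction $\langle X,X'\rangle=0=\langle Z,Z'\rangle$ from Lemma \ref{rotation_in_2-plane}. The formula is
\[K=\tfrac14|j(Z)X'+j(Z')X|^2-\tfrac34|[X,X']|^2-\langle j(Z)X,j(Z')X'\rangle.\]
Two alternative routes seem workable. The first is Puttmann's method: the curvature operator splits as $R|_{\mathcal{V}\otimes\mathcal{Z}}$ (a $6\times 6$ block) plus the $\Lambda^2\mathcal{V}\oplus\Lambda^2\mathcal{Z}$ block. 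I would search for $T\in\Lambda^4\mathcal{N}^*$, supported on $(X_i\wedge X_k\wedge Z_1\wedge Z_2)^*$ and on terms mixing $\Lambda^2\mathcal{V}$ with $\Lambda^2\mathcal{Z}$, such that $\lambda_{\max}(R_T)=\frac14 a^2$, and then apply Proposition \ref{bounds_by_curv_op_general_case}. The block $R|_{\mathcal{V}\otimes\mathcal{Z}}$ is given by Remark \ref{curv_op_on_V_wedge_Z_using_j_map} with explicit entries in $a,b$. The off-diagonal blocks $-\frac14 j(Z_2)j(Z_1)$ have entries $\pm\frac14ab$ (e.g.\ on $X_2\wedge Z_1$ versus $X_3\wedge Z_2$), and these can be cancelled exactly by $T=\frac{ab}{4}(X_2\wedge X_3\wedge Z_1\wedge Z_2)^*$ with a suitable sign. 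After that cancellation the block should be diagonal with entries $\frac14a^2,\frac14b^2,\frac14(a^2),\dots$, all at most $\frac14a^2$. The other block must then also be checked with this $T$.

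The main obstacle is verifying the $\Lambda^2\mathcal{V}\oplus\Lambda^2\mathcal{Z}$ block after adding $T$. The term $\iota(T)$ couples $X_2\wedge X_3$ with $Z_1\wedge Z_2$. By Lemma \ref{formulas_for_curv_op}, $R$ is zero on $\Lambda^2\mathcal{Z}$, and $R$ also couples $X_2\wedge X_3$ (which has zero bracket) with $Z_1\wedge Z_2$. The resulting $2\times2$ block $\begin{bmatrix}c&d\\ d&0\end{bmatrix}$ with $c\le 0$ has top eigenvalue $\frac{c+\sqrt{c^2+4d^2}}{2}$, and I need this to be at most $\frac14a^2$. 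I would first compute $c$ and $d$ explicitly and choose the sign of $T$ so that $|d|$ is as small as possible. If this fails, I would switch to a direct elementary maximization of the explicit sectional curvature in the normal form. In that normal form all the quantities $j(Z)X$ are linear in the coordinates, so maximizing reduces to a finite-dimensional trigonometric inequality.
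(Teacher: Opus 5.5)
Your normal form is correct and matches the paper's: orthonormal $X_1,X_2,X_3$ and $Z_1,Z_2$ with $[X_1,X_2]=aZ_1$, $[X_1,X_3]=bZ_2$, all other brackets zero, and $a=||j||_{\op}\geq b>0$. Deriving it from the abelian plane $\mathcal{W}$ and a singular value decomposition of $\ell$ is a valid alternative to the paper's route (Lemma \ref{maximum_eigenspace_and_two_interpretations} followed by a rotation).

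The gap is in the step meant to prove $K_{\max}\leq\frac14||j||_{\op}^2$.
\begin{itemize}
\item The only nonzero off-diagonal entry of $R|_{\mathcal{V}\otimes\mathcal{Z}}$ is $\langle R(X_3\wedge Z_1),X_2\wedge Z_2\rangle=\frac14\langle j(Z_1)X_2,j(Z_2)X_3\rangle=\frac14ab$. It does not sit on the pair $X_2\wedge Z_1$, $X_3\wedge Z_2$, as you wrote.
\item On the other block, Lemma \ref{formulas_for_curv_op} gives $\langle R(X_2\wedge X_3),Z_1\wedge Z_2\rangle=-\frac14ab$, with the opposite sign. All four corresponding diagonal entries vanish.
\item Put $t=T(X_2\wedge X_3\wedge Z_1\wedge Z_2)$. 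Then $\langle\iota(T)(X_3\wedge Z_1),X_2\wedge Z_2\rangle=t=\langle\iota(T)(X_2\wedge X_3),Z_1\wedge Z_2\rangle$. So the two $2\times2$ blocks have zero diagonal and off-diagonal entries $t+\frac14ab$ and $t-\frac14ab$. Their top eigenvalues are $|t+\frac14ab|$ and $|t-\frac14ab|$.
\end{itemize}
Your cancelling choice $t=-\frac14ab$ creates the eigenvalue $\frac12ab$ in the $\Lambda^2\mathcal{V}\oplus\Lambda^2\mathcal{Z}$ block. Choosing the other sign to kill $d$ moves the same $\frac12ab$ into the $\mathcal{V}\otimes\mathcal{Z}$ block. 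Since $\frac12ab>\frac14a^2$ whenever $b>a/2$, Proposition \ref{bounds_by_curv_op_general_case} then gives nothing useful. So the obstacle you flagged is real and no sign choice removes it. Your fallback, a direct maximization over all 2-planes, is only announced, so as written the upper bound is not proved.

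The missing observation is that no modification is needed: take $T=0$. The two couplings then have off-diagonal entries $\pm\frac14ab$ and eigenvalues $\pm\frac14ab$, and both blocks are otherwise diagonal.
\begin{itemize}
\item $R|_{\mathcal{V}\otimes\mathcal{Z}}$ has eigenvalues $\frac14a^2,\frac14a^2,\frac14b^2,\frac14b^2,\pm\frac14ab$.
\item $R|_{\Lambda^2\mathcal{V}\oplus\Lambda^2\mathcal{Z}}$ has eigenvalues $-\frac34a^2,-\frac34b^2,\pm\frac14ab$.
\end{itemize}
Since $b\leq a$, the largest eigenvalue is $\frac14a^2=\frac14||j||_{\op}^2$. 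Proposition \ref{bounds_by_curv_op_general_case} with $T=0$ then finishes the argument, exactly as in the paper. The $\pm\frac14ab$ couplings are harmless precisely because $b\leq a$, and $t=0$ is in fact the optimal value of $t$ in the family above.
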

\begin{remark}
    There are uncountably many left-invariant metrics on $N_5$ up to isometry and scaling.
\end{remark}
We split the proof into lemmas. 
\begin{lemma} \label{lemma_dim_5_standard_form}
    There exists an orthonormal basis $X_1, X_2, X_3$ of $\mathcal{V}$ and $Z_1, Z_2$ of $\mathcal{Z}$ such that \begin{align*}
        j(Z_1)=\begin{bmatrix}
            0 & -a & 0 \\
            a & 0 & 0 \\
            0 & 0 & 0
        \end{bmatrix}, j(Z_2)=\begin{bmatrix}
            0 & 0 & -b \\
            0 & 0 & 0 \\
            b & 0 & 0
        \end{bmatrix}
    \end{align*}

    In other words, $[X_1, X_2]=aZ_1, [X_1, X_3]=bZ_2$, and the other bracket relations are zero.

    Moreover, $a=||j||_{\op}$, and $|b|\leq a$.
\end{lemma}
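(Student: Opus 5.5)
We start from Setting \ref{setting_for_the_proof_of_lower_bound_theorem}, specialized to $\mathcal{N}_5$ with any inner product. Here $\dim\mathcal{Z}=2$ and $\dim\mathcal{V}=3$, because the center of $\mathcal{N}_5$ is exactly $\mathcal{Z}=[\mathcal{N}_5,\mathcal{N}_5]$. By Lemma \ref{maximum_eigenspace_and_two_interpretations} we choose unit vectors $Z_1\in\mathcal{Z}$ and $X_1\in\mathcal{V}$ with $|j(Z_1)X_1|=||j||_{\op}$. Set $X_2=j(Z_1)X_1/||j||_{\op}$; then $j(Z_1)X_2=-||j||_{\op}X_1$. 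Let $X_3$ complete $X_1,X_2$ to an orthonormal basis of $\mathcal{V}$, and let $Z_2$ be the unit vector orthogonal to $Z_1$ in $\mathcal{Z}$, with its sign to be fixed later. The matrix $j(Z_1)$ is skew-symmetric on a 3-dimensional space and preserves $\mathrm{span}\{X_1,X_2\}$, so it also preserves $X_3^{\perp}\cap\mathcal{V}$'s complement $\mathbb{R}X_3$. Hence $j(Z_1)X_3=0$, which gives the displayed form of $j(Z_1)$ with $a=||j||_{\op}$.

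Next we determine $j(Z_2)$. By Lemma \ref{curv_op_and_totally_geodesic_subalgebras} (item 2), applied to the aligned totally geodesic $\mathrm{heis}_3(\mathbb{R})$ spanned by $X_1,X_2,Z_1$, we have $j(Z_2)(\mathcal{V}_1)\subseteq\mathcal{V}_2=\mathbb{R}X_3$, where $\mathcal{V}_1=\mathrm{span}\{X_1,X_2\}$. Equivalently, $\langle [X_1,X_2],Z_2\rangle=\langle j(Z_2)X_1,X_2\rangle=0$, because $[X_1,X_2]=aZ_1$. So $j(Z_2)$ has the form
\[
j(Z_2)=\begin{bmatrix}0&0&-b\\0&0&-c\\b&c&0\end{bmatrix}.
\]

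The remaining freedom is a rotation in the $X_1,X_2$-plane, which we use to kill $c$. Such a rotation commutes with $j(Z_1)|_{\mathcal{V}_1}=aJ$, so it preserves the form of $j(Z_1)$. It moves the vector $(b,c)$ to $(\sqrt{b^2+c^2},0)$, and after possibly replacing $Z_2$ by $-Z_2$ we may write the result as $(b,0)$. After the rotation $X_1$ is no longer the original vector, but $|j(Z_1)X_1|=a$ still holds, so nothing is lost. The brackets now read $[X_1,X_2]=aZ_1$ and $[X_1,X_3]=bZ_2$, with all other brackets of basis vectors zero. These are computed from $\langle [X_i,X_k],Z_\alpha\rangle=\langle j(Z_\alpha)X_i,X_k\rangle$. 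Finally, $|b|=|j(Z_2)X_1|\le ||j||_{\op}=a$ by the definition of the operator norm, and $b\neq 0$ because otherwise $Z_2$ would not lie in $[\mathcal{N},\mathcal{N}]$.

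The only step needing care is the choice of rotation in the $X_1,X_2$-plane. We must check that it keeps $a=||j||_{\op}$ attained at the new $X_1$, and this holds because $j(Z_1)$ acts as $a$ times a rotation on that whole plane. Everything else is direct bookkeeping with skew-symmetry.
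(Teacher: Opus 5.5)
Your proof is correct and follows the paper's argument essentially step for step. You use Lemma \ref{maximum_eigenspace_and_two_interpretations} to get the aligned totally geodesic $\mathrm{heis}_3(\mathbb{R})$ spanned by $X_1,X_2,Z_1$, then Lemma \ref{curv_op_and_totally_geodesic_subalgebras} (equivalently, skew-symmetry) for the block forms of $j(Z_1)$ and $j(Z_2)$, a rotation in the $X_1,X_2$-plane commuting with $j(Z_1)|_{\mathcal{V}_1}$ to kill $c$, and the operator-norm bound $|b|\le ||j||_{\op}$; the replacement $Z_2\mapsto -Z_2$ is unnecessary, since your rotation already gives $b=\sqrt{b^2+c^2}\ge 0$.
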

\begin{remark}
     Note, \cite[6 Proposition]{Homolya_Kowalski} also shows that there exists an orthonormal basis with only two non-trivial Lie brackets. However, in our argument, we have $a=||j||_{\op}$, which is useful.
\end{remark}
\begin{proof}[Proof of Lemma \ref{lemma_dim_5_standard_form}.]
    Let $\mathcal{N}=\mathcal{N}_5$. Let $\mathcal{Z}$ be the center of $\mathcal{N}$, and $\mathcal{V}=\mathcal{Z}^{\perp}$. By Lemma \ref{maximum_eigenspace_and_two_interpretations}, there exist unit vectors $X_1, X_2 \in \mathcal{V}$ and $Z_1\in \mathcal{Z}$ such that $|j(Z_1)X_1|=||j||_{\op}$, $j(Z_1)^2X_1=-||j||_{\op}^2X_1$, $X_2=\frac{j(Z_1)X_1}{||j||_{\op}}$, and $[X_1,X_2]=||j||_{\op}Z_1$. Note, $j(Z_1)X_1=||j||_{\op}X_2$ and $j(Z_1)X_2=-||j||_{\op}X_1$. Extend $X_1, X_2$ to an orthonormal basis $X_1, X_2, X_3$ of $\mathcal{V}$. Extend $Z_1$ to an orthonormal basis $Z_1, Z_2$ of $\mathcal{Z}$. Let $\mathcal{N}_1=\mathbb{R}\textrm{-span}\{X_1, X_2, Z_1\}.$ As noted in Lemma \ref{maximum_eigenspace_and_two_interpretations}, $\mathcal{N}_1$ is an aligned totally geodesic subalgebra of $\mathcal{N}$. Let $\mathcal{V}_1=\mathbb{R}\textrm{-span}\{X_1, X_2\}, \mathcal{V}_2=\mathbb{R}\textrm{-span}\{X_3\}, \mathcal{Z}_1=\mathbb{R}\textrm{-span}\{Z_1\}, \mathcal{Z}_2=\mathbb{R}\textrm{-span}\{Z_2\}$. Then, by Lemma \ref{curv_op_and_totally_geodesic_subalgebras},  $j(Z_1)$ leaves $\mathcal{V}_1$ and $\mathcal{V}_2$ invariant, and $j(Z_2)(\mathcal{V}_1)\subseteq \mathcal{V}_2$. 
    
     Together with $j(Z_1)X_1=||j||_{\op}X_2$ and $j(Z_1)X_2=-||j||_{\op}X_1$, the matrix representations of $j(Z_1)$ and $j(Z_2)$ with respect to $X_1, X_2, X_3$ are given by  $j(Z_1)=\begin{bmatrix}
            0 & -||j||_{\op} & 0 \\
            ||j||_{\op} & 0 & 0 \\
            0 & 0 & 0
        \end{bmatrix}, j(Z_2)=\begin{bmatrix}
            0 & 0 & -b \\
            0 & 0 & -c \\
            b & c & 0
        \end{bmatrix}$
    for some $b, c\in \mathbb{R}$. In other words, $[X_1, X_2]=||j||_{\op}Z_1, [X_1, X_3]=bZ_2, [X_2, X_3]=cZ_2$, and the other bracket relations are zero.

    We claim that we may assume $c=0$ by using a different basis. Let $Y_1=(\cos \theta) X_1+(\sin \theta)X_2$ and $Y_2=(-\sin \theta)X_1 + (\cos \theta) X_2$. Then, $[Y_1, Y_2]=||j||_{\op}Z_1$, $[Y_1, X_3]=(b\cos \theta + c\sin \theta)Z_2$, and $[Y_2, X_3]=(-b\sin \theta +c\cos \theta )Z_2$. Thus, we can choose $\theta$ so that $-b\sin \theta + c\cos \theta=0$, i.e., we may assume $c=0$. Let $a=||j||_{\op}$. We have $|b|\leq a$. Indeed, $j(Z_2)X_1=bX_3$, so $|b|=|j(Z_2)X_3|\leq ||j||_{\op}=a$. 
\end{proof}

Now, we will compute the curvature operator $R$, and show that the maximum eigenvalue of $R$ is $\frac{1}{4}a^2$. By Proposition \ref{decomposition_of_the_curvature_operator}, we can write $R=\begin{bmatrix}
        A_{11} & 0 \\
         0 & A_{22}
    \end{bmatrix}$ relative to the subspaces $\Lambda^2\mathcal{V}\oplus \Lambda^2\mathcal{Z}$ and $\mathcal{V} \otimes \mathcal{Z}$. As explained in Remark \ref{remark_block_form_of_curvature_operator}, our order of the basis elements is $X_1 \wedge X_2, X_1 \wedge X_3, X_2\wedge X_3, Z_1\wedge Z_2$ for $\Lambda^2 \mathcal{V}\oplus \Lambda^2\mathcal{Z}$ and $X_1\wedge Z_1, X_2\wedge Z_1, X_3\wedge Z_1, X_1 \wedge Z_2, X_2\wedge Z_2, X_3\wedge Z_2$ for $\mathcal{V}\otimes \mathcal{Z}$.

\begin{lemma}\label{lemma_dim_5_curvature_operator_A11_and_A22}
    The matrix representation of $A_{22}$ with respect to $X_1\wedge Z_1, X_2\wedge Z_1, X_3\wedge Z_1, X_1 \wedge Z_2, X_2\wedge Z_2, X_3\wedge Z_2$ is given by \begin{align}
        A_{11}=\begin{bmatrix}
            -\frac{3}{4}a^2 & 0 & 0 & 0 \\
            0 & -\frac{3}{4}b^2 & 0 & 0 \\
            0 & 0 & 0 & -\frac{1}{4}ab \\
            0 & 0 & -\frac{1}{4}ab & 0
        \end{bmatrix}, A_{22}=\begin{bmatrix}
            \frac{1}{4}a^2 & 0 & 0 & 0 & 0 & 0 \\
            0 & \frac{1}{4}a^2 & 0 & 0 & 0 & 0 \\
            0 & 0 & 0 & 0 & \frac{1}{4}ab & 0 \\
            0 & 0 & 0 & \frac{1}{4}b^2 & 0 & 0 \\
            0 & 0 & \frac{1}{4}ab & 0 & 0 & 0 \\
            0 & 0 & 0 & 0 & 0 & \frac{1}{4}b^2
        \end{bmatrix}.
    \end{align}
\end{lemma}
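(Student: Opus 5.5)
The plan is to compute every entry of $R$ in the basis of Lemma \ref{lemma_dim_5_standard_form} directly from Lemma \ref{formulas_for_curv_op}. The only nonzero brackets are $[X_1,X_2]=aZ_1$ and $[X_1,X_3]=bZ_2$. The $j$-maps act by $j(Z_1)X_1=aX_2$, $j(Z_1)X_2=-aX_1$, $j(Z_1)X_3=0$, $j(Z_2)X_1=bX_3$, $j(Z_2)X_3=-bX_1$ and $j(Z_2)X_2=0$. Proposition \ref{decomposition_of_the_curvature_operator} already gives the block form, so only the entries inside $A_{11}$ and $A_{22}$ need to be computed.

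For $A_{22}$ I will use Remark \ref{curv_op_on_V_wedge_Z_using_j_map}. It gives
\[
R|_{\mathcal{V}\otimes\mathcal{Z}}=\begin{bmatrix}-\tfrac14[j(Z_1)]^2 & -\tfrac14[j(Z_2)][j(Z_1)]\\ -\tfrac14[j(Z_1)][j(Z_2)] & -\tfrac14[j(Z_2)]^2\end{bmatrix}.
\]
The diagonal blocks are immediate: $-\tfrac14[j(Z_1)]^2=\mathrm{diag}(\tfrac14a^2,\tfrac14a^2,0)$ and $-\tfrac14[j(Z_2)]^2=\mathrm{diag}(\tfrac14b^2,0,\tfrac14b^2)$.

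For the off-diagonal block, note that $j(Z_1)X_2=-aX_1$ and $j(Z_2)X_1=bX_3$. Hence $-\tfrac14 j(Z_2)j(Z_1)$ sends $X_2$ to $\tfrac14ab\,X_3$ and kills $X_1$ and $X_3$ (since $j(Z_1)X_3=0$ and $j(Z_2)X_2=0$). So the only nonzero off-diagonal entry couples $X_2\wedge Z_1$ and $X_3\wedge Z_2$... but this needs checking against the stated matrix. The stated matrix has its $ab$ entry at positions $(3,5)$, i.e.\ coupling $X_3\wedge Z_1$ with $X_2\wedge Z_2$. I will carefully recheck the index convention $\langle R(X_j\wedge Z_\beta),X_i\wedge Z_\alpha\rangle=\langle -\tfrac14 j(Z_\beta)j(Z_\alpha)X_j,X_i\rangle$ and also the sign conventions.

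Which pair is coupled should not matter for the later eigenvalue argument. The coupled $2\times2$ block is $\begin{bmatrix}0&\pm\tfrac14ab\\ \pm\tfrac14ab&0\end{bmatrix}$, whose eigenvalues are $\pm\tfrac14|ab|\le\tfrac14a^2$. The real job is to confirm that the computed position agrees with the statement.

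For $A_{11}$ I will apply items 2--4 of Lemma \ref{formulas_for_curv_op}. The diagonal entries on $\Lambda^2\mathcal{V}$ are sectional curvatures, so by Proposition \ref{general_formula} they equal $-\tfrac34|[X_i,X_j]|^2$. This gives $-\tfrac34a^2$, $-\tfrac34b^2$ and $0$.

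The off-diagonal $\Lambda^2\mathcal{V}$ entries involve $\langle[X_i,X_j],[X_k,X_l]\rangle$ and the cross terms of item 3. These vanish because $Z_1\perp Z_2$ and each pair of basis elements shares at most one nonzero bracket. For example, $\langle R(X_1\wedge X_2),X_1\wedge X_3\rangle$ involves $\langle[X_2,X_1],[X_1,X_3]\rangle=0$.

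The $\langle R(X_i\wedge X_j),Z_1\wedge Z_2\rangle$ entries come from item 2. Only $X_2\wedge X_3$ survives, giving
\[
\tfrac14\langle j(Z_1)X_3,j(Z_2)X_2\rangle-\tfrac14\langle j(Z_1)X_2,j(Z_2)X_3\rangle=0-\tfrac14\langle -aX_1,-bX_1\rangle=-\tfrac14ab,
\]
which matches the stated matrix. The $Z\wedge Z$ diagonal entry is $0$ by item 4.

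The main obstacle is purely bookkeeping: tracking signs and index order in Remark \ref{curv_op_on_V_wedge_Z_using_j_map} and in the ordering of Remark \ref{remark_block_form_of_curvature_operator}, so that the $\tfrac14ab$ entry lands exactly where stated. I would verify it by directly evaluating $\tfrac14\langle j(Z_\alpha)X_j,j(Z_\beta)X_i\rangle$ from item 1 for the one candidate nonzero cross pair.
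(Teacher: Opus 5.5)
Your approach is the paper's: items 2--4 of Lemma \ref{formulas_for_curv_op} for $A_{11}$, and Remark \ref{curv_op_on_V_wedge_Z_using_j_map} for $A_{22}$. Your entries of $A_{11}$ (including the $-\tfrac14ab$ coupling $X_2\wedge X_3$ with $Z_1\wedge Z_2$) and the two diagonal $3\times3$ blocks of $A_{22}$ are correct. However, the one entry you left unresolved you actually read off backwards. As written, the proposal therefore tentatively contradicts the statement rather than proving it.

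The slip is the index convention in the upper-right block $-\tfrac14[j(Z_2)][j(Z_1)]$. Its rows are indexed by $X_i\wedge Z_1$ and its columns by $X_j\wedge Z_2$. From $\langle R(X_j\wedge Z_\beta),X_i\wedge Z_\alpha\rangle=\langle -\tfrac14 j(Z_\beta)j(Z_\alpha)X_j,X_i\rangle$, the input vector $X_j$ is paired with $Z_\beta$. It is not paired with $Z_\alpha$, even though $j(Z_\alpha)$ is the map that acts first. You correctly found that the only nonzero column of $-\tfrac14 j(Z_2)j(Z_1)$ is $X_2\mapsto \tfrac14ab\,X_3$. So the entry $+\tfrac14ab$ lies in row $X_3\wedge Z_1$ and column $X_2\wedge Z_2$, which is position $(3,5)$.

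Symmetrically, the lower-left block $-\tfrac14[j(Z_1)][j(Z_2)]$ sends $X_3\mapsto \tfrac14ab\,X_2$ and kills $X_1,X_2$. This gives the $(5,3)$ entry.

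The direct check you proposed confirms this:
\[
\langle R(X_3\wedge Z_1),X_2\wedge Z_2\rangle=\tfrac14\langle j(Z_1)X_2,j(Z_2)X_3\rangle=\tfrac14\langle -aX_1,-bX_1\rangle=\tfrac14ab,
\]
whereas
\[
\langle R(X_2\wedge Z_1),X_3\wedge Z_2\rangle=\tfrac14\langle j(Z_1)X_3,j(Z_2)X_2\rangle=0.
\]
With this correction the computation reproduces the stated matrix exactly, and the sign is $+$.

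Your observation that the position does not affect the eigenvalues used in Proposition \ref{proposition_dim_5} is true, since the coupled block has eigenvalues $\pm\tfrac14ab$ either way. But the lemma asserts the specific matrix, so the position must be settled rather than set aside.
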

\begin{proof}[Proof of Lemma \ref{lemma_dim_5_curvature_operator_A11_and_A22}]
    Use Lemma \ref{formulas_for_curv_op} for $A_{11}$, and use the formula in Remark \ref{curv_op_on_V_wedge_Z_using_j_map} for $A_{22}$.
\end{proof}

\begin{proof}[Proof of Proposition \ref{proposition_dim_5}]
    The eigenvalues of $R$ are given by $\frac{1}{4}a^2, \frac{1}{4}b^2, \pm \frac{1}{4}ab, -\frac{3}{4}a^2, -\frac{3}{4}b^2$. Since $|b|\leq a$ and $a=||j||_{\op}$ from Lemma \ref{lemma_dim_5_standard_form}, the maximum eigenvalue $\lambda_{\max}(R)$ of $R$ is $\frac{1}{4}a^2=\frac{1}{4}||j||_{\op}^2$. Thus, it follows from Proposition \ref{bounds_by_curv_op_general_case} (with $T=0$) that $K_{\max}\leq \frac{1}{4}||j||_{\op}^2$. By Lemma \ref{maximum_eigenspace_and_two_interpretations}, $K_{\max}\geq \frac{1}{4}||j||_{\op}^2$. Thus, $K_{\max}=\frac{1}{4}||j||_{\op}^2$. By Theorem \ref{minimum_of_sectional_curvature_of_2-step_nilpotent_Lie_groups}, we have $K_{\min}=-\frac{3}{4}||j||_{\op}^2$. Thus, $\frac{K_{\min}}{K_{\max}}=-3$.
\end{proof}

\begin{corollary} \label{pinching_constant_up_to_dim_5}
    Let $N$ be a 2-step nilpotent Lie group of $\dim N\leq 5$. Then, for any left-invariant metric on $N$, we have $\frac{K_{\min}}{K_{\max}}=-3$.
\end{corollary}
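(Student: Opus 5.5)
The statement to prove is Corollary \ref{pinching_constant_up_to_dim_5}. My plan is to reduce it to the Lie algebra level and then run through the classification in Lemma \ref{classification_up_to_dim_5}.

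First, the pinching constant does not change under passing to the universal cover. Since $N$ is 2-step nilpotent, it is non-abelian. So it suffices to take any inner product on the Lie algebra $\mathcal{N}$ and compute $\frac{K_{\min}}{K_{\max}}$ for the simply connected group. By Lemma \ref{classification_up_to_dim_5}, $\mathcal{N}$ is isomorphic to one of five algebras: $\mathrm{heis}_3(\mathbb{R})$, $\mathrm{heis}_3(\mathbb{R})\oplus\mathbb{R}$, $\mathrm{heis}_3(\mathbb{R})\oplus\mathbb{R}^2$, $\mathrm{heis}_5(\mathbb{R})$, or $\mathcal{N}_5$.

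I will handle these in two cases, which I expect to cover everything.

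\begin{itemize}
\item The first four algebras all have a one-dimensional commutator ideal $[\mathcal{N},\mathcal{N}]$, and this property is independent of the metric. Proposition \ref{max_and_min_of_heis_2n+1} therefore applies directly to every inner product on them and gives $\frac{K_{\min}}{K_{\max}}=-3$. Equivalently, one can cite Corollary \ref{pinching_constant_of_heis_2n+1_plus_abelian} with $n\in\{1,2\}$ and $m\leq 2$.
\item The remaining algebra is $\mathcal{N}_5$. Here Proposition \ref{proposition_dim_5} gives $\frac{K_{\min}}{K_{\max}}=-3$ for every left-invariant metric on $N_5$.
\end{itemize}

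I see no genuine obstacle, since the substantive work is already done in Proposition \ref{max_and_min_of_heis_2n+1} and Proposition \ref{proposition_dim_5}. The only points needing care are the following.

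\begin{itemize}
\item The classification in Lemma \ref{classification_up_to_dim_5} is up to Lie algebra isomorphism, not metric isomorphism. Both propositions, however, hold for arbitrary inner products, so this causes no difficulty.
\item For a non-simply-connected $N$, the local geometry agrees with that of $\tilde N$. Hence $K_{\max}$ and $K_{\min}$ coincide for $N$ and $\tilde N$.
\end{itemize}
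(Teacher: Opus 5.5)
Your proposal is correct and matches the paper's implicit argument. The classification in Lemma \ref{classification_up_to_dim_5} splits the algebras into two groups. Those with a one-dimensional commutator ideal are handled by Proposition \ref{max_and_min_of_heis_2n+1} (equivalently Corollary \ref{pinching_constant_of_heis_2n+1_plus_abelian}), and $\mathcal{N}_5$ is handled by Proposition \ref{proposition_dim_5}; both hold for every inner product, and your bookkeeping about the universal cover and isomorphism-versus-isometry is exactly what is needed.
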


\subsection{Quaternionic Heisenberg Group} \label{section_quaternionic_Heisenberg_group}

The 7-dimensional quaternionic Heisenberg group $N$ has Lie algebra $\mathcal{N}:=\mathrm{Lie}N$ of the form $\mathcal{N}=\mathbb{R}\textrm{-span}\{X_1, X_2, X_3, X_4, Z_1, Z_2, Z_3\}$ with the following non-trivial Lie bracket relations: $[X_1, X_3]=Z_1, [X_2, X_4]=Z
_1, [X_1, X_4]=Z
_2, [X_2,X_3]=-Z_2, [X_1, X_2]=-Z_3$, and $[X_3,X_4]=Z_3$. (If one sets $X_2=-W_1, X_3=Y_1, X_4=-W_1$, then one recovers the case $i=1$ of \cite[p. 618]{Eb1}.) Let $\mathcal{V}=\mathbb{R}\textrm{-span}\{X_1, ..., X_4\}$ and $\mathcal{Z}=\mathbb{R}\textrm{-span}\{Z_1,Z_2,Z_3\}$. The left-invariant metric for which $X_1, ..., X_4, Z_1, Z_2, Z_3$ form an orthonormal frame is of Heisenberg type and hence a Ricci soliton metric (see \cite{Lauret_Ricci_Soliton_Nilmanifolds}). It immediately follows that $||j||_{op}=1$, so $K_{\min}=-\frac{3}{4}$ by Theorem \ref{minimum_of_sectional_curvature_of_2-step_nilpotent_Lie_groups}. Let $\pi=\mathbb{R}\textrm{-span}\left\{\frac{2}{\sqrt{11}}X_3+\sqrt{\frac{7}{11}} Z_2, \frac{2}{\sqrt{11}}X_4+\sqrt{\frac{7}{11}} Z_1 \right\}$. Then, $K(\pi)=\frac{4}{11}$. Hence, $K_{\max}\geq \frac{4}{11}$, and $\frac{K_{\min}}{K_{\max}}\geq -\frac{33}{16}$. 

We show $K_{\max}=\frac{4}{11}$. Let $T_1=-\frac{1}{4}(X_1\wedge Z_1\wedge X_2\wedge Z_2)^*-\frac{1}{4}(X_1\wedge Z_1\wedge X_4\wedge Z_3)^*+\frac{1}{4}(X_2\wedge Z_1\wedge X_3\wedge Z_3)^*+\frac{1}{4}(X_3\wedge Z_1\wedge X_4\wedge Z_2)^*+\frac{1}{4}(X_1\wedge Z_2\wedge X_3\wedge Z_3)+\frac{1}{4}(X_2\wedge Z_2\wedge X_4\wedge Z_3)^*$. Let $T_2=(X_1\wedge X_2\wedge X_3\wedge X_4)^*$. Let $T=t_1T_1+t_2T_2$. Then $\iota(T)$ leaves $\mathcal{V}\otimes \mathcal{Z}$ and $\Lambda^2 \mathcal{V} \oplus \Lambda^2\mathcal{Z}$. When $t_1=\frac{6}{11}$ and $t_2=\frac{49}{44}$, the eigenvalues of $(R+\iota(T))|_{\mathcal{V}\otimes \mathcal{Z}}$ are $\{\frac{4}{11}, \frac{1}{44}\}$, and the eigenvalues of $(R+\iota(T))|_{\Lambda^2 \mathcal{V} \oplus \Lambda^2\mathcal{Z}}$ are $\{\frac{4}{11}, -\frac{49}{22}\}$. Hence, the maximum eigenvalue of $R+\iota(T)$ is $\frac{4}{11}$. By Proposition \ref{bounds_by_curv_op_general_case}, $K_{\max}\leq \frac{4}{11}$. Thus, $K_{\max}=\frac{4}{11}$ and $\frac{K_{\min}}{K_{\max}}=-\frac{33}{16}$.

\bibliographystyle{alpha}
\bibliography{main}{}

\end{document}